\date{May 16, 2024}
\newcommand{\Bc}{\mathcal{B}}
\newtheorem{thm}{Theorem}[section]
\newtheorem{prop}[thm]{Proposition}
\newtheorem{lem}[thm]{Lemma}
\newtheorem{rem}[thm]{Remark}
\newtheorem{fact}[thm]{Fact}
\newtheorem{defi}[thm]{Definition}
\begin{document}

\title[Endpoint Schatten class properties of commutators]{Endpoint Schatten class properties\\ of commutators}

\author{Rupert L. Frank}
\address[Rupert L. Frank]{Mathe\-matisches Institut, Ludwig-Maximilans Universit\"at M\"unchen, The\-re\-sien\-str.~39, 80333 M\"unchen, Germany, and Munich Center for Quantum Science and Technology, Schel\-ling\-str.~4, 80799 M\"unchen, Germany, and Mathematics 253-37, Caltech, Pasa\-de\-na, CA 91125, USA}
\email{r.frank@lmu.de}

\author{Fedor Sukochev}
\address[Fedor Sukochev]{School of Mathematics and Statistics, University of New South Wales, Kensington, 2052, NSW, Australia}
\email{f.sukochev@unsw.edu.au}

\author{Dmitriy Zanin}
\address[Dmitriy Zanin]{School of Mathematics and Statistics, University of New South Wales, Kensington, 2052, NSW, Australia}
\email{d.zanin@unsw.edu.au}

\begin{abstract}
	We study trace ideal properties of the commutators $[(-\Delta)^{\frac{\epsilon}{2}},M_f]$ of a power of the Laplacian with the multiplication operator by a function $f$ on $\mathbb R^d$. For a certain range of $\epsilon\in\mathbb R$, we show that this commutator belongs to the weak Schatten class $\mathcal L_{\frac d{1-\epsilon},\infty}$ if and only if the distributional gradient of $f$ belongs to $L_{\frac d{1-\epsilon}}$. Moreover, in this case we determine the asymptotics of the singular values. Our proofs use, among other things, the tool of Double Operator Integrals.
\end{abstract}

\maketitle

\section{Introduction and main results}

\subsection{Background and motivations}

In this paper we are interested in spectral properties of commutators of the form
$$
[(-\Delta)^{\frac{\epsilon}{2}},M_f]
\qquad\text{in}\ L_2(\mathbb R^d) \,.
$$
Here $\epsilon$ is a parameter that we typically fix in one of the intervals $(0,1]$ and $(-\tfrac d2,0)$, and $M_f$ is the operator of multiplication by a function $f\in L_{1,{\rm loc}}(\mathbb R^d)$. In Section~\ref{sec:preliminary} we will recall that the operator $[(-\Delta)^{\frac{\epsilon}{2}},M_f]$ is always well defined from $C_c^\infty(\mathbb R^d)$ to the the space of distributions $(C_c^\infty(\mathbb R^d))'$.

The spectral properties that we are interested in are boundedness, compactness and membership to a trace ideal, as well as the computation of the asymptotics of its singular values. The trace ideals in question are Schatten classes $\mathcal L_p$ and weak Schatten classes $\mathcal L_{p,\infty}$, whose definition we will recall in Section \ref{prelim section}. 

Our goal is to investigate these spectral properties under minimal assumptions on the function $f$. In particular, we will derive the asymptotics of the singular values under the sole assumption that the asymptotic coefficient is finite. We will also be interested in converse theorems, where spectral properties of the commutators imply certain properties of the function $f$.

To set the stage, let us recall a prototypical result in this area, due to Coifman, Rochberg and Weiss \cite{CoRoWe} with later contributions by Janson \cite{Ja} and Uchiyama \cite{Uc} and many others. In a certain sense this result corresponds to the case $\epsilon=0$ of our problem. Let $d\geq 2$ and let $R_j = -i\partial_j (-\Delta)^{-\frac12}$, $j=1,\ldots,d$, be a Riesz transform. Then
$$
[R_j,M_f]
\qquad\text{in}\ L_2(\mathbb R^d)
$$
is bounded if and only if $f\in BMO(\mathbb R^d)$, the class of functions of bounded mean oscillations. In addition, there is a two-sided bound between the operator norm of $[R_j,M_f]$ and the seminorm of $f$ in $BMO(\mathbb R^d)$. Furthermore, the operator is compact if and only if $f\in VMO(\mathbb R^d)$, the class of functions of vanishing mean oscillations. Concerning trace ideals, Janson and Wolff \cite{JaWo} characterized membership of $[R_j,M_f]$ to Schatten classes $\mathcal L_p$ with $d<p<\infty$ in terms of membership of $f$ to a homogeneous fractional Sobolev space, and they showed that the operator does not belong to $\mathcal L_d$ unless $f$ is constant; for alternative proofs see \cite{RoSe2,JaPe}. Connes, Sullivan and Teleman, together with Semmes, \cite{CoSuTe} added a characterization in the endpoint case of the weak Schatten class $\mathcal L_{d,\infty}$, namely membership to a certain homogeneous first order Sobolev space. In the recent papers \cite{LMSZ,FrSuZa} we have revisited and extended the latter result.

Thus, there is a scale of nested function spaces, parametrized by $p$, namely the homogeneous fractional Sobolev spaces, such that if the regularity of $f$ improves (in the sense of it belonging to a smaller one of these spaces), then the trace ideal properties of $[R_j,M_f]$ improve (in the sense of it belonging to a smaller trace ideal space). At the same time, there is a saturation effect in the sense that if the regularity properties of $f$ improve beyond membership to the corresponding first order Sobolev space, then the trace ideal properties no longer improve. The existence of a critical exponent $p_*$ with this property, here $p_*=d$, is sometimes referred to as the `Janson--Wolff phenomenon' or `cut-off phenomenon'. It has been observed in many other problems of this kind, involving commutator or Hankel operators, see for instance, \cite{ArFiPe,FeRo,ArFiJaPe,PeRoWu,JaRo,EnRo} and references therein.

The present paper is a continuation of \cite{FrSuZa}, but can be read independently. Our goal is to prove the analogue of the before-mentioned results for the operators $[(-\Delta)^{\frac{\epsilon}{2}},M_f]$ for $\epsilon\neq 0$. Several results about these operators were obtained by Murray \cite{Mu} (concerning boundedness) and by Janson and Peetre \cite{JaPe} (concerning boundedness and trace ideal properties) and we review them in detail later on in this introduction. Here we just mention that for every $\epsilon\in (-\frac d2,0)\cup(0,1)$ there is a Janson--Wolff phenomenon with critical exponent $p_*=\frac{d}{1-\epsilon}$. The existing results characterize membership to $\mathcal L_p$ for $p>p_*$ in terms of $f$ belonging to some homogeneous fractional Sobolev space. It is also known that $[(-\Delta)^{\frac{\epsilon}{2}},M_f]$ does not belong to $\mathcal L_{p_*}$ unless $f$ is constant. Our new results characterize membership to the endpoint space, namely the weak Schatten ideal $\mathcal L_{p_*,\infty}$, in terms of $f$ belonging to some homogeneous first order Sobolev space. Moreover, we will compute the asymptotics of the singular values under the sole assumption of membership to this Sobolev space. 

We will also obtain some results in the case $\epsilon=1$, which is somewhat different. Note that the exponent $p_*=\frac{d}{1-\epsilon}$ tends to $+\infty$ as $\epsilon\to 1^-$. Among other things, we show that $[(-\Delta)^{\frac{1}{2}},M_f]$ is never compact, unless $f$ is constant. Moreover, we provide a short proof that a well-known criterion of Calder\'on \cite{Ca} for boundedness is not only sufficient, but also necessary.


\subsection{Main results}

We now come to a precise formulation of our main results in the following three theorems. The necessary notation related to function spaces and trace ideals, as well as the precise meaning of the operator $[(-\Delta)^{\frac{\epsilon}{2}},M_f]$ can be found in the following two sections.

Our first result concerns the case $0<\epsilon<1$.

\begin{thm}\label{main theorem positive} Let $d\geq 1$ and let $0<\epsilon<1.$
\begin{enumerate}[{\rm (i)}]
\item\label{mtpa} If $f\in\dot{W}^{1}_{\frac{d}{1-\epsilon}}(\mathbb{R}^d),$ then $[(-\Delta)^{\frac{\epsilon}{2}},M_f]\in\mathcal{L}_{\frac{d}{1-\epsilon},\infty}$ and
$$\Big\|[(-\Delta)^{\frac{\epsilon}{2}},M_f]\Big\|_{\frac{d}{1-\epsilon},\infty}\leq c_{d,\epsilon}\|f\|_{\dot{W}^{1}_{\frac{d}{1-\epsilon}}(\mathbb{R}^d)}.$$
\item\label{mtpb} If $f\in\dot{W}^{1}_{\frac{d}{1-\epsilon}}(\mathbb{R}^d)$, then
$$\lim_{t\to\infty}t^{\frac{1-\epsilon}{d}}\mu\Big(t,[(-\Delta)^{\frac{\epsilon}{2}},M_f]\Big)=\kappa_{d,\epsilon} \|f\|_{\dot{W}^{1}_{\frac{d}{1-\epsilon}}(\mathbb{R}^d)}$$
with
\begin{equation}
	\label{eq:constasymp}
	\kappa_{d,\epsilon} := |\epsilon| \left( (2\pi)^{-d} d^{-1} \int_{\mathbb S^{d-1}} |\omega_d|^\frac{d}{1-\epsilon}\,d\omega \right)^\frac{1-\epsilon}{d} \,.
\end{equation}
\item\label{mtpc} If $f\in L_{1,{\rm loc}}(\mathbb{R}^d)$ and if 
$[(-\Delta)^{\frac{\epsilon}{2}},M_f]\in\mathcal{L}_{\frac{d}{1-\epsilon},\infty},$ then $f\in\dot{W}^{1}_{\frac{d}{1-\epsilon}}(\mathbb{R}^d)$ and
$$\|f\|_{\dot{W}^{1}_{\frac{d}{1-\epsilon}}(\mathbb{R}^d)}\leq c'_{d,\epsilon}\Big\|[(-\Delta)^{\frac{\epsilon}{2}},M_f]\Big\|_{\frac{d}{1-\epsilon},\infty}.$$
\end{enumerate} 
\end{thm}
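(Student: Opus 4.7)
\medskip

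The three parts of Theorem~\ref{main theorem positive} are linked by a common reduction to the weak Schatten properties of the model operator $M_g(-\Delta)^{(\epsilon-1)/2}$ with $g = \partial_j f \in L_{\frac{d}{1-\epsilon}}$, which by the Cwikel / Connes--Sullivan--Teleman-type estimates of \cite{LMSZ,FrSuZa} belongs to $\mathcal{L}_{\frac{d}{1-\epsilon},\infty}$ with explicit singular-value asymptotics involving an angular integral over $\mathbb{S}^{d-1}$. The bridge is the Double Operator Integral (DOI) machinery announced in the abstract: writing $D = (-\Delta)^{1/2}$, the divided-difference identity
$$ [D^\epsilon, M_f] \;=\; T^{D,D}_{\phi}\bigl([D,M_f]\bigr), \qquad \phi(s,t) = \frac{s^\epsilon - t^\epsilon}{s-t}, $$
trades the fractional commutator for a first-order one, which decomposes (via $[-\Delta,M_f] = -M_{\Delta f} - 2\sum_j M_{\partial_j f}\partial_j$ together with the Riesz-transform factorization $\partial_j = iR_j D$) into the above model operators plus lower-order pieces. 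For $0<\epsilon<1$ the symbol $\phi$ is sufficiently regular that $T^{D,D}_\phi$ is bounded on $\mathcal{L}_{\frac{d}{1-\epsilon},\infty}$.

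\medskip

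For part (\ref{mtpa}), the reduction combined with the Cwikel-type estimate $\|M_g D^{\epsilon-1}\|_{\frac{d}{1-\epsilon},\infty} \lesssim \|g\|_{\frac{d}{1-\epsilon}}$ applied to $g = \partial_j f$ yields the quasi-norm bound; the lower-order remainders are either absorbed via Leibniz-type manipulations or controlled by interpolation. For the asymptotic formula (\ref{mtpb}), I would first prove it on a dense class of Schwartz $f$, where the DOI acts explicitly and only the principal piece $\sum_j M_{\partial_j f} D^{\epsilon-1} R_j$ contributes to the leading singular-value behaviour. The constant $\kappa_{d,\epsilon}$ in \eqref{eq:constasymp} then emerges as the product of the known angular integral from the Cwikel asymptotics and the diagonal value $\phi(s,s) = \epsilon \, s^{\epsilon-1}$, which accounts for the factor $|\epsilon|$. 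The extension to arbitrary $f \in \dot W^1_{\frac{d}{1-\epsilon}}$ is then routine from (\ref{mtpa}) together with the continuity of the asymptotic functional $A \mapsto \lim_{t\to\infty} t^{(1-\epsilon)/d}\mu(t,A)$ with respect to the $\mathcal{L}_{\frac{d}{1-\epsilon},\infty}$-quasi-norm.

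\medskip

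Part (\ref{mtpc}), the converse, is the principal obstacle, since one cannot simply invert the DOI reduction. The plan is a testing/duality argument: for each dyadic cube $Q$ of sidelength $\ell$, choose bump-like test functions $\psi_Q,\varphi_Q$ localized near $Q$ at scale $\ell$, and evaluate
$$ \langle [D^\epsilon,M_f]\psi_Q,\varphi_Q\rangle \;=\; \bigl\langle (f-f_Q)\psi_Q, D^\epsilon\varphi_Q\bigr\rangle - \bigl\langle D^\epsilon\psi_Q,(f-f_Q)\varphi_Q\bigr\rangle, $$
bounding the right-hand side from below by $\ell^\epsilon$ times a mean oscillation of $f$ on $Q$. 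Arranging these test pairs into a wavelet-like orthonormal system indexed by dyadic cubes and invoking the variational characterization of singular values converts the weak Schatten control on the left-hand side into an $\ell_{\frac{d}{1-\epsilon},\infty}$-summability statement on the oscillations; a Poincar\'e inequality then upgrades this to the desired bound on $\|\nabla f\|_{\frac{d}{1-\epsilon}}$.

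\medskip

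Beyond the testing argument of (\ref{mtpc}), the two auxiliary difficulties I anticipate are: managing the \emph{non-local tails} of $D^\epsilon$ for $\epsilon>0$ so that cubes at different scales do not interfere in the summation, and tracking the DOI constants carefully enough through the Cwikel computation to pin down the exact constant $\kappa_{d,\epsilon}$ in \eqref{eq:constasymp}; the $\mathbb{S}^{d-1}$-integral originates from the asymptotics of $M_g D^{\epsilon-1}$, while the prefactor $|\epsilon|$ comes from the diagonal of $\phi$.
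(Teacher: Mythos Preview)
Your sketch of parts (\ref{mtpa}) and (\ref{mtpb}) is essentially the paper's argument: DOI reduction to first-order commutators plus Cwikel-type bounds for (\ref{mtpa}), and isolation of the principal piece $\epsilon\sum_k R_k M_{\partial_k f}(1-\Delta)^{(\epsilon-1)/2}$ followed by known spectral asymptotics for (\ref{mtpb}). One quibble: the limit functional $A\mapsto\lim_{t\to\infty} t^{1/p}\mu(t,A)$ is \emph{not} continuous on $\mathcal L_{p,\infty}$; the density extension in (\ref{mtpb}) uses instead the uniform bound from (\ref{mtpa}) together with the fact that $(\mathcal L_{p,\infty})_0$-perturbations do not affect the limit---this is the standard squeeze argument the paper also invokes.

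For part (\ref{mtpc}) you take a genuinely different route. Your plan is a dyadic testing argument \`a la Rochberg--Semmes: pair the commutator against bump systems, extract mean oscillations, then upgrade via Poincar\'e. The paper instead \emph{recycles part (\ref{mtpb})} as a lower bound. It mollifies $f$ to $f\ast\Phi\in C^\infty$, observes that
$[(-\Delta)^{\epsilon/2},M_{f\ast\Phi}]=\int\Phi(t)\,T_{-t}[(-\Delta)^{\epsilon/2},M_f]T_t\,dt$
is a convex average of unitary conjugates and hence is submajorized by $[(-\Delta)^{\epsilon/2},M_f]$ (Lemma~\ref{main convolution lemma}), then applies the localized asymptotics of (\ref{mtpb}) to the smooth function $f\ast\Phi$ to get $\|\nabla(f\ast\Phi)\|_{d/(1-\epsilon)}\lesssim\|[(-\Delta)^{\epsilon/2},M_f]\|_{d/(1-\epsilon),\infty}$ uniformly in $\Phi$, and concludes by weak-$\ast$ compactness in $L_{d/(1-\epsilon)}$.

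This sidesteps exactly the difficulties you flag (no non-local tails to manage, no dyadic bookkeeping), and also avoids a genuine gap in your outline: Poincar\'e runs from gradient to oscillation, not the reverse. Converting $\ell_{p,\infty}$-summability of mean oscillations back into $\nabla f\in L_p$ would require a wavelet/Littlewood--Paley characterization of $\dot W^1_p=\dot F^1_{p,2}$, which is doable but is not a Poincar\'e inequality and needs real care at the critical smoothness $s=1$ where the Besov scale $\dot B^1_{p,p}$ (the natural target of oscillation testing) and the Sobolev scale diverge. The mollification-plus-submajorization trick is both shorter and more robust---it works verbatim for the $\epsilon<0$ and $\epsilon=1$ theorems as well.
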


Our second result concerns the case $-\frac d2<\epsilon<0$.

\begin{thm}\label{main theorem negative} Let $d\geq 2$ and let $-\frac{d}{2}<\epsilon<0.$
\begin{enumerate}[{\rm (i)}]
\item\label{mtna} If $f\in\dot{W}^{1}_{\frac{d}{1-\epsilon}}(\mathbb{R}^d),$ then $[(-\Delta)^{\frac{\epsilon}{2}},M_f]\in\mathcal{L}_{\frac{d}{1-\epsilon},\infty}$ and
$$\Big\|[(-\Delta)^{\frac{\epsilon}{2}},M_f]\Big\|_{\frac{d}{1-\epsilon},\infty}\leq c_{d,\epsilon}\|f\|_{\dot{W}^{1}_{\frac{d}{1-\epsilon}}(\mathbb{R}^d)}.$$
\item\label{mtnb} If $f\in\dot{W}^{1}_{\frac{d}{1-\epsilon}}(\mathbb{R}^d)$, then
$$\lim_{t\to\infty}t^{\frac{1-\epsilon}{d}}\mu\Big(t,[(-\Delta)^{\frac{\epsilon}{2}},M_f]\Big)=\kappa_{d,\epsilon} \|f\|_{\dot{W}^{1}_{\frac{d}{1-\epsilon}}(\mathbb{R}^d)}$$
with $\kappa_{d,\epsilon}$ given by \eqref{eq:constasymp}.
\item\label{mtnc} If $f\in L_{1,{\rm loc}}(\mathbb R^d)$ and if 
$[(-\Delta)^{\frac{\epsilon}{2}},M_f]\in\mathcal{L}_{\frac{d}{1-\epsilon},\infty},$ then $f\in\dot{W}^{1}_{\frac{d}{1-\epsilon}}(\mathbb{R}^d)$ and
$$\|f\|_{\dot{W}^{1}_{\frac{d}{1-\epsilon}}(\mathbb{R}^d)}\leq c'_{d,\epsilon}\Big\|[(-\Delta)^{\frac{\epsilon}{2}},M_f]\Big\|_{\frac{d}{1-\epsilon},\infty}.$$
\end{enumerate} 
\end{thm}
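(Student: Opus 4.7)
The three conclusions parallel those of Theorem~\ref{main theorem positive}, and I would follow the same template, with modifications reflecting that for $-d/2<\epsilon<0$ the operator $(-\Delta)^{\epsilon/2}$ is bounded on $L_2(\mathbb{R}^d)$ and is realised as convolution with the locally integrable Riesz kernel $c_{d,\epsilon}|x|^{-d-\epsilon}$; in particular the commutator is a genuine integral operator with kernel $c_{d,\epsilon}(f(y)-f(x))|x-y|^{-d-\epsilon}$. For part (i), my starting point is the Double Operator Integral identity
\[
[(-\Delta)^{\epsilon/2},M_f]=T^{-\Delta,-\Delta}_{\phi^{[1]}}\bigl([-\Delta,M_f]\bigr),\qquad \phi(\lambda)=\lambda^{\epsilon/2},
\]
combined with the pointwise computation $[-\Delta,M_f]=-M_{\Delta f}-2\sum_{j=1}^{d}M_{\partial_{j}f}\,\partial_{j}$. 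The dominant terms $T^{-\Delta,-\Delta}_{\phi^{[1]}}(M_{\partial_j f}\partial_j)$ I would handle by rewriting them, modulo commutator remainders, as Birman--Solomyak operators $M_{\partial_j f}\cdot m_j(-i\nabla)$ with $m_j(\xi)\asymp|\xi|^{\epsilon-1}\xi_j/|\xi|$. Since $|\xi|^{\epsilon-1}\in L_{d/(1-\epsilon),\infty}(\mathbb{R}^d)$, Cwikel-type bounds in weak Schatten ideals yield the desired inclusion in $\mathcal{L}_{d/(1-\epsilon),\infty}$ with norm dominated by $\|\nabla f\|_{d/(1-\epsilon)}$; the $M_{\Delta f}$ contribution lands in a strictly smaller ideal. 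The hypothesis $\epsilon>-d/2$ ensures that $p:=d/(1-\epsilon)>2d/(d+2)>1$, keeping these quasi-Banach estimates in a safe range.

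For part (ii) the plan is to show that, modulo an operator lying in a strictly smaller weak Schatten ideal, the commutator coincides with its principal symbol operator
\[
-\epsilon\sum_{j=1}^{d}M_{\partial_{j}f}\,\partial_{j}(-\Delta)^{(\epsilon-2)/2},
\]
which is of the Cwikel form $M_a g(-i\nabla)$. The sharp Weyl asymptotics for such operators (Birman--Solomyak, with refinements by Sukochev--Zanin) then produce the constant $\kappa_{d,\epsilon}$ of \eqref{eq:constasymp} from the $L^p$-norm on $\mathbb{S}^{d-1}$ of the angular part of the symbol. I would first establish the asymptotic for $f\in C_c^\infty(\mathbb{R}^d)$ and then extend by density using the quantitative bound of part (i). For part (iii), the strategy is to compose the commutator on appropriate sides with fixed bounded operators (for instance a power of $-\Delta$ on one side and smooth cutoffs on the other) so as to isolate, up to corrections in a smaller ideal, operators of the form $M_{\partial_j f}\cdot K_j$ with $K_j$ a Cwikel operator whose asymptotic coefficient is non-zero; the reverse direction of the Cwikel asymptotic then returns $\nabla f\in L_{p}$ together with the quantitative bound.

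The main obstacle I anticipate is in part (ii): identifying the principal symbol operator is straightforward, but showing that the error lies in a strictly smaller ideal requires delicate double-operator-integral estimates on the second-order divided difference of $\lambda\mapsto\lambda^{\epsilon/2}$, made quantitative in weak Schatten quasi-norms. This is the analogue of the main technical step in the positive-$\epsilon$ case, and it will exploit the Riesz-potential smoothing of $(-\Delta)^{\epsilon/2}$ in a precise quantitative way.
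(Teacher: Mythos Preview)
Your outline captures the right architecture---DOI plus Cwikel for (i), principal-symbol identification plus Weyl asymptotics for (ii), a reverse argument for (iii)---but there are two genuine gaps and one factual slip.

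First, the slip: $(-\Delta)^{\epsilon/2}$ is \emph{not} bounded on $L_2(\mathbb{R}^d)$ when $\epsilon<0$; the multiplier $|\xi|^\epsilon$ blows up at the origin. What is true (and what the paper uses, equation~\eqref{uniform norm negative equation}) is that each of $M_f(-\Delta)^{\epsilon/2}$ and $(-\Delta)^{\epsilon/2}M_f$ is bounded once $f\in L_{-d/\epsilon}$, hence the commutator is bounded for $f\in\dot W^1_{d/(1-\epsilon)}$ via Sobolev embedding.

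The first real gap is in part~(i). Your DOI is applied to the \emph{unbounded} operator $[-\Delta,M_f]$, and ``rewriting modulo commutator remainders'' is exactly where the difficulty hides. The paper instead arranges things so the DOI acts on $X_k=(-\Delta)^{(\epsilon-1)/4}M_{D_kf}(-\Delta)^{(\epsilon-1)/4}$, which is already in $\mathcal{L}_{d/(1-\epsilon),\infty}$ by Cwikel. The DOI symbol $\phi_\epsilon$ in~\eqref{phi def} is then shown to act boundedly only when $\epsilon\in[-1,1]$; for $\epsilon<-1$ (which occurs since $-d/2<\epsilon$ allows $\epsilon$ arbitrarily negative as $d$ grows) one needs the renormalisation of Lemma~\ref{phi induction lemma}, writing $\phi_\epsilon=\phi_{\epsilon+2n}-\theta_{n,\epsilon}$ and controlling the extra terms $Y_k$ separately (Lemma~\ref{xkyk}). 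Your sketch does not recognise that something qualitatively new is needed once $\epsilon\le -1$.

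The second gap is in part~(iii). You propose to isolate $M_{\partial_j f}\cdot K_j$ from the commutator, but $f$ is only assumed in $L_{1,\mathrm{loc}}$, so $M_{\partial_j f}$ is not an operator and ``corrections in a smaller ideal'' cannot be made sense of. The paper circumvents this by mollification: Lemma~\ref{main convolution lemma} shows $[(-\Delta)^{\epsilon/2},M_{f\ast\Phi}]\prec\prec[(-\Delta)^{\epsilon/2},M_f]$ for any mollifier $\Phi$, so the smooth-case asymptotic (Lemma~\ref{mtb key corollary3}) yields a uniform bound on $\|\nabla(f\ast\Phi)\|_{d/(1-\epsilon)}$, and one passes to the limit in $\Phi$. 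This mollification-via-submajorization step is the missing idea.

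For part~(ii) your plan is close to the paper's, though the remainder estimate is obtained there via integral representations of $(1-\Delta)^{-\delta/2}$ and resolvent expansions (Section~\ref{sec:approximate}) rather than second-order DOI bounds.
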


Note that in this theorem we assume $\epsilon>-\frac d2$ and $d\geq 2$. For the case $\epsilon\leq - \frac d2$, see Lemma \ref{unbdd} below. Concerning the assumption $d\geq 2$, see the discussion in the following subsection.

Our third result concerns the case $\epsilon=1$.

\begin{thm}\label{epsilon1}
Let $d\geq 1.$
\begin{enumerate}[{\rm (i)}]
\item\label{mtabounded} If $f\in\dot{W}^{1}_{\infty}(\mathbb{R}^d),$ then $[(-\Delta)^{\frac{1}{2}},M_f]$ is bounded and
$$\Big\|[(-\Delta)^{\frac{1}{2}},M_f]\Big\|_{\infty} \leq c_{d} \, \|f\|_{\dot{W}^{1}_{\infty}(\mathbb{R}^d)}.$$
\item\label{mtcbounded} If $f\in L_{1,\rm loc}(\mathbb R^d)$ and if 
$[(-\Delta)^{\frac{1}{2}},M_f]$ is bounded, then $f\in\dot{W}^{1}_{\infty}(\mathbb{R}^d)$ and
$$\|f\|_{\dot{W}^{1}_{\infty}(\mathbb{R}^d)}\leq \Big\|[(-\Delta)^{\frac{1}{2}},M_f]\Big\|_\infty \,.$$
\item\label{mtdbounded} If $f\in\dot{W}^{1}_{\infty}(\mathbb{R}^d)$ and if 
$[(-\Delta)^{\frac{1}{2}},M_f]$ is compact, then $f\equiv{\rm const}$.
\end{enumerate} 
\end{thm}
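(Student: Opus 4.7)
The plan is to prove the three parts in turn: (i) reduces to a classical boundedness result of Calder\'on, while (ii) and (iii) rest on a common modulation (coherent-state) argument.

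\emph{Part (i).} I would start from the singular-integral formula
\begin{equation*}
(-\Delta)^{1/2} u(x) = c_d\,\mathrm{p.v.}\!\int_{\mathbb R^d}\frac{u(x)-u(y)}{|x-y|^{d+1}}\,dy
\end{equation*}
and compute
\begin{equation*}
[(-\Delta)^{1/2}, M_f]\,g(x) = c_d\,\mathrm{p.v.}\!\int_{\mathbb R^d}\frac{(f(x)-f(y))\,g(y)}{|x-y|^{d+1}}\,dy.
\end{equation*}
For Lipschitz $f$ this is precisely the first Calder\'on commutator kernel, so Calder\'on's theorem \cite{Ca} recalled in the introduction yields $\|[(-\Delta)^{1/2},M_f]\|_\infty \leq c_d\|\nabla f\|_\infty$.

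\emph{Part (ii).} Set $C:=[(-\Delta)^{1/2},M_f]$. For $\xi\in\mathbb R^d$ let $e_\xi(x):=e^{i\xi\cdot x}$; multiplication by $e_\xi$ is unitary on $L_2(\mathbb R^d)$ and commutes with $M_f$. Then $T_\xi := e_{-\xi}(-\Delta)^{1/2}e_\xi$ is the Fourier multiplier with symbol $|\eta+\xi|$, and
\begin{equation*}
e_{-\xi}\,C\,e_\xi = [T_\xi, M_f], \qquad \|[T_\xi, M_f]\|_\infty = \|C\|_\infty.
\end{equation*}
Writing $\hat\xi := \xi/|\xi|$, the pointwise expansion $|\eta+\xi|=|\xi|+\hat\xi\cdot\eta+O(|\eta|^2/|\xi|)$ shows that for $h,g$ whose Fourier transforms lie in a fixed compact set, $T_\xi h \to |\xi|\,h - i\,\hat\xi\cdot\nabla h$ in $L_2$ as $|\xi|\to\infty$. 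Expanding
\begin{equation*}
\langle h, [T_\xi, M_f]\,g\rangle = \int_{\mathbb R^d} f\,\bigl(\overline{T_\xi h}\,g - \bar h\,T_\xi g\bigr)\,dx,
\end{equation*}
the $|\xi|$-terms cancel and one obtains
\begin{equation*}
\lim_{|\xi|\to\infty}\langle h, [T_\xi, M_f]\,g\rangle = i\!\int_{\mathbb R^d} f\,\hat\xi\cdot\nabla(\bar h g)\,dx = -i\,\langle\hat\xi\cdot\nabla f,\ \bar h g\rangle,
\end{equation*}
the last pairing being distributional. Since the left-hand side is bounded by $\|C\|_\infty\|h\|_2\|g\|_2$ uniformly in $\xi$, and any $\phi\in C_c^\infty$ factors as $\phi=\bar h g$ with $\|h\|_2\|g\|_2=\|\phi\|_{L_1}$, the distribution $\hat\xi\cdot\nabla f$ extends to a bounded functional on $L_1$ of norm at most $\|C\|_\infty$. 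Hence $\hat\xi\cdot\nabla f\in L_\infty$ with $\|\hat\xi\cdot\nabla f\|_\infty\leq\|C\|_\infty$ for every unit $\hat\xi$, giving $\|f\|_{\dot W^1_\infty}\leq\|C\|_\infty$.

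\emph{Part (iii).} This follows by recycling the same limit. By Riemann--Lebesgue, $e_\xi g\rightharpoonup 0$ weakly in $L_2$ as $|\xi|\to\infty$; compactness of $C$ then forces $C\,e_\xi g\to 0$ in norm, so $\langle e_\xi h, C\,e_\xi g\rangle\to 0$. Matching with the limit from (ii), now well-defined since $f\in\dot W^1_\infty$, yields $\int \bar h\,(\hat\xi\cdot\nabla f)\,g = 0$ for all Schwartz $h,g$ and every unit direction $\hat\xi$, so $\nabla f\equiv 0$ and $f$ is constant.

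The principal obstacle is making the symbol-limit in (ii) rigorous while only assuming $f\in L_{1,\mathrm{loc}}$: one must ensure $T_\xi h - (|\xi|\,h - i\,\hat\xi\cdot\nabla h)\to 0$ in a topology strong enough to pair against a merely locally integrable $f$. Restricting to Paley--Wiener test data makes the error uniform on a fixed compact Fourier set, after which dominated convergence and a density argument extend the conclusion to general smooth bumps.
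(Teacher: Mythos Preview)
Your approach to all three parts mirrors the paper's: part (i) is exactly Calder\'on's theorem via the singular-integral kernel, and parts (ii)--(iii) rest on the same modulation identity $e_{-\xi}[(-\Delta)^{1/2},M_f]e_\xi \to M_{\hat\xi\cdot\nabla f}$ in the weak operator topology (the paper's Lemma~\ref{epsilon1lem}). The substantive difference is that the paper first mollifies $f$ (Lemma~\ref{main convolution lemma} gives $[(-\Delta)^{1/2},M_{f\ast\Phi}]\prec\prec[(-\Delta)^{1/2},M_f]$) and localizes, thereby running the modulation argument only for $f\in C_c^\infty$, then passes back via Lemma~\ref{standard convolution lemma}. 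You attempt the modulation limit directly for $f\in L_{1,{\rm loc}}$, which is more economical but forces you to confront two technicalities that the paper's smoothing step sidesteps.

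Both of your proposed patches for those technicalities need repair. First, the Paley--Wiener suggestion is the wrong fix: band-limited $h$ is never compactly supported in physical space, so for instance the term $\int_{\mathbb R^d} f\,\bar h\,T_\xi g$ need not converge for a merely locally integrable $f$ (take $f(x)=e^{|x|^2}$). The correct observation is that for $h\in C_c^\infty(\mathbb R^d)$ the remainder $T_\xi h - |\xi|h - \hat\xi\cdot\nabla h$ already tends to $0$ in $L_\infty(\mathbb R^d)$: bound the $L_1$-norm of its Fourier transform using $\bigl|\,|\eta+\xi|-|\xi|-\hat\xi\cdot\eta\,\bigr|\le C|\eta|^2/|\xi|$ for $|\eta|\le|\xi|/2$ and the rapid decay of $\hat h$ on the complementary region. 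Since $g,h\in C_c^\infty$, the resulting error $g\,\overline{\text{err}_h}-\bar h\,\text{err}_g$ is compactly supported and $\to 0$ uniformly, so integration against $f\in L_{1,{\rm loc}}$ passes to the limit directly---no Paley--Wiener detour is needed. Second, the factorization ``$\phi=\bar h g$ with $\|h\|_2\|g\|_2=\|\phi\|_{L_1}$ and $h,g$ smooth'' fails in general ($\sqrt{|\phi|}\notin C^\infty$ at $\partial\{\phi\neq 0\}$), and you cannot simply take a limit because the distributional pairing $\langle\hat\xi\cdot\nabla f,\,\cdot\,\rangle$ is not a priori $L_1$-continuous. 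Instead, read the bound $|\langle\hat\xi\cdot\nabla f,\bar h g\rangle|\le\|C\|_\infty\|h\|_2\|g\|_2$ for $h,g\in C_c^\infty$ as saying that $g\mapsto(\hat\xi\cdot\nabla f)\,g$ extends to a bounded operator on $L_2$ of norm at most $\|C\|_\infty$; then invoke the standard fact that a distribution inducing a bounded multiplication operator on $L_2$ must lie in $L_\infty$ with matching norm. With these two corrections your direct argument goes through and is a shade shorter than the paper's mollification route.
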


Part \eqref{mtabounded} of this theorem is a celebrated result of Calder\'on \cite{Ca}, which we restate here only for the sake of completeness. Parts \eqref{mtcbounded} and \eqref{mtdbounded} for $d=1$ are due to Janson and Peetre \cite[Section 6, Example 7]{JaPe}. Part \eqref{mtcbounded} appeared recently in \cite[Theorem 1.5 with $\Omega\equiv 1$]{ChDiHo}, but we present an alternative, shorter proof for the case at hand. Part \eqref{mtdbounded} for $d\geq 2$ can be considered as new.


\subsection{Comparison with known results}

Let us formulate precisely the previous results on the operators $[(-\Delta)^\frac\epsilon 2,M_f]$. We will not define the relevant function spaces, since they will not play any role in the remainder of this paper. They are the fractional Sobolev spaces $\dot W^s_p(\mathbb R^d)$ with $0<s<1$, which are equal to a special case of Besov spaces, namely $\dot W^s_p(\mathbb R^d)=\dot B^s_{p,p}(\mathbb R^d)$. For these spaces see, for instance, \cite[Chapter 17]{Leoni-book}. Less standard spaces are the BMO-Sobolev spaces $(-\Delta)^\frac s2 BMO(\mathbb R^d)$, $0<s<1$, which are studied in \cite{St}. As discussed, for instance, in \cite{Yo}, these spaces coincides with a special case of Triebel--Lizorkin spaces, namely $(-\Delta)^\frac s2 BMO(\mathbb R^d) = \dot F^s_{\infty,2}(\mathbb R^d)$. For these spaces see \cite[Subsection 2.3.4]{Tr}.

\subsubsection*{Case $\epsilon\in(0,1)$} It is shown by Murray \cite{Mu} in dimension $d=1$ and by Janson and Peetre \cite{JaPe} in general dimension $d$ that
$$[(-\Delta)^{\frac{\epsilon}{2}},M_f]\mbox{ is bounded iff }
f\in (-\Delta)^{\frac{\epsilon}2} BMO(\mathbb{R}^d).$$
We are not aware of a published result characterizing compactness of $[(-\Delta)^{\frac{\epsilon}{2}},M_f]$, but it is natural to guess that the relevant space is the closure of $C^\infty_c(\mathbb R^d)$ in the space $(-\Delta)^{\frac{\epsilon}2}BMO(\mathbb{R}^d).$ Concerning trace ideals it is shown in \cite{JaPe} that for any $\frac d{1-\epsilon}<p<\infty$ one has
$$
[(-\Delta)^{\frac{\epsilon}{2}},M_f] \in \mathcal L_p
\qquad\iff\qquad
f\in \dot W^{\epsilon+\frac dp}_{p}(\mathbb R^d) \,,
$$
as well as
$$
[(-\Delta)^{\frac{\epsilon}{2}},M_f] \in \mathcal L_\frac d{1-\epsilon}
\qquad\iff\qquad
f \equiv {\rm const} \,.
$$
Our Theorem \ref{main theorem positive} completes these results, namely by characterizing membership of the commutator $[(-\Delta)^{\frac{\epsilon}{2}},M_f]$ to the weak Schatten class $\mathcal L_{\frac d{1-\epsilon},\infty}$ in terms of the condition $f\in\dot W^{1}_{\frac d{1-\epsilon}}(\mathbb R^d)$. Moreover, we compute the asymptotics of the singular values under the sole assumption of membership to this Sobolev space. 

\subsubsection*{Case $\epsilon\in(-\frac d2,0)$} Janson and Peetre \cite{JaPe} have shown that for any $\frac{d}{1-\epsilon}<p<\frac d{-\epsilon}$ one has
$$
[(-\Delta)^{\frac{\epsilon}{2}},M_f] \in \mathcal L_p
\qquad\iff\qquad
f\in \dot W^{\epsilon+\frac dp}_{p}(\mathbb R^d) \,.
$$
We are not aware of any results characterizing membership of $[(-\Delta)^{\frac{\epsilon}{2}},M_f]$ to $\mathcal L_p$ with $p\geq\frac d{-\epsilon}$ or its boundedness or compactness. Thus, the results for $\epsilon<0$ are far less complete than those for $\epsilon>0$. Our Theorem \ref{main theorem negative} completes the existing results at the endpoint $p=\frac{d}{1-\epsilon}$ in a similar way as in the case $\epsilon>0$. However, now we need the additional assumption $d\geq 2$.

The origin of this extra-assumption can be seen from the fact that the integrability exponent $\frac{d}{1-\epsilon}$ of the Sobolev space 
$\dot W^1_\frac d{1-\epsilon}$ is less than $1$ for $d=1$ and
 $\epsilon<0$. In this way the restriction to $d\geq 2$ arises from a technical point of view in our proofs. We think, however, that this restriction is not only technical but that the results are significantly different for 
 $d=1$. More specifically, while it is probably still true that for 
 $f\in C^\infty_c(\mathbb R)$, the operator $[(-\Delta)^{\frac{\epsilon}{2}},M_f]$ belongs to 
 $\mathcal L_{\frac 1{1-\epsilon}}$ and its singular values satisfy the asymptotics in \eqref{mtnb}, it is conceivable that there are 
 $f$ in $L_{1,{\rm loc}}(\mathbb R)$ with 
 $f'\in L_{\frac1{1-\epsilon}}(\mathbb R)$ for which $[(-\Delta)^{\frac{\epsilon}{2}},M_f]$ does not belong to $\mathcal L_{\frac 1{1-\epsilon}}$. This is a subject for further investigation. 

\subsubsection*{Case $\epsilon =1$}

As we have already mentioned, the boundedness of $[(-\Delta)^\frac12,M_f]$ under the assumption $f\in\dot W^1_\infty(\mathbb R^d)$ was proved in an influential paper of Calder\'on \cite{Ca}. For alternative proofs and extensions we refer, for instance, to \cite{CoMe,Ca2,CoMIMe}.

Janson and Peetre \cite{JaPe} prove that the condition $f\in\dot W^1_\infty(\mathbb R^d)$ is also necessary for boundedness in dimension $d=1$ and mention without proof that a referee of their paper has told them that this condition is also necessary for $d\geq 2$. We provide a proof of this claim. A different proof has appeared recently in \cite[Theorem 1.5 with $\Omega\equiv 1$]{ChDiHo}. The latter paper deals with a much larger class of operators than we do, but for the problem at hand our proof has the advantage of being much more direct.

Finally, Janson and Peetre \cite{JaPe} prove that in dimension $d=1$ the operator $[(-\Delta)^\frac12,M_f]$ is not compact unless $f$ is constant. Our Theorem \ref{epsilon1} shows that this remains valid in any dimension.

\subsection{Outline of the paper}

We end this introduction by giving a quick overview over this paper. For simplicity we restrict ourselves to the proofs of Theorems \ref{main theorem positive} and \ref{main theorem negative}; that of Theorem \ref{epsilon1} appears in Section \ref{sec:epsilon1} and uses related arguments.

When proving Theorems \ref{main theorem positive} and \ref{main theorem negative}, the methods that we are using are rather different for part (i) on the one hand and parts (ii) and (iii) on the other hand. 

Let us begin with discussing our proof of part (i) in Theorems \ref{main theorem positive} and \ref{main theorem negative}, which appears in Section \ref{sec:proofparti}. It is of operator-theoretic nature and based on the technique of Double Operator Integrals (DOIs). As two of us have demonstrated in \cite{KPSS,PS-crelle,LMSZ}, DOIs are an excellent tool for studying commutators. Earlier uses of DOIs in connection with spectral properties of pseudodifferential operators can be found, for instance, in the work of Birman and Solomyak \cite{BiSo}.

DOIs allow us to write the commutators $[(-\Delta)^\frac \epsilon 2,M_f]$ as a certain transformation applied to the simple commutator $[\nabla,M_f] = M_{\nabla f}$. Note that trace ideal properties of the operators $(-\Delta)^\frac{\epsilon-1}{4}[\nabla,M_f] (-\Delta)^\frac{\epsilon-1}{4}$ and generalizations thereof can be obtained from Cwikel's theorem \cite{Cwikel} and its generalizations.

Therefore, the proof of our results is reduced to studying the mapping properties of the transformation that allows us to write $[(-\Delta)^\frac \epsilon 2,M_f]$ in terms of $[\nabla,M_f]$. This transformation depends on the parameter $\epsilon$. The mapping properties in the case $\epsilon\in[-1,1]$ are relatively straightforward, given the previous results in \cite{PS-crelle} by one of us. To prove the mapping properties for $\epsilon<-1$ we use a certain `renormalization procedure', where we extract a finite number (depending on $\epsilon$) of extra terms, before being able to apply again the results in \cite{PS-crelle}.

We now turn to a discussion of the methods used to prove parts (ii) and (iii) in Theorems \ref{main theorem positive} and \ref{main theorem negative}. For part (ii) the analysis is divided into two steps. In a first step we derive the spectral asymptotics in the smooth case, that is, for $f\in C^\infty_c(\mathbb R^d)$ and with $-\Delta$ replaced by $1-\Delta$. In a second step we use the uniform a priori bounds from part (i) to extend the asymptotics to the maximal class of functions $f$.

The first step here uses an approximation result for commutators that is in the spirit of parametrix constructions in the theory of pseudodifferential operators, but we present it in an elementary fashion; see Section \ref{sec:approximate}. Once we have this approximation, we can apply the results about spectral properties that we developed in our previous paper \cite{FrSuZa}; see Section \ref{sec:asymptotics}. To be more precise, the methods from \cite{FrSuZa} are applicable, but the results are not, at least not in an obvious way. Instead of referring the reader to redo the arguments in \cite{FrSuZa}, in Section \ref{sec:spectralasymp} we present a method to deduce from \cite{FrSuZa} the results we need \emph{without} redoing the argument. We believe that this argument is interesting in its own right and illustrates, once again, the power of DOI techniques.

Part (iii) of Theorems \ref{main theorem positive} and \ref{main theorem negative} is proved in Section \ref{sec:proofpartiii}. The argument is related to that in the proof of part (ii). Namely, we show that if $[(-\Delta)^\frac\epsilon 2,M_f]$ has certain trace ideal properties, then the same is true for the corresponding operator with a regularized $f$. For the latter operator we can use (a localized version of) the spectral asymptotics in part (ii) to get a bound on the Sobolev norm of the regularized version of $f$, uniform in the regularization parameter. This allows us to conclude the $f$ itself has to be sufficiently regular.

\subsection{Acknowledgements}

\!\!Partial support through US National Science Foundation grant DMS-1954995 (RLF), the Deutsche Forschungsgemeinschaft (German Research Foundation) through Germany’s Excellence Strategy EXC-2111-390814868 (RLF), and through Australian Research Council, Laureate Fellowship FL170100052 (FS) and DP230100434 (DZ) is acknowledged.


\section{Preliminaries}\label{prelim section}

All considered functions are complex-valued, unless otherwise specified. 

Let $(\Omega, \Sigma,\nu)$ be a measure space with a $\sigma$-finite measure $\nu$,
defined on a $\sigma$-algebra $\Sigma$, and let $L(\Omega)$ be the algebra of all classes of equivalent measurable complex-valued functions on $(\Omega, \Sigma, \nu)$.  Denote by $L_0(\Omega)$ the subalgebra of $L(\Omega)$ consisting of all functions $f$ such that $\nu(\{|f|>s\})<\infty$ for some $s>0$.  For every $f\in L_0(\Omega)$, its non-increasing rearrangement is  defined by
$$ \mu(t, f):= \inf \{s> 0 :  \nu(\{|f|>s\}) \le t\}, ~t>0.  $$
For $0<p<\infty$ the space $L_p(\Omega, \Sigma,\nu)$ (resp.\  $L_{p, \infty}(\Omega, \Sigma,\nu)$) consists of all elements $f\in L_0(\Omega)$ for which
$$
\|f\|_p :=\left(\int _0^\infty  \mu(t, f)^p dt\right)^{1/p}<\infty, \quad \left( {\rm  resp.}\
\|f\|_{p,\infty} :=\sup_{t>0}t^{1/p} \mu(t, f)<\infty \right).
$$

As always, the measure spaces $\mathbb{Z}_+=\{0,1,2,\cdots\}$, $\mathbb{N}=\{1,2,3,\cdots\}$ are equipped with counting measures and the Euclidean space $\mathbb{R}^d$ with Lebesgue measure.  

For detailed information concerning classical function and sequence spaces such as $L_p(\mathbb R^d)$, $L_{p,\infty}(\mathbb R^d),$ 
$\ell_p=\ell_p(\mathbb{Z}_+)$ and $\ell_{p,\infty}= \ell_{p,\infty}(\mathbb{Z}_+)$, we refer the reader for instance to \cite{KPS, LSZ2012, LSZ-book-2}.

In this paper, $\nabla$ is the {\it self-adjoint} gradient operator on $L_2(\mathbb{R}^d),$ that is
$$\nabla=(\frac1i\frac{\partial}{\partial t_1},\cdots,\frac1i\frac{\partial}{\partial t_d}).$$

\begin{defi} The homogeneous Sobolev space $\dot W^{1}_{p}(\mathbb{R}^d),$ $1\leq p\leq\infty,$ consists of functions $f\in L_{1,{\rm loc}}(\mathbb{R}^d)$ whose distributional gradient belongs to $L_p(\mathbb R^d).$ This is a space of functions modulo constants. We set
	$$
	\| f \|_{\dot W^1_p(\mathbb R^d)} := \|\nabla f\|_p \,.
	$$
\end{defi}

The following result is available, see e.g. \cite[Theorem 12.9]{Leoni-book}.

\begin{thm}[Sobolev Embedding Theorem]\label{sobolev embedding theorem} Let $1\leq p<d$ and let $\frac1q=\frac1p-\frac1d.$ We have $\dot W^1_p(\mathbb{R}^d)\subset L_q(\mathbb{R}^d).$ This should be understood in the sense that in the class of functions modulo constants there is one that belongs to $L_q(\mathbb{R}^d).$
\end{thm}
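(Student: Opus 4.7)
\smallskip

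The plan is to prove this via the classical Gagliardo--Nirenberg--Sobolev approach, in three steps: first reduce to smooth compactly supported functions, second handle the base case $p=1$ by a slicing argument, and third bootstrap to general $p$ by applying the base case to a suitable power of $|f|$.

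\textbf{Step 1 (Reduction to smooth functions).} Working modulo constants, I would first show that for $f\in\dot W^1_p(\mathbb R^d)$ with $1\le p<d$ there exists a sequence $f_n\in C_c^\infty(\mathbb R^d)$ such that $\nabla f_n\to\nabla f$ in $L_p(\mathbb R^d)$. This is standard: mollify $f$ to obtain smooth approximations, then truncate by cutoff functions $\chi(x/R)$, using the fact that the lower-order ``error'' term $f\,\nabla\chi(\cdot/R)$ is harmless after subtracting a constant (the average of $f$ over the annulus where $\nabla\chi(\cdot/R)$ lives) and invoking a Poincar\'e--type estimate on that annulus. Once the smooth approximation is in place, the inequality $\|f_n\|_q\le C\|\nabla f_n\|_p$ combined with completeness of $L_q$ (and uniqueness modulo constants) yields a representative in $L_q(\mathbb R^d)$.

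\textbf{Step 2 (Base case $p=1$).} For $f\in C_c^\infty(\mathbb R^d)$, I would write, for each coordinate $i$,
$$
|f(x)|\le \int_{-\infty}^{x_i} |\partial_i f(x_1,\dots,t,\dots,x_d)|\,dt =: g_i(\hat x_i),
$$
where $\hat x_i$ denotes the variables other than $x_i$. Then
$$
|f(x)|^{d/(d-1)}\le \prod_{i=1}^d g_i(\hat x_i)^{1/(d-1)},
$$
and one integrates successively in $x_1,x_2,\dots,x_d$, applying the generalized H\"older inequality at each step to get
$$
\|f\|_{d/(d-1)}\le \prod_{i=1}^d \|\partial_i f\|_1^{1/d}\le \|\nabla f\|_1.
$$
This is the well-known loaf-slicing argument and is the technical core of the whole proof.

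\textbf{Step 3 (Bootstrap to $1<p<d$).} For general $p$ I would apply Step 2 to $u=|f|^\gamma$ with $\gamma=p(d-1)/(d-p)>1$, so that $\gamma\cdot d/(d-1)=q$ and $(\gamma-1)p'=q$. A short computation using $\nabla u = \gamma|f|^{\gamma-1}\operatorname{sgn}(f)\nabla f$ and H\"older's inequality in $\|\nabla u\|_1\le \gamma\||f|^{\gamma-1}\|_{p'}\|\nabla f\|_p$ gives
$$
\|f\|_q^{\gamma}\le \gamma\,\|f\|_q^{\gamma-1}\,\|\nabla f\|_p,
$$
and dividing by $\|f\|_q^{\gamma-1}$ yields the desired inequality, with the caveat that one first applies this to the truncated/approximated smooth functions from Step 1 to justify the division.

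\textbf{Main obstacle.} The delicate point is Step 1: the homogeneous Sobolev space is defined modulo constants, and approximation by $C_c^\infty$ functions requires subtracting appropriate constants to control the cutoff error. The iterated H\"older argument in Step 2 is elegant but entirely standard; once it is in hand, the bootstrap in Step 3 is mechanical. Since the theorem is quoted from Leoni's textbook, one expects the authors to simply cite it, and the value of the plan is to verify that its proof is self-contained and independent of the rest of the paper's machinery.
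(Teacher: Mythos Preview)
Your proposal is correct and outlines the classical Gagliardo--Nirenberg--Sobolev argument. The paper does not give a proof at all: it merely states the theorem and refers the reader to \cite[Theorem 12.9]{Leoni-book}, exactly as you anticipated in your closing remark.
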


Let $H$ be a complex separable Hilbert space, and let $\Bc(H)$ denote the set of bounded operators on $H$. The standard uniform operator norm on $\Bc(H)$ is denoted by $\|\cdot\|_\infty$. 

Let $\mathcal{K}(H)$ denote the ideal of compact operators on $H$. Given $T\in \mathcal{K}(H)$, the  singular value function $t\mapsto \mu(t, T)$ is defined by the formula 
    \begin{equation*}\label{def-singular}
\mu(t,T) := \inf\{\|T-R\|_{\infty}:\quad \mathrm{rank}(R)\leq t\},\quad t\ge 0.
\end{equation*}
We denote by $\mu(T)$ the sequence $\{\mu(n,T)\}_{n=0}^\infty.$
 Equivalently, $\mu(T)$ is the sequence of eigenvalues of $|T|$ arranged in nonincreasing order with multiplicities.

The following basic properties of singular values will be frequently used in the sequel:
\begin{equation}\label{singular-value-sum}
\mu(t+s , A+B)  \leq  \mu (t, A) + \mu (s,  B),\quad t, s\ge 0 \,,
\end{equation}
    and, for any $0<p<\infty$,
\begin{equation}\label{singular-value-square}
\mu(t , A)  =  \mu (t, A^*)  =   \mu (t,  |A|^p)^{\frac 1 p},\quad t\ge 0 \,;
\end{equation}
see e.g. \cite[Section~2.3]{LSZ2012}.

Let $p \in (0,\infty).$ The Schatten class $\mathcal{L}_p$ is the set of operators $T$ in $\mathcal{K}(H)$ such that $\mu(T)$ is $p$-summable, that is, belongs to the sequence space $\ell_p$. If $1\leq p <\infty$, then the $\mathcal{L}_p$ norm is defined by
    \begin{equation*}
        \|T\|_p := \|\mu(T)\|_{\ell_p} = \big(\sum_{n=0}^\infty \mu(n,T)^p\big)^{1/p}.
    \end{equation*}
    With this norm $\mathcal{L}_p$ for $1\leq p <\infty$ is a Banach space, and a Banach ideal of $\Bc(H)$.
    
Analogously, for $0<p< \infty$, the weak Schatten class 
    $\mathcal{L}_{p,\infty}$ is the set of operators $T\in \mathcal{K}(H)$ such that $\mu(T)$ is in the weak $L_p$-space $\ell_{p,\infty}$, with quasi-norm
    \begin{equation*}
        \|T\|_{p,\infty} := \sup_{n\geq 0}\, (n+1)^{1/p}\mu(n,T) < \infty.
    \end{equation*}
    As with the $\mathcal{L}_p$ spaces, $\mathcal{L}_{p,\infty}$ is an ideal of $\Bc(H)$.
    
For more details on ideals of compact operators and singular value sequences, we refer the reader to \cite{LSZ2012, LSZ-book-2, Simon-book}.

We will frequently use the following two trace ideal bounds for operators of the form $M_f g(\nabla)$ on $L^2(\mathbb R^d)$. To formulate them, we use the Birman--Solomyak spaces $\ell_p(L_q)(\mathbb R^d)$ and $\ell_{p,\infty}(L_q)(\mathbb R^d)$ (see \cite{BiSo}, \cite[Chapter 4]{Simon-book} or \cite[Subsection 5.6]{BKS}), as well as their analogue $\ell_{2,\log}(L_\infty)(\mathbb R^d)$ in \cite[Definition 5.5]{LeSZ}. The only fact about these spaces that we will be using is that they all contain $C_c(\mathbb R^d)$.

\begin{thm}\label{pre-cwikel estimate}
	Let $d\geq 1$.
	\begin{enumerate}
		\item[(i)] If $2\leq p\leq\infty$ and $f,g\in L_p(\mathbb R^d)$, then $M_fg(\nabla)\in \mathcal{L}_p(L_2(\mathbb{R}^d))$ and
		$$
		\|M_fg(\nabla)\|_{\mathcal{L}_p(L_2(\mathbb{R}^d))}\leq c_{p,d}\|f\|_{L_p(\mathbb{R}^d)}\|g\|_{L_p(\mathbb{R}^d)}.
		$$
		\item[(ii)] If $0<p<2$ and $f,g\in \ell_p(L_2)(\mathbb R^d)$, then $M_fg(\nabla)\in \mathcal{L}_p(L_2(\mathbb{R}^d))$ and
		$$
		\|M_fg(\nabla)\|_{\mathcal{L}_p(L_2(\mathbb{R}^d))}\leq c_{p,d}\|f\|_{\ell_p(L_2)(\mathbb{R}^d)}\|g\|_{\ell_p(L_2)(\mathbb{R}^d)}.
		$$
	\end{enumerate}
\end{thm}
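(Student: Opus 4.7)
I would treat the two parts by rather different techniques, following the classical Cwikel--Birman--Solomyak scheme.

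For part (i) the plan is to establish the two endpoints directly and interpolate. At $p=\infty$ one has $\|M_f g(\nabla)\|_\infty \leq \|f\|_\infty\|g\|_\infty$ trivially from submultiplicativity of the operator norm and Plancherel. At $p=2$, the operator $M_f g(\nabla)$ has integral kernel $K(x,y) = c_d f(x)\check g(x-y)$, so by Plancherel applied to the $y$-integration its Hilbert--Schmidt norm equals $c_d\|f\|_2\|g\|_2$. For intermediate $2<p<\infty$ I would apply Stein's complex interpolation in the Schatten scale to the bilinear map $(f,g)\mapsto M_f g(\nabla)$ between the endpoints $(L_2\times L_2,\mathcal L_2)$ and $(L_\infty\times L_\infty,\mathcal L_\infty)$.

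For part (ii), I would use the Birman--Solomyak phase-space decomposition. Let $\{Q_n\}$ and $\{Q_m^*\}$ partition $\mathbb R^d$ into unit cubes in position and in momentum respectively, and set $f_n := f\,\mathbf 1_{Q_n}$ and $g_m := g\,\mathbf 1_{Q_m^*}$. The key algebraic fact is this: if $A=\sum_k A_k$ where the $A_k$ have mutually orthogonal ranges, then $A^*A = \sum_k A_k^*A_k$, and since $p\leq 2$ forces $p/2\leq 1$, the $p/2$-subadditivity in $\mathcal L_{p/2}$ yields $\|A\|_p^p \leq \sum_k \|A_k\|_p^p$; the same conclusion holds under orthogonality of the coimages by working with $AA^*$ instead. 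I would apply this first over $n$ (since the $M_{f_n}g(\nabla)$ have ranges in the mutually orthogonal subspaces $L_2(Q_n)$) and then over $m$ for each fixed $n$ (since the coimages of $M_{f_n}g_m(\nabla)$ lie in the mutually orthogonal Paley--Wiener subspaces associated to $Q_m^*$), obtaining
\[
\|M_f g(\nabla)\|_p^p \,\leq\, \sum_{n,m}\|M_{f_n}g_m(\nabla)\|_p^p
\qquad (0<p\leq 2).
\]

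The main obstacle is the uniform piecewise bound $\|M_{f_n}g_m(\nabla)\|_p \leq c_{p,d}\|f_n\|_2\|g_m\|_2$, which asserts that the Hilbert--Schmidt estimate from part (i) persists in every Schatten quasi-norm for operators localized in a phase-space box of unit volume. This reflects the fact that the singular values of phase-space concentration operators of Landau--Pollak--Slepian type consist essentially of one dominant eigenvalue with a super-exponentially decaying tail, so that such operators lie in every $\mathcal L_p$ with a constant depending only on $p$ and $d$. Summing the piecewise estimates then yields $\|M_f g(\nabla)\|_p \leq c_{p,d}\|f\|_{\ell_p(L_2)}\|g\|_{\ell_p(L_2)}$. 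Since this entire argument is classical, I would confine myself to a sketch and refer the reader to \cite{BiSo}, \cite[Ch.~4]{Simon-book}, and \cite[Sec.~5.6]{BKS} for complete details.
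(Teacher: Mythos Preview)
The paper does not prove this theorem at all: it records it as a known result and supplies only citations (Kato--Seiler--Simon via \cite[Theorem 4.1]{Simon-book} for part (i), Birman--Solomyak \cite[Theorem 11.1]{BiSo77} for part (ii)). Your sketch is precisely the classical argument contained in those references---endpoints plus complex interpolation for (i), phase-space cube decomposition with $p/2$-subadditivity for (ii)---so it is correct and in fact more detailed than what the paper provides; your plan to give only a sketch with the same citations is entirely in line with how the paper handles this statement.
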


Part (i) is known as the Kato--Seiler--Simon inequality; for a proof see, e.g., \cite[Theorem 4.1]{Simon-book}. Part (2) is due to Birman and Solomyak \cite[Theorem 11.1]{BiSo77}; for a proof for $p\geq 1$ see also \cite[Theorem 4.5]{Simon-book}. For a strengthening see \cite[Theorem~1.4]{LeSZ}.

\begin{thm}\label{cwikel estimate}
	Let $d\geq 1$.
	\begin{enumerate}
		\item[(i)] If $2<p<\infty$ and $f\in L_p(\mathbb R^d)$, $g\in L_{p,\infty}(\mathbb R^d)$, then $M_fg(\nabla)\in \mathcal{L}_{p,\infty}(L_2(\mathbb{R}^d))$ and
		$$\|M_fg(\nabla)\|_{\mathcal{L}_{p,\infty}(L_2(\mathbb{R}^d))}\leq c_{p,d}\|f\|_{L_p(\mathbb{R}^d)}\|g\|_{L_{p,\infty}(\mathbb{R}^d)}.$$
		\item[(ii)] If $p=2$ and $f\in \ell_{2,\log}(L_\infty)(\mathbb R^d)$, $g\in\ell_{2,\infty}(L_4)(\mathbb R^d)$, then $M_fg(\nabla)\in \mathcal{L}_{p,\infty}(L_2(\mathbb{R}^d))$ and
		$$\|M_fg(\nabla)\|_{\mathcal{L}_{2,\infty}(L_2(\mathbb{R}^d))}\leq c_{2,d}\|f\|_{\ell_{2,\log}(L_\infty)(\mathbb{R}^d)}\|g\|_{\ell_{2,\infty}(L_4)(\mathbb{R}^d)}.$$
		\item[(iii)] If $0<p<2$ and $f\in \ell_{p}(L_2)(\mathbb R^d)$, $g\in\ell_{p,\infty}(L_2)(\mathbb R^d)$, then $M_fg(\nabla)\in \mathcal{L}_{p,\infty}(L_2(\mathbb{R}^d))$ and
		$$\|M_fg(\nabla)\|_{\mathcal{L}_{p,\infty}(L_2(\mathbb{R}^d))}\leq c_{p,d}\|f\|_{\ell_p(L_2)(\mathbb{R}^d)}\|g\|_{\ell_{p,\infty}(L_2)(\mathbb{R}^d)}.$$
	\end{enumerate}
\end{thm}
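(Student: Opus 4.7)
My plan is to derive parts (i) and (iii) from the strong-type bounds in Theorem \ref{pre-cwikel estimate} by a real interpolation argument, and to obtain the critical case (ii) by invoking the refinement \cite[Theorem 1.4]{LeSZ} that is flagged just above the theorem as a strengthening of Theorem \ref{pre-cwikel estimate}.

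For part (i), I would fix $p\in(2,\infty)$, select $p_0,p_1\in(2,\infty)$ with $p_0<p<p_1$, and apply Theorem \ref{pre-cwikel estimate}(i) at these two exponents. The Kato--Seiler--Simon inequality then makes the bilinear map $(f,g)\mapsto M_f g(\nabla)$ bounded from $L_{p_i}\times L_{p_i}$ into $\mathcal L_{p_i}$ for $i=0,1$, and the desired conclusion is a bilinear Marcinkiewicz-type interpolation with the weak variable on the $g$-slot and the strong variable on the $f$-slot. Concretely, one can implement this by dyadically decomposing $g=\sum_{k\in\mathbb Z}g_k$ with $g_k = g\chi_{\{2^k\le |g|<2^{k+1}\}}$, estimating each $\|g_k\|_q$ in terms of $\|g\|_{p,\infty}$ via its level-set measure, applying Kato--Seiler--Simon to each $M_f g_k(\nabla)$ (after a parallel level-set split of $f$ so that each piece belongs to $L_q$ with controlled norm), and assembling the singular-value contributions via \eqref{singular-value-sum}. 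Part (iii) follows by the identical recipe, using Theorem \ref{pre-cwikel estimate}(ii) (Birman--Solomyak) at exponents $p_0<p<p_1$ in $(0,2)$ with the analogous decomposition in the scale $\ell_{p,\infty}(L_2)$.

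The main obstacle is part (ii): the endpoint $p=2$ sits on the boundary between the two regimes of Theorem \ref{pre-cwikel estimate} and is not accessible by bilinear interpolation from either side, since the sharp spaces $\ell_{2,\log}(L_\infty)$ and $\ell_{2,\infty}(L_4)$ in the statement are genuine endpoint spaces reflecting a logarithmic loss at the Schatten exponent $p=2$. My plan here is to invoke the refinement \cite[Theorem 1.4]{LeSZ} directly, as it is designed precisely to deliver a weak-$\mathcal L_2$ bound for $M_f g(\nabla)$ with exactly these endpoint norms; this step cannot be bypassed using only Theorem \ref{pre-cwikel estimate} together with standard real interpolation.
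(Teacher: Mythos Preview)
The paper does not give a proof of this theorem; it records it as a known result with references: part (i) is Cwikel's inequality \cite{Cwikel} (with later proofs in \cite[Theorem~4.2]{Simon-book} and \cite{Fr,LeSZ}), part (ii) is \cite[Theorem~1.3]{LeSZ}, and part (iii) is \cite[Theorem~4.6]{Simon-book} for $p>1$ and \cite[Subsection~5.7]{BKS} in general.

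Your interpolation plan for part (i) has a genuine gap. From the diagonal Kato--Seiler--Simon bounds $L_{p_i}\times L_{p_i}\to\mathcal L_{p_i}$ of Theorem~\ref{pre-cwikel estimate}(i), no standard bilinear real-interpolation theorem yields the asymmetric conclusion $L_p\times L_{p,\infty}\to\mathcal L_{p,\infty}$. The obstruction is already visible in your proposed double level-set decomposition: writing $f=f_0+f_1$ and $g=g_0+g_1$ with the usual cuts, the piece $f_0=f\chi_{\{|f|>s\}}$ lies in $L_q$ only for $q\le p$, while $g_1=g\chi_{\{|g|\le s\}}$ (for $g\in L_{p,\infty}$) lies in $L_q$ only for $q>p$; thus the cross term $M_{f_0}g_1(\nabla)$ admits no exponent $q\ge 2$ at which Theorem~\ref{pre-cwikel estimate}(i) applies, and symmetrically for $M_{f_1}g_0(\nabla)$. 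This is precisely why Cwikel's theorem is not a formal corollary of Kato--Seiler--Simon and why every known proof requires an additional idea (Cwikel's dyadic construction, the Birman--Solomyak piecewise-polynomial method, or the arguments in \cite{Fr,LeSZ}). The same objection applies verbatim to your scheme for part (iii) in the scale $\ell_p(L_2)$. For part (ii) your instinct to invoke \cite{LeSZ} is right, but the exact source is Theorem~1.3 there; Theorem~1.4 is the strengthening of Theorem~\ref{pre-cwikel estimate}, not of the present statement.
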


Part (1) is known as the Cwikel inequality and due to \cite{Cwikel}; for proofs see also, for instance, \cite[Theorem 4.2]{Simon-book} and \cite{Fr,LeSZ}. Part (2) is due to two of us \cite[Theorem~1.3]{LeSZ}. Part (3) is due to Simon \cite[Theorem 4.6]{Simon-book} for $p>1$ and appears in \cite[Subsection 5.7]{BKS} in general. For a strengthening see \cite[Theorem~1.4]{LeSZ}.


\section{Domain and codomain of the commutator $[(-\Delta)^{\frac{\epsilon}{2}},M_f]$}\label{sec:preliminary}

We begin by clarifying in which sense the commutator $[(-\Delta)^\frac\epsilon2,M_f]$ is understood for $f\in L_{1,{\rm loc}}(\mathbb R^d)$ and $\epsilon\in(-d,1]$.

\begin{lem}\label{definition}
	Let $\epsilon\in(-d,1]$ and let $f\in L_{1,{\rm loc}}(\mathbb R^d)$. For any $\phi\in C_c^\infty(\mathbb R^d)$ there are unique distributions, denoted by
	$$
	(-\Delta)^\frac\epsilon2 M_f\phi \quad\text{and}\quad
	M_f(-\Delta)^\frac\epsilon2\phi \,,
	$$
	such that for all $\psi\in C_c^\infty(\mathbb R^d)$ one has
	$$
	\langle (-\Delta)^\frac\epsilon2 M_f\phi, \psi \rangle = \langle f\phi,(-\Delta)^\frac\epsilon2 \psi\rangle
	\qquad\text{and}\qquad
	\langle M_f(-\Delta)^\frac\epsilon2\phi, \psi \rangle = \langle (-\Delta)^\frac\epsilon2\phi, \overline f \psi \rangle \,.
	$$
	Here $\langle\cdot,\cdot\rangle$ on the left sides denotes the sesquilinear duality pairing between $(C_c^\infty)'$ and $C_c^\infty$, and on the right sides that between $L_1$ and $L_{\infty}$ ($L_{\infty}$ and $L_1$, respectively). The distribution $M_f(-\Delta)^\frac\epsilon2\phi$ is regular (that is, given by an $L_{1,{\rm loc}}$-function), and so is, for $\epsilon<0$, the distribution $(-\Delta)^\frac\epsilon2 M_f \phi$.\\
	Moreover, if $\{f_n\}_{n\geq0}\subset L_{1,{\rm loc}}(\mathbb{R}^d)$ is such that $f_n\to f$ in $L_{1,{\rm loc}}(\mathbb{R}^d)$, then for every $\phi\in C^{\infty}_c(\mathbb{R}^d),$ we have
	$$(-\Delta)^{\frac{\epsilon}{2}}M_{f_n}\phi\to (-\Delta)^{\frac{\epsilon}{2}}M_f\phi,\quad M_{f_n}(-\Delta)^{\frac{\epsilon}{2}}\phi\to M_f(-\Delta)^{\frac{\epsilon}{2}}\phi,\quad n\to\infty,$$
	in $(C^{\infty}_c(\mathbb{R}^d))'.$
\end{lem}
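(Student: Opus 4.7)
The plan is to verify, for every $\epsilon\in(-d,1]$ and every $\psi\in C_c^\infty(\mathbb R^d)$, that $(-\Delta)^{\epsilon/2}\psi$ is a bounded function on $\mathbb R^d$ with $\|(-\Delta)^{\epsilon/2}\psi\|_\infty$ controlled by finitely many Schwartz seminorms of $\psi$, and then deduce everything from this. Granted this estimate, and because $f\phi$ and $\overline f\psi$ lie in $L_1(\mathbb R^d)$ by compactness of support of $\phi,\psi$ together with $f\in L_{1,\mathrm{loc}}$, the right-hand pairings $\langle f\phi,(-\Delta)^{\epsilon/2}\psi\rangle$ and $\langle(-\Delta)^{\epsilon/2}\phi,\overline f\psi\rangle$ are absolutely convergent and define continuous linear functionals of $\psi$ on $C_c^\infty(\mathbb R^d)$; uniqueness of the associated distribution is automatic since a distribution is determined by its pairing against test functions.

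The $L_\infty$-bound on $(-\Delta)^{\epsilon/2}\psi$ comes from the Fourier representation, which gives
$$\|(-\Delta)^{\epsilon/2}\psi\|_\infty\leq(2\pi)^{-d/2}\bigl\||\xi|^\epsilon\hat\psi(\xi)\bigr\|_{L_1(\mathbb R^d)}.$$
Splitting the integration into $\{|\xi|\leq 1\}$ and $\{|\xi|\geq 1\}$, the low-frequency contribution is finite precisely because $\epsilon>-d$ (so $|\xi|^\epsilon$ is locally integrable near the origin) together with $\hat\psi\in L_\infty$, while on $\{|\xi|\geq 1\}$ one uses $|\xi|^\epsilon\leq 1+|\xi|$ (valid since $\epsilon\leq 1$) and the Schwartz decay of $\hat\psi$. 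Each piece is dominated by a Schwartz seminorm of $\psi$, which closes the estimate and simultaneously yields continuity in $\psi$.

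For the regularity assertions, $M_f(-\Delta)^{\epsilon/2}\phi=f\cdot(-\Delta)^{\epsilon/2}\phi$ is the pointwise product of an $L_{1,\mathrm{loc}}$-function and an $L_\infty$-function, hence belongs to $L_{1,\mathrm{loc}}(\mathbb R^d)$, and a standard manipulation of pairings identifies this function with the distribution defined above. For $\epsilon<0$, the Riesz-potential representation $(-\Delta)^{\epsilon/2}(f\phi)=c_{d,\epsilon}\,|x|^{-d-\epsilon}*(f\phi)$ applies: since $f\phi$ is compactly supported and in $L_1$, while the kernel is locally integrable (the exponent $-d-\epsilon$ lies in $(-d,0)$), Fubini produces an element of $L_{1,\mathrm{loc}}(\mathbb R^d)$, which coincides with the distributional definition by self-adjointness of $(-\Delta)^{\epsilon/2}$ on Schwartz functions. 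Finally, continuity in $f$ follows at once from the same bounds: if $f_n\to f$ in $L_{1,\mathrm{loc}}$, then $(f_n-f)\phi\to 0$ in $L_1$, and
$$\bigl|\langle(f_n-f)\phi,(-\Delta)^{\epsilon/2}\psi\rangle\bigr|\leq\|(f_n-f)\phi\|_1\,\|(-\Delta)^{\epsilon/2}\psi\|_\infty\longrightarrow 0$$
for each fixed $\psi\in C_c^\infty(\mathbb R^d)$, which is convergence in $(C_c^\infty(\mathbb R^d))'$; the argument for $M_{f_n}(-\Delta)^{\epsilon/2}\phi$ is symmetric, using $\overline{f_n}\psi\to\overline f\psi$ in $L_1$.

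The only delicate spot is the Fourier bound at the joint endpoints of the range of $\epsilon$: near $\epsilon=-d$ the low-frequency singularity of $|\xi|^\epsilon$ must be absorbed into the boundedness of $\hat\psi$, and near $\epsilon=1$ the high-frequency growth must be absorbed into its Schwartz decay. Both are handled by the splitting above; the rest of the proof is routine distributional algebra.
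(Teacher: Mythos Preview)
Your proof is correct and follows essentially the same route as the paper's: both establish that $(-\Delta)^{\epsilon/2}\psi\in L_\infty$ for test functions $\psi$, use this to make sense of the defining pairings, identify $M_f(-\Delta)^{\epsilon/2}\phi$ as the pointwise product $f\cdot(-\Delta)^{\epsilon/2}\phi$, invoke the Riesz potential for the regularity of $(-\Delta)^{\epsilon/2}(f\phi)$ when $\epsilon<0$, and deduce convergence from $L_1$-convergence of $f_n\phi$. The only cosmetic difference is that the paper splits into the cases $\epsilon\in(-d,0)$ and $\epsilon\in(0,1]$ and cites the Hardy--Littlewood--Sobolev mapping $(-\Delta)^{\epsilon/2}:L_{-d/\epsilon,1}\to L_\infty$ for the former, whereas you give a single Fourier-side estimate $\||\xi|^\epsilon\hat\psi\|_{L_1}<\infty$ valid uniformly across the whole range $(-d,1]$.
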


As a consequence of this lemma, the mappings 
$$
M_f(-\Delta)^{\frac{\epsilon}{2}}:C^{\infty}_c(\mathbb{R}^d)\to L_{1,{\rm loc}}(\mathbb{R}^d)
$$
and
$$
(-\Delta)^{\frac{\epsilon}{2}}M_f:C^{\infty}_c(\mathbb{R}^d)\to 
\begin{cases}
	L_{1,{\rm loc}}(\mathbb{R}^d) & \text{if}\ \epsilon\in(-d,0) \,, \\
	(C^{\infty}_c(\mathbb{R}^d))' & \text{if}\ \epsilon\in(0,1] \,.
\end{cases}
$$
are well defined.

\begin{proof}
	We begin with the case $\epsilon\in(-d,0)$. It is well known (see, e.g., \cite[Chapter V, Theorem 1]{Stein}) that $(-\Delta)^{\frac{\epsilon}{2}}:L_1(\mathbb{R}^d)\to L_{\frac{d}{d+\epsilon},\infty}(\mathbb{R}^d).$ If $\phi\in C^{\infty}_c(\mathbb{R}^d),$ then $f\phi\in L_1(\mathbb{R}^d)$ and, therefore,
	$$\big((-\Delta)^{\frac{\epsilon}{2}}M_f\big)(\phi)=(-\Delta)^{\frac{\epsilon}{2}}(f\phi)\in L_{\frac{d}{d+\epsilon},\infty}(\mathbb{R}^d)\subset L_{1,{\rm loc}}(\mathbb{R}^d).$$	
	Moreover, by duality, $(-\Delta)^{\frac{\epsilon}{2}}: L_{-\frac{d}{\epsilon},1}(\mathbb R^d) \to L_{\infty}(\mathbb{R}^d).$ Therefore,	
	$$
	\big(M_f(-\Delta)^{\frac{\epsilon}{2}}\big)(\phi)=f\cdot (-\Delta)^{\frac{\epsilon}{2}}\phi\in L_{1,{\rm loc}}(\mathbb{R}^d)\cdot L_{\infty}(\mathbb{R}^d)\subset L_{1,{\rm loc}}(\mathbb{R}^d).
	$$
	Thus, both $(-\Delta)^\frac\epsilon2 M_f\phi$ and $M_f(-\Delta)^\frac\epsilon2\phi$ are regular distributions.
	
	Now let $\epsilon\in(0,1]$. Then $(-\Delta)^{\frac{\epsilon}{2}}:C^{\infty}_c(\mathbb{R}^d)\to L_{\infty}(\mathbb{R}^d).$ Thus,
	$$
	(M_f(-\Delta)^{\frac{\epsilon}{2}})(\phi)=f\cdot (-\Delta)^{\frac{\epsilon}{2}}\phi\in L_{1,{\rm loc}}(\mathbb{R}^d)\cdot L_{\infty}(\mathbb{R}^d)\subset L_{1,{\rm loc}}(\mathbb{R}^d).
	$$
	Moreover, by duality, $(-\Delta)^{\frac{\epsilon}{2}}: (L_{\infty}(\mathbb{R}^d))' \to (C^{\infty}_c(\mathbb{R}^d))'$, where on $(L_{\infty}(\mathbb{R}^d))'$ we consider the weak star topology. Since $f\phi\in L_1(\mathbb{R}^d) \subset(L_{\infty}(\mathbb{R}^d))'$, we have
	$$
	((-\Delta)^{\frac{\epsilon}{2}}M_f)(\phi)=(-\Delta)^{\frac{\epsilon}{2}}(f\phi)\in(C^{\infty}_c(\mathbb{R}^d))'.
	$$	
	This proves the existence of the two distributions. The uniqueness is clear. It remains to prove the convergence statement. For the distribution $(-\Delta)^\frac\epsilon2 M_f\phi$ we use the fact that $f_n\phi\to f\phi$ in $L_1$ and therefore 
	$$
	\langle f_n \phi,(-\Delta)^\frac\epsilon2 \psi\rangle \to \langle f\phi,(-\Delta)^\frac\epsilon2 \psi\rangle \,.
	$$
	The proof for $M_f(-\Delta)^{\frac{\epsilon}{2}}\phi$ is similar, using $\overline{f_n}\psi\to \overline f\psi$ in $L_1$.
\end{proof}

\begin{lem}\label{tricky convergence} 
	Let $\epsilon\in(-\frac d2,1]$ and let $f\in L_\infty(\mathbb R^d)$ and $\phi\in C^\infty_c(\mathbb R^d)$. For $m\geq 1$ let $P_m=\chi_{(\frac1m,m)}(-\Delta)$. Then
	$$
	(P_m(-\Delta)^{\frac{\epsilon}{2}}) M_f P_m \phi \to (-\Delta)^{\frac{\epsilon}{2}} M_f \phi,
	\ \
	P_m M_f (P_m(-\Delta)^{\frac{\epsilon}{2}})\phi \to M_f (-\Delta)^{\frac{\epsilon}{2}}\phi,
	\ \
	m\to\infty,
	$$
	in $(C_c^\infty(\mathbb R^d))'$. Here $(-\Delta)^{\frac{\epsilon}{2}} M_f \phi$ and $M_f (-\Delta)^{\frac{\epsilon}{2}}\phi$ are defined as in Lemma \ref{definition}.
\end{lem}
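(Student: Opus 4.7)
The plan is to recast both distributional pairings as $L_2$ inner products and then exploit the strong convergence $P_m\to I$ on $L_2(\mathbb R^d)$. For each $m$, the operator $P_m(-\Delta)^{\epsilon/2}$ is the Fourier multiplier by the bounded function $\xi\mapsto|\xi|^\epsilon\chi_{\{1/m<|\xi|^2<m\}}$, so it is a bounded self-adjoint operator on $L_2(\mathbb R^d)$ that commutes with $P_m$, even though $(-\Delta)^{\epsilon/2}$ itself is unbounded on $L_2$.

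The one analytic ingredient I need is that for $\phi\in C_c^\infty(\mathbb R^d)$ and $\epsilon\in(-d/2,1]$ one has $(-\Delta)^{\epsilon/2}\phi\in L_2(\mathbb R^d)$. On the Fourier side this reduces to the square integrability of $|\xi|^\epsilon\widehat\phi(\xi)$; the tail is handled by the Schwartz decay of $\widehat\phi$, while near $\xi=0$ the singularity $|\xi|^{2\epsilon}$ is locally integrable precisely when $2\epsilon>-d$. This is exactly where the stronger restriction on $\epsilon$ (compared with Lemma~\ref{definition}) originates, and it immediately implies $P_m(-\Delta)^{\epsilon/2}\phi\to(-\Delta)^{\epsilon/2}\phi$ in $L_2$ as $m\to\infty$ (by dominated convergence on the Fourier side).

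For the first convergence I would test against an arbitrary $\psi\in C_c^\infty(\mathbb R^d)$ and use the self-adjointness of $P_m(-\Delta)^{\epsilon/2}$ on $L_2$, together with $M_fP_m\phi\in L_2$ (which uses $f\in L_\infty$), to rewrite
\begin{equation*}
\bigl\langle (P_m(-\Delta)^{\epsilon/2})M_fP_m\phi,\,\psi\bigr\rangle
=\bigl\langle fP_m\phi,\, P_m(-\Delta)^{\epsilon/2}\psi\bigr\rangle_{L_2}.
\end{equation*}
Since $P_m\phi\to\phi$ and $P_m(-\Delta)^{\epsilon/2}\psi\to(-\Delta)^{\epsilon/2}\psi$ in $L_2$ (the latter by the key observation applied to $\psi$), boundedness of $f$ yields convergence of the right-hand side to $\int f\phi\,\overline{(-\Delta)^{\epsilon/2}\psi}\,dx=\langle f\phi,(-\Delta)^{\epsilon/2}\psi\rangle$, which by Lemma~\ref{definition} is $\langle(-\Delta)^{\epsilon/2}M_f\phi,\psi\rangle$. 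The second convergence is handled identically: one writes $\langle P_mM_fP_m(-\Delta)^{\epsilon/2}\phi,\psi\rangle = \langle fP_m(-\Delta)^{\epsilon/2}\phi,P_m\psi\rangle_{L_2}$ and passes to the limit using $P_m(-\Delta)^{\epsilon/2}\phi\to(-\Delta)^{\epsilon/2}\phi$ and $P_m\psi\to\psi$ in $L_2$.

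I do not anticipate any substantial obstacle. The only consistency check is that the $L_2$ inner products used above agree with the distributional pairings from Lemma~\ref{definition}; this is automatic because $f\phi$ and $f\psi$ have compact support while $(-\Delta)^{\epsilon/2}\phi$ and $(-\Delta)^{\epsilon/2}\psi$ both lie in $L_2$, so both interpretations of the bracket coincide. The entire argument boils down to the strong convergence $P_m\to I$ on $L_2$ combined with the placement of $(-\Delta)^{\epsilon/2}\phi$ into $L_2$ afforded by the hypothesis $\epsilon>-d/2$.
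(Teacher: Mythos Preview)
Your proof is correct and follows the same line as the paper: both arguments test against $\psi\in C_c^\infty$, move $P_m(-\Delta)^{\epsilon/2}$ to the other slot by self-adjointness, and then use $P_m\to I$ strongly on $L_2$ together with $(-\Delta)^{\epsilon/2}\chi\in L_2$ for $\chi\in C_c^\infty$. The only difference is in how the key inclusion $(-\Delta)^{\epsilon/2}\chi\in L_2$ is verified for $\epsilon\in(-\tfrac d2,0)$: the paper deduces it from the Riesz-potential mapping properties $(-\Delta)^{\epsilon/2}:L_1\to L_{d/(d+\epsilon),\infty}$ and $(-\Delta)^{\epsilon/2}:L_{-d/\epsilon,1}\to L_\infty$ and then intersects, whereas your direct Fourier-side computation (local integrability of $|\xi|^{2\epsilon}$ near $0$ iff $\epsilon>-\tfrac d2$) is more elementary and makes the role of the threshold $-\tfrac d2$ transparent.
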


\begin{proof}
	Let $\psi\in C^\infty_c(\mathbb R^d)$. Then, with $\langle\cdot,\cdot\rangle$ denoting the $L_2$-inner product,
	\begin{align*}
		\langle( (P_m(-\Delta)^{\frac{\epsilon}{2}}) \cdot M_f\cdot P_m)\phi,\psi\rangle & =\langle f\cdot P_m\phi,(P_m(-\Delta)^{\frac{\epsilon}{2}})\psi\rangle, \\
		\langle(P_m\cdot M_f\cdot (P_m(-\Delta)^{\frac{\epsilon}{2}})\phi,\psi\rangle & =\langle (P_m(-\Delta)^{\frac{\epsilon}{2}})\phi,\overline f\cdot P_m\psi\rangle.
	\end{align*}
	Let $\chi$ denote either $\phi$ or $\psi$. We claim that
	\begin{equation}
		\label{eq:trickyconv}
		\chi,(-\Delta)^\frac\epsilon2\chi \in L_2(\mathbb{R}^d).
	\end{equation}
	Indeed, for $\chi$ this is clear. When $\epsilon\in(0,1]$ it is also clear for $(-\Delta)^\frac\epsilon2\chi$. Thus let $\epsilon\in(-\frac d2,0)$. As observed in the proof of Lemma \ref{definition}, we have
$$(-\Delta)^{\frac{\epsilon}{2}}:L_1(\mathbb{R}^d)\to L_{\frac{d}{d+\epsilon},\infty}(\mathbb{R}^d),\quad (-\Delta)^{\frac{\epsilon}{2}}:L_{-\frac{d}{\epsilon},1}(\mathbb{R}^d)\to L_{\infty}(\mathbb{R}^d).$$
Therefore,
$$(-\Delta)^{\frac{\epsilon}{2}}:L_1(\mathbb{R}^d)\cap L_{-\frac{d}{\epsilon},1}(\mathbb{R}^d)\to L_{\frac{d}{d+\epsilon},\infty}(\mathbb{R}^d)\cap L_{\infty}(\mathbb{R}^d)\subset L_2(\mathbb{R}^d).$$
In particular, $(-\Delta)^\frac\epsilon2\chi \in L_2(\mathbb{R}^d).$ This proves \eqref{eq:trickyconv}.
	
	It follows from \eqref{eq:trickyconv} that $P_m\chi\to\chi$ and $P_m (-\Delta)^\frac\epsilon2\chi\to (-\Delta)^\frac\epsilon2\chi$ in $L_2(\mathbb{R}^d)$. This, together with $f\in L_{\infty}(\mathbb R^d)$, implies
	\begin{align*}
		\begin{split}
			\langle f\cdot P_m\phi,(P_m(-\Delta)^{\frac{\epsilon}{2}})\psi\rangle
			& \to \langle f\cdot \phi,(-\Delta)^\frac\epsilon2\psi \rangle \,,\\
			\langle (P_m(-\Delta)^{\frac{\epsilon}{2}})\phi,\overline f\cdot P_m\psi\rangle
			& \to \langle (-\Delta)^\frac\epsilon2\phi,\overline f\cdot\psi \rangle \,.
		\end{split}
	\end{align*}
	Here $\langle\cdot,\cdot\rangle$ denotes the $L_2$-inner product. Alternatively, it can be interpreted as the duality pairing between $L_1$ and $L_{\infty}$ ($L_{\infty}$ and $L_1$, respectively), and then, by Lemma \ref{definition}, the right sides coincide with $\langle (-\Delta)^\frac\epsilon2 M_f\phi,\psi\rangle$ and $\langle M_f (-\Delta)^\frac\epsilon2 \phi,\psi\rangle$, respectively, where now $\langle\cdot,\cdot\rangle$ denotes the sesquilinear duality pairing between $(C_c^\infty(\mathbb R^d))'$ and $C_c^\infty(\mathbb R^d)$. This proves the claimed convergence in $C_c^\infty(\mathbb R^d)$.
\end{proof}

Having defined the operator $[(-\Delta)^{\frac{\epsilon}{2}},M_f]:C^{\infty}_c(\mathbb{R}^d)\to (C^{\infty}_c)'(\mathbb{R}^d),$ we are now interested in when it extends to a bounded operator on $L_2(\mathbb{R}^d)$ and belongs to certain operator ideal. The following theorem provides a negative result in a certain range of negative $\epsilon$'s.

\begin{lem}\label{unbdd}
If $\epsilon\in(-d,-\frac{d}{2}]$ and if $0\neq f\in L_1(\mathbb{R}^d)$ is compactly supported, then the operator $[(-\Delta)^{\frac{\epsilon}{2}},M_f]$ does not map $L_2(\mathbb{R}^d)$ to itself.
\end{lem}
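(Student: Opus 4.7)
The plan is to exhibit a single test function $\phi\in C_c^\infty(\mathbb R^d)$ for which $[(-\Delta)^{\epsilon/2},M_f]\phi$ fails to lie in $L_2(\mathbb R^d)$; since this commutator is well defined on $C_c^\infty\subset L_2$ by Lemma \ref{definition}, no bounded extension to $L_2$ can exist.

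First I would use the representation of $(-\Delta)^{\epsilon/2}$ for $\epsilon\in(-d,0)$ as a Riesz potential (see \cite[Chapter V, Theorem~1]{Stein}): for any $g\in L_1(\mathbb R^d)$,
$$
(-\Delta)^{\epsilon/2}g(x) \;=\; c_{d,\epsilon}\!\int_{\mathbb R^d}\frac{g(y)}{|x-y|^{d+\epsilon}}\,dy,
$$
with $c_{d,\epsilon}\neq 0$. Because $f$ is compactly supported and $L_1$, choose $R>0$ so that $\mathrm{supp}(f)\subset B(0,R)$. Pick $\phi\in C_c^\infty(\mathbb R^d)$ with $\int f\phi\,dy\neq 0$; such $\phi$ exists because $f\not\equiv 0$ forces the pairing $\phi\mapsto\int f\phi$ to be nontrivial on $C_c^\infty$.

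Next I would compute the commutator at points $x$ with $|x|>2R$. Since $f$ vanishes there, $M_f(-\Delta)^{\epsilon/2}\phi(x)=0$, so
$$
[(-\Delta)^{\epsilon/2},M_f]\phi(x) \;=\; c_{d,\epsilon}\!\int\frac{f(y)\phi(y)}{|x-y|^{d+\epsilon}}\,dy.
$$
For $|y|\leq R$ and $|x|\geq 2R$ one has $|x-y|=|x|(1+O(|x|^{-1}))$ uniformly in $y$, whence $|x-y|^{-(d+\epsilon)}=|x|^{-(d+\epsilon)}(1+O(|x|^{-1}))$. Dominated convergence then gives the pointwise asymptotics
$$
[(-\Delta)^{\epsilon/2},M_f]\phi(x) \;=\; c_{d,\epsilon}\,|x|^{-(d+\epsilon)}\!\left(\int f\phi\,dy\right)(1+o(1)) \qquad \text{as } |x|\to\infty.
$$

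Finally, for $\epsilon\leq -d/2$ we have $2(d+\epsilon)\leq d$, so
$$
\int_{|x|>M}|x|^{-2(d+\epsilon)}\,dx \;=\; c_d\int_M^\infty r^{-d-2\epsilon-1}\,dr \;=\; +\infty
$$
for every $M$. Combined with the nonvanishing of $\int f\phi$ and the above asymptotics, we conclude $[(-\Delta)^{\epsilon/2},M_f]\phi\notin L_2(\mathbb R^d)$, which yields the lemma.

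The only nontrivial point is the uniform asymptotic expansion of $|x-y|^{-(d+\epsilon)}$ and the justification that the leading term controls the integral; this is routine once $|x|>2R$, since $d+\epsilon>0$ keeps the kernel smooth on the region of integration. I expect no substantive obstacle beyond that.
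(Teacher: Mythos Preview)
Your argument is correct and essentially identical to the paper's own proof: both write the commutator via the Riesz-potential kernel, choose $\phi\in C_c^\infty$ with $\int f\phi\neq 0$, observe that for $|x|$ outside $\operatorname{supp}(f)$ only the term $(-\Delta)^{\epsilon/2}(f\phi)(x)$ survives, expand it as $|x|^{-(d+\epsilon)}\bigl(\int f\phi\bigr)\bigl(1+O(|x|^{-1})\bigr)$, and conclude from $2(d+\epsilon)\leq d$ that this is not square-integrable at infinity. The only cosmetic difference is that the paper phrases the computation via the commutator kernel $(f(t)-f(s))/|t-s|^{d+\epsilon}$ rather than splitting into the two summands.
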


\begin{proof} The operator $(-\Delta)^{\frac{\epsilon}{2}}$ is an integral operator with an integral kernel $|t-s|^{-d-\epsilon}$ (up to a multiplicative constant). Hence, the operators $M_f(-\Delta)^{\frac{\epsilon}{2}},(-\Delta)^{\frac{\epsilon}{2}}M_f:C^{\infty}_c(\mathbb{R}^d)\to L_{1,{\rm loc}}(\mathbb{R}^d)$ (those operators are well defined, see the proof of Lemma \ref{definition}) are also integral ones. Thus, $[(-\Delta)^{\frac{\epsilon}{2}},M_f]$ is an integral operator with the integral kernel given, up to a multiplicative constant, by
$$(t,s)\mapsto \frac{f(t)-f(s)}{|t-s|^{d+\epsilon}},\quad t,s\in\mathbb{R}^d.$$

Suppose $f$ is supported in the ball $B(0,R).$ Let $\phi\in C^{\infty}_c(\mathbb{R}^d)$ be such that
$$\int_{\mathbb{R}^d}(f\phi)(s)ds\neq0.$$
We have
$$\Big(\big([(-\Delta)^{\frac{\epsilon}{2}},M_f]\big)\phi\Big)(t)=-\int_{|s|\leq R}\frac{(f\phi)(s)}{|t-s|^{d+\epsilon}}ds,\quad |t|>R.$$
It follows that
$$\Big(\big([(-\Delta)^{\frac{\epsilon}{2}},M_f]\big)\phi\Big)(t)=-|t|^{-d-\epsilon}\cdot\int_{|s|\leq R}(f\phi)(s)ds+O(|t|^{-d-\epsilon-1}),\quad |t|>2R.$$
The function on the right hand side does not belong to $L_2(\mathbb{R}^d\backslash B(0,2R)).$ Hence, we have $\big([(-\Delta)^{\frac{\epsilon}{2}},M_f]\big)\phi\notin L_2(\mathbb{R}^d).$
\end{proof}

Let now $-\frac{d}{2}<\epsilon<0.$ By the weak Young inequality (see, e.g., \cite[Chapter V, Theorem 1]{Stein}), H\"older's inequality and Sobolev's inequality (see Theorem \ref{sobolev embedding theorem}) we find
$$
\|M_f(-\Delta)^{\frac{\epsilon}{2}}\|_\infty ,\ \|(-\Delta)^{\frac{\epsilon}{2}}M_f\|_\infty \leq c_{d,\epsilon}^{(1)}\|f\|_{-\frac{d}{\epsilon}} \leq c_{d,\epsilon}^{(2)}\|f\|_{\dot{W}^{1}_{\frac{d}{1-\epsilon}}(\mathbb{R}^d)}.
$$
Hence, for $-\frac{d}{2}<\epsilon<0$ and $f\in\dot W^1_{\frac d{1-\epsilon}}$ the commutator $[(-\Delta)^{\frac{\epsilon}{2}},M_f]$ is a difference of two bounded operators and is, therefore, bounded. Moreover,
\begin{equation}\label{uniform norm negative equation}
\Big\|[(-\Delta)^{\frac{\epsilon}{2}},M_f]\Big\|_{\infty}\leq c_{d,\epsilon}\|f\|_{\dot{W}^{1}_{\frac{d}{1-\epsilon}}(\mathbb{R}^d)},\quad f\in \dot{W}^{1}_{\frac{d}{1-\epsilon}}(\mathbb{R}^d),\quad -\frac{d}{2}<\epsilon<0.
\end{equation}


\section{Proof of Theorem \ref{main theorem positive} \eqref{mtpa} and Theorem \ref{main theorem negative} \eqref{mtna}}\label{sec:proofparti}

If $\mathbf A=(A_1,\ldots,A_d)$ is a $d$-tuple of mutually commuting self-adjoint operator on a Hilbert space and $\phi$ is a sufficiently regular function on $\mathbb R^d\times\mathbb R^d$, then the symbol $T^{\bf A}_{\phi}$ denotes the corresponding Double Operator Integral (DOI). We refer the reader to \cite{KPSS,CSZ-AiF} for the notion of a DOI with respect to tuples of mutually commuting operators. The following lemma shows that for certain $\phi$ this DOI can be reduced to one for a single operator. It is the exact analogue of \cite[Lemma 8]{PS-crelle} and its proof is exactly the same as in \cite{PS-crelle}, so we omit it.

\begin{lem}\label{doi composition lemma} Let $\Psi:\mathbb{R}\times\mathbb{R}\to\mathbb{R}$ be a bounded Borel measurable function and let $h:\mathbb{R}^d\to\mathbb{R}$ be a Borel measurable function. Set
	$$\phi(\lambda,\mu)=\Psi(h(\lambda),h(\mu)),\quad \lambda,\mu\in\mathbb{R}^d.$$
	For every tuple ${\bf A}=(A_1,\cdots,A_d)$ of commuting self-adjoint operators we have
	$$T^{{\bf A}}_{\phi}=T^{h({\bf A})}_{\Psi}.$$
\end{lem}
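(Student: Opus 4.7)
The plan is to follow the standard functional-calculus route for double operator integrals, exactly as in \cite{PS-crelle}. Recall that for a $d$-tuple $\mathbf{A}=(A_1,\dots,A_d)$ of commuting self-adjoint operators there is a joint spectral measure $E^{\mathbf{A}}$ on $\mathbb{R}^d$, and the DOI $T^{\mathbf{A}}_\phi$ is defined (on a sufficiently regular class of symbols, e.g.\ bounded Borel $\phi$) by the double spectral integral
\begin{equation*}
T^{\mathbf{A}}_\phi(X) \;=\; \int_{\mathbb{R}^d}\int_{\mathbb{R}^d} \phi(\lambda,\mu)\, dE^{\mathbf{A}}(\lambda)\, X\, dE^{\mathbf{A}}(\mu).
\end{equation*}
Analogously, since $h(\mathbf{A})$ is a single self-adjoint operator with spectral measure $E^{h(\mathbf{A})}$, the DOI $T^{h(\mathbf{A})}_\Psi$ is defined by a double integral against $E^{h(\mathbf{A})}$ over $\mathbb{R}\times\mathbb{R}$. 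The key algebraic input is that $E^{h(\mathbf{A})}$ is the pushforward of $E^{\mathbf{A}}$ under $h$; equivalently, $\psi(h(\mathbf{A})) = (\psi\circ h)(\mathbf{A})$ for any bounded Borel $\psi$ on $\mathbb{R}$, which is a standard consequence of the joint spectral theorem.

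First I would verify the identity for elementary tensor symbols $\Psi(s,t)=\psi_1(s)\psi_2(t)$. For such $\Psi$ one has $\phi(\lambda,\mu) = \psi_1(h(\lambda))\psi_2(h(\mu))$, and both sides of the claimed equality reduce, directly from the defining spectral integral, to
\begin{equation*}
\psi_1(h(\mathbf{A}))\, X\, \psi_2(h(\mathbf{A})) \;=\; (\psi_1\circ h)(\mathbf{A})\, X\, (\psi_2\circ h)(\mathbf{A}),
\end{equation*}
so that $T^{\mathbf{A}}_\phi(X) = T^{h(\mathbf{A})}_\Psi(X)$ holds trivially. Linearity in $\Psi$ then gives the identity for all finite linear combinations of such tensor products.

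The second step is to pass from this algebra of tensor symbols to all bounded Borel $\Psi:\mathbb{R}\times\mathbb{R}\to\mathbb{R}$. This is done by a monotone class / pointwise bounded approximation argument: the class of $\Psi$ for which the identity holds is closed under uniformly bounded pointwise limits, because the DOI defined via the spectral integral depends continuously (in the weak operator topology, when applied to any fixed $X\in\mathcal{B}(H)$) on the symbol $\Psi$ under dominated pointwise convergence; this is just two applications of the dominated convergence theorem, one in $E^{h(\mathbf{A})}(s)$ and one in $E^{h(\mathbf{A})}(t)$ (respectively in $E^{\mathbf{A}}(\lambda), E^{\mathbf{A}}(\mu)$ on the other side, where the pushforward identity guarantees that $\phi_n\to\phi$ pointwise and boundedly). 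Since tensor products generate the bounded Borel functions on $\mathbb{R}^2$ under such approximation, this yields the identity in full generality.

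The main point requiring care is the approximation step, namely justifying that the DOI is continuous under uniformly bounded pointwise convergence of the symbol; this is the only non-algebraic ingredient and relies on the concrete spectral-integral definition of the DOI rather than any abstract perturbation-type construction. Once this is in hand, the tensor-product reduction and the pushforward identity $E^{h(\mathbf{A})}=h_*E^{\mathbf{A}}$ make the identity immediate, which is precisely why the proof is a verbatim repetition of \cite[Lemma~8]{PS-crelle}.
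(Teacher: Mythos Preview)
Your proposal is correct and follows the same approach as the paper, which simply defers to \cite[Lemma~8]{PS-crelle} without giving a proof. The tensor-product-plus-approximation argument via the pushforward identity $E^{h(\mathbf{A})}=h_*E^{\mathbf{A}}$ is precisely the standard route, and your closing remark already identifies this as the content of the cited lemma.
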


For a fixed $\epsilon\in\mathbb{R},$ define the function $\phi_{\epsilon}$ on $\mathbb{R}^d\times\mathbb{R}^d$ by setting
\begin{equation}\label{phi def}
\phi_{\epsilon}(\lambda,\mu) :=
\begin{cases}
\frac{|\lambda|^{\epsilon}-|\mu|^{\epsilon}}{|\lambda|-|\mu|}\cdot  |\lambda|^{\frac{1-\epsilon}{2}}|\mu|^{\frac{1-\epsilon}{2}} & \text{if}\ |\lambda|\neq|\mu| \,, \\
\epsilon & \text{if}\ |\lambda|=|\mu| \,.
\end{cases}
\end{equation}

\begin{lem}\label{tboundedphi}
Let $\epsilon\in[-1,1]$ and let $\phi_{\epsilon}$ be as in \eqref{phi def}. Then for any $1<p<\infty$ the operator $T^{\nabla}_{\phi_{\epsilon}}$ is bounded on $\mathcal{L}_p$ and on $\mathcal{L}_{p,\infty}$ 
\end{lem}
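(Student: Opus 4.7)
My first move would be to use Lemma \ref{doi composition lemma} to collapse the two-parameter DOI with respect to the tuple $\nabla$ into a DOI with respect to the single self-adjoint operator $|\nabla|=(-\Delta)^{1/2}$. Setting
\[
\Psi(s,t):=\frac{s^{\epsilon}-t^{\epsilon}}{s-t}\,s^{(1-\epsilon)/2}t^{(1-\epsilon)/2}\quad (s\ne t),\qquad \Psi(s,s):=\epsilon,
\]
one has $\phi_\epsilon(\lambda,\mu)=\Psi(|\lambda|,|\mu|)$, so Lemma \ref{doi composition lemma} applied with $h(\lambda)=|\lambda|$ gives $T^\nabla_{\phi_\epsilon}=T^{|\nabla|}_\Psi$. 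It therefore suffices to prove that $T^{|\nabla|}_\Psi$ is bounded on $\mathcal{L}_p$ and on $\mathcal{L}_{p,\infty}$ for $1<p<\infty$.

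The key observation is that the symbol $\Psi$ becomes translation-invariant in logarithmic coordinates. A direct computation using $e^{\epsilon u}-e^{\epsilon v}=2e^{\epsilon(u+v)/2}\sinh(\epsilon(u-v)/2)$ and the analogous identity for the denominator produces
\[
\Psi(e^u,e^v)=\frac{\sinh(\epsilon(u-v)/2)}{\sinh((u-v)/2)}=:F(u-v).
\]
For $\epsilon\in[-1,1]$ the function $F$ is smooth (with $F(0)=\epsilon$), extends holomorphically to the strip $|\operatorname{Im} z|<2\pi$ (where $\sinh(z/2)$ first vanishes), and decays like $e^{-(1-|\epsilon|)|x|/2}$ at infinity; the boundary cases $\epsilon=\pm 1$ are trivial since $F\equiv\pm 1$ there.

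Finally, Fourier inversion turns $\Psi$ into a superposition of product symbols $s^{i\xi}t^{-i\xi}$,
\[
\Psi(s,t)=\frac{1}{2\pi}\int_{\mathbb R}\widehat F(\xi)\,s^{i\xi}\,t^{-i\xi}\,d\xi,
\]
and since the DOI of a product symbol is simply a sandwich, this yields
\[
T^{|\nabla|}_\Psi(X)=\frac{1}{2\pi}\int_{\mathbb R}\widehat F(\xi)\,|\nabla|^{i\xi}\,X\,|\nabla|^{-i\xi}\,d\xi.
\]
Because $|\nabla|^{i\xi}$ is the $L_2$-Fourier multiplier with symbol $|k|^{i\xi}$ of modulus $1$, it is unitary, so each conjugation is an isometry on every symmetrically normed ideal. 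The triangle inequality then gives $\|T^{|\nabla|}_\Psi(X)\|\le(2\pi)^{-1}\|\widehat F\|_{L_1}\|X\|$ on both $\mathcal{L}_p$ and $\mathcal{L}_{p,\infty}$. The main (and essentially only) technical obstacle is the verification that $\widehat F\in L_1(\mathbb R)$; this follows from the holomorphic extension in a strip together with the exponential decay of $F$, which together force $\widehat F$ to decay exponentially as well, and is precisely the analysis carried out in the parallel result of \cite{PS-crelle}.
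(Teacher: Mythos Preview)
Your proof is correct and follows essentially the same route as the paper: reduce via Lemma~\ref{doi composition lemma} to a DOI in the single operator $(-\Delta)^{1/2}$, observe that the resulting symbol $\Psi_\epsilon(s,t)=h_\epsilon(s/t)$ with $h_\epsilon\circ\exp(x)=\sinh(\epsilon x/2)/\sinh(x/2)$ a Schwartz function, and conclude via \cite{PS-crelle}. The only cosmetic difference is that the paper quotes \cite[Lemma~9]{PS-crelle} as a black box to get the $\mathcal{L}_1$ and $\mathcal{L}_\infty$ endpoints and then interpolates, whereas you unpack that lemma --- the Fourier representation $T_\Psi(X)=\tfrac{1}{2\pi}\int \widehat F(\xi)\,|\nabla|^{i\xi}X|\nabla|^{-i\xi}\,d\xi$ and the unitarity of $|\nabla|^{i\xi}$ --- to obtain boundedness on every symmetric ideal directly, bypassing the interpolation step.
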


\begin{proof} For $\epsilon=\pm1,$  $\phi_{\epsilon}$ is a constant and there is nothing to prove. Suppose $\epsilon\in(-1,1).$ By Lemma \ref{doi composition lemma} applied with $h(\lambda)=|\lambda|,$ we have
$$T^{\nabla}_{\phi_{\epsilon}}=T^{(-\Delta)^{\frac12}}_{\Psi_{\epsilon}}$$
with
$$\Psi_{\epsilon}(s,t):=\frac{s^{\epsilon}-t^{\epsilon}}{s-t}\cdot s^{\frac{1-\epsilon}{2}}t^{\frac{1-\epsilon}{2}},\quad s,t>0.$$
Thus,
$$\Psi_{\epsilon}(s,t)=h_{\epsilon}(\frac{s}{t}),\quad s,t>0,$$
with the function $h_{\epsilon}$ given by the formula
$$(h_{\epsilon}\circ\exp)(t)=\frac{e^{\epsilon t}-1}{e^t-1}\cdot e^{\frac{(1-\epsilon)t}{2}}=\frac{\sinh(\frac{\epsilon t}{2})}{\sinh(\frac{t}{2})},\quad t\in\mathbb{R}.$$
In particular, $h_{\epsilon}\circ\exp$ is a Schwartz function. By \cite[Lemma 9]{PS-crelle}, we have 
$$T^{(-\Delta)^{\frac12}}_{\Psi_{\epsilon}}:\mathcal{L}_1\to\mathcal{L}_1,\quad T^{(-\Delta)^{\frac12}}_{\Psi_{\epsilon}}:\mathcal{L}_{\infty}\to\mathcal{L}_{\infty}.$$
Therefore, by interpolation,
$$T^{(-\Delta)^{\frac12}}_{\Psi_{\epsilon}}:\mathcal{L}_p\to\mathcal{L}_p,\quad T^{(-\Delta)^{\frac12}}_{\Psi_{\epsilon}}:\mathcal{L}_{p,\infty}\to\mathcal{L}_{p,\infty},\quad 1<p<\infty,$$
proving the assertion.
\end{proof}

For $\epsilon\in\mathbb R$ and $n\in\mathbb N$, define the function $\theta_{n,\epsilon}$ on $\mathbb R^d\times\mathbb R^d$ by setting
$$
\theta_{n,\epsilon}(\lambda,\mu) : =\sum_{l=1}^n(\frac{|\lambda|}{|\mu|})^{\frac{\epsilon+2l-1}{2}}+\sum_{l=1}^n(\frac{|\mu|}{|\lambda|})^{\frac{\epsilon+2l-1}{2}}, ~\lambda, \mu\in \mathbb R^d, ~\lambda, \mu\neq 0.
$$

\begin{lem}\label{phi induction lemma} Let $n\in\mathbb{N}$ and let $\epsilon\in[-2n-1,-2n+1).$ We have
$$\phi_{\epsilon}(\lambda,\mu)=\phi_{\epsilon+2n}(\lambda,\mu)-\theta_{n,\epsilon}(\lambda,\mu),\quad \lambda,\mu\in\mathbb{R}^d,\quad \lambda,\mu\neq0.$$
\end{lem}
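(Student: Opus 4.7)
The plan is to reduce the identity to a one-variable statement and then prove it by a telescoping argument based on a single algebraic identity at the base case $n=1$.

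First I would observe that $\phi_\epsilon(\lambda,\mu)$ and $\theta_{n,\epsilon}(\lambda,\mu)$ depend on $(\lambda,\mu)$ only through $s := |\lambda|$ and $t := |\mu|$, so it suffices to prove, for all $s,t>0$ with $s\neq t$, the identity
\begin{equation*}
\Psi_\epsilon(s,t) = \Psi_{\epsilon+2n}(s,t) - \sum_{l=1}^{n}\Bigl((s/t)^{(\epsilon+2l-1)/2} + (t/s)^{(\epsilon+2l-1)/2}\Bigr),
\end{equation*}
where $\Psi_\alpha(s,t) := \frac{s^\alpha-t^\alpha}{s-t}\,s^{(1-\alpha)/2}t^{(1-\alpha)/2}$. (The case $s=t$ will follow by continuity in both sides, or alternatively by L'Hopital applied at each step of the induction.) Note that the range of $\epsilon$ assumed in the lemma plays no role in the algebra; the identity is purely formal.

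Next I would establish the base case $n=1$, i.e.
\begin{equation*}
\Psi_{\epsilon+2}(s,t) - \Psi_\epsilon(s,t) = (s/t)^{(\epsilon+1)/2} + (t/s)^{(\epsilon+1)/2}.
\end{equation*}
Bringing the two terms on the left over a common denominator, this reduces to showing
\begin{equation*}
s^{\epsilon+2}-t^{\epsilon+2} - st(s^\epsilon - t^\epsilon) = (s-t)\bigl(s^{\epsilon+1}+t^{\epsilon+1}\bigr),
\end{equation*}
which one verifies by grouping the left-hand side as $s^{\epsilon+1}(s-t) + t^{\epsilon+1}(s-t)$. Multiplying through by $s^{-(1+\epsilon)/2}t^{-(1+\epsilon)/2}$ gives the claimed formula.

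Finally, the inductive step is a trivial telescoping: applying the $n=1$ identity with $\epsilon$ replaced by $\epsilon+2k$ for $k=0,1,\ldots,n-1$ and summing yields
\begin{equation*}
\Psi_{\epsilon+2n}(s,t) - \Psi_\epsilon(s,t) = \sum_{k=0}^{n-1}\Bigl((s/t)^{(\epsilon+2k+1)/2} + (t/s)^{(\epsilon+2k+1)/2}\Bigr),
\end{equation*}
and reindexing by $l=k+1$ converts the right-hand side into $\theta_{n,\epsilon}(\lambda,\mu)$, completing the proof. There is no real obstacle here—the entire content is the one-line base identity, and neither the induction nor the passage from $\Psi_\alpha$ to $\phi_\alpha$ requires anything beyond routine algebraic manipulation.
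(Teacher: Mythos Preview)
Your proposal is correct and follows essentially the same route as the paper: both establish the one-step identity $\phi_{\epsilon+2}-\phi_{\epsilon} = (|\lambda|/|\mu|)^{(\epsilon+1)/2} + (|\mu|/|\lambda|)^{(\epsilon+1)/2}$ via the factorization $s^{\epsilon+2}-t^{\epsilon+2}-st(s^{\epsilon}-t^{\epsilon})=(s-t)(s^{\epsilon+1}+t^{\epsilon+1})$ and then telescope (the paper phrases this as ``induction on $n$''). Your reduction to scalar variables and remark on the diagonal case $s=t$ are cosmetic additions; otherwise the arguments coincide.
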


\begin{proof} For every $\epsilon\in\mathbb{R},$ we have
$$\phi_{\epsilon}(\lambda,\mu)-\phi_{\epsilon+2}(\lambda,\mu)=$$
$$=\frac{|\lambda|^{\epsilon}-|\mu|^{\epsilon}}{|\lambda|-|\mu|}\cdot |\lambda|^{\frac{1-\epsilon}{2}}|\mu|^{\frac{1-\epsilon}{2}}-\frac{|\lambda|^{\epsilon+2}-|\mu|^{\epsilon+2}}{|\lambda|-|\mu|}\cdot |\lambda|^{-\frac{1+\epsilon}{2}}|\mu|^{-\frac{1+\epsilon}{2}}=$$
$$=\frac{|\lambda|^{-\frac{1+\epsilon}{2}}|\mu|^{-\frac{1+\epsilon}{2}}}{|\lambda|-|\mu|}\cdot\Big(|\lambda||\mu|(|\lambda|^{\epsilon}-|\mu|^{\epsilon})-(|\lambda|^{\epsilon+2}-|\mu|^{\epsilon+2})\Big).$$
Note that
$$|\lambda||\mu|(|\lambda|^{\epsilon}-|\mu|^{\epsilon})-(|\lambda|^{\epsilon+2}-|\mu|^{\epsilon+2})=$$
$$=-|\lambda|\cdot|\mu|^{\epsilon+1}+|\mu|\cdot |\mu|^{\epsilon+1}-|\lambda|\cdot|\lambda|^{\epsilon+1}+|\mu|\cdot|\lambda|^{\epsilon+1}=$$
$$=-(|\lambda|-|\mu|)(|\lambda|^{\epsilon+1}+|\mu|^{\epsilon+1}).$$
Thus,
$$\phi_{\epsilon}(\lambda,\mu)-\phi_{\epsilon+2}(\lambda,\mu)=$$
$$=-|\lambda|^{-\frac{1+\epsilon}{2}}|\mu|^{-\frac{1+\epsilon}{2}}\cdot(|\lambda|^{\epsilon+1}+|\mu|^{\epsilon+1})=-(\frac{|\lambda|}{|\mu|})^{\frac{\epsilon+1}{2}}-(\frac{|\mu|}{|\lambda|})^{\frac{\epsilon+1}{2}}.$$
The assertion follows now by induction on $n.$
\end{proof}

Define the function $\psi_k,$ $1\leq k\leq d,$ on $\mathbb{R}^d\times\mathbb{R}^d$ by setting
\begin{equation}\label{eq:defpsik}
\psi_k(\lambda,\mu) :=
\begin{cases}
\frac{|\lambda|-|\mu|}{|\lambda-\mu|^2}\cdot(\lambda_k -\mu_k) & \text{if}\ \lambda\neq \mu \,,\\
0 & \text{if}\ \lambda=\mu \,.
\end{cases}
\end{equation}

\begin{lem}\label{tboundedpsi}
Let $1\leq k\leq d$ and let $\psi_k$ be as in \eqref{eq:defpsik}. For all $1<p<\infty$ the operator $T^{\nabla}_{\psi_k}$ is bounded on $\mathcal L_p$ and on $\mathcal L_{p,\infty}$. 
\end{lem}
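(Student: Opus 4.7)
My plan is to decompose the symbol $\psi_k$ into pieces whose DOIs can each be bounded on $\mathcal L_p$, reducing as far as possible to the one-variable setting of Lemma \ref{tboundedphi} combined with standard singular-integral theory. The starting point is the algebraic identity $(|\lambda|-|\mu|)(|\lambda|+|\mu|) = \sum_j(\lambda_j-\mu_j)(\lambda_j+\mu_j)$, which produces the factorization
$$
\psi_k(\lambda,\mu) = \sum_{j=1}^d \eta_j(\lambda,\mu)\,\zeta_{jk}(\lambda,\mu),\quad \eta_j := \frac{\lambda_j+\mu_j}{|\lambda|+|\mu|},\quad \zeta_{jk} := \frac{(\lambda_j-\mu_j)(\lambda_k-\mu_k)}{|\lambda-\mu|^2}.
$$

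I would then split $\eta_j = (\lambda_j/|\lambda|)\cdot |\lambda|/(|\lambda|+|\mu|) + (\mu_j/|\mu|)\cdot |\mu|/(|\lambda|+|\mu|)$, so that the single-variable Riesz symbols $\lambda_j/|\lambda|$ and $\mu_j/|\mu|$ pull out of the DOI as left, respectively right, multiplications by the bounded Riesz transform $R_j=\nabla_j|\nabla|^{-1}$. The remaining scalar kernel $a(\lambda,\mu):=|\lambda|/(|\lambda|+|\mu|)$ depends only on $|\lambda|,|\mu|$, so by Lemma \ref{doi composition lemma} the DOI $T^\nabla_a$ reduces to the one-variable DOI $T^{|\nabla|}_\Phi$ with $\Phi(s,t)=s/(s+t)$; writing $\Phi = \tfrac12+\tfrac12\tanh(\tfrac12\log(s/t))$ and decomposing $\tanh(v/2) = \mathrm{sgn}(v) + (\tanh(v/2)-\mathrm{sgn}(v))$, I would treat the constant trivially, the $\mathrm{sgn}$-part by the classical $\mathcal L_p$-boundedness ($1<p<\infty$) of triangular truncation, and the piecewise-Schwartz remainder by the method of \cite[Lemma 9]{PS-crelle}. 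Meanwhile $\zeta_{jk}$ depends only on $\lambda-\mu$ and is a Mikhlin multiplier, so $T^\nabla_{\zeta_{jk}}$ coincides with the principal-value integral $X\mapsto \mathrm{p.v.}\int\widehat{\zeta_{jk}}(y)\,\tau_y X\tau_{-y}\,dy$, bounded on every $\mathcal L_p$, $1<p<\infty$, by operator-valued Calder\'on--Zygmund theory.

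The hard part is combining these two classes of factors, since DOIs do not distribute over products of symbols that are not rank-one separable. My plan is to use the integral representation $(|\lambda|+|\mu|)^{-1} = \int_0^\infty e^{-t(|\lambda|+|\mu|)}\,dt$, which renders $\eta_j\zeta_{jk}$ a superposition of separable symbols $\lambda_j e^{-t|\lambda|}\otimes e^{-t|\mu|}$ (and its twin) multiplied by $\zeta_{jk}$, yielding
$$
T^\nabla_{\eta_j\zeta_{jk}}(X) = \int_0^\infty\!\bigl[(\nabla_j e^{-t|\nabla|})\,T^\nabla_{\zeta_{jk}}(X)\,e^{-t|\nabla|} + e^{-t|\nabla|}\,T^\nabla_{\zeta_{jk}}(X)\,(\nabla_j e^{-t|\nabla|})\bigr]dt.
$$
Absolute $\mathcal L_p$-convergence of this integral should be achieved by an integration by parts in $t$ exploiting the identity $\int_0^\infty|\nabla|e^{-2t|\nabla|}\,dt=\tfrac12 I$, after which the $\mathcal L_{p,\infty}$ case follows by Marcinkiewicz interpolation from strong-type $\mathcal L_q$ bounds for $q$ on either side of $p$.
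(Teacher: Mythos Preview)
The paper's proof is a direct citation of \cite[Corollary 5.2]{CSZ-AiF} for $\mathcal L_p$-boundedness, followed by real interpolation for $\mathcal L_{p,\infty}$; your decomposition $\psi_k=\sum_j\eta_j\zeta_{jk}$ is a natural attempt at an independent argument, but it has two gaps. First, your claim that ``DOIs do not distribute over products of symbols that are not rank-one separable'' is false: one always has $T^{\mathbf A}_{\phi_1\phi_2}=T^{\mathbf A}_{\phi_1}\circ T^{\mathbf A}_{\phi_2}$ (this is immediate from the spectral definition and is used freely in the paper, e.g.\ in the proof of Lemma~\ref{equality for bounded operators}). So the integral-representation detour is unnecessary---which is just as well, since your integration-by-parts sketch does not yield absolute $\mathcal L_p$-convergence: the identity $\int_0^\infty|\nabla|e^{-2t|\nabla|}\,dt=\tfrac12$ would help only if the two semigroup factors sat on the same side of $Y=T^\nabla_{\zeta_{jk}}(X)$, which they do not.

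Second, and more fundamentally, even granting the composition $T^\nabla_{\psi_k}=\sum_j T^\nabla_{\eta_j}\circ T^\nabla_{\zeta_{jk}}$ and your (essentially correct) treatment of $T^\nabla_{\eta_j}$ via Riesz transforms and triangular truncation, the $\mathcal L_p$-boundedness of $T^\nabla_{\zeta_{jk}}$ is itself the hard part. The symbol $\zeta_{jk}(\lambda,\mu)=m(\lambda-\mu)$ with $m(\xi)=\xi_j\xi_k/|\xi|^2$ is a Toeplitz-type Schur multiplier with a H\"ormander--Mikhlin symbol; it is \emph{not} bounded on $\mathcal L_1$ or $\mathcal L_\infty$, so no argument in the style of \cite[Lemma~9]{PS-crelle} applies. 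Establishing its $\mathcal L_p$-boundedness for $1<p<\infty$ requires either transference to Fourier multipliers on UMD-valued $L_p$ spaces or the noncommutative martingale techniques that underlie \cite{CSZ-AiF,CMPS}---in other words, your reduction lands on a statement of the same depth as the one the paper simply cites.
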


\begin{proof}
Boundedness of $T^{\nabla}_{\psi_k}:\mathcal{L}_p\to\mathcal{L}_p,$ $1<p<\infty,$ is established in \cite[Corollary 5.2]{CSZ-AiF}; see also \cite[Theorem 5.1]{CMPS}. By interpolation, $T^{\nabla}_{\psi_k}:\mathcal{L}_{p,\infty}\to\mathcal{L}_{p,\infty},$ $1<p<\infty,$ is bounded as well.
\end{proof}

For a tuple $\mathbf A=(A_1,\ldots,A_d)$ we write 
$$
|{\bf A}| = (A_1^2+\ldots+A_d^2)^\frac12 \,.
$$
In the next two lemmas we derive representations for commutators $[|{\bf A}|^{\epsilon},B]$ as DOIs. 

\begin{lem}\label{equality for bounded operators} 
	Let $\epsilon\in\mathbb{R}$ and let $\phi_{\epsilon}$ and $\psi_k$ be as in \eqref{phi def} and \eqref{eq:defpsik}, respectively. If $B\in\mathcal{L}_2$ and if ${\bf A}$ is a tuple of bounded self-adjoint, mutually commuting operators such that $|{\bf A}|$ is bounded from below by a strictly positive constant, then
$$
[|{\bf A}|^{\epsilon},B] = \sum_{k=1}^dT^{{\bf A}}_{\phi_{\epsilon}\cdot\psi_k}(|{\bf A}|^{\frac{\epsilon-1}{2}}[A_k,B]|{\bf A}|^{\frac{\epsilon-1}{2}}).
$$
\end{lem}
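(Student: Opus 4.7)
The plan is to exploit the algebraic properties of Double Operator Integrals (DOIs) with respect to the joint spectral measure of the commuting tuple ${\bf A}$ and reduce the claimed identity to a pointwise identity involving the symbols.

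First, I would recall that since $A_1,\ldots,A_d$ are bounded, mutually commuting and self-adjoint, they admit a joint spectral resolution $E$ on $\mathbb R^d$, and on $\mathcal L_2$ the map $X\mapsto T^{\bf A}_g(X)$ is well defined for every bounded Borel symbol $g$ on $\mathbb R^d\times\mathbb R^d$. For such symbols $g$ one has the functional calculus identities
\begin{itemize}
\item[(a)] $[h({\bf A}),B] = T^{\bf A}_{h(\lambda)-h(\mu)}(B)$ for bounded Borel $h:\mathbb R^d\to\mathbb C$;
\item[(b)] $g({\bf A})\,X\,h({\bf A}) = T^{\bf A}_{g(\lambda)h(\mu)}(X)$ for bounded Borel $g,h$ and $X\in\mathcal L_2$;
\item[(c)] $T^{\bf A}_{g_1}\!\circ T^{\bf A}_{g_2} = T^{\bf A}_{g_1 g_2}$ on $\mathcal L_2$.
\end{itemize}
All symbols appearing in the statement are bounded Borel on the spectrum of ${\bf A}$: indeed, since ${\bf A}$ is bounded and $|{\bf A}|$ is bounded below by a positive constant, the joint spectrum is a compact set avoiding the origin, so $\phi_\epsilon$ is bounded there, and $\psi_k$ is bounded by $1$ on $\mathbb R^d\times\mathbb R^d\setminus\{\lambda=\mu\}$ by Cauchy--Schwarz (since $||\lambda|-|\mu||\le|\lambda-\mu|$).

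Second, I would unwind the right-hand side using (a)--(c) in sequence. Applying (a) with $h(\lambda)=\lambda_k$ gives $[A_k,B]=T^{\bf A}_{\lambda_k-\mu_k}(B)$. Sandwiching with $|{\bf A}|^{(\epsilon-1)/2}$ and using (b) (together with (c) if one prefers to view the sandwich as a composition of two DOIs), we obtain
$$
|{\bf A}|^{\frac{\epsilon-1}{2}}[A_k,B]|{\bf A}|^{\frac{\epsilon-1}{2}}
 \;=\; T^{\bf A}_{|\lambda|^{\frac{\epsilon-1}{2}}|\mu|^{\frac{\epsilon-1}{2}}(\lambda_k-\mu_k)}(B).
$$
Applying $T^{\bf A}_{\phi_\epsilon\psi_k}$ and using the composition rule (c) gives
$$
T^{\bf A}_{\phi_\epsilon\psi_k}\!\bigl(|{\bf A}|^{\frac{\epsilon-1}{2}}[A_k,B]|{\bf A}|^{\frac{\epsilon-1}{2}}\bigr)
 \;=\; T^{\bf A}_{\phi_\epsilon(\lambda,\mu)\psi_k(\lambda,\mu)|\lambda|^{\frac{\epsilon-1}{2}}|\mu|^{\frac{\epsilon-1}{2}}(\lambda_k-\mu_k)}(B).
$$

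Third, by linearity of the DOI in its symbol, summing over $k$ reduces the proof to the pointwise identity
$$
\sum_{k=1}^d \phi_\epsilon(\lambda,\mu)\,\psi_k(\lambda,\mu)\,|\lambda|^{\frac{\epsilon-1}{2}}|\mu|^{\frac{\epsilon-1}{2}}(\lambda_k-\mu_k) \;=\; |\lambda|^\epsilon-|\mu|^\epsilon
$$
for $\lambda,\mu\neq 0$. From the definition of $\psi_k$ in \eqref{eq:defpsik},
$$
\sum_{k=1}^d \psi_k(\lambda,\mu)(\lambda_k-\mu_k)
=\frac{|\lambda|-|\mu|}{|\lambda-\mu|^2}\sum_{k=1}^d(\lambda_k-\mu_k)^2
=|\lambda|-|\mu|,
$$
and then from the definition \eqref{phi def} of $\phi_\epsilon$,
$$
\phi_\epsilon(\lambda,\mu)\,(|\lambda|-|\mu|)\,|\lambda|^{\frac{\epsilon-1}{2}}|\mu|^{\frac{\epsilon-1}{2}}
=|\lambda|^\epsilon-|\mu|^\epsilon,
$$
which is valid on the set $|\lambda|\neq|\mu|$; on the set $|\lambda|=|\mu|$ both sides of the pointwise identity vanish (the left side because $\psi_k$ is defined to vanish on the diagonal of $\mathbb R^d$ and the factor $|\lambda|-|\mu|$ vanishes off the diagonal, the right side trivially). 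Combining this with (a) applied to $h(\lambda)=|\lambda|^\epsilon$, we conclude
$$
\sum_{k=1}^d T^{\bf A}_{\phi_\epsilon\psi_k}\bigl(|{\bf A}|^{\frac{\epsilon-1}{2}}[A_k,B]|{\bf A}|^{\frac{\epsilon-1}{2}}\bigr)
=T^{\bf A}_{|\lambda|^\epsilon-|\mu|^\epsilon}(B)=[|{\bf A}|^\epsilon,B].
$$

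The only slightly subtle point is the justification of the DOI composition rule (c) and the sandwich rule (b) at the $\mathcal L_2$-level; both are standard once the joint spectral measure is invoked and $B\in\mathcal L_2$ is written as a kernel against $E(d\lambda)\otimes E(d\mu)$. Because the spectrum of $|{\bf A}|$ is bounded away from $0$ and $\infty$, no integrability issues arise for the symbols, so the manipulations above are rigorous.
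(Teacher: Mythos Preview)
Your proof is correct and follows essentially the same approach as the paper's: both reduce the operator identity to the pointwise symbol identity $\sum_{k=1}^d \phi_\epsilon\,\psi_k\,g_k = |\lambda|^\epsilon-|\mu|^\epsilon$ (where $g_k(\lambda,\mu)=|\lambda|^{\frac{\epsilon-1}{2}}(\lambda_k-\mu_k)|\mu|^{\frac{\epsilon-1}{2}}$) via the standard DOI calculus rules, then verify that identity directly. Your version is slightly more explicit about the boundedness of the symbols on the joint spectrum and the case $|\lambda|=|\mu|$, but the argument is the same.
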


\begin{proof} Let $g_k$ be the function on $\mathbb{R}^d\times\mathbb{R}^d$ given by the formula $$g_k(\lambda,\mu)=|\lambda|^{\frac{\epsilon-1}{2}}(\lambda_k-\mu_k) |\mu|^{\frac{\epsilon-1}{2}},\quad \lambda,\mu\in\mathbb{R}^d.$$
	Note that $g_k$ is bounded when $\mu,\lambda$ range over compact sets in $\mathbb R^d\setminus\{0\}$. Then we have
$$|{\bf A}|^{\frac{\epsilon-1}{2}}[A_k,B]|{\bf A}|^{\frac{\epsilon-1}{2}}=T^{{\bf A}}_{g_k}(B).$$
It follows that
$$\sum_{k=1}^dT^{{\bf A}}_{\phi_\epsilon\cdot\psi_k}(|{\bf A}|^{\frac{\epsilon-1}{2}}[A_k,B]|{\bf A}|^{\frac{\epsilon-1}{2}})=$$
$$=\sum_{k=1}^dT^{{\bf A}}_{\phi_\epsilon\cdot\psi_k}\Big(T^{{\bf A}}_{g_k}(B)\Big)=\sum_{k=1}^dT^{{\bf A}}_{\phi_\epsilon\cdot\psi_k\cdot g_k}(B)=T^{{\bf A}}_{\sum_{k=1}^d\phi_\epsilon\cdot\psi_k\cdot g_k}(B).$$
Since
$$(\sum_{k=1}^d\phi_\epsilon\cdot\psi_k\cdot g_k)(\lambda,\mu)=|\lambda|^{\epsilon}-|\mu|^{\epsilon},\quad \lambda,\mu\in\mathbb{R}^d,$$
we have
$$T^{{\bf A}}_{\sum_{k=1}^d \phi_\epsilon \cdot\psi_k\cdot g_k}(B)=|{\bf A}|^{\epsilon}B-B|{\bf A}|^{\epsilon},$$
which proves the assertion.
\end{proof}

\begin{lem}\label{improved equality for bounded operators} Let $n\in\mathbb{N}$, let $\epsilon\in[-2n-1,-2n+1)$ and let $\phi_{\epsilon}$ and $\psi_k$ be as in \eqref{phi def} and \eqref{eq:defpsik}, respectively. If $B\in\mathcal{L}_2$ and if ${\bf A}$ is a tuple of bounded self-adjoint, mutually commuting operators such that $|{\bf A}|$ is bounded from below by a strictly positive constant, then
\begin{align*}
	[|{\bf A}|^{\epsilon},B] & = \sum_{k=1}^dT^{{\bf A}}_{\phi_{\epsilon+2n}\cdot \psi_k}\Big(|{\bf A}|^{\frac{\epsilon-1}{2}}[A_k,B]|{\bf A}|^{\frac{\epsilon-1}{2}}\Big) \\
	& \quad -\sum_{k=1}^dT^{{\bf A}}_{\psi_k}\Big(\sum_{l=1}^n|{\bf A}|^{\epsilon+l-1}[A_k,B]|{\bf A}|^{-l}+\sum_{l=1}^n|{\bf A}|^{-l}[A_k,B]|{\bf A}|^{\epsilon+l-1}\Big)
\end{align*}
\end{lem}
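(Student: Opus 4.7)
The proof will combine Lemmas~\ref{equality for bounded operators} and \ref{phi induction lemma} with the standard absorption property of DOIs, namely that for any functions $g_1,g_2$ measurable on the joint spectrum and any symbol $\phi$,
\[
T^{\mathbf{A}}_{g_1(\lambda)\,\phi(\lambda,\mu)\,g_2(\mu)}(X) = T^{\mathbf{A}}_{\phi}\bigl(g_1(\mathbf{A})\,X\,g_2(\mathbf{A})\bigr),
\]
which is immediate from the definition of a DOI with respect to commuting tuples. The hypothesis that $|\mathbf{A}|$ is bounded below by a positive constant makes every power $|\mathbf{A}|^\alpha$ that will appear bounded, so all the manipulations below take place between bounded operators and no functional-analytic subtleties arise.

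The plan is as follows. First, apply Lemma~\ref{equality for bounded operators} to write
\[
[|\mathbf{A}|^{\epsilon},B] = \sum_{k=1}^d T^{\mathbf{A}}_{\phi_{\epsilon}\cdot\psi_k}\bigl(|\mathbf{A}|^{\frac{\epsilon-1}{2}}[A_k,B]|\mathbf{A}|^{\frac{\epsilon-1}{2}}\bigr).
\]
Next, invoke Lemma~\ref{phi induction lemma} (valid since $\epsilon\in[-2n-1,-2n+1)$) to split $\phi_{\epsilon}=\phi_{\epsilon+2n}-\theta_{n,\epsilon}$, which produces the first term of the claimed identity plus a remainder
\[
-\sum_{k=1}^d T^{\mathbf{A}}_{\theta_{n,\epsilon}\cdot\psi_k}\bigl(|\mathbf{A}|^{\frac{\epsilon-1}{2}}[A_k,B]|\mathbf{A}|^{\frac{\epsilon-1}{2}}\bigr).
\]
It remains to rewrite this remainder.

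For that, use the absorption property to move the two flanking factors $|\mathbf{A}|^{\frac{\epsilon-1}{2}}$ from the argument of the DOI into its symbol, obtaining the kernel
\[
|\lambda|^{\frac{\epsilon-1}{2}}\,\theta_{n,\epsilon}(\lambda,\mu)\,|\mu|^{\frac{\epsilon-1}{2}}\cdot\psi_k(\lambda,\mu).
\]
A direct exponent computation using the definition of $\theta_{n,\epsilon}$ gives
\[
|\lambda|^{\frac{\epsilon-1}{2}}\theta_{n,\epsilon}(\lambda,\mu)|\mu|^{\frac{\epsilon-1}{2}} = \sum_{l=1}^n |\lambda|^{\epsilon+l-1}|\mu|^{-l} + \sum_{l=1}^n |\lambda|^{-l}|\mu|^{\epsilon+l-1},
\]
since $\frac{\epsilon+2l-1}{2}+\frac{\epsilon-1}{2}=\epsilon+l-1$ and $-\frac{\epsilon+2l-1}{2}+\frac{\epsilon-1}{2}=-l$. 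Applying the absorption property in the opposite direction, term by term, converts each monomial kernel back into operator factors and leaves only $\psi_k$ as the symbol, yielding exactly the second sum in the statement. Combining the two contributions completes the proof.

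The computation is essentially bookkeeping, so there is no real obstacle; the only point to watch is the exponent arithmetic in the step above, and the (harmless) fact that one must justify applying the absorption property with unbounded exponents like $-l$ or $\epsilon+l-1$, which is legitimate because $|\mathbf{A}|\geq c>0$ and $\mathbf{A}$ is bounded, making all these operator powers bounded.
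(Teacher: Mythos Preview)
Your proof is correct and follows essentially the same approach as the paper: apply Lemma~\ref{equality for bounded operators}, split $\phi_{\epsilon}=\phi_{\epsilon+2n}-\theta_{n,\epsilon}$ via Lemma~\ref{phi induction lemma}, and then use the tensor structure of $\theta_{n,\epsilon}$ together with the absorption property of DOIs to convert the remainder into the required operator form. The paper organizes the last step slightly differently, writing $T^{\mathbf{A}}_{\theta_{n,\epsilon}\cdot\psi_k}=T^{\mathbf{A}}_{\psi_k}\circ T^{\mathbf{A}}_{\theta_{n,\epsilon}}$ and then evaluating $T^{\mathbf{A}}_{\theta_{n,\epsilon}}$ directly on $|\mathbf{A}|^{\frac{\epsilon-1}{2}}[A_k,B]|\mathbf{A}|^{\frac{\epsilon-1}{2}}$, but this is exactly your exponent computation in a different order.
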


\begin{proof} By Lemma \ref{equality for bounded operators} and Lemma \ref{phi induction lemma}, we have
$$[|{\bf A}|^{\epsilon},B]=\sum_{k=1}^dT^{{\bf A}}_{(\phi_{\epsilon+2n}-\theta_{n,\epsilon})\cdot\psi_k}(|{\bf A}|^{\frac{\epsilon-1}{2}}[A_k,B]|{\bf A}|^{\frac{\epsilon-1}{2}})=$$
$$=\sum_{k=1}^dT^{{\bf A}}_{\phi_{\epsilon+2n}\cdot \psi_k}\Big(|{\bf A}|^{\frac{\epsilon-1}{2}}[A_k,B]|{\bf A}|^{\frac{\epsilon-1}{2}}\Big)-\sum_{k=1}^dT^{{\bf A}}_{\psi_k}\Big(T^{{\bf A}}_{\theta_{n,\epsilon}}\Big(|{\bf A}|^{\frac{\epsilon-1}{2}}[A_k,B]|{\bf A}|^{\frac{\epsilon-1}{2}}\Big)\Big).$$	
It is immediate from the definition of $\theta_{n,\epsilon}$, given before Lemma \ref{phi induction lemma}, that
$$T^{{\bf A}}_{\theta_{n,\epsilon}}\Big(|{\bf A}|^{\frac{\epsilon-1}{2}}[A_k,B]|{\bf A}|^{\frac{\epsilon-1}{2}}\Big)=$$
$$=\sum_{l=1}^n|{\bf A}|^{\frac{\epsilon+2l-1}{2}}\cdot |{\bf A}|^{\frac{\epsilon-1}{2}}[A_k,B]|{\bf A}|^{\frac{\epsilon-1}{2}}\cdot|{\bf A}|^{-\frac{\epsilon+2l-1}{2}}+$$
$$+\sum_{l=1}^n|{\bf A}|^{-\frac{\epsilon+2l-1}{2}}\cdot |{\bf A}|^{\frac{\epsilon-1}{2}}[A_k,B]|{\bf A}|^{\frac{\epsilon-1}{2}}\cdot|{\bf A}|^{\frac{\epsilon+2l-1}{2}}=$$
$$=\sum_{l=1}^n|{\bf A}|^{\epsilon+l-1}[A_k,B]|{\bf A}|^{-l}+\sum_{l=1}^n|{\bf A}|^{-l}[A_k,B]|{\bf A}|^{\epsilon+l-1}.$$
This proves the claimed representation.
\end{proof}

Our goal is to apply the representation formulas in Lemmas \ref{equality for bounded operators} and \ref{improved equality for bounded operators} to (regularizations of) $A=\nabla$ and $B=M_f$. To that end, we now derive trace ideal properties of the operators that appear. It is exactly in the following lemma that we are using the assumption $\epsilon>-\frac{d}{2}.$

\begin{lem}\label{simple cwikel corollary} Let $\epsilon\in(-\frac{d}{2},1)$ with $\epsilon> 0$ when $d=1$. Let $f\in L_{\frac{d}{1-\epsilon}}(\mathbb{R}^d).$
\begin{enumerate}[{\rm (i)}]
\item\label{scca} We have
$$\Big\|(-\Delta)^{\frac{\epsilon-1}{4}}M_f(-\Delta)^{\frac{\epsilon-1}{4}}\Big\|_{\frac{d}{1-\epsilon},\infty}\leq c_{d,\epsilon}\|f\|_{\frac{d}{1-\epsilon}}.$$
\item\label{sccb} If $n\in\mathbb{N}$ is such that $\epsilon\in[-2n-1,-2n+1),$ then we have
$$\Big\|(-\Delta)^{\frac{\epsilon+l-1}{2}}M_f(-\Delta)^{-\frac{l}{2}}\Big\|_{\frac{d}{1-\epsilon},\infty}\leq c'_{d,\epsilon}\|f\|_{\frac{d}{1-\epsilon}},\quad 1\leq l\leq n.$$
\end{enumerate}
\end{lem}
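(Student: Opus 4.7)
The plan for both parts is to combine the Cwikel inequality (Theorem \ref{cwikel estimate}(i)) with H\"older's inequality for weak Schatten ideals, after using the polar decomposition $f=u|f|$ (with $|u|\le 1$ pointwise) to split the operator in question as a product of two Cwikel-type pieces. Set $p:=\frac{d}{1-\epsilon}$. The assumptions $\epsilon\in(-d/2,1)$ (and $\epsilon>0$ when $d=1$) force $p>1$, so the Lorentz exponents appearing below will all exceed $2$ strictly.

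For part \eqref{scca}, I would factor
$$(-\Delta)^{\frac{\epsilon-1}{4}}M_f(-\Delta)^{\frac{\epsilon-1}{4}} = A\cdot B,\qquad A:=(-\Delta)^{\frac{\epsilon-1}{4}}M_{u|f|^{1/2}},\ \ B:=M_{|f|^{1/2}}(-\Delta)^{\frac{\epsilon-1}{4}}.$$
The symbol $|\xi|^{(\epsilon-1)/2}$ of $(-\Delta)^{(\epsilon-1)/4}$ lies in $L_{2p,\infty}(\mathbb R^d)$ because $d/|(\epsilon-1)/2|=2p$. Using $\mu(\cdot,A)=\mu(\cdot,A^*)$ and Theorem \ref{cwikel estimate}(i), applied to $A^{*}=M_{\bar u|f|^{1/2}}(-\Delta)^{(\epsilon-1)/4}$ and to $B$ directly, we get
$$\|A\|_{2p,\infty},\ \|B\|_{2p,\infty}\ \le\ c\,\||f|^{1/2}\|_{2p}\,\||\cdot|^{(\epsilon-1)/2}\|_{L_{2p,\infty}}\ \le\ c'\,\|f\|_p^{1/2}.$$
H\"older's inequality $\|AB\|_{p,\infty}\le c_p\|A\|_{2p,\infty}\|B\|_{2p,\infty}$ for weak Schatten ideals then finishes part \eqref{scca}.

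For part \eqref{sccb}, set $\alpha:=\frac{\epsilon+l-1}{2}$ and $\beta:=-\frac{l}{2}$. Since $\epsilon<-2n+1$ and $1\le l\le n$, both $\alpha,\beta$ are strictly negative, and a direct computation gives $|\alpha|+|\beta|=\frac{1-\epsilon}{2}$. Pick $s:=\frac{2|\alpha|}{1-\epsilon}\in(0,1)$ and factor
$$(-\Delta)^{\alpha}M_f(-\Delta)^{\beta} = A'\cdot B',\qquad A':=(-\Delta)^\alpha M_{u|f|^s},\ \ B':=M_{|f|^{1-s}}(-\Delta)^\beta.$$
With $q_1:=\frac{d}{2|\alpha|}$ and $q_2:=\frac{d}{2|\beta|}=\frac{d}{l}$, the symbols $|\xi|^{2\alpha}$ and $|\xi|^{2\beta}$ lie in $L_{q_1,\infty}$ and $L_{q_2,\infty}$ respectively, and the choice of $s$ is arranged so that $sq_1=p=(1-s)q_2$. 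Applying Theorem \ref{cwikel estimate}(i) as above (via adjoints) yields $\|A'\|_{q_1,\infty}\le c\|f\|_p^{s}$ and $\|B'\|_{q_2,\infty}\le c\|f\|_p^{1-s}$. Since $\frac{1}{q_1}+\frac{1}{q_2}=\frac{2(|\alpha|+|\beta|)}{d}=\frac{1}{p}$, H\"older's inequality for weak Schatten classes closes the argument.

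The only verification I foresee as nontrivial is that $2p,q_1,q_2$ all exceed $2$ strictly, which is required for Theorem \ref{cwikel estimate}(i) to apply: $2p>2$ is $p>1$, already noted; $q_1>2$ reads $|\alpha|<d/4$, i.e.\ $\epsilon>1-l-d/2$, which follows from $\epsilon>-d/2$ and $l\ge 1$; and $q_2>2$ is $l<d/2$, which holds because for the set $(-d/2,1)\cap[-2n-1,-2n+1)$ to contain $\epsilon$ one needs $d>4n-2$, whence $l\le n\le 2n-1<d/2$. I do not anticipate a conceptual obstacle; the role of the hypothesis $\epsilon>-d/2$, highlighted just before the lemma statement, is precisely to make all the relevant Cwikel exponents supercritical so that Theorem \ref{cwikel estimate}(i) can be invoked.
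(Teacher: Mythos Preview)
Your proof is correct and follows essentially the same approach as the paper: the same factorization via $|f|^s$ and $|f|^{1-s}$ (with the phase of $f$ absorbed into one factor rather than kept in the middle, a cosmetic difference), the same application of the Cwikel estimate to each factor, and the same H\"older inequality for weak Schatten ideals to recombine. Your verification that the relevant exponents $2p$, $q_1=\frac{d}{1-\epsilon-l}$, $q_2=\frac{d}{l}$ all exceed $2$ is exactly the check the paper performs, and your choice $s=\frac{1-\epsilon-l}{1-\epsilon}$ coincides with the paper's split $1-\frac{l}{1-\epsilon}$, $\frac{l}{1-\epsilon}$.
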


\begin{proof} In the proof of \eqref{scca}, we are only using the assumption $\epsilon\in(1-d,1).$ We write
$$(-\Delta)^{\frac{\epsilon-1}{4}}M_f(-\Delta)^{\frac{\epsilon-1}{4}}=(-\Delta)^{\frac{\epsilon-1}{4}}M_{|f|^{\frac12}}\cdot M_{{\rm sgn}(f)}\cdot M_{|f|^{\frac12}}(-\Delta)^{\frac{\epsilon-1}{4}}.$$
By H\"older's inequality for weak $\mathcal{L}_p$ ideals (see \cite[Theorem 11.6.9]{BiSo-book} and, concerning sharp constants, \cite{SZ2}), we write
$$\Big\|(-\Delta)^{\frac{\epsilon-1}{4}}M_f(-\Delta)^{\frac{\epsilon-1}{4}}\Big\|_{\frac{d}{1-\epsilon},\infty}\leq$$
$$\leq c_{d,\epsilon}^{(1)}\Big\|(-\Delta)^{\frac{\epsilon-1}{4}}M_{|f|^{\frac12}}\Big\|_{\frac{2d}{1-\epsilon},\infty}\cdot \Big\|M_{|f|^{\frac12}}(-\Delta)^{\frac{\epsilon-1}{4}}\Big\|_{\frac{2d}{1-\epsilon},\infty}.$$
Since $\epsilon\in(1-d,1),$ it follows that $\frac{2d}{1-\epsilon}>2.$ 
Note that the function $t\mapsto|t|^{\frac{\epsilon-1}{2}},$ 
$t\in\mathbb{R}^d,$ falls into
 $L_{\frac{2d}{1-\epsilon},\infty}(\mathbb{R}^d).$ Hence, Theorem \ref{cwikel estimate} is applicable to the functions 
 $|f|^{1/2}$ and $g(t)=|t|^{\frac{\epsilon-1}{2}}$ and we can write
$$\Big\|(-\Delta)^{\frac{\epsilon-1}{4}}M_{|f|^{\frac12}}\Big\|_{\frac{2d}{1-\epsilon},\infty}, \Big\|M_{|f|^{\frac12}}(-\Delta)^{\frac{\epsilon-1}{4}}\Big\|_{\frac{2d}{1-\epsilon},\infty}\leq c_{d,\epsilon}^{(2)}\big\||f|^{\frac12}\big\|_{\frac{2d}{1-\epsilon}}=c_{d,\epsilon}^{(2)}\|f\|_{\frac{d}{1-\epsilon}}^{\frac12}.$$
Combining these estimates, we obtain \eqref{scca}.

In the proof of \eqref{sccb}, we note that $\epsilon$ is negative and that $1-\epsilon>n\geq l,$ $1\leq l\leq n.$ We write
$$(-\Delta)^{\frac{\epsilon+l-1}{2}}M_f(-\Delta)^{-\frac{l}{2}}=$$
$$=(-\Delta)^{\frac{\epsilon+l-1}{2}}M_{|f|^{1-\frac{l}{1-\epsilon}}}\cdot M_{{\rm sgn}(f)}\cdot M_{|f|^{\frac{l}{1-\epsilon}}}(-\Delta)^{-\frac{l}{2}}.$$
Using H\"older inequality for weak $\mathcal{L}_p$ ideals, we write
$$\Big\|(-\Delta)^{\frac{\epsilon+l-1}{2}}M_f(-\Delta)^{-\frac{l}{2}}\Big\|_{\frac{d}{1-\epsilon},\infty}\leq$$
$$\leq c_{l,d,\epsilon}\Big\|(-\Delta)^{\frac{\epsilon+l-1}{2}}M_{|f|^{1-\frac{l}{1-\epsilon}}}\Big\|_{\frac{d}{1-\epsilon-l},\infty}\Big\| M_{|f|^{\frac{l}{1-\epsilon}}}(-\Delta)^{-\frac{l}{2}}\Big\|_{\frac{d}{l},\infty}.$$
Note that
$$\frac{d}{l}\geq\frac{d}{n}>2,\quad \frac{d}{1-\epsilon-l}\geq\frac{d}{-\epsilon}>2.$$
Note that the function $t\mapsto |t|^{\epsilon+l-1},$ $t\in\mathbb{R}^d$ (respectively, the function $t\mapsto |t|^{-l},$ $t\in\mathbb{R}^d$) falls into $L_{\frac{d}{1-\epsilon-l},\infty}(\mathbb{R}^d)$ (respectively, into $L_{\frac{d}{l},\infty}(\mathbb{R}^d)$). Hence, Theorem \ref{cwikel estimate} is applicable and we can write
$$\Big\|(-\Delta)^{\frac{\epsilon+l-1}{2}}M_{|f|^{1-\frac{l}{1-\epsilon}}}\Big\|_{\frac{d}{1-\epsilon-l},\infty}\leq c_{d,\epsilon}^{(3)}\||f|^{1-\frac{l}{1-\epsilon}}\|_{\frac{d}{1-\epsilon-l}}=c_{d,\epsilon}^{(3)}\|f\|_{\frac{d}{1-\epsilon}}^{1-\frac{l}{1-\epsilon}},$$
$$\Big\| M_{|f|^{\frac{l}{1-\epsilon}}}(-\Delta)^{-\frac{l}{2}}\Big\|_{\frac{d}{l},\infty}\leq c_{d,\epsilon}^{(4)}\||f|^{\frac{l}{1-\epsilon}}\|_{\frac{d}{l}}=c_{d,l}^{(4)}\|f\|_{\frac{d}{1-\epsilon}}^{\frac{l}{1-\epsilon}}.$$
Combining these estimates, we obtain \eqref{sccb}.
\end{proof}

The following corollary follows immediately by applying Lemma \ref{simple cwikel corollary} with $f$ replaced by $D_k f$.

\begin{lem}\label{xkyk} Let $\epsilon\in(-\frac{d}{2},1)$ with $\epsilon>0$ when $d=1$, and let $n\in\mathbb{Z}_+$ with $\epsilon\in[-2n-1,-2n+1).$ For $f\in C^{\infty}_c(\mathbb{R}^d)$ set 
$$X_k=(-\Delta)^{\frac{\epsilon-1}{4}}M_{D_kf}(-\Delta)^{\frac{\epsilon-1}{4}},\quad 1\leq k\leq d,$$
$$Y_k=\sum_{l=1}^n \left( (-\Delta)^{\frac{\epsilon+l-1}{2}}M_{D_kf}(-\Delta)^{-\frac{l}{2}}+(-\Delta)^{-\frac{l}{2}}M_{D_kf}(-\Delta)^{\frac{\epsilon+l-1}{2}}\right) ,\quad 1\leq k\leq d.$$
Here, for $n=0,$ $Y_k$ is assumed to be zero. Then 
$$\|X_k\|_{\frac{d}{1-\epsilon},\infty},\|Y_k\|_{\frac{d}{1-\epsilon},\infty}\leq\|f\|_{\dot{W}^1_{\frac{d}{1-\epsilon}}}.
$$
\end{lem}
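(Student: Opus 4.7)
The plan is to deduce this lemma as a direct corollary of Lemma \ref{simple cwikel corollary}, applied with $f$ replaced by $D_k f$. Since $f\in C^\infty_c(\mathbb R^d)$, each partial derivative $D_k f$ also lies in $C^\infty_c(\mathbb R^d)\subset L_{\frac{d}{1-\epsilon}}(\mathbb R^d)$, and trivially
$$\|D_k f\|_{\frac{d}{1-\epsilon}}\le \|\nabla f\|_{\frac{d}{1-\epsilon}} = \|f\|_{\dot W^1_{\frac{d}{1-\epsilon}}(\mathbb R^d)}.$$
So all hypotheses of Lemma \ref{simple cwikel corollary} are met with $D_k f$ in the role of $f$.

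The bound on $X_k$ is immediate: part \eqref{scca} of Lemma \ref{simple cwikel corollary} applied to $D_k f$ gives
$$\|X_k\|_{\frac{d}{1-\epsilon},\infty}\le c_{d,\epsilon}\|D_k f\|_{\frac{d}{1-\epsilon}}\le c_{d,\epsilon}\|f\|_{\dot W^1_{\frac{d}{1-\epsilon}}(\mathbb R^d)}.$$
For $Y_k$, nothing is needed when $n=0$. For $n\ge 1$, each term of the first kind, $(-\Delta)^{\frac{\epsilon+l-1}{2}}M_{D_k f}(-\Delta)^{-l/2}$, is controlled directly by part \eqref{sccb} of Lemma \ref{simple cwikel corollary} applied to $D_k f$. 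Each term of the reversed kind, $(-\Delta)^{-l/2}M_{D_k f}(-\Delta)^{\frac{\epsilon+l-1}{2}}$, is the adjoint of $(-\Delta)^{\frac{\epsilon+l-1}{2}}M_{\overline{D_k f}}(-\Delta)^{-l/2}$ because the fractional powers of $-\Delta$ are self-adjoint; singular values (and hence the $\mathcal L_{p,\infty}$ quasi-norm) are invariant under taking adjoints by \eqref{singular-value-square}, and $|\overline{D_k f}|=|D_k f|$, so the same estimate from Lemma \ref{simple cwikel corollary}\,\eqref{sccb} applies. Summing the $2n$ contributions and using the quasi-triangle inequality in $\mathcal L_{\frac{d}{1-\epsilon},\infty}$ (which produces only a constant depending on $d,\epsilon,n$) yields the claimed bound on $\|Y_k\|_{\frac{d}{1-\epsilon},\infty}$.

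There is no real obstacle here: the lemma is essentially a bookkeeping statement. The only minor points requiring attention are (a) verifying that the exponent constraints in Lemma \ref{simple cwikel corollary}\,\eqref{sccb}, namely $\epsilon\in[-2n-1,-2n+1)$ and $1\le l\le n$, are inherited from the hypotheses of the present lemma, which is built into the definition of $Y_k$, and (b) the self-adjointness trick used to handle the reversed factor order. Both are essentially immediate.
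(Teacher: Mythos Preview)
Your proposal is correct and follows exactly the same approach as the paper, which simply states that the lemma ``follows immediately by applying Lemma \ref{simple cwikel corollary} with $f$ replaced by $D_k f$.'' Your added details about the adjoint trick for the reversed-order terms and the quasi-triangle inequality make explicit what the paper leaves implicit.
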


We are now in position to state and prove a useful representation formula for the commutators of interest.
	
\begin{prop}\label{equality for gradient} 
	Let $d\geq 2$ and let $\epsilon\in(-\frac{d}{2},1)$ (alternatively, let $d=1$ and $\epsilon\in(0,1)$). Choose $n\in\mathbb Z_+$ with $\epsilon\in[-2n-1,2n+1)$. For every $f\in C^{\infty}_c(\mathbb{R}^d),$ we have
$$
[(-\Delta)^{\frac{\epsilon}{2}},M_f]=\sum_{k=1}^dT^{\nabla}_{\psi_k}\Big(T^{\nabla}_{\phi_{\epsilon+2n}}(X_k)\Big)-\sum_{k=1}^dT^{\nabla}_{\psi_k}(Y_k).
$$
Here, $X_k$ and $Y_k$ are given in Lemma \ref{xkyk}, $\phi_{\epsilon}$ is given in \eqref{phi def} and $\psi_k$ is given in \eqref{eq:defpsik}.
\end{prop}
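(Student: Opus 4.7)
My approach will be to obtain the claimed representation by applying the abstract commutator identity of Lemma \ref{improved equality for bounded operators} (which reduces to Lemma \ref{equality for bounded operators} when $n=0$) with $\mathbf A=\nabla$ and $B=M_f$, and then splitting the resulting DOI symbol via the composition rule $T^{\mathbf A}_{\phi\cdot\psi}=T^{\mathbf A}_\phi\circ T^{\mathbf A}_\psi$ (a standard calculus fact for spectral integrals of commuting tuples). Since $[\nabla_k,M_f]=M_{D_k f}$, the operators $X_k$ and the summands inside $Y_k$ will appear precisely as $|\nabla|^{(\epsilon-1)/2}[\nabla_k,M_f]|\nabla|^{(\epsilon-1)/2}$ and $|\nabla|^{\epsilon+l-1}[\nabla_k,M_f]|\nabla|^{-l}$, matching the right-hand side of the proposition.

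The obstacle is that Lemmas \ref{equality for bounded operators}--\ref{improved equality for bounded operators} require $\mathbf A$ to be bounded with $|\mathbf A|$ bounded below by a positive constant and $B\in\mathcal L_2$, neither of which holds for $\nabla$ and $M_f$. I would circumvent this by a spectral regularization: set $P_m=\chi_{(1/m,m)}(-\Delta)$ and work on the invariant subspace $H_m=\operatorname{Ran}(P_m)$ with $\mathbf A_m=\nabla|_{H_m}$ and $B_m=P_mM_fP_m|_{H_m}$. Then $|\mathbf A_m|=(-\Delta)^{1/2}|_{H_m}$ has spectrum in $[1/\sqrt m,\sqrt m]$, and $B_m\in\mathcal L_2(H_m)$ since $P_m=g_m(\nabla)$ with $g_m\in L_2(\mathbb R^d)$ and $f\in C_c^\infty\subset L_2(\mathbb R^d)$, so Theorem \ref{pre-cwikel estimate}(i) applies. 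A direct computation gives $[A_{m,k},B_m]=P_mM_{D_k f}P_m|_{H_m}$. Applying Lemma \ref{improved equality for bounded operators} on $H_m$ and splitting the DOI symbol via the composition rule yields the analogue of the claimed identity for the regularized operators $X_{k,m}=P_mX_kP_m$ and $Y_{k,m}=P_mY_kP_m$. Because $P_m$ commutes with the spectral measure of $\nabla$, the DOI $T^{\mathbf A_m}_\phi$ acting on a $P_m$-sandwiched operator coincides (after extending by zero) with $T^\nabla_\phi$, so the identity transports to an operator identity on $L_2(\mathbb R^d)$.

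The final step is to test both sides against $\phi,\psi\in C_c^\infty(\mathbb R^d)$ and pass $m\to\infty$. Lemma \ref{tricky convergence} will deliver the convergence of the left-hand side to $\langle[(-\Delta)^{\epsilon/2},M_f]\phi,\psi\rangle$. For the right-hand side, Lemma \ref{xkyk} provides $X_k,Y_k\in\mathcal L_{d/(1-\epsilon),\infty}$; these operators are therefore compact, and strong convergence $P_m\to I$ upgrades to norm convergence $X_{k,m}\to X_k$ and $Y_{k,m}\to Y_k$. Combined with the ideal boundedness of $T^\nabla_{\phi_{\epsilon+2n}}$ and $T^\nabla_{\psi_k}$ on $\mathcal L_p$ and $\mathcal L_{p,\infty}$ for $1<p<\infty$ (Lemmas \ref{tboundedphi} and \ref{tboundedpsi}), together with the uniform weak Schatten bounds on the regularizations, this will suffice to conclude convergence of the right-hand side.

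The main technical difficulty will be justifying precisely this passage to the limit on the right-hand side, since neither the operator norm nor the weak Schatten quasi-norm is automatically continuous under composition of DOIs in the generality we need. Should that argument require reinforcement, a direct verification in Fourier variables is available: $T^\nabla_\phi$ acts as the Schur multiplier by $\phi(\xi,\eta)$, the Fourier kernels of $X_{k,m}$ are obtained from those of $X_k$ by multiplication with $\chi_{P_m}(\xi)\chi_{P_m}(\eta)$ which converge pointwise to $1$, and for $f,\phi,\psi\in C_c^\infty$ the combined integrand admits an integrable majorant (using that $\phi_{\epsilon+2n}$ is bounded for $\epsilon+2n\in[-1,1]$ and that $\hat f$ is Schwartz), so dominated convergence yields the limit.
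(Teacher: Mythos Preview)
Your approach is essentially the paper's: regularize via $P_m=\chi_{(1/m,m)}(-\Delta)$, apply Lemma~\ref{improved equality for bounded operators} (or Lemma~\ref{equality for bounded operators} when $n=0$) to the truncated tuple $\mathbf A=\nabla P_m$, $B=P_mM_fP_m$ on $P_mL_2(\mathbb R^d)$, identify $[A_{m,k},B_m]=P_mM_{D_kf}P_m$ so that the right-hand side becomes the DOIs applied to $P_mX_kP_m$ and $P_mY_kP_m$, and then pass to the limit using Lemma~\ref{tricky convergence} on the left.

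The one place where your argument is looser than the paper's is the passage to the limit on the right-hand side. You observe that $X_k,Y_k$ are compact, so $P_mX_kP_m\to X_k$ in operator norm, and propose to combine this with the uniform $\mathcal L_{d/(1-\epsilon),\infty}$ bounds. The difficulty (which you flag) is that $T^\nabla_{\psi_k}$ is only known to be bounded on $\mathcal L_p$ and $\mathcal L_{p,\infty}$ for $1<p<\infty$ (Lemma~\ref{tboundedpsi}), not on $\mathcal L_\infty$, so operator-norm convergence does not obviously survive the DOI; and $\mathcal L_{d/(1-\epsilon),\infty}$ is not separable, so $P_mX_kP_m\to X_k$ need not hold in that quasi-norm either. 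Your Fourier-side dominated-convergence fallback can be made to work, but the paper's device is cleaner and avoids this entirely: embed $\mathcal L_{d/(1-\epsilon),\infty}\subset\mathcal L_p$ for any fixed $d/(1-\epsilon)<p<\infty$. In the \emph{separable} Banach ideal $\mathcal L_p$ one has $P_mX_kP_m\to X_k$ and $P_mY_kP_m\to Y_k$ in the $\mathcal L_p$-norm whenever $P_m\uparrow 1$ strongly (a folklore fact recorded in the remark following the paper's proof). The $\mathcal L_p$-boundedness of $T^\nabla_{\phi_{\epsilon+2n}}$ and $T^\nabla_{\psi_k}$ then gives convergence of the whole right-hand side in $\mathcal L_p$, and the limit must coincide with the bounded extension of $[(-\Delta)^{\epsilon/2},M_f]$ by the distributional convergence already established on the left.
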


\begin{proof} Fix $m\in\mathbb{N}.$ Let 
$P_m=\chi_{(\frac1m,m)}(-\Delta).$ Consider the tuple $
{\bf A}=\nabla\cdot P_m$ and $B=P_mM_fP_m$ on the Hilbert space $P_m(L_2(\mathbb{R}^d)).$ Clearly, ${\bf A}$ is bounded and $|{\bf A}|$ is bounded from below. Also, $B\in\mathcal{L}_2$ (due to the fact that $M_fg(\nabla)$  belongs to $ \mathcal{L}_2$ whenever $f,g\in L_2(\mathbb{R}^d)$). We  now apply 
Lemma \ref{improved equality for bounded operators} 
(for $n\in\mathbb{N}$) or 
Lemma \ref{equality for bounded operators} (for $n=0$). Abbreviating $\delta:=\epsilon+2n$ we obtain
\begin{align*}
		T_m & :=(P_m(-\Delta)^{\frac{\epsilon}{2}})\cdot M_f\cdot P_m-P_m\cdot M_f\cdot(P_m(-\Delta)^{\frac{\epsilon}{2}})=[|{\bf A}|^{\epsilon},B] \\
		& =\sum_{k=1}^dT^{{\bf A}}_{\psi_k}\Big(T^{{\bf A}}_{\phi_{\delta}}\Big(|{\bf A}|^{\frac{\epsilon-1}{2}}[A_k,B]|{\bf A}|^{\frac{\epsilon-1}{2}}\Big)\Big) \\& \quad -\sum_{k=1}^dT^{{\bf A}}_{\psi_k}\Big(\sum_{l=1}^n|{\bf A}|^{\epsilon+l-1}[A_k,B]|{\bf A}|^{-l}+\sum_{l=1}^n|{\bf A}|^{-l}[A_k,B]|{\bf A}|^{\epsilon+l-1}\Big).
\end{align*}
Since
$$|{\bf A}|^{\frac{\epsilon-1}{2}}[A_k,B]|{\bf A}|^{\frac{\epsilon-1}{2}}=P_mX_kP_m,$$
$$\sum_{l=1}^n|{\bf A}|^{\epsilon+l-1}[A_k,B]|{\bf A}|^{-l}+\sum_{l=1}^n|{\bf A}|^{-l}[A_k,B]|{\bf A}|^{\epsilon+l-1}=P_mY_kP_m,$$
we see that 
$$T_m=\sum_{k=1}^dT^{\nabla}_{\psi_k}\Big(T^{\nabla}_{\phi_{\delta}}\Big(P_m\cdot X_k\cdot P_m\Big)\Big)-\sum_{k=1}^dT^{\nabla}_{\psi_k}\Big(P_m\cdot Y_k\cdot P_m\Big).$$

We now want to pass to the limit $m\to\infty$. We have, by Lemma \ref{xkyk},
$$X_k,Y_k\in\mathcal{L}_{\frac{d}{1-\epsilon},\infty}\subset\mathcal{L}_p,\quad \frac{d}{1-\epsilon}<p<\infty.$$
Since $P_m\uparrow 1$ strongly, it follows that
$$P_m\cdot X_k\cdot P_m\to X_k,\quad P_m\cdot Y_k\cdot P_m\to Y_k
\qquad\text{in}\ \mathcal L_p \,.
$$
The operators $T^{\nabla}_{\phi_{\delta}}$ and $T^{\nabla}_{\psi_k}$ are bounded in $\mathcal{L}_p$ by Lemma \ref{tboundedphi} and Lemma \ref{tboundedpsi}, respectively. It follows that
$$
T^{\nabla}_{\psi_k}\Big(T^{\nabla}_{\phi_{\delta}}\Big(P_m\cdot X_k\cdot P_m\Big)\Big)\to T^{\nabla}_{\psi_k}\Big(T^{\nabla}_{\phi_{\delta}}(X_k)\Big), \qquad
T^{\nabla}_{\psi_k}(P_m\cdot Y_k\cdot P_m)\to T^{\nabla}_{\psi_k}(Y_k)
$$
in $\mathcal{L}_p$. We therefore have
$$
\mathcal{L}_p\ni T_\infty := \lim_{m\to\infty} T_m = \sum_{k=1}^dT^{\nabla}_{\psi_k}\Big(T^{\nabla}_{\phi_{\delta}}( X_k )\Big)-\sum_{k=1}^dT^{\nabla}_{\psi_k}( Y_k ).
$$

To complete the proof, it remains to notice that $T_{\infty}:L_2(\mathbb{R}^d)\to L_2(\mathbb{R}^d)$ is the unique bounded extension of the operator $[(-\Delta)^{\frac{\epsilon}{2}},M_f]:C^{\infty}_c(\mathbb{R}^d)\to (C^\infty_c(\mathbb R^d))'$ defined via Lemma \ref{definition}. Indeed, by Lemma \ref{tricky convergence} we have for all $\phi\in C_c^\infty(\mathbb R^d)$
$$
T_m\phi \to (-\Delta)^\frac\epsilon2 M_f \phi - M_f (-\Delta)^\frac\epsilon2 \phi = [(-\Delta)^{\frac{\epsilon}{2}},M_f] \phi
$$
in $(C_c^\infty(\mathbb R^d))'$. This implies
$$
T_{\infty}\phi=[(-\Delta)^{\frac{\epsilon}{2}},M_f]\phi,\quad \phi\in C^{\infty}_c(\mathbb{R}^d),
$$
and therefore proves the claim.
\end{proof}

\begin{rem}
	In the proof above, we used the folklore result that in any separable Banach ideal $(\mathcal E, \|\cdot\|_{\mathcal E})$ and for any sequence of projections $P_m\uparrow 1$, we have 
	$\|x-P_mxP_m\|_{\mathcal E}\to 0$ as $m\to \infty$. For a proof of a similar but more general fact, we refer to \cite[Proposition 2.5]{CS}. In particular, this fact holds in any Schatten ideal $\mathcal{L}_p$, $1\leq p<\infty$, but fails in the non-separable ideals $\mathcal{L}_{p,\infty}.$
\end{rem}
	
We can now derive the trace ideal inequality in Theorem \ref{main theorem positive} \eqref{mtpa} and Theorem \ref{main theorem negative} \eqref{mtna} for smooth functions.
	
\begin{lem}\label{boundcinfty} 
Let $\epsilon\in(-\frac d2,1)$ with $\epsilon>0$ when $d=1$. For every $f\in C^{\infty}_c(\mathbb{R}^d),$ we have
$$\Big\|[(-\Delta)^{\frac{\epsilon}{2}},M_f]\Big\|_{\frac{d}{1-\epsilon},\infty}\leq c_{d,\epsilon}\|f\|_{\dot{W}^{1}_{\frac{d}{1-\epsilon}}(\mathbb{R}^d)}.$$
\end{lem}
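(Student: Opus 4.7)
The plan is essentially to read off the inequality from the representation formula in Proposition \ref{equality for gradient}, using the $\mathcal{L}_{p,\infty}$-boundedness of the DOIs and the trace ideal bounds on $X_k,Y_k$ already established.

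Set $p:=\tfrac{d}{1-\epsilon}$ and choose $n\in\mathbb{Z}_+$ so that $\epsilon\in[-2n-1,-2n+1)$, hence $\delta:=\epsilon+2n\in[-1,1)$. The first thing to check is that the DOI boundedness results from Lemmas \ref{tboundedphi} and \ref{tboundedpsi}, which are stated for $1<p<\infty$, do apply here. Clearly $p<\infty$ since $\epsilon<1$. For $p>1$, one computes: when $d\geq 2$ and $\epsilon>-\tfrac{d}{2}$, we have $p=\tfrac{d}{1-\epsilon}>\tfrac{d}{1+d/2}=\tfrac{2d}{d+2}\geq 1$, with strict inequality for $d\geq 3$, and strict for $d=2$ because $\epsilon>-1$ strictly; when $d=1$ and $\epsilon>0$, $p=\tfrac{1}{1-\epsilon}>1$ directly. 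So indeed $1<p<\infty$ throughout the stated range.

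Next, Proposition \ref{equality for gradient} yields the identity
$$
[(-\Delta)^{\epsilon/2},M_f]=\sum_{k=1}^{d}T^{\nabla}_{\psi_k}\!\left(T^{\nabla}_{\phi_{\delta}}(X_k)\right)-\sum_{k=1}^{d}T^{\nabla}_{\psi_k}(Y_k),
$$
with $X_k,Y_k$ as in Lemma \ref{xkyk}. I would then invoke the quasi-triangle inequality in $\mathcal{L}_{p,\infty}$, together with Lemma \ref{tboundedphi} applied to $\phi_{\delta}$ (legal since $\delta\in[-1,1]$) and Lemma \ref{tboundedpsi} applied to $\psi_k$, to obtain
$$
\Big\|[(-\Delta)^{\epsilon/2},M_f]\Big\|_{p,\infty}\leq C_{d,\epsilon}\sum_{k=1}^{d}\bigl(\|X_k\|_{p,\infty}+\|Y_k\|_{p,\infty}\bigr).
$$
Finally, Lemma \ref{xkyk} bounds each $\|X_k\|_{p,\infty}$ and $\|Y_k\|_{p,\infty}$ by $\|f\|_{\dot{W}^1_{p}(\mathbb{R}^d)}$, yielding the claimed estimate.

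There is no real obstacle: all the heavy lifting—the DOI representation, the mapping properties of $T^{\nabla}_{\phi_{\delta}}$ and $T^{\nabla}_{\psi_k}$ on weak Schatten classes, and the Cwikel-type estimates underlying Lemma \ref{xkyk}—has already been done in the preceding sections. The only point that deserves an explicit sentence in the write-up is the verification that $p>1$ under the stated hypotheses, so that the DOI boundedness lemmas can be invoked.
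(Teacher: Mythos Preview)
Your proposal is correct and follows essentially the same approach as the paper's own proof: apply Proposition~\ref{equality for gradient}, use the quasi-triangle inequality together with the $\mathcal{L}_{p,\infty}$-boundedness of $T^{\nabla}_{\phi_{\epsilon+2n}}$ and $T^{\nabla}_{\psi_k}$ from Lemmas~\ref{tboundedphi} and~\ref{tboundedpsi}, and conclude via Lemma~\ref{xkyk}. Your explicit verification that $p=\tfrac{d}{1-\epsilon}>1$ under the stated hypotheses is a useful addition that the paper leaves implicit.
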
	

\begin{proof} 
Choosing $n$  as in Proposition \ref{equality for gradient} and applying its result, we find
$$
[(-\Delta)^{\frac{\epsilon}{2}},M_f]=\sum_{k=1}^dT^{\nabla}_{\psi_k}\Big(T^{\nabla}_{\phi_{\epsilon+2n}}(X_k)\Big)-\sum_{k=1}^dT^{\nabla}_{\psi_k}(Y_k).
$$
By the triangle inequality, we obtain
$$
\Big\|[(-\Delta)^{\frac{\epsilon}{2}},M_f]\Big\|_{\frac{d}{1-\epsilon},\infty}\leq$$
$$\leq c_{d,\epsilon}'\Big(\sum_{k=1}^d\Big\|T^{\nabla}_{\psi_k}\Big(T^{\nabla}_{\phi_{\epsilon+2n}}(X_k)\Big)\Big\|_{\frac{d}{1-\epsilon},\infty}+\sum_{k=1}^d\Big\|T^{\nabla}_{\psi_k}(Y_k)\Big\|_{\frac{d}{1-\epsilon},\infty}\Big)\leq
$$
$$\leq c_{d,\epsilon}'\Big\|T^{\nabla}_{\psi_k}\Big\|_{\mathcal{L}_{\frac{d}{1-\epsilon},\infty}\to\mathcal{L}_{\frac{d}{1-\epsilon},\infty}}\cdot\Big(1+\Big\|T^{\nabla}_{\phi_{\epsilon+2n}}\Big\|_{\mathcal{L}_{\frac{d}{1-\epsilon},\infty}\to\mathcal{L}_{\frac{d}{1-\epsilon},\infty}}\Big)\cdot$$
$$\cdot\Big(\sum_{k=1}^d\|X_k\|_{\frac{d}{1-\epsilon},\infty}+\sum_{k=1}^d\|Y_k\|_{\frac{d}{1-\epsilon},\infty}\Big).
$$
By Lemma \ref{tboundedpsi}, the first factor is finite (its value depends only on $d$ and $\epsilon$). By Lemma \ref{tboundedphi}, the second factor is finite (its value depends only on $d$ and $\epsilon$).	The assertion follows now from Lemma \ref{xkyk}.
\end{proof}

\begin{fact}\label{distl2conv}
	Let $(A_n)$ be a sequence of bounded operators with $\sup_n \|A_n\|_\infty<\infty$ and let $A:C^{\infty}_c(\mathbb{R}^d)\to (C^{\infty}_c(\mathbb{R}^d))'$ be such that $A_n\phi\to A\phi$ in the sense of distributions for every $\phi\in C^{\infty}_c(\mathbb{R}^d).$ Then $A$ extends to a bounded operator on $L_2(\mathbb R^d)$ and $A_n\to A$ strongly.
\end{fact}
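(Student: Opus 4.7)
The plan is to first identify $A\phi$ with an $L_2$-function for each $\phi\in C_c^\infty(\mathbb{R}^d)$ via a duality argument, and then to extend $A$ by density. Set $M:=\sup_n\|A_n\|_\infty$. For $\phi,\psi\in C_c^\infty(\mathbb{R}^d)$, since $A_n\phi\in L_2(\mathbb{R}^d)$, the distributional pairing $\langle A_n\phi,\psi\rangle_{(C_c^\infty)',C_c^\infty}$ coincides with the $L_2$-inner product, and the Cauchy--Schwarz inequality gives $|\langle A_n\phi,\psi\rangle|\le M\|\phi\|_2\|\psi\|_2$. Passing to the limit $n\to\infty$ via the assumed distributional convergence yields $|\langle A\phi,\psi\rangle|\le M\|\phi\|_2\|\psi\|_2$ for all $\phi,\psi\in C_c^\infty(\mathbb{R}^d)$.

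For fixed $\phi\in C_c^\infty(\mathbb{R}^d)$, the antilinear functional $\psi\mapsto\overline{\langle A\phi,\psi\rangle}$ therefore extends by density of $C_c^\infty$ in $L_2$ to a bounded antilinear functional on $L_2(\mathbb{R}^d)$ of norm at most $M\|\phi\|_2$. By the Riesz representation theorem, $A\phi$ is identified with an element of $L_2(\mathbb{R}^d)$ satisfying $\|A\phi\|_2\le M\|\phi\|_2$. A further density argument extends $A$ to a bounded operator on $L_2(\mathbb{R}^d)$ of norm at most $M$, establishing the first half of the claim.

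For the convergence, the distributional convergence together with the uniform $L_2$-bound on $\{A_n\phi\}_n$ gives $A_n\phi\rightharpoonup A\phi$ weakly in $L_2(\mathbb{R}^d)$ for each $\phi\in C_c^\infty$: test first against $\psi\in C_c^\infty$ and then extend to $\psi\in L_2$ by density. A second density argument, combining the density of $C_c^\infty$ in $L_2$ with the uniform bound on $\|A_n\|_\infty$ and on $\|A\|_\infty$, promotes this to convergence $A_nf\rightharpoonup Af$ in $L_2(\mathbb{R}^d)$ for every $f\in L_2(\mathbb{R}^d)$, i.e.\ weak operator convergence $A_n\to A$.

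The main obstacle is the promotion from weak to strong operator convergence, which does \emph{not} follow from the stated hypotheses alone (translation operators provide a standard counterexample). I would therefore interpret ``$A_n\to A$ strongly'' in the Fact in the sense of the weak operator topology, which is exactly what the argument above delivers; genuine strong operator convergence would require additional input such as norm convergence $\|A_n\phi\|_2\to\|A\phi\|_2$ on a dense subspace, which is not encoded in the stated assumptions.
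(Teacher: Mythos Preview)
Your proof of the bounded extension is essentially identical to the paper's. The crucial observation you make---that strong operator convergence does \emph{not} follow from the stated hypotheses---is correct, and your translation counterexample is valid: with $A_n=T_n$ (translation by $n$ on $L_2(\mathbb{R})$) one has $\|A_n\|_\infty=1$, $A_n\phi\to 0$ in $(C_c^\infty)'$ for every $\phi\in C_c^\infty$, yet $\|A_n\phi\|_2=\|\phi\|_2\not\to 0$. The statement of the Fact as written is therefore false in general.

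The paper's own proof does not actually do any better on this point: after establishing the bounded extension it merely asserts that ``one easily verifies that $A_nf\to Af$ for any $f\in L_2(\mathbb{R}^d)$'' without specifying the topology or giving an argument. What your argument delivers---and what the paper's argument can deliver---is precisely weak operator convergence, as you say. This is also all that is needed for the single application of the Fact in the paper (the proof of Theorem~\ref{main theorem positive}\,\eqref{mtpa}): the Fatou property for $\mathcal{L}_{p,\infty}$ invoked there (Simon, Theorem~2.7(d)) requires only WOT convergence. Your proposed reading of ``strongly'' as weak operator convergence is the correct fix.
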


\begin{proof}
	Let $M:=\sup_n \|A_n\|_\infty$. For $\phi,\psi\in C_c^\infty(\mathbb{R}^d)$ we have
	$$
	|\langle \psi,A\phi\rangle | = \lim_{n\to\infty} |\langle \psi,A_n\phi \rangle | \leq M \|\psi\|_2\|\phi\|_2 \,.
	$$
	By density and the Riesz representation theorem, this implies $A\phi\in L_2(\mathbb{R}^d)$ with $\|A\phi\|_2\leq M \|\phi\|_2$. Consequently, $A$ extends to a bounded operator on $L_2(\mathbb{R}^d)$ and one easily verifies that $A_n f\to Af$ for any $f\in L_2(\mathbb{R}^d)$.
\end{proof}

\begin{proof}[Proof of Theorem \ref{main theorem positive} \eqref{mtpa}] 
The assertion follows from Lemma \ref{boundcinfty} by a simple density argument. Let us give the details.
	
Let $f\in\dot W^1_{\frac{d}{1-\epsilon}}(\mathbb{R}^d).$ It is well known (see, e.g., \cite[Theorem 11.43]{Leoni-book}) that 
$C^{\infty}_c(\mathbb{R}^d)$ is dense in 
$\dot{W}^{1}_{\frac{d}{1-\epsilon}}(\mathbb{R}^d).$ (Observe that  \cite[Theorem 11.43]{Leoni-book} requires that either $d\geq 2$ or else $p>1.$ In our case, if  $d<2,$ then $d=1$ and $p=\frac1{1-\epsilon}.$ So, $p>1$ means $\epsilon\in(0,1),$ satisfying the assumption.) Choose a sequence 
$\{f_n\}_{n\geq0}\subset C^\infty_c(\mathbb{R}^d)$ such that 
$f_n\to f$ in $\dot{W}^{1}_{\frac{d}{1-\epsilon}}(\mathbb{R}^d).$  The proof of \cite[Theorem 11.43]{Leoni-book} also shows that there is a sequence 
$(c_n)\subset\mathbb C$ such that $f_n-c_n \to f$ in the space $L_{\frac{d}{1-\epsilon},{\rm loc}}(\mathbb R^d)$. (Indeed, $c_n$ can be chosen as the mean value of $f$ over $\{ x\in\mathbb R^d:\ n<|x|<2n\}$.)

Denote, for brevity,
$$A_n:=[(-\Delta)^{\frac{\epsilon}{2}},M_{f_n}] = [(-\Delta)^{\frac{\epsilon}{2}},M_{f_n-c_n}],\qquad A:=[(-\Delta)^{\frac{\epsilon}{2}},M_f].$$
Since $f_n-c_n\to f$ in $L_{1,{\rm loc}}(\mathbb{R}^d),$ it follows from Lemma \ref{definition} that $A_n\phi\to A\phi$ in $(C^{\infty}_c(\mathbb{R}^d))'$ for every $\phi\in C^{\infty}_c(\mathbb{R}^d).$ By Fact \ref{distl2conv} and the Fatou property of the ideal $\mathcal{L}_{\frac{d}{1-\epsilon},\infty}$ (see, e.g., \cite[Theorem 2.7 (d)]{Simon-book}), we have
$$
\|A\|_{\frac{d}{1-\epsilon},\infty}\leq \liminf_{n\to\infty} \|A_n\|_{\frac{d}{1-\epsilon},\infty}
$$
Since, by Lemma \ref{boundcinfty},
$$
\|A_n\|_{\frac{d}{1-\epsilon},\infty}\leq c_{d,\epsilon} \|f_n\|_{\dot{W}^{1}_{\frac{d}{1-\epsilon}}},\quad n\geq 0,
$$	
we obtain the assertion.
\end{proof}

\begin{proof}[Proof of Theorem \ref{main theorem negative} \eqref{mtna}] The assertion follows from Lemma \ref{boundcinfty} by a simple density argument. Let us give the details.
	
Let $f\in\dot W^1_{\frac{d}{1-\epsilon}}(\mathbb{R}^d).$ It is well known (see, e.g., \cite[Theorem 11.43]{Leoni-book}) that $C^{\infty}_c(\mathbb{R}^d)$ is dense in $\dot{W}^{1}_{\frac{d}{1-\epsilon}}(\mathbb{R}^d).$ Choose a sequence $(f_n)\subset C^\infty_c(\mathbb{R}^d)$ such that $f_n\to f$ in $\dot{W}^{1}_{\frac{d}{1-\epsilon}}(\mathbb{R}^d).$ 

Denote for brevity
$$A_n=[(-\Delta)^{\frac{\epsilon}{2}},M_{f_n}],\quad A=[(-\Delta)^{\frac{\epsilon}{2}},M_f].$$
It follows from \eqref{uniform norm negative equation} that
$$\|A_n-A\|_{\infty}\leq c_{d,\epsilon}^{(1)}\|f_n-f\|_{\dot{W}^{1}_{\frac{d}{1-\epsilon}}}.$$
	
Recall from Lemma \ref{boundcinfty} that
$$\|A_n\|_{\frac{d}{1-\epsilon},\infty}\leq c_{d,\epsilon} \|f_n\|_{\dot{W}^{1}_{\frac{d}{1-\epsilon}}},\quad n\geq 0.$$	
Using the Fatou property of the ideal $\mathcal{L}_{\frac{d}{1-\epsilon},\infty}$ (see, e.g., \cite[Theorem 2.7 (d)]{Simon-book}), we obtain
$$\|A\|_{\frac{d}{1-\epsilon},\infty} \leq\limsup_{n\to\infty}\|A_n\|_{\frac{d}{1-\epsilon},\infty}
\leq c_{d,\epsilon} \limsup_{n\to\infty}\|f_n\|_{\dot{W}^{1}_{\frac{d}{1-\epsilon}}} 
= c_{d,\epsilon} \|f\|_{\dot{W}^{1}_{\frac{d}{1-\epsilon}}},$$
as claimed.
\end{proof}


\section{Approximate expression for commutator}\label{sec:approximate}

In this section, we prove an approximation results, which provides the leading term of the commutator $[(1-\Delta)^{\frac{\epsilon}{2}},M_f]$. This approximation is needed in the proof of Theorem \ref{main theorem positive} \eqref{mtpb} and Theorem \ref{main theorem negative} \eqref{mtnb}.

\begin{thm}\label{approximation theorem} 
	Let $d\geq 2$ and $\epsilon\in(-\frac d2,1)$ (alternatively, let $d=1$ and $\epsilon\in(0,1)$). For every $f\in C_c^{\infty}(\mathbb{R}^d)$ we have
$$
[(1-\Delta)^{\frac{\epsilon}{2}},M_f] \in -\tfrac{\epsilon}{2}[\Delta,M_f](1-\Delta)^{\frac{\epsilon}{2}-1}+(\mathcal{L}_{\frac{d}{1-\epsilon},\infty})_0.
$$
\end{thm}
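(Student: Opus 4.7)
The goal is to isolate the "first order" term in a pseudodifferential/symbolic expansion of the commutator with $(1-\Delta)^{\epsilon/2}$, and to show that the remainder sits in the separable part $(\mathcal{L}_{d/(1-\epsilon),\infty})_0$, i.e., has singular values $o(n^{-(1-\epsilon)/d})$. I would carry this out by combining a Double Operator Integral representation of the commutator with a Taylor-type decomposition of the underlying divided-difference kernel, in the same spirit as Proposition \ref{equality for gradient}.

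Set $h(t)=(1+t)^{\epsilon/2}$, so that $(1-\Delta)^{\epsilon/2}=h(-\Delta)$. The first step is to establish the identity
\[
	[(1-\Delta)^{\epsilon/2},M_f] \;=\; T^{-\Delta}_{h^{[1]}}\!\bigl([-\Delta,M_f]\bigr),
	\qquad h^{[1]}(\lambda,\mu)=\tfrac{h(\lambda)-h(\mu)}{\lambda-\mu},
\]
to be justified via the same truncation procedure as in Proposition \ref{equality for gradient}: introduce $P_m=\chi_{(1/m,m)}(-\Delta)$, work with the bounded operator $|\mathbf A|^2=-\Delta\cdot P_m$, reduce (using Lemma \ref{doi composition lemma}) to a DOI with respect to $\nabla$, and then take $m\to\infty$ in $\mathcal L_p$ for some $p>d/(1-\epsilon)$ together with Fact \ref{distl2conv} and the convergence lemma \ref{tricky convergence}.

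Step two is the Taylor split. Writing $h^{[1]}(\lambda,\mu)=h'(\mu)+(\lambda-\mu)R(\lambda,\mu)$ with $R(\lambda,\mu)=\int_0^1(1-s)h''(\mu+s(\lambda-\mu))\,ds$, the first summand, inserted into the DOI, gives
\[
	T^{-\Delta}_{h'(\mu)}\!\bigl([-\Delta,M_f]\bigr) \;=\; [-\Delta,M_f]\cdot h'(-\Delta) \;=\; -\tfrac{\epsilon}{2}[\Delta,M_f](1-\Delta)^{\epsilon/2-1},
\]
which is precisely the "leading" term of the theorem. Using the elementary identity $T^A_{(\lambda-\mu)\phi}(X)=[A,T^A_\phi(X)]$, the remainder becomes
\[
	\mathcal R \;=\; T^{-\Delta}_{(\lambda-\mu)R}\!\bigl([-\Delta,M_f]\bigr) \;=\; \bigl[-\Delta,\; T^{-\Delta}_{R}\!\bigl([-\Delta,M_f]\bigr)\bigr],
\]
equivalently (and more usefully for estimates) $\mathcal R=T^{-\Delta}_{R}\!\bigl([-\Delta,[-\Delta,M_f]]\bigr)$ by the same commutator identity applied the other way.

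Step three is to place $\mathcal R$ in the small ideal. Expanding the double commutator gives $[\Delta,[\Delta,M_f]]=M_{\Delta^2 f}+4\sum_k M_{\partial_k\Delta f}\partial_k+4\sum_{j,k}M_{\partial_j\partial_k f}\partial_j\partial_k$, a finite sum of terms $M_{g}\,q(\nabla)$ with $g\in C_c^\infty$ and $q$ a polynomial of degree $\leq 2$. Passing these through $T^{-\Delta}_R$ (which, via Lemma \ref{doi composition lemma} with $h(\lambda)=|\lambda|^2$, reduces to a DOI of a symbol built from $h''=\tfrac{\epsilon(\epsilon-2)}{4}(1+\cdot)^{\epsilon/2-2}$) produces a finite sum of operators $M_{g_\alpha}\,m_\alpha(\nabla)$ whose Fourier multipliers decay at infinity like $(1+|\xi|^2)^{\epsilon/2-2}\cdot|\xi|^{\leq 2}$, i.e., two orders faster than $(1+|\xi|^2)^{\epsilon/2-1}$. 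Each such $m_\alpha$ lies in $L_{d/(1-\epsilon)}(\mathbb R^d)$ (since $d/(1-\epsilon)>d/(2-\epsilon)$), and by Kato--Seiler--Simon (Theorem \ref{pre-cwikel estimate}(i)) or by Theorem \ref{cwikel estimate}, $M_{g_\alpha}m_\alpha(\nabla)\in\mathcal L_{d/(1-\epsilon)}\subset(\mathcal L_{d/(1-\epsilon),\infty})_0$.

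The main obstacle is to rigorously handle the DOI $T^{-\Delta}_R$ on the ideal $\mathcal L_{d/(1-\epsilon),\infty}$: the kernel $R$ is \emph{not} of the tensor-product form $p(\lambda)q(\mu)$, so boundedness on the ideal has to be argued either by writing $R$ as an iterated integral of operators of the type $(\tau-\Delta)^{-1}$ (which then "absorbs" to the multipliers $m_\alpha$), or by verifying via Lemma \ref{doi composition lemma} that the symbol $R(|\lambda|^2,|\mu|^2)$ is bounded on $\mathcal L_p$ for $1<p<\infty$ by arguments analogous to Lemma \ref{tboundedphi}. Once the boundedness of $T^{-\Delta}_R$ is in hand, the proof reduces to the Cwikel-type estimates already available.
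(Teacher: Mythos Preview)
Your Step 3 contains a genuine error, not merely the ``obstacle'' you flag at the end. The claim that $T^{-\Delta}_R$ applied to $A_{2,f}=[-\Delta,[-\Delta,M_f]]$ ``produces a finite sum of operators $M_{g_\alpha}m_\alpha(\nabla)$'' is false: a DOI $T^A_\phi(M_g)$ is a genuinely two-sided object $\int\!\!\int \phi(\lambda,\mu)\,dE_A(\lambda)\,M_g\,dE_A(\mu)$, not a product of a multiplication operator with a Fourier multiplier, so Kato--Seiler--Simon/Cwikel estimates cannot be applied to the output. The extra two orders of decay you want are encoded in the \emph{symbol} $R$, not in any multiplier $m_\alpha$, and extracting them requires precisely the structural work you defer. (There is also the preliminary issue that $A_{2,f}$ is an unbounded second-order operator, so even writing $T^{-\Delta}_R(A_{2,f})$ needs regularisation before any of this can start.)

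The paper bypasses the DOI machinery entirely for this theorem: it writes $(1-\Delta)^{\epsilon/2}$ as a resolvent integral and uses the iterated resolvent identity (Fact~\ref{commutator lemma})
\[
\Big[\frac1{1+\lambda-\Delta},M_f\Big]=A_{1,f}\,(1+\lambda-\Delta)^{-2}+(1+\lambda-\Delta)^{-1}A_{2,f}\,(1+\lambda-\Delta)^{-2}.
\]
Integrating the first term in $\lambda$ gives the leading piece $-\tfrac{\epsilon}{2}[\Delta,M_f](1-\Delta)^{\epsilon/2-1}$; the second is placed in $\mathcal L_{d,\infty}\subset(\mathcal L_{d/(1-\epsilon),\infty})_0$ by a direct Cwikel bound on $A_{2,f}(1-\Delta)^{-3/2}$ combined with the integrable factor $(1+\lambda)^{-3/2}$ (Lemma~\ref{first concrete integral lemma}). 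For $\epsilon<0$ one first factors $(1-\Delta)^{\epsilon/2}=\bigl((1-\Delta)^{-\delta/2}\bigr)^m$ via Leibniz and runs the same argument on each factor (Lemma~\ref{second concrete integral lemma}). This is exactly your ``write $R$ as an iterated integral of resolvents'' fallback---so your backup plan \emph{is} the actual proof, and the DOI shortcut as written does not close.
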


The rest of this section is devoted to the proof of this theorem.

Let $(\Omega,\nu)$ be a measure space and $H$ a complex, separable Hilbert space.  Recall that a function $f : (\Omega,\nu) \rightarrow \Bc(H)$ is called \emph{measurable in the weak operator topology} if for every pair of vectors $\xi , \eta \in H$ the function $s \mapsto \langle f(s) \xi  , \eta \rangle$ is measurable. A function $f : (\Omega,\nu) \rightarrow \Bc(H)$ is said to be integrable in the weak operator topology if it is measurable in the weak operator topology and $\int_{\Omega} \|f(s)\|_{\infty} ds <\infty  $; for details see, e.g., \cite[Subsection 2.7]{CSZ}. In this case, $\int_{\Omega}  f(s) ds $ determines uniquely an operator in $\Bc(H)$ called the weak integral. 
If the function $F : (0,\infty) \rightarrow \mathcal{L}_{p,\infty}$, $1<p<\infty$, is continuous in the weak operator topology,  and $\int_0^{\infty}\|F(\lambda)\|_{p,\infty}d\lambda<\infty$, then its  weak integral exists and belongs to  $ \mathcal{L}_{p,\infty}$. Furthermore, we record the following fact, established in \cite[Proposition 2.3.2]{SZ}.

\begin{fact}\label{general integral estimate} For every $1<p<\infty$ there is a constant $c_p$ such that, if $F:(0,\infty)\to\mathcal{L}_{p,\infty}$ is continuous in the weak operator topology, then
$$\Big\|\int_0^{\infty}F(\lambda)d\lambda\Big\|_{p,\infty}\leq c_p\int_0^{\infty}\|F(\lambda)\|_{p,\infty}d\lambda.$$
\end{fact}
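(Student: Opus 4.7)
The plan is to reduce the claim to the standard Bochner-integral triangle inequality in a Banach space, by exploiting that, although $\|\cdot\|_{p,\infty}$ is only a quasi-norm, the space $\mathcal{L}_{p,\infty}$ admits for $1<p<\infty$ an equivalent genuine norm. Specifically, I would use the duality identification $\mathcal{L}_{p,\infty}\cong(\mathcal{L}_{p',1})^{*}$ with $p'=p/(p-1)$, realized by the trace pairing $T\mapsto(S\mapsto\mathrm{tr}(TS))$, under which the dual norm
\[
\|T\|_{(p,\infty)}:=\sup_{\|S\|_{\mathcal{L}_{p',1}}\le 1}|\mathrm{tr}(TS)|
\]
is equivalent to $\|T\|_{p,\infty}$ with constants depending only on $p$. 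Both ingredients (the duality and the equivalence of norms) are classical for Lorentz ideals of compact operators and I would quote them from \cite{LSZ2012}.

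First, I would observe that the weak integral $T:=\int_{0}^{\infty}F(\lambda)\,d\lambda$ is well defined as a bounded operator: the pointwise estimate $\|F(\lambda)\|_{\infty}\le\|F(\lambda)\|_{p,\infty}$ combined with the hypothesis shows that the matrix elements $\langle\xi,T\eta\rangle=\int_{0}^{\infty}\langle\xi,F(\lambda)\eta\rangle\,d\lambda$ exist absolutely and are jointly bounded sesquilinear in $(\xi,\eta)$.

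Second, I would test $T$ against finite-rank operators. For any rank-one $S=|\eta\rangle\langle\xi|$ one has $\mathrm{tr}(TS)=\langle\xi,T\eta\rangle=\int_{0}^{\infty}\mathrm{tr}(F(\lambda)S)\,d\lambda$, which extends by linearity to every finite-rank $S$. For such $S$ with $\|S\|_{\mathcal{L}_{p',1}}\le 1$, the pointwise duality bound $|\mathrm{tr}(F(\lambda)S)|\le\|F(\lambda)\|_{(p,\infty)}$ and integration in $\lambda$ yield
\[
|\mathrm{tr}(TS)|\le\int_{0}^{\infty}\|F(\lambda)\|_{(p,\infty)}\,d\lambda.
\]
Since finite-rank operators are dense in $\mathcal{L}_{p',1}$, the functional $S\mapsto\mathrm{tr}(TS)$ extends to a bounded linear functional on $\mathcal{L}_{p',1}$ whose norm is majorised by the right-hand side. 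By the duality, this functional is represented by a unique $\tilde T\in\mathcal{L}_{p,\infty}$ with $\|\tilde T\|_{(p,\infty)}$ controlled by the same integral; comparing with $T$ on rank-one operators shows $T=\tilde T$, so $T\in\mathcal{L}_{p,\infty}$. Converting between the two equivalent norms produces the constant $c_{p}$ in the claimed inequality.

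The main obstacle is conceptual rather than technical: one must carefully distinguish the weak integral (defined purely via matrix elements) from a Bochner integral in the quasi-Banach space $\mathcal{L}_{p,\infty}$, and the only way I see to pass from the former to a norm bound in the latter is to test against trace-class-type operators on the dual side. A delicate point in this step is the commutation $\mathrm{tr}(TS)=\int\mathrm{tr}(F(\lambda)S)\,d\lambda$, which I would justify first on rank-one $S$ (where it is the very definition of the weak integral) and then extend by density, rather than attempting a direct Fubini-type interchange for general $S\in\mathcal{L}_{p',1}$; this is also precisely the point at which the hypothesis of weak-operator continuity of $F$ (ensuring measurability of $\lambda\mapsto\mathrm{tr}(F(\lambda)S)$) enters.
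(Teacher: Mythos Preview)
Your argument is correct. The key observation---that for $1<p<\infty$ the weak Schatten ideal $\mathcal{L}_{p,\infty}$ carries an equivalent genuine norm, most conveniently realized via the trace duality $(\mathcal{L}_{p',1})^{*}\cong\mathcal{L}_{p,\infty}$---is exactly what is needed, and your careful handling of the passage from the weak integral to the norm estimate (testing first on rank-one, then finite-rank $S$, then invoking density in $\mathcal{L}_{p',1}$) is sound.

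Note that the paper does not give its own proof of this statement: it records it as a known fact and cites \cite[Proposition~2.3.2]{SZ}. The argument there proceeds along essentially the same lines you outline, namely by replacing the quasi-norm with an equivalent Banach norm on $\mathcal{L}_{p,\infty}$ (available precisely because $p>1$) and then using the triangle inequality for the vector-valued integral. So your route matches the intended one; the duality realization you chose is one standard way to exhibit the equivalent norm, another being the Calder\'on-type expression $\sup_{n\ge 0}(n+1)^{1/p-1}\sum_{k=0}^{n}\mu(k,T)$.
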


\begin{fact}\label{commutator lemma} Let $f\in C^{\infty}_c(\mathbb{R}^d).$ For every $n\geq2,$ we have
$$[\frac1{1+\lambda-\Delta},M_f]=\sum_{k=1}^{n-1}A_{k,f}\frac1{(1+\lambda-\Delta)^{k+1}}+\frac1{1+\lambda-\Delta}A_{n,f}\frac1{(1+\lambda-\Delta)^n}.$$
Here, we use the inductive notation
$$A_{0,f}=M_f,\quad A_{k+1,f}=[\Delta,A_{k,f}],\quad k\geq0.$$
\end{fact}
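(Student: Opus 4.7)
The plan is a straightforward algebraic induction based on the standard commutator identity for an invertible operator: $[R,A] = R[A,R^{-1}]R$, valid for any $A$ on a suitable common domain. Setting $R := (1+\lambda-\Delta)^{-1}$, one has $R^{-1} = 1+\lambda-\Delta$, so $[A,R^{-1}] = [A,-\Delta] = [\Delta,A]$ and the identity reduces to $[R,A] = R[\Delta,A]R$.

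Applied to $A = M_f = A_{0,f}$, this gives the base case $[R,M_f] = R A_{1,f} R$. Applied to $A = A_{k,f}$ for $k\geq 1$, it gives $[R,A_{k,f}] = R A_{k+1,f} R$, which may be rewritten as $R A_{k,f} = A_{k,f} R + R A_{k+1,f} R$, and hence
\begin{equation*}
R A_{k,f} R^{k} = A_{k,f} R^{k+1} + R A_{k+1,f} R^{k+1}.
\end{equation*}
Iterating this relation starting from $[R,M_f]=R A_{1,f}R$, the terms $A_{k,f} R^{k+1}$ for $k=1,\dots,n-1$ split off successively and the residual is $R A_{n,f} R^{n}$, which is exactly the claimed formula.

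The only point requiring attention is to fix a common domain on which to justify these algebraic manipulations. Since $f\in C_c^\infty(\mathbb{R}^d)$, each $A_{k,f}$ is a linear differential operator of order at most $k$ with smooth, compactly supported coefficients, hence it maps $C_c^\infty(\mathbb{R}^d)$ into itself and extends to a bounded operator $H^{s+k}\to H^{s}$ for every $s$. The resolvent $R$ is bounded on every Sobolev space and increases regularity by two, so all composite operators in the identity are bounded on $L_2(\mathbb{R}^d)$. One therefore verifies the identity first on $\phi\in C_c^\infty(\mathbb{R}^d)$ by the recursion above and extends it by density; I foresee no substantive obstacle beyond this routine bookkeeping.
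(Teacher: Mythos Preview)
Your argument is correct: the resolvent identity $[R,A]=R[\Delta,A]R$ with $R=(1+\lambda-\Delta)^{-1}$, applied recursively to peel off one term at a time, yields exactly the stated formula, and the domain considerations you sketch are adequate. The paper itself states this as a \emph{Fact} without proof, so there is no alternative argument to compare against; your proof is the natural one.
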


\begin{lem}\label{first concrete integral lemma} 
Let $\epsilon\in(0,1).$ For $f\in C^{\infty}_c(\mathbb{R}^d)$ we have
$$\int_0^{\infty}\frac1{1+\lambda-\Delta}A_{2,f}\frac1{(1+\lambda-\Delta)^2}\cdot \lambda^{\frac{\epsilon}{2}}d\lambda\in\mathcal{L}_{d,\infty}.$$
\end{lem}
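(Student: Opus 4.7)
The plan is to expand $A_{2,f}$ as a differential operator of order $\le 2$ and to bound each piece of the integrand in the Schatten ideal $\mathcal L_d$. Iterating the identity $[\Delta,M_h]=M_{\Delta h}+2\sum_{j=1}^{d}M_{\partial_j h}\partial_j$ (with $\partial_j$ the usual partial derivative) one obtains
$$A_{2,f}=[\Delta,[\Delta,M_f]]=M_{\Delta^2 f}+4\sum_{j=1}^{d}M_{\partial_j\Delta f}\,\partial_j+4\sum_{j,k=1}^{d}M_{\partial_j\partial_k f}\,\partial_j\partial_k,$$
which is a finite sum of terms $M_{g_\alpha}\partial^\alpha$ with $|\alpha|\le 2$ and coefficients $g_\alpha\in C^\infty_c(\mathbb R^d)$.

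Set $R_\lambda:=(1+\lambda-\Delta)^{-1}$. For each such term factor $g_\alpha=|g_\alpha|^{1/2}\cdot\operatorname{sgn}(g_\alpha)|g_\alpha|^{1/2}$ and split
$$R_\lambda M_{g_\alpha}\partial^\alpha R_\lambda^{2}=\bigl(R_\lambda M_{|g_\alpha|^{1/2}}\bigr)\bigl(M_{\operatorname{sgn}(g_\alpha)|g_\alpha|^{1/2}}\partial^\alpha R_\lambda^{2}\bigr).$$
Both factors are of the form ``compactly supported multiplier times Fourier multiplier'', so the Kato--Seiler--Simon inequality (Theorem \ref{pre-cwikel estimate} (i)) is available with the exponent $p=2d\ge 2$. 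The substitution $\xi=(1+\lambda)^{1/2}\eta$ yields the symbol bounds $\|(1+\lambda+|\xi|^2)^{-1}\|_{L^{2d}(\mathbb R^d)}\le c(1+\lambda)^{-3/4}$ and $\|\xi^\alpha(1+\lambda+|\xi|^2)^{-2}\|_{L^{2d}(\mathbb R^d)}\le c(1+\lambda)^{|\alpha|/2-7/4}$, hence
$$\|R_\lambda M_{|g_\alpha|^{1/2}}\|_{\mathcal L_{2d}}\le c\,(1+\lambda)^{-3/4},\qquad\|M_{\operatorname{sgn}(g_\alpha)|g_\alpha|^{1/2}}\partial^\alpha R_\lambda^{2}\|_{\mathcal L_{2d}}\le c\,(1+\lambda)^{|\alpha|/2-7/4}.$$
H\"older's inequality $\mathcal L_{2d}\cdot\mathcal L_{2d}\subset\mathcal L_d$, summed over $\alpha$, then gives
$$\bigl\|R_\lambda A_{2,f}R_\lambda^{2}\bigr\|_{\mathcal L_d}\le c\,(1+\lambda)^{-3/2}.$$

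Consequently $\int_0^\infty\lambda^{\epsilon/2}\|R_\lambda A_{2,f}R_\lambda^{2}\|_{\mathcal L_d}\,d\lambda\le c\int_0^\infty\lambda^{\epsilon/2}(1+\lambda)^{-3/2}\,d\lambda<\infty$; integrability at infinity requires exactly $\epsilon<1$, while integrability at $0$ is automatic from $\epsilon>0$. Since $\lambda\mapsto R_\lambda A_{2,f}R_\lambda^{2}$ is norm-continuous into $\mathcal L_d$, the integrand is Bochner integrable in $\mathcal L_d$, and the integral lies in $\mathcal L_d\subset\mathcal L_{d,\infty}$, which coincides with the weak integral used in the statement. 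The only mildly delicate point is the choice of the exponent $p=2d$: it puts both factors into an ideal to which Kato--Seiler--Simon applies uniformly for all $d\ge 1$ and produces bounds that combine to give precisely the borderline decay $(1+\lambda)^{-3/2}$, saturated by the hypothesis $\epsilon<1$.
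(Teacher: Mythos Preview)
Your argument is correct. Both your proof and the paper's begin with the same explicit expansion of $A_{2,f}$ as a second-order differential operator with $C^\infty_c$ coefficients, but the factorizations diverge from there. The paper, for $d\ge 2$, writes the integrand as a bounded operator times $A_{2,f}(1-\Delta)^{-3/2}\in\mathcal L_{d,\infty}$ (via Cwikel's weak-type bound, Theorem~\ref{cwikel estimate}) times another bounded operator, obtains the same $(1+\lambda)^{-3/2}$ decay, and then invokes Fact~\ref{general integral estimate} to pass the weak integral into $\mathcal L_{d,\infty}$; for $d=1$ it runs a separate argument in $\mathcal L_1$ using Theorem~\ref{pre-cwikel estimate}~(ii). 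Your route instead splits each term as a product $\mathcal L_{2d}\cdot\mathcal L_{2d}\subset\mathcal L_d$ via Kato--Seiler--Simon with $p=2d$, which is available for every $d\ge 1$; this yields the integrand directly in the \emph{strong} ideal $\mathcal L_d$ with the same $(1+\lambda)^{-3/2}$ bound, so ordinary Bochner integrability replaces Fact~\ref{general integral estimate} and no case distinction is needed. The trade-off is that your approach computes explicit $L_{2d}$-norms of the Fourier symbols (tracking the $\lambda$-dependence there), while the paper isolates the $\lambda$-dependence in operator-norm factors and keeps the Schatten estimate $\lambda$-free; the latter pattern is reused in Lemma~\ref{second concrete integral lemma}, but for the present lemma your version is tighter and more uniform.
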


\begin{proof} 
We first assume $d\geq 2$ and bound
\begin{align*}
& \Big\|\frac1{1+\lambda-\Delta}A_{2,f}\frac1{(1+\lambda-\Delta)^2}\Big\|_{d,\infty} \\
& \leq \Big\|\frac1{1+\lambda-\Delta}\Big\|_{\infty} \Big\|A_{2,f}(1-\Delta)^{-\frac32}\Big\|_{d,\infty} \Big\|\frac{(1-\Delta)^{\frac32}}{(1+\lambda-\Delta)^2}\Big\|_{\infty} \,.
\end{align*}
One easily finds that
$$\Big\|\frac1{1+\lambda-\Delta}\Big\|_{\infty} \leq \frac{1}{1+\lambda},\quad
\Big\|\frac{(1-\Delta)^{\frac32}}{(1+\lambda-\Delta)^2}\Big\|_{\infty} \leq \frac{1}{(1+\lambda)^\frac12}.$$
Moreover, $A_{2,f}$ is a second order differential operator with bounded, compactly supported coefficients. Explicitly,
$$
A_{2,f} = 4 \sum_{j,k} M_{\partial_j\partial_k f} \partial_j \partial_k + 4 \sum_j M_{\partial_j \Delta f} \partial_j + M_{\Delta^2 f} \,.
$$
We claim that by Cwikel's estimate (Theorem \ref{cwikel estimate} (i) for $d\geq 3$ and Theorem \ref{cwikel estimate} (ii) for $d=2$) we have $A_{2,f}(1-\Delta)^{-\frac32}\in \mathcal L_{d,\infty}$. To see this, we write $\partial_j\partial_k (1-\Delta)^{-\frac32}$ as $(1-\Delta)^{-\frac12}$ times the bounded operator $\partial_j\partial_k (1-\Delta)^{-1}$. The operator $M_{\partial_j\partial_k f}(1-\Delta)^{-\frac12}$ belongs to the claimed trace ideal by Cwikel's bound. The other terms in the expression for $A_{2,f}$ can be handled similarly (and, in fact, enjoy better trace ideal properties than $\mathcal L_{d,\infty}$). Thus, we have shown that
$$\Big\|\frac1{1+\lambda-\Delta}A_{2,f}\frac1{(1+\lambda-\Delta)^2}\Big\|_{d,\infty} \leq\frac{c_{n,f}}{(1+\lambda)^{\frac32}}.$$
Hence, the integrand is absolutely integrable in $\mathcal{L}_{d,\infty}$ and the assertion of the lemma for $d\geq 2$ follows from Fact \ref{general integral estimate}.

The proof for $d=1$ is a variation of this argument. We bound
\begin{align*}
& \Big\|\frac1{1+\lambda-\Delta}A_{2,f}\frac1{(1+\lambda-\Delta)^2}\Big\|_1 \\
& \leq \Big\|\frac1{1+\lambda-\Delta}\Big\|_{\infty} \Big\|A_{2,f}(1-\Delta)^{\frac{\epsilon}{2}-2}\Big\|_1 \Big\|\frac{(1-\Delta)^{2-\frac{\epsilon}{2}}}{(1+\lambda-\Delta)^2}\Big\|_{\infty}.
\end{align*}
One easily finds that
$$\Big\|\frac1{1+\lambda-\Delta}\Big\|_{\infty} \leq \frac{1}{1+\lambda},\quad
\Big\|\frac{(1-\Delta)^{2-\frac{\epsilon}{2}}}{(1+\lambda-\Delta)^2}\Big\|_{\infty} \leq \frac{1}{(1+\lambda)^{\frac{\epsilon}{2}}}.$$
Furthermore, we have $A_{2,f}(1-\Delta)^{\frac{\epsilon}{2}-2}\in\mathcal{L}_1$ using Theorem \ref{pre-cwikel estimate}. We, therefore, established the inequality
$$\Big\|\frac1{1+\lambda-\Delta}A_{2,f}\frac1{(1+\lambda-\Delta)^2}\Big\|_1\leq\frac{c_{n,f}}{(1+\lambda)^{1+\frac{\epsilon}{2}}}.$$
The integrand is absolutely integrable in $\mathcal{L}_1.$ Using Fact \ref{general integral estimate} (with $\mathcal{L}_1$ instead of $\mathcal{L}_{d,\infty}$), we infer that the integral in the assertion belongs to $\mathcal{L}_1$ and, therefore, to $\mathcal{L}_{1,\infty}.$  
\end{proof}

\begin{proof}[Proof of Theorem \ref{approximation theorem} for $0<\epsilon<1$] Set $\delta = 2-\epsilon \in(1,2).$ Using the functional calculus, we write (see, e.g., \cite[Theorem 1, p.232]{BiSo-book})
$$(1-\Delta)^{\frac{\epsilon}{2}}=\frac{\sin(\frac{\pi\epsilon}{2})}{\pi}\int_0^{\infty}\frac{1-\Delta}{1+\lambda-\Delta}\lambda^{-\frac{\delta}{2}}d\lambda.$$
Therefore,
$$[(1-\Delta)^{\frac{\epsilon}{2}},M_f]=\frac{\sin(\frac{\pi\epsilon}{2})}{\pi}\int_0^{\infty}[\frac{1-\Delta}{1+\lambda-\Delta},M_f]\lambda^{-\frac{\delta}{2}}d\lambda.$$
It is immediate that
$$[\frac{1-\Delta}{1+\lambda-\Delta},M_f]=[1-\frac{\lambda}{1+\lambda-\Delta},M_f]=-[\frac1{1+\lambda-\Delta},M_f]\cdot\lambda.$$
Hence,
$$[(1-\Delta)^{\frac{\epsilon}{2}},M_f]=-\frac{\sin(\frac{\pi\epsilon}{2})}{\pi}\int_0^{\infty}[\frac1{1+\lambda-\Delta},M_f]\lambda^{\frac{\epsilon}{2}}d\lambda.$$
Using Fact \ref{commutator lemma} with $n=2,$ we write
$$[(1-\Delta)^{\frac{\epsilon}{2}},M_f]=-A_{1,f}\cdot \frac{\sin(\frac{\pi\epsilon}{2})}{\pi}\int_0^{\infty}\frac{\lambda^{\frac{\epsilon}{2}}d\lambda}{(1+\lambda-\Delta)^2}-$$
$$-\frac{\sin(\frac{\pi\epsilon}{2})}{\pi}\int_0^{\infty}\frac1{1+\lambda-\Delta}A_{2,f} \frac1{(1+\lambda-\Delta)^2}\lambda^{\frac{\epsilon}{2}}d\lambda.$$
It follows from Lemma \ref{first concrete integral lemma} that
$$\int_0^{\infty}\frac1{1+\lambda-\Delta}A_{2,f} \frac1{(1+\lambda-\Delta)^2}\lambda^{\frac{\epsilon}{2}}d\lambda\in\mathcal{L}_{d,\infty} \subset (\mathcal L_{\frac d{1-\epsilon},\infty})_0.$$
Again by the functional calculus, we have
$$\int_0^{\infty}\frac{\lambda^{\frac{\epsilon}{2}}d\lambda}{(1+\lambda-\Delta)^2}=(1-\Delta)^{\frac{\epsilon}{2}-1}B(1+\tfrac{\epsilon}{2},1-\tfrac{\epsilon}{2})=\frac{\frac{\pi\epsilon}{2}}{\sin(\frac{\pi\epsilon}{2})}(1-\Delta)^{\frac{\epsilon}{2}-1},$$
where $B(\cdot,\cdot)$ denotes the beta function. This completes the proof. 
\end{proof}

\begin{lem}\label{second concrete integral lemma} Let $\alpha,\beta>0$ and $\delta\in(0,2).$ Suppose $p>1$ is such that $p\geq\frac{d}{\alpha+\beta+2}.$ We have
$$(1-\Delta)^{-\frac{\alpha}{2}}\cdot\int_0^{\infty}\frac1{1+\lambda-\Delta}A_{2,f} \frac1{(1+\lambda-\Delta)^2}\lambda^{-\frac{\delta}{2}}d\lambda\cdot(1-\Delta)^{-\frac{\beta}{2}}\in\mathcal{L}_{p,\infty}.$$
\end{lem}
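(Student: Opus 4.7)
The plan is to apply Fact \ref{general integral estimate} to the weak integral in question. This reduces the statement to showing that the sandwiched integrand, call it
$$
F(\lambda) := (1-\Delta)^{-\frac{\alpha}{2}}\, \frac{1}{1+\lambda-\Delta}\, A_{2,f}\, \frac{1}{(1+\lambda-\Delta)^{2}}\, (1-\Delta)^{-\frac{\beta}{2}},
$$
lies in $\mathcal{L}_{p,\infty}$ for each $\lambda>0$ with a norm estimate such that $\lambda^{-\delta/2}\|F(\lambda)\|_{p,\infty}$ is integrable on $(0,\infty)$. My strategy is to factor $F(\lambda) = X_{1}(\lambda)\, Y\, X_{2}(\lambda)$ with the outer operators bounded and $\lambda$-dependent, and the middle operator fixed and in $\mathcal{L}_{p,\infty}$. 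Concretely, for parameters $\tau_{1},\tau_{2}>0$ to be selected, set
\begin{align*}
X_{1}(\lambda) &:= \frac{(1-\Delta)^{(\tau_{1}-\alpha)/2}}{1+\lambda-\Delta}, \qquad X_{2}(\lambda) := \frac{(1-\Delta)^{(\tau_{2}-\beta)/2}}{(1+\lambda-\Delta)^{2}}, \\
Y &:= (1-\Delta)^{-\tau_{1}/2}\, A_{2,f}\, (1-\Delta)^{-\tau_{2}/2}.
\end{align*}

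For the bounded factors, a direct Fourier-multiplier computation using the elementary bound $1+\lambda+|\xi|^{2}\geq (1+\lambda)^{1-a}(1+|\xi|^{2})^{a}$ for $a\in[0,1]$ shows that, provided $\tau_{1}\in[\alpha,\alpha+2]$ and $\tau_{2}\in[\beta,\beta+4]$, one has $\|X_{1}(\lambda)\|_{\infty}\lesssim (1+\lambda)^{(\tau_{1}-\alpha)/2-1}$ and $\|X_{2}(\lambda)\|_{\infty}\lesssim (1+\lambda)^{(\tau_{2}-\beta)/2-2}$. For the middle factor, I will expand
$$
A_{2,f} = 4\sum_{j,k} M_{\partial_{j}\partial_{k}f}\,\partial_{j}\partial_{k} + 4\sum_{j} M_{\partial_{j}\Delta f}\,\partial_{j} + M_{\Delta^{2}f},
$$
so that each contribution to $Y$ has the form $(1-\Delta)^{-\tau_{1}/2} M_{g} D (1-\Delta)^{-\tau_{2}/2}$ with $g\in C_{c}^{\infty}(\mathbb{R}^{d})$ and $D$ a Fourier multiplier of order $\text{ord}(D)\leq 2$. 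Commuting $D$ with $(1-\Delta)^{-\tau_{2}/2}$ and pulling out the bounded operator $D(1-\Delta)^{-\text{ord}(D)/2}$ reduces the analysis of $Y$ to operators of the form $(1-\Delta)^{-\tau_{1}/2} M_{g} (1-\Delta)^{-\tau'_{2}/2}$ with $\tau'_{2}=\tau_{2}-\text{ord}(D)$. Applying the factorization $M_{g}=M_{|g|^{1/2}}M_{\operatorname{sgn} g}M_{|g|^{1/2}}$ together with Hölder's inequality for weak Schatten ideals and the Cwikel estimates of Theorem \ref{cwikel estimate} places $Y$ in $\mathcal{L}_{d/(\tau_{1}+\tau_{2}-2),\infty}$.

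Combining the three factors via Hölder, $\|F(\lambda)\|_{d/(\tau_{1}+\tau_{2}-2),\infty}\lesssim (1+\lambda)^{(\tau_{1}+\tau_{2}-\alpha-\beta)/2-3}$. It remains to choose $\tau_{1},\tau_{2}$ within the ranges above so that (a) $d/(\tau_{1}+\tau_{2}-2)\leq p$, which ensures $Y\in \mathcal{L}_{p,\infty}$ by the inclusion $\mathcal{L}_{q,\infty}\subset \mathcal{L}_{p,\infty}$ for $q\leq p$, and (b) $\tau_{1}+\tau_{2}<\alpha+\beta+\delta+4$, which guarantees that $\lambda^{-\delta/2}(1+\lambda)^{(\tau_{1}+\tau_{2}-\alpha-\beta)/2-3}$ is integrable at infinity (integrability at zero follows from $\delta<2$). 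The hypothesis $p\geq d/(\alpha+\beta+2)$ means $d/p\leq \alpha+\beta+2$, so the required window $2+d/p\leq \tau_{1}+\tau_{2}<\alpha+\beta+4+\delta$ is nonempty (thanks to $\delta>0$), and one verifies that it intersects the admissible rectangle $[\alpha,\alpha+2]\times[\beta,\beta+4]$ for $\tau_{1},\tau_{2}$.

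The main obstacle I anticipate is arranging the argument so that the Cwikel step works across all regimes. Depending on the size of $\alpha$, $\beta$ and $d$, the exponents $d/\tau_{1}$ and $d/\tau'_{2}$ may fall on either side of $2$, forcing one to invoke case (i), (ii), or (iii) of Theorem \ref{cwikel estimate}. Each case demands a slightly different norm of the coefficient $g$ — an $L_{p}$, $\ell_{2,\log}(L_{\infty})$, or $\ell_{p}(L_{2})$ norm — but since $g$ is a fixed smooth compactly supported function, all these norms are finite, as recorded right before Theorem \ref{pre-cwikel estimate}. With the parameters chosen as above, Fact \ref{general integral estimate} yields the claim, first for $p=d/(\alpha+\beta+2)$ when that value exceeds $1$, and then for all admissible $p$ by the embedding of ideals.
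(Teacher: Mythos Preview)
Your argument is correct and follows the same overall strategy as the paper: bound the integrand in a weak Schatten norm, obtain an integrable estimate in $\lambda$, and invoke Fact~\ref{general integral estimate}. The difference is purely in implementation.

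The paper makes the specific choice corresponding to $\tau_1=\alpha$, $\tau_2=\beta+4$ in your notation from the outset, so that $X_1(\lambda)=(1+\lambda-\Delta)^{-1}$ and $X_2(\lambda)=(1-\Delta)^2(1+\lambda-\Delta)^{-2}$ give directly $\|F(\lambda)\|\lesssim (1+\lambda)^{-1}$, and then $\lambda^{-\delta/2}(1+\lambda)^{-1}$ is integrable for $\delta\in(0,2)$; no parameter window has to be located. For the key middle estimate
\[
(1-\Delta)^{-\alpha/2}M_h(1-\Delta)^{-\beta/2-1}\in\mathcal{L}_{\frac{d}{\alpha+\beta+2},\infty},\quad h\in C_c(\mathbb{R}^d),
\]
the paper does not split $M_h$ and balance exponents via H\"older; instead it applies complex (Stein) interpolation \cite[Theorem~2.9]{Simon-book} to the analytic family $z\mapsto (1-\Delta)^{-z(\alpha+\beta+2)/2}M_h(1-\Delta)^{-(1-z)(\alpha+\beta+2)/2}$, reducing to the one-sided case $M_h(1-\Delta)^{-(\alpha+\beta+2)/2}$ covered by Theorem~\ref{cwikel estimate}. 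This sidesteps entirely the (i)/(ii)/(iii) case analysis you anticipated. Your route is more elementary (no analytic interpolation), at the price of extra bookkeeping with $\tau_1,\tau_2$ and the various Cwikel regimes; the paper's route is shorter but uses a slightly heavier tool.
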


\begin{proof}
	We claim that
\begin{equation}
	\label{eq:traceidealbound}
	(1-\Delta)^{-\frac{\alpha}{2}}M_h(1-\Delta)^{-\frac{\beta}{2}-1}\in\mathcal{L}_{\frac{d}{\alpha+\beta+2},\infty},\quad h\in C_c(\mathbb{R}^d).
\end{equation}
By complex interpolation \cite[Theorem 2.9]{Simon-book}, it suffices to prove that	
$$M_h(1-\Delta)^{-\frac{\alpha+\beta+2}{2}}\in\mathcal{L}_{\frac{d}{\alpha+\beta+2},\infty},\quad h\in C_c(\mathbb{R}^d).$$		
The latter inclusion follows from Theorem \ref{cwikel estimate}, thus proving \eqref{eq:traceidealbound}.
	
We now note that
$$(1-\Delta)^{-\frac{\alpha}{2}}A_{2,f}(1-\Delta)^{-\frac{\beta}{2}-2}\in\mathcal{L}_{\frac{d}{\alpha+\beta+2},\infty}.$$	
Indeed, $A_{2,f}$ is a differential operator of second order with smooth, compactly supported coefficients. Hence, the inclusion follows from \eqref{eq:traceidealbound}.

We now argue similarly as in the proof of Lemma \ref{first concrete integral lemma}. We bound 
$$\Big\|(1-\Delta)^{-\frac{\alpha}{2}}\cdot\frac1{1+\lambda-\Delta}A_{2,f} \frac1{(1+\lambda-\Delta)^2}\cdot(1-\Delta)^{-\frac{\beta}{2}}\Big\|_{\frac{d}{\alpha+\beta+2},\infty}\leq$$
$$\leq\frac1{1+\lambda}\Big\|(1-\Delta)^{-\frac{\alpha}{2}}A_{2,f}(1-\Delta)^{-\frac{\beta}{2}-2}\Big\|_{\frac{d}{\alpha+\beta+2},\infty}\leq\frac{c_{\alpha,\beta,f}}{1+\lambda}.$$
Hence, the integrand is absolutely integrable in $\mathcal{L}_{\frac{d}{\alpha+\beta+2},\infty}.$ Consequently, it is absolutely integrable in $\mathcal{L}_{p,\infty}$ and the assertion follows from Fact \ref{general integral estimate}.
\end{proof}

\begin{proof}[Proof of Theorem \ref{approximation theorem} for $-\frac{d}{2}<\epsilon<0$, $d\geq 2$] 
	Choose $0<\delta<1$ and $m\in\mathbb{N}$ such that $\epsilon=-m\delta.$ Using Leibniz's rule, we write
$$[(1-\Delta)^{\frac{\epsilon}{2}},M_f]=[(1-\Delta)^{-\frac{m\delta}{2}},M_f]=$$
$$=\sum_{l=0}^{m-1}(1-\Delta)^{-\frac{l\delta}{2}}[(1-\Delta)^{-\frac{\delta}2},M_f](1-\Delta)^{\frac{(l+1-m)\delta}{2}}.$$
Using the functional calculus, we write
$$(
1-\Delta)^{-\frac{\delta}{2}}=\frac{\sin(\frac{\pi\delta}{2})}{\pi}\int_0^{\infty}\frac{\lambda^{-\frac{\delta}{2}}\,d\lambda}{1+\lambda-\Delta} \,,
$$
where the integrand is a norm-continuous  function of $\lambda$ and the integral converges in operator norm.

We have
$$[(1-\Delta)^{-\frac{\delta}{2}},M_f]=\frac{\sin(\frac{\pi\delta}{2})}{\pi}\int_0^{\infty}[\frac1{1+\lambda-\Delta},M_f]\cdot\lambda^{-\frac{\delta}{2}}d\lambda.$$
Using Fact \ref{commutator lemma} with $n=2,$ we write
\begin{align*}
	[(1-\Delta)^{-\frac{\delta}{2}},M_f] & =A_{1,f}\cdot \frac{\sin(\frac{\pi\delta}{2})}{\pi}\int_0^{\infty}\frac{\lambda^{-\frac{\delta}{2}}d\lambda}{(1+\lambda-\Delta)^2} \\
	& \quad +\frac{\sin(\frac{\pi\delta}{2})}{\pi}\int_0^{\infty}\frac1{1+\lambda-\Delta}A_{2,f} \frac1{(1+\lambda-\Delta)^2}\lambda^{-\frac{\delta}{2}}d\lambda.
\end{align*}
Thus,
$$[(1-\Delta)^{\frac{\epsilon}{2}},M_f]=$$
$$=\sum_{l=0}^{m-1}(1-\Delta)^{-\frac{l\delta}{2}}A_{1,f}\cdot \frac{\sin(\frac{\pi\delta}{2})}{\pi}\int_0^{\infty}\frac{\lambda^{-\frac{\delta}{2}}d\lambda}{(1+\lambda-\Delta)^2}\cdot (1-\Delta)^{\frac{(l+1-m)\delta}{2}}+$$
$$+\sum_{l=0}^{m-1}\frac{\sin(\frac{\pi\delta}{2})}{\pi}(1-\Delta)^{-\frac{l\delta}{2}}\cdot\int_0^{\infty}\frac1{1+\lambda-\Delta}A_{2,f} \frac1{(1+\lambda-\Delta)^2}\lambda^{-\frac{\delta}{2}}d\lambda\cdot (1-\Delta)^{\frac{(l+1-m)\delta}{2}}.$$

Choose $p>1$ such that $\frac{d}{2-\epsilon-\delta}\leq p<\frac{d}{1-\epsilon}.$ (Note that $\frac{d}{1-\epsilon}>1$ since $\epsilon>-\frac d2$ and $d\geq 2$.) It follows from Lemma \ref{second concrete integral lemma} that
$$(1-\Delta)^{-\frac{l\delta}{2}}\cdot\int_0^{\infty}\frac1{1+\lambda-\Delta}A_{2,f} \frac1{(1+\lambda-\Delta)^2}\lambda^{-\frac{\delta}{2}}d\lambda\cdot (1-\Delta)^{\frac{(l+1-m)\delta}{2}}\in\mathcal{L}_{p,\infty}.$$
Thus,
$$[(1-\Delta)^{\frac{\epsilon}{2}},M_f]\in$$
$$\in\sum_{l=0}^{m-1}(1-\Delta)^{-\frac{l\delta}{2}}A_{1,f}\cdot \frac{\sin(\frac{\pi\delta}{2})}{\pi}\int_0^{\infty}\frac{\lambda^{-\frac{\delta}{2}}d\lambda}{(1+\lambda-\Delta)^2}\cdot (1-\Delta)^{\frac{(l+1-m)\delta}{2}}+(\mathcal{L}_{\frac{d}{1-\epsilon},\infty})_0.$$
Again by the functional calculus, we have
$$\int_0^{\infty}\frac{\lambda^{-\frac{\delta}{2}}d\lambda}{(1+\lambda-\Delta)^2}=(1-\Delta)^{-\frac{\delta}{2}-1}\cdot B(\tfrac{\delta}{2}+1,1-\tfrac{\delta}{2})=\frac{\frac{\pi\delta}{2}}{\sin(\frac{\pi\delta}{2})}(1-\Delta)^{-\frac{\delta}{2}-1}.$$
Thus,
$$[(1-\Delta)^{\frac{\epsilon}{2}},M_f]\in\tfrac{\delta}{2}\sum_{l=0}^{m-1}(1-\Delta)^{-\frac{l\delta}{2}}A_{1,f}(1-\Delta)^{\frac{(l-m)\delta}{2}-1}+(\mathcal{L}_{\frac{d}{1-\epsilon},\infty})_0.$$
With the help of Cwikel estimates (Theorem \ref{cwikel estimate}) it is easy to see that
$$[(1-\Delta)^{-\frac{l\delta}{2}},A_{1,f}](1-\Delta)^{\frac{(l-m)\delta}{2}-1}\in (\mathcal{L}_{\frac{d}{1-\epsilon},\infty})_0,\quad 0\leq l<m.$$
Thus,
$$[(1-\Delta)^{\frac{\epsilon}{2}},M_f]\in \tfrac{\delta}{2}\sum_{l=0}^{m-1} A_{1,f}(1-\Delta)^{-\frac{l\delta}{2}} (1-\Delta)^{\frac{(l-m)\delta}{2}-1} + (\mathcal{L}_{\frac{d}{1-\epsilon},\infty})_0.$$
Since all summands on the right hand side are equal, the assertion follows.
\end{proof}


\section{Spectral asymptotics for pseudo-differential operators}\label{sec:spectralasymp}

Our goal in this section is to extend the main result in \cite{FrSuZa}.

Let $\Pi$ be the $C^{\ast}$-subalgebra of $\Bc(L_2(\mathbb R^d))$ generated by the algebras
$$\{M_f:\ f\in\mathbb C+C_0(\mathbb R^d)\}
\qquad\text{and}\qquad
\{g(\nabla(-\Delta)^{-\frac12}):\ g\in C(\mathbb S^{d-1})\}.$$
According to \cite{SZ-DAO1} (where a much stronger result is given in Theorem 1.2) or \cite{MSZ-DAO2} (where a very general result is given in Theorem 3.3 and examplified on p.~284), there is a $\ast$-homomorphism
$$
{\rm sym} : \Pi \to C(\mathbb S^{d-1},\mathbb C + C_0(\mathbb R^d))
$$
such that, for all $f\in \mathbb C+C_0(\mathbb R^d)$ and $g\in C(\mathbb S^{d-1})$,
$$
{\rm sym} (M_f) = f\otimes 1
\qquad\text{and}\qquad
{\rm sym} (g(\nabla(-\Delta)^{-\frac12})) = 1 \otimes g \,.
$$

We say that $T \in \Bc(L_2(\mathbb{R}^d))$ is compactly supported from the right if there is a $\phi\in C_c^\infty(\mathbb{R}^d)$ such that $T = TM_{\phi}.$ We say that $T \in \Bc(L_2(\mathbb{R}^d))$ is compactly supported if there is a $\phi\in C_c^\infty(\mathbb{R}^d)$ such that $T =M_{\phi} TM_{\phi}.$

\begin{thm}\label{FSZ generalisation thm} Let $d\geq 2$ and $p>0.$ If $T\in\Pi$ is compactly supported from the right, then
$$\lim_{t\to\infty}t^{\frac1p}\mu\Big(t,T(1-\Delta)^{-\frac{d}{2p}}\Big)=d^{-\frac1p} (2\pi)^{-\frac dp} \|{\rm sym}(T)\|_{L_p(\mathbb R^d\times\mathbb S^{d-1})}.$$
\end{thm}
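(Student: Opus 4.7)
The plan is to reduce the theorem to the main result of \cite{FrSuZa} by exploiting the algebra structure of $\Pi$ together with DOI-based commutator estimates, and then to pass to the limit using the Fatou-type properties of $\mathcal L_{p,\infty}$.

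The first step is to show that, up to error terms whose product with $(1-\Delta)^{-d/(2p)}$ lies in the separable ideal $(\mathcal L_{p,\infty})_0$, every compactly supported-from-the-right $T \in \Pi$ can be approximated by a finite sum
$$
S = \sum_{i=1}^N M_{f_i}\, g_i(\nabla(-\Delta)^{-\frac12}),
$$
with $f_i \in C_c^\infty(\mathbb R^d)$ and $g_i \in C(\mathbb S^{d-1})$, whose symbol $\sum_i f_i\otimes g_i$ approximates ${\rm sym}(T)$ in $L_p(\mathbb R^d\times\mathbb S^{d-1})$. This reduction rests on two facts: $\Pi$ is generated (modulo compacts) by the commuting subalgebras $\{M_f\}$ and $\{g(\nabla(-\Delta)^{-\frac12})\}$, and the commutator $[M_f, g(\nabla(-\Delta)^{-\frac12})]$ can be written as a DOI applied to $M_{\nabla f}$, as in the proof of Proposition \ref{equality for gradient}. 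Since multiplying by $(1-\Delta)^{-d/(2p)}$ provides further smoothing, a Cwikel-type argument shows that the resulting commutator error, multiplied by $(1-\Delta)^{-d/(2p)}$, belongs to $(\mathcal L_{p,\infty})_0$. Compactness of support ensures the $f_i$ may be taken in $C_c^\infty$.

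The second step addresses the approximating sum directly. Writing
$$
S(1-\Delta)^{-\frac d{2p}} = \sum_{i=1}^N M_{f_i}\, h_i(\nabla), \qquad h_i(\xi) := g_i\!\left(\tfrac{\xi}{|\xi|}\right)(1+|\xi|^2)^{-\frac d{2p}},
$$
brings the operator into the form treated in \cite{FrSuZa}. The main result there yields the spectral asymptotics for operators of this shape, and a careful bookkeeping of the constants (the factor $(2\pi)^{-d/p}$ from the semiclassical Weyl formula and the factor $d^{-1/p}$ from the polar-coordinate volume element $r^{d-1}\,dr\,d\omega$ on $\mathbb R^d$) produces
$$
\lim_{t\to\infty} t^{1/p}\mu\bigl(t, S(1-\Delta)^{-\frac d{2p}}\bigr) = d^{-1/p}(2\pi)^{-d/p}\Bigl\|\sum_i f_i\otimes g_i\Bigr\|_{L_p(\mathbb R^d\times\mathbb S^{d-1})}.
$$

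The third and final step passes to the limit. The difference $T(1-\Delta)^{-d/(2p)} - S(1-\Delta)^{-d/(2p)}$ lies in $(\mathcal L_{p,\infty})_0$ and has arbitrarily small $\mathcal L_{p,\infty}$-quasinorm; using the standard stability principle
$$
\bigl|\, t^{1/p}\mu(t,A) - t^{1/p}\mu(t,B)\bigr| \leq 2^{1/p} \|A-B\|_{p,\infty} + o(1),
$$
and the continuity of ${\rm sym}$ in the approximation, the asymptotic coefficient for $T(1-\Delta)^{-d/(2p)}$ equals the limit of those for the approximants, which is $d^{-1/p}(2\pi)^{-d/p}\|{\rm sym}(T)\|_{L_p(\mathbb R^d\times\mathbb S^{d-1})}$. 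I expect the main obstacle to be Step~1, since it requires quantitative control of all commutators in $\Pi$ after multiplication by $(1-\Delta)^{-d/(2p)}$, and in particular an argument that the error ideal is indeed $(\mathcal L_{p,\infty})_0$ rather than merely $\mathcal L_{p,\infty}$, so that it does not contribute to the leading asymptotics.
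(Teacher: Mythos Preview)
Your proposal has a genuine gap in Step~2. The main result of \cite{FrSuZa} (Theorem~1.5 there) establishes the asymptotics only in the case $p=d$; the paper says this explicitly just after stating the theorem. So when you write that ``the main result there yields the spectral asymptotics for operators of this shape'', you are assuming precisely what the theorem is meant to prove. Writing $S(1-\Delta)^{-d/(2p)}$ as $\sum_i M_{f_i} h_i(\nabla)$ with $h_i$ asymptotically homogeneous of degree $-d/p$ does not help: there is no off-the-shelf result in \cite{FrSuZa} covering such operators for $p\neq d$, and producing one would amount to redoing that paper's argument with a different exponent, which the authors explicitly wished to avoid.

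The paper's actual route is quite different and worth knowing. It never approximates $T$ by elementary sums. Instead, for nonnegative compactly supported $T\in\Pi$ it sets $A=T$, $B=(1-\Delta)^{-1/2}$ and proves (Proposition~\ref{abstract separable lemma}) an abstract identity
\[
B^pA^p-(A^{1/2}BA^{1/2})^p\in(\mathcal L_{d/p,\infty})_0,
\]
valid for $1<p<d$ under commutator hypotheses supplied by the appendix. Since the $p=d$ result of \cite{FrSuZa} gives the asymptotics of $A^{1/2}BA^{1/2}$, taking $p$-th powers and applying the display yields the asymptotics of $T^p(1-\Delta)^{-p/2}$; replacing $T$ by $T^{1/p}\in\Pi$ then gives the theorem for $1<p<d$. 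A separate doubling lemma (Lemma~\ref{post-FSZ induction lemma}) extends this to all $p>0$, and a final polar-decomposition step removes the positivity and two-sided support assumptions on $T$. Your commutator control in Step~1 is close in spirit to the appendix, but it is used there for a different purpose (verifying the hypotheses of the abstract separable lemma and the doubling lemma), not for a direct density argument.
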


This theorem with $p=d$ appears in \cite{FrSuZa}. The above more general assertion can be obtained either by following the same steps as in that paper or, as we shall show here, as a consequence of the results proved there. The corresponding result for $d=1$ is essentially the well-known Weyl asymptotic and will be discussed separately in Subsection \ref{sec:asymp1d}.


\subsection{An abstract result on spectral asymptotics}

Our goal in this subsection is to prove the following result, which allows us to reduce spectral asymptotics for the product of powers to the power of a product. The parameter $d$ in this subsection is an arbitrary real number, not necessarily an integer (although it will be in the application to the proof of Theorem \ref{FSZ generalisation thm}).

\begin{prop}\label{abstract separable lemma} 
Let $d>p>1.$ Let $0\leq A,B\in\Bc(H).$ Suppose $BA^{\frac12}\in\mathcal{L}_{d,\infty},$ $B^pA^p\in\mathcal{L}_{\frac{d}{p},\infty}$ and $[B^p,A^{\frac12}]\in(\mathcal{L}_{\frac{d}{p},\infty})_0.$ Then
$$B^pA^p-(A^{\frac12}BA^{\frac12})^p\in(\mathcal{L}_{\frac{d}{p},\infty})_0.$$
\end{prop}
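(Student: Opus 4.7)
My plan is a two-stage telescoping. The first stage uses the commutator hypothesis to shift a factor of $A^{\frac12}$ past $B^p$; the second identifies the resulting expression with $(A^{\frac12}BA^{\frac12})^p$ modulo $(\mathcal{L}_{d/p,\infty})_0$ via an intertwining functional-calculus identity.

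For the first stage, the elementary algebra $B^pA^{\frac12}=A^{\frac12}B^p+[B^p,A^{\frac12}]$ yields
\[
B^pA^p-A^{\frac12}B^pA^{p-\frac12}=[B^p,A^{\frac12}]\,A^{p-\frac12},
\]
which lies in $(\mathcal{L}_{d/p,\infty})_0$ since the commutator does by hypothesis, $A^{p-\frac12}\in\Bc(H)$, and $(\mathcal{L}_{d/p,\infty})_0$ is a two-sided ideal of $\Bc(H)$. The remaining task is
\[
A^{\frac12}B^pA^{p-\frac12}-(A^{\frac12}BA^{\frac12})^p\in(\mathcal{L}_{d/p,\infty})_0.
\]

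For the second stage, I would exploit the intertwining identity $A^{\frac12}f(BA)=f(A^{\frac12}BA^{\frac12})\,A^{\frac12}$, which holds for polynomial $f$ by a direct induction on degree and extends to Borel $f$ with $f(0)=0$ through the spectral calculus of the self-adjoint $A^{\frac12}BA^{\frac12}$ (whose non-zero spectrum coincides with that of $BA$). Applying this with $f(x)=x^{p-1}$ and invoking $(BA)^{p-1}B=B(AB)^{p-1}$ produces
\[
(A^{\frac12}BA^{\frac12})^p=A^{\frac12}B(AB)^{p-1}A^{\frac12},
\]
while $A^{\frac12}B^pA^{p-\frac12}=A^{\frac12}B\cdot B^{p-1}A^{p-1}\cdot A^{\frac12}$. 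The difference thus equals $A^{\frac12}B[B^{p-1}A^{p-1}-(AB)^{p-1}]A^{\frac12}$. Since $A^{\frac12}B=(BA^{\frac12})^{\ast}\in\mathcal{L}_{d,\infty}$, H\"older's inequality for Lorentz ideals, in the form $\mathcal{L}_{d,\infty}\cdot(\mathcal{L}_{d/(p-1),\infty})_0\cdot\Bc(H)\subset(\mathcal{L}_{d/p,\infty})_0$, reduces the task to
\[
B^{p-1}A^{p-1}-(AB)^{p-1}\in(\mathcal{L}_{d/(p-1),\infty})_0.
\]

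The main obstacle is this reduced statement, which resembles the original at exponent $p-1$ but with the non-self-adjoint $(AB)^{p-1}$ in place of $(A^{\frac12}BA^{\frac12})^{p-1}$. My plan is to iterate the scheme: each pass drops the exponent by one and peels off another factor of $BA^{\frac12}$, with the Hölder bookkeeping arranged so that the Lorentz indices align. Two technical points need care. First, the non-integer power $(AB)^{p-1}$ of the non-self-adjoint operator $AB$ must be defined rigorously; I would do this by the regularization $A\to A+\varepsilon$, exploiting that $AB$ is then similar to the positive self-adjoint $(A+\varepsilon)^{\frac12}B(A+\varepsilon)^{\frac12}$ via an invertible transform, defining the power by the similarity, and passing $\varepsilon\to 0^+$ using the Fatou property of the weak Schatten ideal. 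Second, the iteration must terminate at a base case (exponent $\leq 1$), where the L\"owner--Heinz operator monotonicity of $x\mapsto x^s$ for $s\in(0,1]$, together with the bound $A^{\frac12}B^2A^{\frac12}=|BA^{\frac12}|^2\in\mathcal{L}_{d/2,\infty}$ derived from the first hypothesis, closes the argument.
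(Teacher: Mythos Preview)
Your first stage is fine, and the algebra in the second stage leading to
\[
A^{\frac12}B^pA^{p-\frac12}-(A^{\frac12}BA^{\frac12})^p
= A^{\frac12}B\bigl[B^{p-1}A^{p-1}-(AB)^{p-1}\bigr]A^{\frac12}
\]
is correct, as is the H\"older reduction. The scheme breaks down after that, in two places.

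\textbf{The base case is not proved.} After iterating you land at an exponent $s=p-\lfloor p\rfloor\in(0,1)$ (unless $p$ is an integer) and need something like $B^{s}A^{s}-(AB)^{s}\in(\mathcal{L}_{d/s,\infty})_0$. L\"owner--Heinz only says that $x\mapsto x^s$ is operator monotone; it gives inequalities of the form $X\leq Y\Rightarrow X^s\leq Y^s$, not that a difference of two unrelated expressions lies in a small ideal. The bound $|BA^{1/2}|^2\in\mathcal{L}_{d/2,\infty}$ you mention tells you about singular values of $A^{1/2}BA^{1/2}$, but $B^sA^s$ is not the $s$-th power of any positive operator, so neither L\"owner--Heinz nor Birman--Koplienko--Solomyak-type bounds apply to the pair $(B^sA^s,(AB)^s)$. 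No mechanism is offered here, and this is exactly the hard analytic content of the proposition.

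\textbf{The regularisation does not preserve the separable part.} You propose defining $(AB)^{p-1}$ via $A\to A+\varepsilon$ and passing to the limit using the Fatou property of $\mathcal{L}_{q,\infty}$. But the Fatou property yields membership in $\mathcal{L}_{q,\infty}$, not in $(\mathcal{L}_{q,\infty})_0$; weak-type ideals have a nontrivial quotient $\mathcal{L}_{q,\infty}/(\mathcal{L}_{q,\infty})_0$, and weak (or even norm-bounded strong) limits of elements of the separable part need not stay there. So even if every $\varepsilon>0$ gave an element of $(\mathcal{L}_{d/p,\infty})_0$, the limit could escape.

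The paper bypasses both issues entirely: it invokes an explicit integral representation (a special case of \cite[Theorem~5.2.1]{SZ}) writing $B^pA^p-(A^{1/2}BA^{1/2})^p=T_p(0)-\int_{\mathbb R}T_p(s)\widehat{g_p}(s)\,ds$ for a Schwartz function $g_p$ and an explicit operator-valued $T_p(s)$ built from commutators of the form $[BA^{1/2},A^{z}]$. One then shows each $T_p(s)\in(\mathcal{L}_{d/p,\infty})_0$ (using the hypothesis $[B^p,A^{1/2}]\in(\mathcal{L}_{d/p,\infty})_0$ together with the operator-Lipschitz results of \cite{PS-acta,HSZ-holder}) and that $\sup_s\|T_p(s)\|_{d/p,\infty}<\infty$ (via Araki--Lieb--Thirring). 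Since $d/p>1$, the integral is then Bochner-convergent in the separable Banach space $(\mathcal{L}_{d/p,\infty})_0$, which is precisely what is needed. There is no induction and no fractional base case.
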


The first step in the proof of this proposition is an integral representation for the difference on the left side of the proposition. This representation formula is a special case of \cite[Theorem 5.2.1]{SZ}, with a predecessor in \cite {CSZ}.

\begin{lem}\label{csz key lemma} 
Let $0\leq A,B\in\Bc(H)$ and  let $Y=A^{\frac12}BA^{\frac12}.$ For $p>1,$ define the mapping $T_p:\mathbb{R}\to \Bc(H)$ by,
\begin{align*}
T_p(s) :=\begin{cases}
B^{p-1}[BA^{\frac{1}{2}},A^{p-\frac{1}{2}}]+[BA^{\frac{1}{2}},A^{\frac{1}{2}}]Y^{p-1}
& \text{if}\ s=0 \,, \\
B^{p-1+is}[BA^{\frac{1}{2}},A^{p-\frac{1}{2}+is}]Y^{-is}+B^{is}[BA^{\frac{1}{2}},A^{\frac{1}{2}+is}]Y^{p-1-is} & \text{if}\ s \neq 0.
\end{cases}
\end{align*}
We also define the function $g_p\in\mathcal{S}(\mathbb{R})$ by setting
\begin{align*}
g_p(t) :=
\begin{cases}
1-\frac{p}{2} & \text{if}\ t=0 \,,\\
1-\frac{e^{\frac{p}{2}t}-e^{-\frac{p}{2}t}}{(e^{\frac{t}{2}}-e^{-\frac{t}{2}})(e^{\left(\frac{p-1}{2}\right)t}+e^{-\left(\frac{p-1}{2}\right)t})} & \text{if}\ t\neq 0 \,.
\end{cases}
\end{align*}
Then the mapping $T_p:\mathbb{R}\to \Bc(H)$ is continuous in the weak operator topology and we have
$$B^pA^p-(A^{\frac{1}{2}}BA^{\frac{1}{2}})^p = T_p(0)-\int_{\mathbb{R}} T_p(s)\widehat{g}_p(s)ds.$$
\end{lem}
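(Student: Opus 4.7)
My plan follows the approach of \cite[Theorem 5.2.1]{SZ} and \cite{CSZ}, of which this lemma is a special case.

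First, for the weak-operator continuity of $T_p$: since $A, B \geq 0$ are bounded, so is $Y = A^{1/2} B A^{1/2}$, and for any fixed $\alpha, \beta, \gamma \in \mathbb{R}$ the complex power maps $s \mapsto A^{\alpha+is}$, $s \mapsto B^{\beta+is}$, $s \mapsto Y^{\gamma+is}$ are strongly operator continuous and uniformly bounded (by $\|A\|^\alpha$, etc.) via the bounded Borel functional calculus. Multiplication by the fixed bounded operator $BA^{1/2}$ preserves WOT-continuity, so $T_p$ is WOT-continuous on $\mathbb{R} \setminus \{0\}$. At $s = 0$, a direct calculation shows that the formula for $s \neq 0$ converges to the stated $T_p(0)$, using $Y^{\pm is} \to I$ strongly and the continuity of the commutator terms.

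Next, I would expand the two commutators in $T_p(s)$ by definition and use $A^{1/2} B A^{1/2} = Y$ to collect terms. A short computation yields
\begin{align*}
T_p(s) & = \bigl(B^{p+is} A^{p+is} Y^{-is} - B^{is} A^{is} Y^{p-is}\bigr) \\
& \quad + \bigl(B^{1+is} A^{1+is} Y^{p-1-is} - B^{p-1+is} A^{p-1/2+is}\, BA^{1/2}\, Y^{-is}\bigr).
\end{align*}
Evaluating at $s = 0$ gives $T_p(0) = (B^p A^p - Y^p) + (BA \cdot Y^{p-1} - B^{p-1} A^{p-1/2} BA^{1/2})$. Therefore, after cancelling $B^p A^p - Y^p$ from both sides, the claimed identity is equivalent to the integral equality
$$
\int_{\mathbb R} T_p(s)\, \widehat{g}_p(s)\, ds = BA \cdot Y^{p-1} - B^{p-1} A^{p-1/2} BA^{1/2}.
$$

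The remaining task is to verify this integral identity by Fourier analysis of the operator-valued function $s \mapsto T_p(s)$. The hyperbolic $\sinh/\cosh$ structure of $g_p$ is precisely the kernel whose Fourier transform extracts the required combination of the twisted operator products $B^{\alpha+is} A^{\beta+is} Y^{\gamma+is}$ appearing above. Schwartz-class decay of $g_p$ at infinity genuinely requires $p > 1$ (since for $p \leq 1$ the function $g_p$ tends to a nonzero constant as $|t| \to \infty$, while for $p > 1$ the ratio tends to $1$ and $g_p$ decays); together with the uniform boundedness of $T_p(s)$ in $s$, this guarantees absolute convergence of the integral.

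The main obstacle is this last verification. Identifying $\widehat{g}_p$ as the exact kernel producing the needed cancellation is a delicate Fourier-analytic computation involving the Fourier transforms of rational functions in complex exponentials and careful bookkeeping of the operator factors. This is the technical core of \cite[Theorem 5.2.1]{SZ}, carried out there in greater generality; for our setting it specializes directly to the stated identity.
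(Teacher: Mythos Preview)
Your approach matches the paper's exactly: the paper does not prove this lemma at all but simply states it as a special case of \cite[Theorem 5.2.1]{SZ} (with predecessor \cite{CSZ}), which is precisely the source you invoke, and your expansion of $T_p(s)$ and the resulting reduction to the integral identity are correct.

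One small technical slip in your sketch: the claim that $Y^{\pm is}\to I$ strongly as $s\to 0$ is false when $Y$ has nontrivial kernel---one only gets convergence to the support projection $P_Y$. The same issue arises with $B^{is}$. Continuity of $T_p$ at $s=0$ still holds, but it requires checking that in the full expression the ``missing'' projections are harmless (e.g.\ because factors like $A^{p-1/2}BA^{1/2}=A^{p-1}Y$ already have range inside $\overline{\mathrm{ran}\,Y}$, and $B^{p-1+is}$ already kills $\ker B$); this is handled properly in \cite{SZ}, so your citation covers it.
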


\begin{lem}\label{tp estimate lemma} 
Let $d>0$ and $p>1.$ Let $0\leq A,B\in\mathcal B(H)$ and let $T_p$ be as in Lemma \ref{csz key lemma}. Suppose $B^pA^p\in\mathcal{L}_{\frac{d}{p},\infty}.$ We have
$$\sup_{s\in\mathbb{R}}\|T_p(s)\|_{\frac{d}{p},\infty}<\infty.$$
\end{lem}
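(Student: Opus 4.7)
The plan is to expand the two commutators in $T_p(s)$, simplify using basic functional-calculus identities, and then bound each of the resulting summands in $\mathcal L_{\frac{d}{p},\infty}$ uniformly in $s$. First I would use $[BA^{1/2}, A^{p-1/2+is}] = BA^{p+is} - A^{p-1/2+is}BA^{1/2}$ together with the elementary identity $A^{p-1/2+is}\cdot BA^{1/2} = A^{p-1+is}\cdot A^{1/2}BA^{1/2} = A^{p-1+is}Y$ (which uses only functional calculus for $A$ and the definition $Y = A^{1/2}BA^{1/2}$, requiring no invertibility assumption), and the analogous identities with $p-1/2$ replaced by $1/2$, to arrive at
\begin{align*}
T_p(s) &= B^{p+is}A^{p+is}Y^{-is} + B^{1+is}A^{1+is}Y^{p-1-is} \\
&\quad - B^{p-1+is}A^{p-1+is}Y^{1-is} - B^{is}A^{is}Y^{p-is}.
\end{align*}

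The key observation is that for any $0\leq C\in\mathcal B(H)$ the imaginary power $C^{is}$ is a contraction, $\|C^{is}\|_\infty \leq 1$ (with $C^{is}$ vanishing on $\ker C$). Combined with $C^{a+is} = C^aC^{is} = C^{is}C^a$ for $a>0$ and $C\geq 0$, this lets me pull out imaginary-power factors from each summand. Using also the cyclicity identity $\mu(n, XW) = \mu(n, WX)$ for non-zero singular values and $\mu(n, XWZ)\leq \|X\|_\infty\|Z\|_\infty\mu(n, W)$, I find, uniformly in $s$,
$$\mu(n, T_p(s)) \;\leq\; C_p\bigl(\mu(n, B^pA^p) + \mu(n, BA\cdot Y^{p-1}) + \mu(n, B^{p-1}A^{p-1}\cdot Y) + \mu(n, Y^p)\bigr),$$
after absorbing the quasi-triangle-inequality constant of $\mathcal L_{\frac{d}{p},\infty}$ into $C_p$. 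Thus the problem reduces to showing that each of these four \emph{cores}---all of ``total degree $p$'' in the positive operators $A$ and $B$---belongs to $\mathcal L_{\frac{d}{p},\infty}$ with quasi-norm bounded by $\|B^pA^p\|_{\frac{d}{p},\infty}$.

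The first core is the hypothesis. For the remaining three I would invoke the weak-submajorization form of the Araki--Lieb--Thirring inequality: for $0\leq X, Z\in\mathcal B(H)$ and $r\geq 1$, the sequence $\{\mu(k, (X^{1/2}ZX^{1/2})^r)\}_k$ is weakly submajorized by $\{\mu(k, X^{r/2}Z^rX^{r/2})\}_k$, with the reverse majorization for $r\in(0,1]$. Combined with cyclicity of singular values and the fact that in $\ell_{\frac{d}{p},\infty}$ (with $d/p>1$) weak majorization controls the quasi-norm up to a constant depending only on $d/p$, this reduces each remaining core to $B^pA^p$. For instance, $\mu(k, Y^p) = \mu(k, (A^{1/2}BA^{1/2})^p)$ is weakly submajorized by $\mu(k, A^{p/2}B^p A^{p/2}) = \mu(k, A^{p/2}B^{p/2})^2$, and a further inverse Araki step (applied with $r = 1/2$ to the positive operator $A^pB^{2p}A^p = (B^pA^p)^*(B^pA^p)$) compares this to $\mu(k, B^pA^p)$.

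The hardest part will be the iterated Araki--Lieb--Thirring bookkeeping for the asymmetric cores $BA\cdot Y^{p-1}$ and $B^{p-1}A^{p-1}\cdot Y$, whose exponents of $A$ and $B$ do not balance as cleanly as in the cases $Y^p$ and $B^pA^p$. A natural route is to pass to the positive operator $(V^*V)$ (so $\mu(n, V)^2 = \mu(n, V^*V)$), yielding expressions of the form $Y^{p-1}AB^2AY^{p-1}$ and $Y A^{p-1}B^{2(p-1)}A^{p-1}Y$ respectively, and then to apply Araki iteratively to migrate powers of $Y$ and of $A,B$ into a single balanced expression that can be majorized by $A^pB^{2p}A^p$; the Lorentz bound above then closes the loop.
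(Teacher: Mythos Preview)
Your expansion into the four summands and the stripping of the imaginary powers via $\|C^{is}\|_\infty\le 1$ together with cyclicity is correct and is exactly the mechanism the paper uses (the paper keeps the $A^{is}$ factor inside rather than cycling it out, but this is cosmetic). The point where your plan diverges from the paper is the treatment of the two ``asymmetric'' cores $BA\cdot Y^{p-1}$ and $B^{p-1}A^{p-1}\cdot Y$: you propose an iterated Araki--Lieb--Thirring argument via $V^*V$, which you have not actually carried out and which is unnecessarily intricate. The paper instead applies H\"older for weak Schatten ideals once,
\[
\|BA\cdot Y^{p-1}\|_{\frac{d}{p},\infty}\le c\,\|BA\|_{d,\infty}\|Y\|_{d,\infty}^{p-1},\qquad
\|B^{p-1}A^{p-1}\cdot Y\|_{\frac{d}{p},\infty}\le c\,\|B^{p-1}A^{p-1}\|_{\frac{d}{p-1},\infty}\|Y\|_{d,\infty},
\]
and a \emph{single} Araki--Lieb--Thirring step ($X^rY^r\prec\prec_{\log}|XY|^r$, $0<r\le 1$) already gives $BA\in\mathcal L_{d,\infty}$, $B^{p-1}A^{p-1}\in\mathcal L_{\frac{d}{p-1},\infty}$, $B^{1/2}A^{1/2}\in\mathcal L_{2d,\infty}$ (hence $Y\in\mathcal L_{d,\infty}$) from the hypothesis $B^pA^p\in\mathcal L_{\frac{d}{p},\infty}$. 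This replaces your entire last paragraph.

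Two smaller remarks. First, your displayed inequality for $\mu(n,T_p(s))$ mixes a pointwise singular-value estimate with the quasi-triangle constant of $\mathcal L_{\frac{d}{p},\infty}$; the clean statement is at the quasi-norm level, as in the paper. Second, you invoke ``weak majorization controls the $\ell_{\frac{d}{p},\infty}$ quasi-norm for $d/p>1$'', but the lemma is stated for arbitrary $d>0$; the paper avoids this restriction by using that Araki--Lieb--Thirring gives \emph{logarithmic} submajorization, and every quasi-Banach ideal is closed under logarithmic submajorization.
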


\begin{proof} 
We use the notion of logarithmic submajorization discussed, for instance in \cite[Definition 2.3.10]{LSZ-book-2}. For $x,y\in\mathcal B(H)$ one write $x\prec\prec_{{\rm log}} y$ if $\prod_{k=1}^{n} \mu(k,x) \leq \prod_{k=1}^{n}\mu(k,y)$ for all $n\geq 0$. In this notation the Araki--Lieb--Thirring inequality (see \cite{Ar}) states that
$$X^rY^r\prec\prec_{{\rm log}}|XY|^r,\quad 0<r\leq 1.$$
Using this inequality with $X=B^p,$ $Y=A^p$ and taking into account that every quasi-Banach ideal is closed with respect to the logarithmic submajorization (see \cite[Proposition 2.4.18]{LSZ-book-2}), we obtain from the assumption $B^pA^p\in\mathcal L_{\frac dp,\infty}$ the inclusions
$$B^{p-1}A^{p-1}\in\mathcal{L}_{\frac{d}{p-1},\infty},\quad BA\in\mathcal{L}_{d,\infty},\quad B^{\frac12}A^{\frac12}\in\mathcal{L}_{2d,\infty}.$$

It follows from \eqref{singular-value-sum} that
$$
\|x+y\|_{r,\infty}\leq 2^{\frac1r}(\|x\|_{r,\infty}+\|y\|_{r,\infty})
\qquad\text{for all}\ 0<r<\infty.
$$
Applying  this quasi-triangle inequality,  we have with $Y:=A^\frac12 B A^\frac12$
$$\|T_p(s)\|_{\frac{d}{p},\infty}\leq 2^{\frac{p}{d}}\|B^{p-1}[BA^{\frac{1}{2}},A^{p-\frac{1}{2}+is}]\|_{\frac{d}{p},\infty}+2^{\frac{p}{d}}\|[BA^{\frac{1}{2}},A^{\frac{1}{2}+is}]Y^{p-1}\|_{\frac{d}{p},\infty}.$$
Again using the quasi-triangle inequality, we obtain
$$\|B^{p-1}[BA^{\frac{1}{2}},A^{p-\frac{1}{2}+is}]\|_{\frac{d}{p},\infty}\leq 2^{\frac{p}{d}}\|B^pA^p\|_{\frac{d}{p},\infty}+2^{\frac{p}{d}}\|B^{p-1}A^{p-\frac{1}{2}+is}BA^{\frac{1}{2}}\|_{\frac{d}{p},\infty},$$
$$\|[BA^{\frac{1}{2}},A^{\frac{1}{2}+is}]Y^{p-1}\|_{\frac{d}{p},\infty}\leq2^{\frac{p}{d}}\|BA^{1+is}Y^{p-1}\|_{\frac{d}{p},\infty}+2^{\frac{p}{d}}\|A^{\frac{1}{2}}BA^{\frac{1}{2}}Y^{p-1}\|_{\frac{d}{p},\infty}.$$
Using H\"older's inequality, we obtain
$$\|B^{p-1}A^{p-\frac{1}{2}+is}BA^{\frac{1}{2}}\|_{\frac{d}{p},\infty}\leq 2^{\frac{p}{d}}\|B^{p-1}A^{p-1}\|_{\frac{d}{p-1},\infty}\|A^{\frac12}BA^{\frac12}\|_{d,\infty},$$
$$\|BA^{1+is}Y^{p-1}\|_{\frac{d}{p},\infty}\leq 2^{\frac{p}{d}}\|BA\|_{d,\infty}\|Y\|_{d,\infty}^{p-1}.$$
Clearly,
$$\|A^{\frac{1}{2}}BA^{\frac{1}{2}}Y^{p-1}\|_{\frac{d}{p},\infty}=\|Y\|_{d,\infty}^p=\|B^{\frac12}A^{\frac12}\|_{2d,\infty}^{2p}.$$
Combining these estimates, we complete the proof.
\end{proof}

\begin{lem}\label{power separable part lemma} Let $1<r<\infty.$ Let $0\leq X,Y\in\Bc(H).$ If $[X,Y]\in(\mathcal{L}_{r,\infty})_0,$ then also $[X,Y^z]\in(\mathcal{L}_{r,\infty})_0$ for every $z\in\mathbb{C}$ with $\Re(z)\geq 1.$
\end{lem}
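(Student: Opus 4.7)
The plan is to realize $[X,Y^z]$ as a Double Operator Integral applied to $[X,Y]$, and then to show that this DOI preserves the separable ideal $(\mathcal{L}_{r,\infty})_0$. With
$$
\phi_z(\lambda,\mu):=\frac{\lambda^z-\mu^z}{\lambda-\mu},\qquad \phi_z(\lambda,\lambda):=z\lambda^{z-1},
$$
the spectral theorem yields $[X,Y^z]=T^Y_{\phi_z}([X,Y])$, and for $\Re z\geq 1$ the symbol $\phi_z$ is continuous on the compact set $\mathrm{spec}(Y)\times\mathrm{spec}(Y)\subset[0,\infty)^2$. The task thus reduces to showing that $T^Y_{\phi_z}$ is bounded on $\mathcal{L}_{r,\infty}$ and maps $(\mathcal{L}_{r,\infty})_0$ to itself.

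Under the additional assumption $Y\geq\delta I$ for some $\delta>0$ (see below), one has the symmetric factorization
$$
\phi_z(\lambda,\mu)=(\lambda\mu)^{(z-1)/2}\cdot\Psi_z(\ln\lambda-\ln\mu),\qquad \Psi_z(x):=\frac{\sinh(zx/2)}{\sinh(x/2)},
$$
extended continuously by $\Psi_z(0)=z$. Applying Lemma \ref{doi composition lemma} with $h(\lambda)=\ln\lambda$ and absorbing the bounded factors $Y^{(z-1)/2}$ into left/right multiplications, $T^Y_{\phi_z}$ reduces to $T^{\ln Y}_{\Psi_z(\cdot-\cdot)}$, a DOI for the self-adjoint operator $\ln Y$ whose spectrum is the compact set $\ln\mathrm{spec}(Y)$ and whose symbol is smooth there. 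A Peller-type argument as in the proof of Lemma \ref{tboundedphi}, invoking \cite[Lemma~9]{PS-crelle}, then gives boundedness of $T^Y_{\phi_z}$ on $\mathcal{L}_p$ for every $1<p<\infty$, and by interpolation on $\mathcal{L}_{p,\infty}$. Since $\mathcal{L}_r\subset(\mathcal{L}_{r,\infty})_0$ and $(\mathcal{L}_{r,\infty})_0$ is the $\|\cdot\|_{r,\infty}$-closure of $\mathcal{L}_r$, continuity forces $T^Y_{\phi_z}$ to send $(\mathcal{L}_{r,\infty})_0$ into itself. Applied to $[X,Y]\in(\mathcal{L}_{r,\infty})_0$, this delivers the conclusion.

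The main obstacle is the reduction to $Y\geq\delta I$: when $0\in\mathrm{spec}(Y)$, the logarithm is unbounded and the symmetric factorization breaks down. A natural approach is to approximate $Y$ by $Y_\delta:=Y+\delta I$, obtain $[X,Y_\delta^z]\in(\mathcal{L}_{r,\infty})_0$ from the case above, and pass to the limit $\delta\to 0^+$ using the operator-norm convergence $Y_\delta^z\to Y^z$ (valid for $\Re z\geq 1$ by continuous functional calculus on the compact spectrum of $Y$). The difficulty is that operator-norm convergence alone does not close the argument, since $(\mathcal{L}_{r,\infty})_0$ is not closed in operator norm; one needs in addition a $\delta$-uniform $\mathcal{L}_{r,\infty}$-bound and a stronger convergence (for instance, approximation in $\mathcal{L}_{r,\infty}$ up to a finite-rank correction). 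Alternatively, the factorization can be avoided altogether by working with a Duhamel-based integral representation of $Y^z$ in terms of the semigroup $e^{-tY}$, which makes no assumption on the bottom of the spectrum of $Y$.
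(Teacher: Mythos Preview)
Your proposal is incomplete, and you say so yourself: the reduction to $Y\geq\delta I$ is essential to your logarithmic factorization, and you do not close the gap when $0\in\mathrm{spec}(Y)$. The approximation $Y_\delta=Y+\delta I$ does give $[X,Y_\delta]=[X,Y]$, but passing to the limit in $(\mathcal{L}_{r,\infty})_0$ requires exactly the uniform control and mode of convergence you flag as missing. (There is also a smaller issue: for $\Re z>1$ the function $\Psi_z(x)=\sinh(zx/2)/\sinh(x/2)$ grows like $e^{(\Re z-1)|x|/2}$ and is not Schwartz on $\mathbb{R}$; you would have to cut it off outside the compact range of $\ln\mathrm{spec}(Y_\delta)$ before invoking \cite[Lemma~9]{PS-crelle}, which is harmless but should be said.)

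The paper sidesteps all of this in one line. Normalize $\|Y\|_\infty=1$, so $\mathrm{spec}(Y)\subset[0,1]$. For $\Re z\geq1$ the map $t\mapsto t^z$ is \emph{Lipschitz} on $[0,1]$, since $|zt^{z-1}|\leq|z|$ there; extend it to a Lipschitz function $f_z$ on $\mathbb{R}$. Then your own identity reads $[X,Y^z]=T^Y_{f_z^{[1]}}([X,Y])$, and the Potapov--Sukochev theorem \cite{PS-acta} (every Lipschitz function is operator-Lipschitz in $\mathcal{L}_p$, $1<p<\infty$) gives boundedness of $T^Y_{f_z^{[1]}}$ on each $\mathcal{L}_p$, hence by interpolation on $\mathcal{L}_{r,\infty}$ and on its separable part $(\mathcal{L}_{r,\infty})_0$. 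No logarithm, no factorization, no issue at the bottom of the spectrum.

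The point you are missing is that the divided difference $\phi_z$ is already the divided difference of a globally Lipschitz function on $[0,\|Y\|]$, so the full strength of \cite{PS-acta} applies directly; there is no need to force $\phi_z$ into the specific $\sinh$-form of Lemma~\ref{tboundedphi}, which is tailored to homogeneous symbols and the range $|\epsilon|<1$.
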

\begin{proof} Without loss of generality, $\|Y\|_{\infty}=1.$ Let $f_z$ be a Lipschitz function on $\mathbb{R}$ such that $f_z(t)=t^z$ for $t\in[0,1].$ We have
$$[X,Y^z]=[X,f_z(Y)]=T^Y_{f_z^{[1]}}([X,Y]).$$
The main result of \cite{PS-acta} yields that $T^Y_{f_z^{[1]}}:(\mathcal{L}_{r,\infty})_0\to (\mathcal{L}_{r,\infty})_0$, and this is enough to complete the proof.
\end{proof}

\begin{lem}\label{tp separable lemma} 
Let $d>p>1.$ Let $0\leq A,B\in\mathcal B(H)$ and let $T_p$ be as in Lemma \ref{csz key lemma}. Suppose $BA^{\frac12}\in\mathcal{L}_{d,\infty}$ and $[B^p,A^{\frac12}]\in(\mathcal{L}_{\frac{d}{p},\infty})_0.$ We have
$$T_p(s)\in(\mathcal{L}_{\frac{d}{p},\infty})_0,\quad s\in\mathbb{R}.$$
\end{lem}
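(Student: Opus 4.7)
The plan is to reduce $T_p(s)$ to a finite sum of operators, each of which manifestly belongs to $(\mathcal{L}_{d/p,\infty})_0$.

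First, I will upgrade the commutator hypothesis by applying Lemma \ref{power separable part lemma} with $X=B^p$ and $Y=A^{1/2}$ (normalized, if necessary, to have norm at most $1$). From $[B^p,A^{1/2}]\in(\mathcal{L}_{d/p,\infty})_0$ we deduce
\[
[B^p,A^\alpha]\in(\mathcal{L}_{d/p,\infty})_0 \quad\text{for every $\alpha$ with $\Re\alpha\geq 1/2$},
\]
and in particular for the exponents $\alpha=p-1/2+is$ and $\alpha=1/2+is$ that appear in $T_p(s)$.

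Second, since $A^{1/2}$ commutes with every $A^\alpha$, we have the identity $[BA^{1/2},A^\alpha]=[B,A^\alpha]A^{1/2}$, so
\[
T_p(s) = B^{p-1+is}[B,A^{p-1/2+is}]A^{1/2}Y^{-is} + B^{is}[B,A^{1/2+is}]A^{1/2}Y^{p-1-is}.
\]
The next step is to manipulate each summand so that the commutators $[B^p,A^\beta]$ become explicit. In the first summand, I will use the identity $B^{p-1}[B,C] = [B^p,C] - [B^{p-1},C]B$ (with $C=A^{p-1/2+is}$) to bring $B^{p-1}$ inside the commutator. In the second summand, I will use the intertwining relation $BA^{1/2}\cdot Y = BA\cdot BA^{1/2}$ (extended by functional calculus to $BA^{1/2}\cdot f(Y) = f(BA)\cdot BA^{1/2}$) to unfold $Y^{p-1-is}$ into factors of $BA^{1/2}$. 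After this rearrangement, each summand becomes a linear combination of ``good'' terms of the form $(\text{bounded})\cdot [B^p,A^\beta]\cdot (\text{bounded})$ with $\Re\beta\geq 1/2$---which belong to $(\mathcal{L}_{d/p,\infty})_0$ by the first step---together with ``residual'' terms.

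Third, the key observation is that the residual terms from the two summands---which a priori involve $[B^{p-1},A^\beta]$, an expression about which we have no direct hypothesis---cancel exactly, as a consequence of the specific telescoping structure of $T_p(s)$ engineered in Lemma \ref{csz key lemma} via the same intertwining relation. What survives after cancellation are at worst terms of the form $(\text{bounded})\cdot BA^{1/2}\cdot (\text{bounded})\cdot (BA^{1/2})^*\cdot (\text{bounded})$, which by H\"older's inequality combined with $BA^{1/2}\in\mathcal{L}_{d,\infty}$ lie in $\mathcal{L}_{d/2,\infty}$; after one last application of the hypothesis $[B^p,A^{1/2}]\in(\mathcal{L}_{d/p,\infty})_0$ through the same algebraic rewriting, these residuals land in $(\mathcal{L}_{d/p,\infty})_0$ and the conclusion follows.

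The main obstacle is verifying the exact cancellation of the $[B^{p-1},A^\beta]$-level residuals in the third step: these cannot be controlled directly from the hypotheses, and must instead be annihilated by matching contributions between the two summands of $T_p(s)$ through the intertwining $BA^{1/2}\cdot Y = BA\cdot BA^{1/2}$. Additional care is required when $p$ is not an integer, in which case all iterated operator identities involving powers of $B$ and $Y$ should be interpreted via the Dunford functional calculus applied to the (non-self-adjoint) operator $BA$.
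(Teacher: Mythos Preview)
Your plan has a genuine gap at the third step: the claimed exact cancellation of the $[B^{p-1},A^\beta]$--level residuals does not occur. Take for instance $p=3$, $s=0$. Following your Step~2 and the Leibniz identity in your Step~3, the first summand of $T_3(0)$ produces the ``good'' term $[B^3,A^{5/2}]A^{1/2}$ together with the residual $-[B^2,A^{5/2}]\,BA^{1/2}$, while the second summand equals $[B,A^{1/2}]A^{1/2}Y^2=[B,A^{1/2}]\,ABAB\,A^{1/2}$. These two expressions have no reason to sum to something of the form you describe; a direct computation shows their sum is a genuine mixture of commutators $[B,A^{5/2}]$ and $[B,A^{1/2}]$ sandwiched between various bounded factors and copies of $BA^{1/2}$, and none of those commutators is controlled by your hypotheses. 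The intertwining $BA^{1/2}\cdot f(Y)=f(BA)\cdot BA^{1/2}$ you invoke does not produce a matching $[B^{p-1},A^{p-1/2+is}]$ from the second summand, so nothing annihilates the residual from the first. (Incidentally, even making sense of $f(BA)$ for $f(z)=z^{p-1-is}$ is delicate, since $BA$ is not self-adjoint and $0$ lies in its spectrum; but this is a secondary issue.)

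The paper takes a different route that avoids any cancellation. It invokes an operator $\theta$--H\"older estimate (\cite[Corollary~7.1]{HSZ-holder}) which, from the single hypothesis $[B^p,A^{1/2}]\in(\mathcal{L}_{d/p,\infty})_0$, yields directly
\[
[B^{p-1},A^{1/2}]\in(\mathcal{L}_{d/(p-1),\infty})_0
\qquad\text{and}\qquad
[B,A^{1/2}]\in(\mathcal{L}_{d,\infty})_0.
\]
With these in hand, each of the two summands of $T_p(s)$ is estimated \emph{separately}: the first is rewritten (via the same Leibniz identity you use) as $X_1X_2-X_3X_4$ with $X_1=[B^p,A^{p-1/2+is}]\in(\mathcal{L}_{d/p,\infty})_0$, $X_3=[B^{p-1},A^{p-1/2+is}]\in(\mathcal{L}_{d/(p-1),\infty})_0$ (both via Lemma~\ref{power separable part lemma}), $X_2$ bounded, and $X_4=BA^{1/2}\in\mathcal{L}_{d,\infty}$, so H\"older gives the result; the second summand is handled by writing it as $B^{is}[B,A^{1/2+is}]\cdot A^{1/2}Y^{-is}\cdot Y^{p-1}$ and combining $[B,A^{1/2+is}]\in(\mathcal{L}_{d,\infty})_0$ with $Y^{p-1}\in\mathcal{L}_{d/(p-1),\infty}$. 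The key external input you are missing is precisely this Hölder-type control on $[B^{p-1},A^{1/2}]$ and $[B,A^{1/2}]$; without it, the residual terms cannot be bounded, and they do not cancel.
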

\begin{proof} Since $[B^p,A^{\frac12}]\in(\mathcal{L}_{\frac{d}{p},\infty})_0,$ it follows from \cite[Corollary 7.1]{HSZ-holder} that
$$[B^{p-1},A^{\frac12}]\in(\mathcal{L}_{\frac{d}{p-1},\infty})_0,\quad [B,A^{\frac12}]\in(\mathcal{L}_{d,\infty})_0.$$

By the Leibniz rule, we have
$$B^{p-1}[BA^{\frac{1}{2}},A^{p-\frac{1}{2}+is}]=X_1X_2-X_3X_4,$$
$$X_1=[B^p,A^{p-\frac12+is}],\quad X_2= A^{\frac12},\quad X_3=[B^{p-1},A^{p-\frac12+is}],\quad X_4=BA^{\frac12}.$$
Applying Lemma \ref{power separable part lemma} to $X=B^p$ (respectively, $X=B^{p-1}$), $Y=A^{\frac12},$ $r=\frac{d}{p}$ (respectively, $r=\frac{d}{p-1}$) and $z=2p-1+2is,$ we obtain 
$$X_1\in(\mathcal{L}_{\frac{d}{p},\infty})_0 \,, \quad X_3\in(\mathcal{L}_{\frac{d}{p-1},\infty})_0 \,.$$
Since also $X_2$ is bounded and $X_4\in\mathcal{L}_{d,\infty},$ it follows from the inclusions above and from H\"older's inequality that
$$B^{p-1}[BA^{\frac{1}{2}},A^{p-\frac{1}{2}+is}]\in(\mathcal{L}_{\frac{d}{p},\infty})_0.$$
Thus,
\begin{equation}\label{tpsl eq0}
B^{p-1+is}[BA^{\frac{1}{2}},A^{p-\frac{1}{2}+is}]Y^{-is}\in(\mathcal{L}_{\frac{d}{p},\infty})_0.
\end{equation}
	
We have
$$B^{is}[BA^{\frac{1}{2}},A^{\frac{1}{2}+is}]Y^{p-1-is}=B^{is}\cdot [B,A^{\frac{1}{2}+is}]\cdot A^{\frac12}Y^{-is}\cdot Y^{p-1}.$$
Recall that $[B,A^{\frac12}]\in(\mathcal{L}_{d,\infty})_0.$ Applying Lemma \ref{power separable part lemma} with $X=B,$ $Y=A^{\frac12},$ $r=d$ and $z=1+2is,$ we obtain
$$[B,A^{\frac{1}{2}+is}]\in(\mathcal{L}_{d,\infty})_0.$$
By assumption, $BA^{\frac12}\in\mathcal{L}_{d,\infty}.$ Thus, $Y\in\mathcal{L}_{d,\infty}$ and $Y^{p-1}\in\mathcal{L}_{\frac{d}{p-1},\infty}.$ Hence, 
\begin{equation}\label{tpsl eq1}
B^{is}[BA^{\frac{1}{2}},A^{\frac{1}{2}+is}]Y^{p-1-is}\in(\mathcal{L}_{\frac{d}{p},\infty})_0.
\end{equation}
The assertion follows now by combining \eqref{tpsl eq0} and \eqref{tpsl eq1}.
\end{proof}

We are finally in position to prove the main result of this subsection.

\begin{proof}[Proof of Proposition \ref{abstract separable lemma}] 
By Lemmas \ref{tp estimate lemma} and \ref{tp separable lemma}, the mapping
$$s\to T_p(s)\widehat{g_p}(s),\quad s\in\mathbb{R},$$
is absolutely integrable in $(\mathcal{L}_{\frac{d}{p},\infty})_0.$ Since $d>p,$ it follows that the latter mapping is Bochner integrable in $(\mathcal{L}_{\frac{d}{p},\infty})_0.$ In particular, its integral belongs to $(\mathcal{L}_{\frac{d}{p},\infty})_0.$
\end{proof}


\subsection{Proof of Theorem \ref{FSZ generalisation thm}}

We now combine the result from \cite{FrSuZa} with Proposition \ref{abstract separable lemma} and obtain the following preliminary version of Theorem \ref{FSZ generalisation thm}.

\begin{lem}\label{final concrete separable lemma} Let $1<p<d.$ If $0\leq T\in\Pi$ is compactly supported, then one has
$$\lim_{t\to\infty}t^{\frac{p}{d}}\mu(t,T^p(1-\Delta)^{-\frac{p}{2}})=\Big(d^{-\frac1d}(2\pi)^{-1}\|{\rm sym}(T)\|_{L_d(\mathbb{R}^d\times\mathbb{S}^{d-1})}\Big)^p.$$
\end{lem}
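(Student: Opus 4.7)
The plan is to extract the spectral asymptotic for $T^p(1-\Delta)^{-p/2}$ from that of the first-power operator $T(1-\Delta)^{-1/2}$ by means of Proposition~\ref{abstract separable lemma}. The starting point is the $p=d$ case of Theorem~\ref{FSZ generalisation thm}, proved in \cite{FrSuZa}, which applied to $T$ gives
\[
\lim_{t\to\infty} t^{1/d}\mu\bigl(t,T(1-\Delta)^{-1/2}\bigr) = C := d^{-1/d}(2\pi)^{-1}\|\mathrm{sym}(T)\|_{L_d(\mathbb{R}^d\times\mathbb{S}^{d-1})}.
\]
Since $T\geq 0$, the identity $|T(1-\Delta)^{-1/2}|^2 = (1-\Delta)^{-1/2}T^2(1-\Delta)^{-1/2}$ holds, and therefore by \eqref{singular-value-square},
\[
\mu\bigl(t,((1-\Delta)^{-1/2}T^2(1-\Delta)^{-1/2})^{p/2}\bigr) = \mu(t,T(1-\Delta)^{-1/2})^p \sim t^{-p/d} C^p.
\]
Thus the positive operator $((1-\Delta)^{-1/2}T^2(1-\Delta)^{-1/2})^{p/2}$ already has the desired asymptotic, and it remains to transfer it to $T^p(1-\Delta)^{-p/2}$.

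I would next apply Proposition~\ref{abstract separable lemma} with $A=(1-\Delta)^{-1}$ and $B=T^2$, taking the Proposition's exponent to be $p_{\mathrm{Prop}}=p/2$, to obtain
\[
T^p(1-\Delta)^{-p/2} - \bigl((1-\Delta)^{-1/2}T^2(1-\Delta)^{-1/2}\bigr)^{p/2} \in (\mathcal{L}_{d/p,\infty})_0.
\]
Combined with the asymptotic above and the triangle inequality \eqref{singular-value-sum} for singular values, this yields the claimed limit, since operators differing by an element of $(\mathcal{L}_{d/p,\infty})_0$ have the same leading $t^{-p/d}$ asymptotic.

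The three hypotheses of the Proposition must be verified. The Schatten inclusion $T^p(1-\Delta)^{-p/2}\in\mathcal{L}_{d/p,\infty}$ follows from Cwikel's estimate (Theorem~\ref{cwikel estimate}) together with the compact support of $T$: one writes $T^p(1-\Delta)^{-p/2}=T^p M_{\tilde\phi}(1-\Delta)^{-p/2}$ for a cutoff $\tilde\phi$ equal to $1$ on $\mathrm{supp}\,\phi$, and exploits that $(1+|\xi|^2)^{-p/2}\in L_{d/p,\infty}$. The commutator hypothesis $[T^p,(1-\Delta)^{-1/2}]\in(\mathcal{L}_{d/p,\infty})_0$ can be handled by the DOI techniques developed in Section~\ref{sec:proofparti}, reducing the commutator to manageable trace ideal expressions involving $T$ and powers of $(1-\Delta)^{-1/2}$. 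The remaining hypothesis $T^2(1-\Delta)^{-1/2}\in\mathcal{L}_{d/2,\infty}$ is to be established using the doubly compactly supported structure $T=M_\phi T M_\phi$.

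The main obstacle is indeed this last hypothesis: $T^2(1-\Delta)^{-1/2}\in\mathcal{L}_{d/2,\infty}$ is strictly sharper than the $\mathcal{L}_{d,\infty}$ inclusion supplied by the $p=d$ case of Theorem~\ref{FSZ generalisation thm} applied to $T^2$, and a delicate two-step Cwikel-type argument exploiting the double cutoff appears to be required. A secondary technicality is that the Proposition's constraint $p_{\mathrm{Prop}}>1$ forces $p>2$ in the direct application above; the remaining range $1<p\leq 2$ will have to be covered by a separate bootstrap, for instance by first proving the asymptotic at the higher exponent $2p$ and then descending to $p$ via the positive operator identity $\mu(t,T^p(1-\Delta)^{-p/2})^2=\mu(t,(1-\Delta)^{-p/2}T^{2p}(1-\Delta)^{-p/2})$.
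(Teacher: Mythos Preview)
Your application of Proposition~\ref{abstract separable lemma} with $A=(1-\Delta)^{-1}$ and $B=T^2$ cannot be completed, and the obstacle you flag is fatal rather than merely delicate. With exponent $p_{\mathrm{Prop}}=p/2$ the ``$d$'' of the Proposition must be taken equal to $d/2$ in order to land in $(\mathcal{L}_{d/p,\infty})_0$, so the first hypothesis reads $BA^{1/2}=T^2(1-\Delta)^{-1/2}\in\mathcal{L}_{d/2,\infty}$. But $T^2\in\Pi$ is compactly supported, so the very result you invoke from \cite{FrSuZa} gives $\mu(t,T^2(1-\Delta)^{-1/2})\sim c\,t^{-1/d}$ with $c=d^{-1/d}(2\pi)^{-1}\|\mathrm{sym}(T)^2\|_{L_d}>0$ whenever $\mathrm{sym}(T)\neq 0$. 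Hence $T^2(1-\Delta)^{-1/2}\notin(\mathcal{L}_{d,\infty})_0\supset\mathcal{L}_{d/2,\infty}$, and no Cwikel-type refinement can repair this: the inclusion is simply false. The same rescaling forces the commutator hypothesis to be $[T^p,(1-\Delta)^{-1/2}]\in(\mathcal{L}_{d/p,\infty})_0$, whereas a single commutator with $(1-\Delta)^{-1/2}$ gains only one order and produces $(\mathcal{L}_{d/2,\infty})_0$, which is strictly larger when $p>2$.

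The paper avoids all of this by reversing the roles: take $A=T$ and $B=(1-\Delta)^{-1/2}$, with the Proposition applied at the given $d$ and $p$. Then $BA^{1/2}=(1-\Delta)^{-1/2}M_\phi\cdot T^{1/2}\in\mathcal{L}_{d,\infty}$ and $B^pA^p=(1-\Delta)^{-p/2}M_\phi\cdot T^p\in\mathcal{L}_{d/p,\infty}$ are immediate from Theorem~\ref{cwikel estimate}, and the commutator hypothesis $[(1-\Delta)^{-p/2},T^{1/2}]\in(\mathcal{L}_{d/p,\infty})_0$ is exactly Theorem~\ref{t delta commutator lemma}. The conclusion gives $(1-\Delta)^{-p/2}T^p-(T^{1/2}(1-\Delta)^{-1/2}T^{1/2})^p\in(\mathcal{L}_{d/p,\infty})_0$; one then passes from the asymptotic of $AB=T(1-\Delta)^{-1/2}$ (given by \cite{FrSuZa}) to that of $A^{1/2}BA^{1/2}$ via the commutator $[B,A^{1/2}]\in(\mathcal{L}_{d,\infty})_0$, raises to the $p$-th power, and transfers back. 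No restriction $p>2$ arises and no bootstrap is needed.
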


\begin{proof} Set $A=T$ and $B=(1-\Delta)^{-\frac12}.$ We want to apply Proposition \ref{abstract separable lemma}.

Let us show that the conditions of the Proposition \ref{abstract separable lemma} are satisfied. Indeed, the inclusion $[B^p,A^{\frac12}]\in(\mathcal{L}_{\frac{d}{p},\infty})_0$ is established in Theorem \ref{t delta commutator lemma} for every $p>0.$ Recall that $T$ is compactly supported and choose $\phi\in C_c(\mathbb{R}^d)$ such that $T=M_{\phi}T.$ Then, by Theorem \ref{cwikel estimate},
$$BA^{\frac12}=(1-\Delta)^{-\frac12}M_{\phi}\cdot T^{\frac12}\in\mathcal{L}_{d,\infty},\quad B^pA^p=(1-\Delta)^{-\frac{p}{2}}M_{\phi}\cdot T^{\frac{p}{2}}\in\mathcal{L}_{\frac{d}{p},\infty}.$$
By Proposition \ref{abstract separable lemma}, we have
$$B^pA^p-\big(A^{\frac12}BA^{\frac12}\big)^p
\in(\mathcal{L}_{\frac{d}{p},\infty})_0.$$

By \cite[Theorem 1.5]{FrSuZa}, we have
$$\lim_{t\to\infty}t^{\frac1d}\mu(t,AB)=c_T,\quad c_T:=d^{-\frac1d}(2\pi)^{-1}\|{\rm sym}(T)\|_{L_d(\mathbb{R}^d\times\mathbb{S}^{d-1})}.$$
By Theorem \ref{t delta commutator lemma}, we have $[B,A^{\frac12}]\in(\mathcal{L}_{d,\infty})_0.$ By a standard result on spectral asymptotics (see, e.g., \cite[Lemma 3.1]{FrSuZa}) this implies that
$$\lim_{t\to\infty}t^{\frac1d}\mu(t,A^{\frac12}BA^{\frac12})=c_T.$$
In other words, we have
$$\lim_{t\to\infty}t^{\frac{p}{d}}\mu\Big(t,\big(A^{\frac12}BA^{\frac12}\big)^p\Big)=c_T^p.$$
Again using \cite[Lemma 3.1]{FrSuZa}, we conclude that
$$\lim_{t\to\infty}t^{\frac{p}{d}}\mu(t,B^pA^p)=c_T^p.$$
This is exactly our assertion.
\end{proof}

\begin{lem}\label{post-FSZ induction lemma} 
Let $d\geq 2$ and $p>0.$ The following conditions are equivalent:
\begin{enumerate}[{\rm (i)}]
\item\label{pfila} for every nonnegative and compactly supported $T\in\Pi$ one has
$$\lim_{t\to\infty}t^{\frac1p}\mu(t,T(1-\Delta)^{-\frac{d}{2p}})=d^{-\frac1p}(2\pi)^{-\frac{d}{p}}\|{\rm sym}(T)\|_{L_p(\mathbb{R}^d\times\mathbb{S}^{d-1})}$$
\item\label{pfilb} for every nonnegative and compactly supported $T\in\Pi$ one has
$$\lim_{t\to\infty}t^{\frac2p}\mu(t,T(1-\Delta)^{-\frac{d}{p}})=d^{-\frac2p}(2\pi)^{-\frac{2d}{p}}\|{\rm sym}(T)\|_{L_{\frac{p}{2}}(\mathbb{R}^d\times\mathbb{S}^{d-1})}.$$
\end{enumerate}
\end{lem}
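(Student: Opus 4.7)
Fix $p>0$, and set $Q_1:=(1-\Delta)^{-d/(2p)}$ and $Q_2:=Q_1^{2}=(1-\Delta)^{-d/p}$. The strategy is to establish the ``squaring identity''
\begin{equation}\label{planid}
\lim_{t\to\infty}t^{2/p}\mu(t,T^{2}Q_2)\;=\;\Bigl(\lim_{t\to\infty}t^{1/p}\mu(t,TQ_1)\Bigr)^{\!2}
\end{equation}
for every $0\le T\in\Pi$ compactly supported, interpreted as: either limit exists if and only if the other does, with the stated equality. Combined with the identity $\|\mathrm{sym}(T^{2})\|_{L_{p/2}}=\|\mathrm{sym}(T)\|_{L_p}^{2}$ (from $\mathrm{sym}$ being a $*$-homomorphism), \eqref{planid} shows that the constant on the right of (i) for $T$, when squared, matches the constant on the right of (ii) for $T^{2}$. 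Moreover, every nonnegative compactly supported $S\in\Pi$ factors as $S=T^{2}$ with $T:=S^{1/2}\in\Pi$ still nonnegative and compactly supported — the latter because $T$ and $S$ share the same kernel (as $T^{2}=S$ and $T\ge 0$), so $T$ annihilates the same subspace as any cutoff $\phi$ for $S$ does, and self-adjointness of $T$ then yields $M_{\tilde\phi}TM_{\tilde\phi}=T$ for any $\tilde\phi\in C_c^\infty$ with $\tilde\phi\equiv 1$ on $\mathrm{supp}\,\phi$. Hence the map $T\mapsto T^{2}$ is surjective onto the admissible class, and \eqref{planid} delivers the equivalence of (i) and (ii) in both directions.

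The identity \eqref{planid} will be proved via Proposition \ref{abstract separable lemma}, applied with $A=T$, $B=Q_1$, and proposition-parameters $p_{\mathrm{prop}}=2$, $d_{\mathrm{prop}}=p$. Its conclusion
$$T^{2}Q_2-\bigl(T^{1/2}Q_1T^{1/2}\bigr)^{2}\;\in\;(\mathcal L_{p/2,\infty})_0$$
implies that $\mu(\cdot,T^{2}Q_2)$ and $\mu(\cdot,(T^{1/2}Q_1T^{1/2})^{2})$ share the same leading asymptotic. Since $T^{1/2}Q_1T^{1/2}\ge 0$, the latter equals $\mu(\cdot,T^{1/2}Q_1T^{1/2})^{2}$. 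Invoking \cite[Lemma~3.1]{FrSuZa} (exactly as in the proof of Lemma \ref{final concrete separable lemma}), enabled by the commutator inclusion $[T^{1/2},Q_1]\in(\mathcal L_{p,\infty})_0$ from Theorem \ref{t delta commutator lemma}, replaces $\mu(\cdot,T^{1/2}Q_1T^{1/2})$ by $\mu(\cdot,TQ_1)$ at leading order, completing the proof of \eqref{planid}.

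The main obstacle is verifying the hypotheses of Proposition \ref{abstract separable lemma}. The integrability conditions $Q_1T^{1/2}\in\mathcal L_{p,\infty}$ and $Q_1^{2}T\in\mathcal L_{p/2,\infty}$ follow from Cwikel's theorem (Theorem \ref{cwikel estimate}) using the compact support of $T$, while the crucial commutator inclusion $[Q_1^{2},T^{1/2}]\in(\mathcal L_{p/2,\infty})_0$ is Theorem \ref{t delta commutator lemma} applied to $T^{1/2}\in\Pi$. A secondary obstruction is that Proposition \ref{abstract separable lemma} requires $d_{\mathrm{prop}}>p_{\mathrm{prop}}>1$, which in our setup forces $p>2$; the range $p\in(0,2]$ will require a separate argument, for example a refined variant of Proposition \ref{abstract separable lemma} tailored to small exponents.
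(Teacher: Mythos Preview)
Your overall strategy---reduce the equivalence to a ``squaring identity'' relating the asymptotics of $T^2(1-\Delta)^{-d/p}$ to those of $T(1-\Delta)^{-d/(2p)}$, and exploit that $T\mapsto T^2$ is a bijection on nonnegative compactly supported elements of $\Pi$---is exactly the paper's. Your argument that $S^{1/2}$ inherits compact support from $S$ is correct.

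The difference, and the source of your gap, is in \emph{how} you prove the squaring identity. You route it through Proposition~\ref{abstract separable lemma} with $A=T$, $B=Q_1$, $p_{\mathrm{prop}}=2$, landing on $(T^{1/2}Q_1T^{1/2})^2$ and then commuting $T^{1/2}$ past $Q_1$. This is correct where it applies, but Proposition~\ref{abstract separable lemma} carries the hypothesis $d_{\mathrm{prop}}>p_{\mathrm{prop}}$, forcing $p>2$; you acknowledge this and defer $p\le 2$ to an unspecified ``refined variant''. That is a genuine gap: the lemma is needed for \emph{all} $p>0$ (it is the induction step that covers the full range in the proof of Theorem~\ref{FSZ generalisation thm}).

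The paper avoids Proposition~\ref{abstract separable lemma} here entirely. The key observation is that one should compare $T^2Q_1^2$ not to $(T^{1/2}Q_1T^{1/2})^2$ but to $Q_1T^2Q_1=|TQ_1|^2$, whose singular values are \emph{exactly} $\mu(\cdot,TQ_1)^2$. The required inclusion
\[
T^2Q_1^2-Q_1T^2Q_1=[T^2,Q_1]Q_1\in(\mathcal L_{p/2,\infty})_0
\]
follows from Theorem~\ref{t delta commutator lemma} and a short Leibniz/H\"older computation valid for every $p>0$: write $[T^2,Q_1]Q_1=T[T,Q_1]Q_1+[T,Q_1]\,TQ_1$; the second summand is $(\mathcal L_{p,\infty})_0\cdot\mathcal L_{p,\infty}$ (using $TQ_1=TM_\phi Q_1\in\mathcal L_{p,\infty}$ by Cwikel), and for the first write $T[T,Q_1]Q_1=T[T,Q_1^2]-TQ_1[T,Q_1]$, where $[T,Q_1^2]=[T,Q_2]\in(\mathcal L_{p/2,\infty})_0$ again by Theorem~\ref{t delta commutator lemma}. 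This eliminates the range restriction and the need for any refinement of Proposition~\ref{abstract separable lemma}.
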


\begin{proof} We use the fact that the set of nonnegative, compactly supported elements of $\Pi$ is invariant under taking powers.

Renaming $T$ into $T^2,$ we rewrite \eqref{pfilb} as follows: for every nonnegative and compactly supported $T\in\Pi$ one has
$$
\lim_{t\to\infty}t^{\frac2p}\mu(t,T^2(1-\Delta)^{-\frac{d}{p}})=d^{-\frac2p}(2\pi)^{-\frac{2d}{p}}\|{\rm sym}(T)\|_{L_p(\mathbb{R}^d\times\mathbb{S}^{d-1})}^2.
$$
It follows from Theorem \ref{t delta commutator lemma} (which we proved for $d\geq 2$) and simple arithmetic that
$$T^2(1-\Delta)^{-\frac{d}{p}}-(1-\Delta)^{-\frac{d}{2p}}T^2(1-\Delta)^{-\frac{d}{2p}}\in(\mathcal{L}_{\frac{p}{2},\infty})_0.$$
Using a standard result about spectral asymptotics (see, e.g., \cite[Lemma 3.1]{FrSuZa}), we rewrite \eqref{pfilb} as follows:	
$$\lim_{t\to\infty}t^{\frac2p}\mu^2(t,T(1-\Delta)^{-\frac{d}{2p}})=d^{-\frac2p}(2\pi)^{-\frac{2d}{p}}\|{\rm sym}(T)\|_{L_p(\mathbb{R}^d\times\mathbb{S}^{d-1})}^2.$$
This is clearly equivalent to \eqref{pfila}.
\end{proof}

\begin{proof}[Proof of Theorem \ref{FSZ generalisation thm}] Let us first prove the assertion for $1<p\leq d.$ For $p=d,$ the assertion is given by Theorem 1.5 in \cite{FrSuZa}. Hence, we may assume that $1<p<d.$ Applying Lemma \ref{final concrete separable lemma} to $T^{\frac1p}$ (which belongs to $\Pi$ and is compactly supported), we conclude that
\begin{align*}
\lim_{t\to\infty}t^{\frac{p}{d}}\mu(t,T(1-\Delta)^{-\frac{p}{2}})
& =d^{-\frac{p}{d}}(2\pi)^{-p}\|{\rm sym}(T^\frac 1p)\|_{L_{d}(\mathbb{R}^d\times\mathbb{S}^{d-1})}^p \\
& = d^{-\frac{p}{d}}(2\pi)^{-p}\|{\rm sym}(T)\|_{L_{\frac{d}{p}}(\mathbb{R}^d\times\mathbb{S}^{d-1})} 
\end{align*}
Renaming $p$ into $\frac{d}{p}$ and noting that as $p$ runs through $(1,d)$, $\frac{d}{p}$ runs through the same interval, we conclude the assertion for $1<p<d$ (and, hence, for $1<p\leq d$).

Let us now prove the assertion in full generality. Fix $p>0.$ Choose $n\in\mathbb{Z}$ such that $q=2^np\in (1,d]$. By Lemma \ref{final concrete separable lemma}, for every nonnegative and compactly supported $T\in\Pi$ we have
$$\lim_{t\to\infty}t^{\frac1q}\mu(t,T(1-\Delta)^{-\frac{d}{2q}})=d^{-\frac1q}(2\pi)^{-\frac{d}{q}}\|{\rm sym}(T)\|_{L_q(\mathbb{R}^d\times\mathbb{S}^{d-1})}.$$
By repeated use of Lemma \ref{post-FSZ induction lemma}, we deduce that for every nonnegative and compactly supported $T\in\Pi$ we have
$$
\lim_{t\to\infty}t^{\frac1p}\mu(t,T(1-\Delta)^{-\frac{d}{2p}})=d^{-\frac1p}(2\pi)^{-\frac{d}{p}}\|{\rm sym}(T)\|_{L_p(\mathbb{R}^d\times\mathbb{S}^{d-1})}.
$$
This proves the assertion for every nonnegative and compactly supported $T\in\Pi.$
	
Consider now the general case. Let $T\in\Pi$ be compactly supported from the right. Clearly, $|T|\in\Pi$ is nonnegative and compactly supported. By the preceding paragraph, we have
$$\lim_{t\to\infty}t^{\frac1p}\mu(t,|T|(1-\Delta)^{-\frac{d}{2p}})=d^{-\frac1p}(2\pi)^{-\frac{d}{p}}\|{\rm sym}(|T|)\|_{L_p(\mathbb{R}^d\times\mathbb{S}^{d-1})}.$$
Observing that for any operators $A, B\in \Bc(H)$ we have  $\mu(AB)=\mu(|AB|)$ and $|AB|=\big||A|B\big|, $ we write
$$\mu\Big(T(1-\Delta)^{-\frac{d}{2p}}\Big)=\mu\Big(|T|(1-\Delta)^{-\frac{d}{2p}}\Big),\quad {\rm sym}(|T|)=|{\rm sym}(T)|.$$
This proves the assertion in general case.
\end{proof}


\subsection{Asymptotics for $d=1$}\label{sec:asymp1d}

For $d=1,$ the sphere $\mathbb{S}^{d-1}$ is just a two-point set. Hence, the algebra $\Pi$ consists of two `copies' of the first algebra in the definition of $\Pi$. So, the result analogous to Theorem \ref{FSZ generalisation thm} should be stated only for multiplication operators. This is the well-known Weyl asymptotic formula. For the convenience of the reader we explain how this can be deduced from the results of Birman--Solomyak in~\cite{BiSo70}.

\begin{thm}\label{weyl asymptotic} Let $d=1$ and $p>0.$ If $f\in C_c(\mathbb{R}),$ then
$$\lim_{t\to\infty}t^{\frac1p}\mu(t,M_f(1-\Delta)^{-\frac{1}{2p}})=\pi^{-\frac 1p} \|f\|_p.$$
\end{thm}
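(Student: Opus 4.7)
The plan is to deduce this one-dimensional Weyl asymptotic from the classical Birman--Solomyak theorem on singular-value asymptotics for operators of the form $M_f g(\nabla)$ \cite{BiSo70}.

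First, observe that the operator of interest has the form $T := M_f g(\nabla)$ with symbol $g(\xi) := (1+\xi^2)^{-1/(2p)}$. The distribution function of $g$ is
\[
|\{\xi \in \mathbb R : |g(\xi)|>s\}| = 2\sqrt{s^{-2p}-1}, \quad 0<s<1,
\]
so $g \in L_{p,\infty}(\mathbb R)$ with
\[
\lim_{s\to 0^+} s\,|\{|g|>s\}|^{1/p} = 2^{1/p}.
\]
In particular, $g$ behaves like $|\xi|^{-1/p}$ at infinity, so $g \notin L_p(\mathbb R)$; the singular-value asymptotics therefore are not governed by $\|g\|_p$ but by the weak limit above, which plays the role of the Weyl coefficient.

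Next, I would invoke the Birman--Solomyak theorem \cite{BiSo70}, which for $f \in C_c(\mathbb R)$ and a weakly polar symbol $g$ asserts
\[
\lim_{t\to\infty} t^{1/p}\mu(t, M_f g(\nabla)) = (2\pi)^{-1/p}\,\|f\|_p\,\cdot\,\bigl(\lim_{s\to 0^+} s\,|\{|g|>s\}|^{1/p}\bigr).
\]
Substituting the value $2^{1/p}$ computed above yields $(2\pi)^{-1/p}\cdot 2^{1/p} = \pi^{-1/p}$, as claimed.

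An equivalent route, which may be more convenient depending on the exact statement one extracts from \cite{BiSo70}, is to pass to the self-adjoint operator $TT^* = M_f(1-\Delta)^{-1/p}M_{\bar f}$ whose non-zero eigenvalues satisfy $\lambda_n(TT^*) = \mu(n,T)^2$. Applying the self-adjoint version of the Birman--Solomyak Weyl asymptotic formula to this operator directly yields $\lim_{n\to\infty} n\,\lambda_n(TT^*)^{p/2} = \pi^{-1}\|f\|_p^p$, and taking $p$-th roots recovers the desired asymptotic. The only technical point is to match the exact normalization of the Birman--Solomyak theorem in \cite{BiSo70} with our setup; the underlying computation of the Weyl coefficient and the explicit form of the distribution function of $g$ are elementary once the correct form of the theorem is identified.
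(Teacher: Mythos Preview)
Your proposal and the paper's proof use the same underlying input, namely the Birman--Solomyak asymptotics from \cite{BiSo70}, but differ in how carefully the range of $p$ is handled.

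The paper does not invoke a version of \cite{BiSo70} valid for all $p>0$ at once. Instead it extracts from \cite{BiSo70} only the self-adjoint asymptotic
\[
\lim_{t\to\infty} t^{1/q}\mu\bigl(t,\,M_{\bar f}(-\Delta)^{-1/(2q)}M_f\bigr)=\pi^{-1/q}\|f\|_{2q}^2,
\]
under the restriction $q>1$ (coming from the constraint $\alpha=\tfrac1q-1\in(-1,0)$ in \cite[Theorem~1]{BiSo70}). Applying this with $q=p/2$ and using $\mu(M_{\bar f}(-\Delta)^{-1/p}M_f)=\mu^2(M_f(-\Delta)^{-1/(2p)})$ gives the asymptotic for $M_f(-\Delta)^{-1/(2p)}$, but only for $p>2$. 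The passage from $(-\Delta)$ to $(1-\Delta)$ is then a separate perturbation step (the difference lies in $(\mathcal L_{p,\infty})_0$ by Theorem~\ref{pre-cwikel estimate}). Finally, the full range $p>0$ is obtained by a halving/doubling induction (Lemma~\ref{d=1 induction lemma}), which trades the asymptotic at exponent $p$ for that at exponent $p/2$ via the identity $\mu(T^2(1-\Delta)^{-1/p})\sim\mu^2(T(1-\Delta)^{-1/(2p)})$ modulo a commutator in $(\mathcal L_{p/2,\infty})_0$.

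Your ``direct'' route asserts a version of the Birman--Solomyak formula for $M_f g(\nabla)$ valid for arbitrary $p>0$ and $f\in C_c(\mathbb R)$. Such results do exist in the broader Birman--Solomyak literature, but the specific reference \cite{BiSo70} you cite does not cover small $p$ in the form you state; the paper's explicit verification shows the range constraint. Your ``equivalent route'' via $TT^*=M_f(1-\Delta)^{-1/p}M_{\bar f}$ is exactly what the paper does for $p>2$, modulo the $(-\Delta)$ versus $(1-\Delta)$ perturbation. What is missing from your proposal is either a precise citation covering all $p>0$, or the induction step that the paper uses to descend from $p>2$ to general $p>0$.
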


The ingredient from~\cite{BiSo70} that we use is the following.

\begin{lem}\label{from bs70 lemma} Let $d=1$ and let $p>1.$ If $f\in C_c(\mathbb{R}),$ then
$$\lim_{t\to\infty}t^{\frac1p}\mu(t,M_{\bar{f}}(-\Delta)^{-\frac1{2p}}M_f)= \pi^{-\frac1p} \|f\|_{2p}^2.$$
\end{lem}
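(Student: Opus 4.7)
The operator $A := M_{\bar f}(-\Delta)^{-1/(2p)}M_f$ is nonnegative, self-adjoint and compact on $L_2(\mathbb R)$. Indeed, the assumption $p>1$ ensures $|\xi|^{-1/(2p)} \in L_{2p,\infty}(\mathbb R)$, so by Theorem \ref{cwikel estimate}(i) the operator $B := (-\Delta)^{-1/(4p)}M_f$ lies in $\mathcal L_{2p,\infty}$, and hence $A = B^*B \in \mathcal L_{p,\infty}$. Its singular-value function therefore coincides with the nonincreasing rearrangement of its eigenvalue sequence, and the lemma reduces to a Weyl-type asymptotic for those eigenvalues.

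The plan is to apply the spectral asymptotic formula of Birman and Solomyak \cite{BiSo70} for operators of the form $a(X)^* b(D) a(X)$ with $a \in L_{2p}(\mathbb R)$ and $b$ positively homogeneous. Taking $a := f$ and $b(\xi) := |\xi|^{-1/p}$, homogeneous of degree $-1/p$ on $\mathbb R \setminus \{0\}$, their theorem yields
$$
\lim_{\lambda \to 0^+} \lambda^p\, N(\lambda, A) = \frac{1}{2\pi}\, \mathrm{Leb}\bigl\{(x,\xi) \in \mathbb R \times \mathbb R : |f(x)|^2 |\xi|^{-1/p} > 1\bigr\},
$$
where $N(\lambda,A) := \#\{n : \mu(n,A) > \lambda\}$ denotes the eigenvalue counting function.

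To conclude, one computes the phase-space volume: the inequality $|f(x)|^2|\xi|^{-1/p} > 1$ is equivalent to $|\xi| < |f(x)|^{2p}$, so the Lebesgue measure above equals $\int_{\mathbb R} 2|f(x)|^{2p}\,dx = 2\|f\|_{2p}^{2p}$. This gives $\lim_{\lambda \to 0^+} \lambda^p N(\lambda,A) = \pi^{-1}\|f\|_{2p}^{2p}$. Inverting via the standard duality $\mu(t,A) = \inf\{\lambda > 0 : N(\lambda,A) \leq t\}$ transforms this counting-function asymptotic into $\mu(t,A) \sim (\pi t)^{-1/p}\|f\|_{2p}^{2}$ as $t \to \infty$, which is the claimed identity after multiplication by $t^{1/p}$.

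The only point requiring care is verifying the applicability of the Birman--Solomyak formula to the singular symbol $b(\xi) = |\xi|^{-1/p}$, which fails to be smooth at the origin. This non-smoothness is harmless: \cite{BiSo70} is designed precisely for homogeneous symbols of this type, and the compact support of $f$ (hence $f \in L_{2p}(\mathbb R) \cap L_\infty(\mathbb R)$) supplies all the amplitude integrability their theorem requires. No approximation or regularization of the symbol is needed, and the formula applies off-the-shelf.
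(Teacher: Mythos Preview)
Your proof is correct and takes essentially the same approach as the paper: both invoke the Birman--Solomyak result \cite{BiSo70} directly, identifying $M_{\bar f}(-\Delta)^{-1/(2p)}M_f$ with the operator class treated there (the paper specifies the parameters $m=1$, $\alpha=\tfrac1p-1$, $\theta(t)=|t|^\alpha$, $a=f$, while you phrase it via the symbol $b(\xi)=|\xi|^{-1/p}$ and the phase-space volume). Your explicit computation of the constant is a welcome addition that the paper leaves implicit.
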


\begin{proof} 
This is a very special case of \cite[Theorem 1]{BiSo70}. Indeed, take $m=1$ and $\alpha=\frac1p-1\in(-1,0).$ Set $\theta(t)=|t|^{\alpha},$ $\Phi(t,t')=1$ and $a(t)=f(t)$ for every $t\in\mathbb{R}.$ Set $E={\rm supp}(f).$ 

Note that in \cite{BiSo70} a non-standard notion of Fourier transform is used. However, for a homogeneous function $\theta$ of degree $\alpha\in(-1,0),$ this notion coincides with the usual notion of Fourier transform. In particular, the operator $T$ featuring in \cite[(2.4)]{BiSo70} is given by a constant times $M_{\bar{f}}(-\Delta)^{-\frac1{2p}}M_f$.
\end{proof}

\begin{lem}\label{d=1 induction lemma} 
Let $d=1$ and $p>0.$ The following conditions are equivalent:
\begin{enumerate}[{\rm (i)}]
\item\label{dila} for every $0\leq f\in C_c(\mathbb{R})$ one has
$$\lim_{t\to\infty}t^{\frac1p}\mu(t,M_f(1-\Delta)^{-\frac{1}{2p}})=\pi^{-\frac1p}\|f\|_p$$
\item\label{dilb} for every  $0\leq f\in C_c(\mathbb{R})$ one has
$$\lim_{t\to\infty}t^{\frac2p}\mu(t,M_f(1-\Delta)^{-\frac{1}{p}})=\pi^{-\frac2p}\|f\|_{\frac{p}{2}}.$$
\end{enumerate}
\end{lem}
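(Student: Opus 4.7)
The plan is to mirror the proof of Lemma \ref{post-FSZ induction lemma} almost verbatim, substituting its single $d\geq 2$-specific ingredient (Theorem \ref{t delta commutator lemma}) by a direct commutator estimate valid in dimension one.

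The map $f\mapsto f^2$ is a bijection on the cone of nonnegative elements of $C_c(\mathbb{R})$, so condition (ii) is equivalent to the statement that for every $0\leq f\in C_c(\mathbb{R})$,
\[
\lim_{t\to\infty}t^{2/p}\mu\!\left(t,M_{f^2}(1-\Delta)^{-1/p}\right)=\pi^{-2/p}\|f^2\|_{p/2}=\pi^{-2/p}\|f\|_p^2.
\]
On the other hand, setting $A:=M_f(1-\Delta)^{-1/(2p)}$ and using $f\geq 0$, we have
\[
(1-\Delta)^{-1/(2p)}M_{f^2}(1-\Delta)^{-1/(2p)}=A^{*}A,\qquad \mu(t,A^{*}A)=\mu(t,A)^2,
\]
so condition (i) is equivalent to
\[
\lim_{t\to\infty}t^{2/p}\mu\!\left(t,(1-\Delta)^{-1/(2p)}M_{f^2}(1-\Delta)^{-1/(2p)}\right)=\pi^{-2/p}\|f\|_p^2.
\]
By the standard spectral-asymptotics perturbation principle \cite[Lemma~3.1]{FrSuZa}, the equivalence (i)$\Leftrightarrow$(ii) will therefore follow once we prove
\[
C:=M_{f^2}(1-\Delta)^{-1/p}-(1-\Delta)^{-1/(2p)}M_{f^2}(1-\Delta)^{-1/(2p)}\in(\mathcal{L}_{p/2,\infty})_0.
\]

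The main obstacle is precisely this commutator estimate, since the ambient dimension $d=1$ places us outside the scope of Theorem \ref{t delta commutator lemma}. I would handle it as follows. First, approximate $f$ uniformly (on a common compact support) by $f_n\in C_c^\infty(\mathbb{R})$; by the $d=1$ case of Theorem \ref{cwikel estimate}, the operator $C_n$ obtained by replacing $f$ with $f_n$ converges to $C$ in the quasi-norm of $\mathcal{L}_{p/2,\infty}$. Since $(\mathcal{L}_{p/2,\infty})_0$ is a closed subspace of $\mathcal{L}_{p/2,\infty}$, it suffices to treat smooth $g:=f_n^2\in C_c^\infty(\mathbb{R})$. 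For such $g$ one writes
\[
(1-\Delta)^{-1/(2p)}=c_p\int_0^{\infty}\frac{\lambda^{-1/(2p)}}{1+\lambda-\Delta}\,d\lambda
\]
and applies Fact \ref{commutator lemma} with $n=2$ to expand $[M_g,(1+\lambda-\Delta)^{-1}]$ into a leading term $A_{1,g}(1+\lambda-\Delta)^{-2}$ plus a remainder of the form $(1+\lambda-\Delta)^{-1}A_{2,g}(1+\lambda-\Delta)^{-2}$. After multiplying by the second factor $(1-\Delta)^{-1/(2p)}$ and integrating in $\lambda$, each resulting summand is, by Theorems \ref{pre-cwikel estimate} and \ref{cwikel estimate} in $d=1$, absolutely integrable in an ideal strictly contained in $\mathcal{L}_{p/2,\infty}$. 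This places $C$ in $(\mathcal{L}_{p/2,\infty})_0$ and completes the proof. The delicate point here, as in Section \ref{sec:approximate}, is to choose the splitting of powers of $(1-\Delta)^{-1/(2p)}$ between the two factors so that both sides of each Cwikel estimate land in a genuinely smaller ideal, which is possible for arbitrary $p>0$ precisely because $g$ is smooth with compact support.
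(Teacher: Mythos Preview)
Your reduction to the single commutator estimate
\[
C=M_{f^2}(1-\Delta)^{-1/p}-(1-\Delta)^{-1/(2p)}M_{f^2}(1-\Delta)^{-1/(2p)}\in(\mathcal{L}_{p/2,\infty})_0
\]
is exactly the paper's argument, and the perturbation principle you invoke is the right one. The difference lies only in how you establish this estimate.

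The paper handles it in one line: it observes that Lemma~\ref{second appendix commutator lemma} (and hence Lemma~\ref{first appendix commutator lemma} on which it rests) remains valid for $d=1$, so that $[M_g,(1-\Delta)^{-q/2}]\in(\mathcal{L}_{1/q,\infty})_0$ for every $g\in C_c(\mathbb{R})$ and $q>0$. Combined with the compact support of $f$ and a Cwikel bound on $M_\phi(1-\Delta)^{-1/(2p)}\in\mathcal{L}_{p,\infty}$, the ``simple arithmetic'' of Lemma~\ref{post-FSZ induction lemma} then gives $C\in(\mathcal{L}_{p/2,\infty})_0$ directly, without any integral representation or smoothing. Your route via the resolvent integral and Fact~\ref{commutator lemma} is in principle sound and would reproduce the content of Lemmas~\ref{first appendix commutator lemma}--\ref{second appendix commutator lemma} by hand, but it is considerably more work and your sketch leaves the bookkeeping (the ``splitting of powers'') unverified.

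There is also a genuine technical gap in your version: the formula $(1-\Delta)^{-1/(2p)}=c_p\int_0^\infty\lambda^{-1/(2p)}(1+\lambda-\Delta)^{-1}\,d\lambda$ is only valid when $0<\tfrac{1}{2p}<1$, i.e.\ for $p>\tfrac12$. For $p\le\tfrac12$ you would need to iterate as in the negative-$\epsilon$ case of Theorem~\ref{approximation theorem}, writing $(1-\Delta)^{-1/(2p)}$ as a product of an integer power and a fractional power in $(0,1)$. This can be done, but it is precisely the kind of complication that the paper avoids by citing Lemma~\ref{second appendix commutator lemma}.
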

\begin{proof} The proof follows that of Lemma \ref{post-FSZ induction lemma} {\it mutatis mutandi}. Instead of Theorem \ref{t delta commutator lemma}, which we stated under the assumption $d\geq 2$, we apply Lemma \ref{second appendix commutator lemma}, whose proof remains valid for $d=1$.
\end{proof}

\begin{proof}[Proof of Theorem \ref{weyl asymptotic}] First, take $p>2.$ Applying Lemma \ref{from bs70 lemma} to $\frac{p}{2}$ and taking into account that
$$\mu(M_{\bar{f}}(-\Delta)^{-\frac1p}M_f)=\mu^2(M_f(-\Delta)^{-\frac1{2p}}),$$ we arrive at 
$$\lim_{t\to\infty}t^{\frac2p}\mu^2(t,M_f(-\Delta)^{-\frac1{2p}})=\pi^{-\frac 2p} \|f\|_p^2.$$
Noting that, by Theorem \ref{pre-cwikel estimate},
$$M_f(-\Delta)^{-\frac1{2p}}-M_f(1-\Delta)^{-\frac1{2p}}\in(\mathcal{L}_{p,\infty})_0,$$
and using a standard result about spectral asymptotics (see, e.g., \cite[Lemma 3.1]{FrSuZa}), we infer the assertion for $p>2.$

The assertion for $p\leq 2$ follows by induction in the same way as in the proof of Theorem \ref{FSZ generalisation thm}, using Lemma \ref{d=1 induction lemma} instead of Lemma \ref{post-FSZ induction lemma}.
\end{proof}


\section{Proof of Theorem \ref{main theorem positive} \eqref{mtpb} and Theorem \ref{main theorem negative} \eqref{mtnb}}\label{sec:asymptotics}

Parts (ii) of Theorems \ref{main theorem positive} and \ref{main theorem negative} state spectral asymptotics for $[(-\Delta)^{\frac{\epsilon}{2}},M_f]$ under the sole assumption that the leading term is finite. We begin by proving these asymptotics in the smooth case.

\begin{lem}\label{mtb key corollary} 
Let $d\geq 2$ and $\epsilon\in(-\frac{d}{2},1)$ (alternatively, let $d=1$ and $\epsilon\in(0,1)$). If $f\in C^{\infty}_c(\mathbb{R}^d),$ then
$$\lim_{t\to\infty}t^{\frac{1-\epsilon}{d}}\mu\Big(t,[(-\Delta)^{\frac{\epsilon}{2}},M_f]\Big) = \kappa_{d,\epsilon} \|f\|_{\dot{W}^{1}_{\frac{d}{1-\epsilon}}(\mathbb{R}^d)}$$
with $\kappa_{d,\epsilon}$ given by \eqref{eq:constasymp}.
\end{lem}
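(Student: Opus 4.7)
The plan is to extract a leading operator term from $[(-\Delta)^{\epsilon/2},M_f]$, modulo $(\mathcal{L}_{p,\infty})_0$ where $p:=\tfrac{d}{1-\epsilon}$, whose singular-value asymptotics come directly from Theorem~\ref{FSZ generalisation thm} (for $d\ge 2$) or Theorem~\ref{weyl asymptotic} (for $d=1$). Note that $d/(2p)=(1-\epsilon)/2$.

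First I would replace $(-\Delta)^{\epsilon/2}$ by $(1-\Delta)^{\epsilon/2}$, showing
\[
[(-\Delta)^{\epsilon/2}-(1-\Delta)^{\epsilon/2},M_f]\in(\mathcal{L}_{p,\infty})_0.
\]
For $\epsilon\in(0,1)$ the difference is a bounded Fourier multiplier whose symbol is $O(|\xi|^{\epsilon-2})$ at infinity, so by Theorem~\ref{cwikel estimate} each side of the commutator lies in $\mathcal{L}_{d/(2-\epsilon),\infty}\subset(\mathcal{L}_{p,\infty})_0$. For $\epsilon\in(-d/2,0)$ the multiplier is unbounded at $\xi=0$; I would split it with a smooth frequency cutoff on $-\Delta$, treat the bounded high-frequency piece as above, and compare the low-frequency piece to the Riesz/Bessel kernels of $(-\Delta)^{\epsilon/2}$ and $(1-\Delta)^{\epsilon/2}$ so that the leading singularities cancel, yielding an operator in a strictly smaller ideal.

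Next, Theorem~\ref{approximation theorem} gives
\[
[(1-\Delta)^{\epsilon/2},M_f]\equiv-\tfrac{\epsilon}{2}[\Delta,M_f](1-\Delta)^{\epsilon/2-1} \pmod{(\mathcal{L}_{p,\infty})_0}.
\]
Writing $[\Delta,M_f]=2i\sum_j M_{\partial_j f}\nabla_j+M_{\Delta f}$, the summand $M_{\Delta f}(1-\Delta)^{\epsilon/2-1}$ lies in $\mathcal{L}_{d/(2-\epsilon),\infty}$ by Theorem~\ref{cwikel estimate}. Then $\nabla_j(1-\Delta)^{\epsilon/2-1}$ may be replaced by $R_j(1-\Delta)^{(\epsilon-1)/2}$, where $R_j=\nabla_j(-\Delta)^{-1/2}$ is the Riesz transform: the error is a Fourier multiplier of order $\epsilon-3$, and paired with $M_{\partial_j f}$ it lies in $\mathcal{L}_{d/(3-\epsilon),\infty}\subset(\mathcal{L}_{p,\infty})_0$. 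Combining,
\[
[(-\Delta)^{\epsilon/2},M_f]\equiv L\pmod{(\mathcal{L}_{p,\infty})_0},\qquad L:=-i\epsilon\sum_{j=1}^d M_{\partial_j f}\,R_j\,(1-\Delta)^{(\epsilon-1)/2}.
\]

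To invoke Theorem~\ref{FSZ generalisation thm}, note that $T_0:=-i\epsilon\sum_j M_{\partial_j f}R_j\in\Pi$ is compactly supported from the \emph{left}, so $T_0^*=i\epsilon\sum_j R_jM_{\overline{\partial_j f}}$ is compactly supported from the right. Since $R_j$ commutes with $(1-\Delta)^{(\epsilon-1)/2}$, we have $L^*=(1-\Delta)^{(\epsilon-1)/2}T_0^*$, and the swap commutator reduces to
\[
[(1-\Delta)^{(\epsilon-1)/2},T_0^*]=i\epsilon\sum_j R_j\,[(1-\Delta)^{(\epsilon-1)/2},M_{\overline{\partial_j f}}],
\]
each term of which lies in $\mathcal{L}_{d/(2-\epsilon),\infty}\subset(\mathcal{L}_{p,\infty})_0$ by rerunning the DOI/Cwikel argument of Theorem~\ref{approximation theorem} at exponent $\epsilon-1$. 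Using $\mu(L)=\mu(L^*)$ and the standard stability of leading asymptotics under $(\mathcal{L}_{p,\infty})_0$-perturbations (e.g.\ \cite[Lemma~3.1]{FrSuZa}), Theorem~\ref{FSZ generalisation thm} applied to $T_0^*(1-\Delta)^{-d/(2p)}$ gives $\lim_{t\to\infty}t^{1/p}\mu(t,L)=d^{-1/p}(2\pi)^{-d/p}\|{\rm sym}(T_0)\|_{L_p(\mathbb R^d\times\mathbb S^{d-1})}$. Finally ${\rm sym}(T_0)(x,\omega)=-i\epsilon\,\nabla f(x)\!\cdot\!\omega$, and rotational invariance of the $\omega$-integral gives $\|{\rm sym}(T_0)\|_{L_p}^p=|\epsilon|^p\big(\int_{\mathbb S^{d-1}}|\omega_d|^p\,d\omega\big)\|\nabla f\|_p^p$, matching $\kappa_{d,\epsilon}\|f\|_{\dot W^1_p}$ from \eqref{eq:constasymp}. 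For $d=1$ the same outline works with Theorem~\ref{weyl asymptotic} in place of Theorem~\ref{FSZ generalisation thm}, via the reduction $\mu(L)=\mu(M_{|\epsilon f'|}(1-\Delta)^{(\epsilon-1)/2})$ coming from $|L|^2=\overline{h(\nabla)}M_{|\epsilon f'|^2}h(\nabla)$ with $h(\xi)={\rm sign}(\xi)(1+|\xi|^2)^{(\epsilon-1)/2}$. The main obstacle will be Step~1 for $\epsilon<0$, together with the bookkeeping needed to place all remainder operators in $(\mathcal{L}_{p,\infty})_0$ across the various sub-regimes of $(d,\epsilon)$.
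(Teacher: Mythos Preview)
Your overall strategy matches the paper's: reduce to $(1-\Delta)^{\epsilon/2}$, invoke Theorem~\ref{approximation theorem}, peel off the lower-order pieces of $-\tfrac{\epsilon}{2}[\Delta,M_f](1-\Delta)^{\epsilon/2-1}$, and apply Theorem~\ref{FSZ generalisation thm} (or Theorem~\ref{weyl asymptotic} when $d=1$) to the resulting $T(1-\Delta)^{-(1-\epsilon)/2}$ with $T\in\Pi$; the symbol computation is then identical.

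What you flag as ``the main obstacle'' --- Step~1 for $\epsilon<0$ --- is in fact not an obstacle, and the cutoff/kernel-comparison route you sketch is unnecessary. The symbol $g(\xi)=(1+|\xi|^2)^{\epsilon/2}-|\xi|^\epsilon$ belongs to $L_p(\mathbb R^d)$ (respectively $\ell_p(L_2)(\mathbb R^d)$ when $p<2$) for \emph{every} $\epsilon\in(-\tfrac d2,1)\setminus\{0\}$, where $p=\tfrac{d}{1-\epsilon}$: at infinity $g(\xi)=O(|\xi|^{\epsilon-2})$ and $(2-\epsilon)p>d$ since $(2-\epsilon)/(1-\epsilon)>1$; near the origin, when $\epsilon<0$, the singular piece $-|\xi|^\epsilon$ satisfies $|\epsilon|\,p<d$ since $-\epsilon/(1-\epsilon)<1$. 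Hence Theorem~\ref{pre-cwikel estimate} (the strong Schatten bound, not Cwikel's weak one) gives $M_f g(\nabla),\,g(\nabla)M_f\in\mathcal L_p\subset(\mathcal L_{p,\infty})_0$ in one stroke, uniformly in the sign of $\epsilon$. One further simplification: the paper takes the leading term with multiplication on the \emph{right}, $T=\epsilon\sum_k R_k M_{D_kf}$, so $T$ is already compactly supported from the right and Theorem~\ref{FSZ generalisation thm} applies without your adjoint-and-swap detour; the commutator $[R_k,M_{D_kf}](1-\Delta)^{(\epsilon-1)/2}$ that arises from either ordering lies in $\mathcal L_{d/(2-\epsilon),\infty}\subset(\mathcal L_{p,\infty})_0$ by Lemma~\ref{rk appendix lemma}.
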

\begin{proof} Set
$$T:=\epsilon \sum_{k=1}^d \frac{D_k}{(-\Delta)^{\frac12}}M_{D_kf}$$
and note that $T\in\Pi$ and that $T$ is compactly supported from the right. 

We claim that
\begin{equation}\label{eq:mtbkeycor}
[(-\Delta)^{\frac{\epsilon}{2}},M_f]-T(1-\Delta)^{-\frac{1-\epsilon}{2}}\in(\mathcal{L}_{\frac{d}{1-\epsilon},\infty})_0.
\end{equation}
To prove this, we note that by Theorem \ref{pre-cwikel estimate}
$$[(1-\Delta)^{\frac{\epsilon}{2}},M_f]-[(-\Delta)^{\frac{\epsilon}{2}},M_f]=$$
$$=\big((1-\Delta)^{\frac{\epsilon}{2}}-(-\Delta)^{\frac{\epsilon}{2}}\big)M_f-M_f\big((1-\Delta)^{\frac{\epsilon}{2}}-(-\Delta)^{\frac{\epsilon}{2}}\big)\in\mathcal{L}_{\frac{d}{1-\epsilon}}\subset (\mathcal{L}_{\frac{d}{1-\epsilon},\infty})_0.$$
Here we use the assumption $f\in C^\infty_c(\mathbb R^d)$ together with the fact that the function $\xi\mapsto (1+|\xi|^2)^\frac\epsilon2 - |\xi|^\epsilon$ belongs to $L_\frac{d}{1-\epsilon}(\mathbb R^d)$ if $\frac{d}{1-\epsilon}\geq 2$ and to $\ell_\frac{d}{1-\epsilon}(L_2)(\mathbb R^d)$ if $\frac{d}{1-\epsilon}<2$.
Next, by Theorem \ref{approximation theorem},
$$
[(1-\Delta)^{\frac{\epsilon}{2}},M_f] + \tfrac\epsilon2 [\Delta,M_f] (1-\Delta)^{\frac\epsilon2 -1} \in (\mathcal{L}_{\frac{d}{1-\epsilon},\infty})_0.
$$
Finally, we write
$$
-[\Delta,M_f] = 2 \sum_{k=1}^d M_{D_k f} D_k - M_{\Delta f} \,.
$$
For the second term we use Theorem \ref{pre-cwikel estimate} similarly as before and find
$$
M_{\Delta f} (1-\Delta)^{\frac\epsilon2 -1} \in\mathcal{L}_{\frac{d}{1-\epsilon}}\subset(\mathcal{L}_{\frac{d}{1-\epsilon},\infty})_0.
$$
For the first term we again use Theorem \ref{pre-cwikel estimate} and find
$$
\epsilon \sum_{k=1}^d M_{D_k f} D_k (1-\Delta)^{\frac\epsilon2 -1} - T (1-\Delta)^{-\frac{1-\epsilon}{2}} \in\mathcal{L}_{\frac{d}{1-\epsilon}}\subset(\mathcal{L}_{\frac{d}{1-\epsilon},\infty})_0.
$$
This completes the proof of \eqref{eq:mtbkeycor}.

Consider the case $d\geq 2.$ The inclusion \eqref{eq:mtbkeycor}, together with simple limiting arguments (see, e.g., \cite[Lemma 3.1]{FrSuZa}) and Theorem \ref{FSZ generalisation thm}, applied with $p=\frac{d}{1-\epsilon}$, implies
\begin{align*}
\lim_{t\to\infty}t^{\frac{1-\epsilon}{d}}\mu\Big(t,[(-\Delta)^{\frac{\epsilon}{2}},M_f]\Big) 
& = \lim_{t\to\infty}t^{\frac{1-\epsilon}{d}}\mu\Big(t, T(1-\Delta)^{-\frac{1-\epsilon}{2}} \Big) \\
& = d^{-\frac{1-\epsilon}d} (2\pi)^{-1+\epsilon} \| {\rm sym}(T) \|_{L_{\frac{d}{1-\epsilon}(\mathbb R^d\times\mathbb S^{d-1})}}.
\end{align*}
Finally, we compute, similarly as in \cite[Lemma 8.4]{FrSuZa},
\begin{align*}
\| {\rm sym}(T) \|_{L_{\frac{d}{1-\epsilon}(\mathbb R^d\times\mathbb S^{d-1})}}^\frac d{1-\epsilon} & = |\epsilon|^\frac{d}{1-\epsilon} \int_{\mathbb R^d} \int_{\mathbb S^{d-1}} |\omega\cdot\nabla f(x)|^\frac{d}{1-\epsilon}\,d\omega\,dx \\
& = |\epsilon|^\frac{d}{1-\epsilon} \int_{\mathbb R^d} |\nabla f(x)|^\frac{d}{1-\epsilon}\,dx \int_{\mathbb S^{d-1}} |\omega_d|^\frac{d}{1-\epsilon}\,d\omega \,.
\end{align*}
This completes the proof for $d\geq 2.$

Consider the case $d=1.$ The inclusion \eqref{eq:mtbkeycor} reads as
$$[(-\Delta)^{\frac{\epsilon}{2}},M_f]\in -\epsilon M_{Df}(1-\Delta)^{\frac{\epsilon-1}{2}}\cdot{\rm sgn}(D)+(\mathcal{L}_{\frac1{1-\epsilon},\infty})_0.$$
By a simple limiting argument (see, e.g., \cite[Lemma 3.1]{FrSuZa}), this reduces the assertion for $d=1$ to the corresponding assertion about the operator $\epsilon M_{Df}(1-\Delta)^{\frac{\epsilon-1}{2}}\cdot{\rm sgn}(D)$. Since ${\rm sgn}(D)$ is unitary, we have
$$
\mu(M_{Df}(1-\Delta)^{\frac{\epsilon-1}{2}}\cdot{\rm sgn}(D)) = \mu(M_{Df}(1-\Delta)^{\frac{\epsilon-1}{2}}) \,.
$$
The assertion for $d=1$ follows now from Theorem \ref{weyl asymptotic} (applied with $p=\frac1{1-\epsilon}$).
\end{proof}

\begin{proof}[Proof of Theorem \ref{main theorem positive} \eqref{mtpb} and Theorem \ref{main theorem negative} \eqref{mtnb}] The assertion follows from Lemma \ref{mtb key corollary} and the universal bounds in parts (i) by means a simple approximation argument (see, e.g., \cite[Lemma 3.2]{FrSuZa}).
\end{proof}

In the proof of part \eqref{mtnc} we need a lower bound with localization functions.

\begin{lem}\label{mtb key corollary2} 
Let $d\geq2$ and $\epsilon\in(-\frac{d}{2},1)$ (alternatively, let $d=1$ and $\epsilon\in(0,1)$). If $f\in C^{\infty}(\mathbb{R}^d)$ and $\chi\in C_c(\mathbb{R}^d),$ then
$$\lim_{t\to\infty}t^{\frac{1-\epsilon}{d}}\mu\Big(t,M_{\bar\chi} [(-\Delta)^{\frac{\epsilon}{2}},M_f] M_\chi \Big)
= \kappa_{d,\epsilon} \left( \int_{\mathbb R^d} |\chi|^\frac{2d}{1-\epsilon} |\nabla f|^\frac{d}{1-\epsilon}\right)^\frac{1-\epsilon}{d}$$
with $\kappa_{d,\epsilon}$ given by \eqref{eq:constasymp}.
\end{lem}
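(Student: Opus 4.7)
The proof runs parallel to that of Lemma \ref{mtb key corollary}, using an algebraic identity that cleanly handles the sandwich by $M_{\bar\chi}$ and $M_\chi$. I first treat $\chi\in C^\infty_c(\mathbb R^d)$; the general case $\chi\in C_c(\mathbb R^d)$ follows by uniform approximation by smooth compactly supported functions supported in a fixed compact set, together with dominated convergence for the right-hand side. The starting point is the direct-computation identity
\[
M_{\bar\chi}[(-\Delta)^{\frac{\epsilon}{2}},M_f]M_\chi = M_{\bar\chi}[(-\Delta)^{\frac{\epsilon}{2}},M_{f\chi}] - M_{\bar\chi f}[(-\Delta)^{\frac{\epsilon}{2}},M_\chi],
\]
in which both $f\chi$ and $\chi$ now belong to $C^\infty_c(\mathbb R^d)$.

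Next I invoke the approximation \eqref{eq:mtbkeycor} from the proof of Lemma \ref{mtb key corollary}: for any $g\in C^\infty_c(\mathbb R^d)$,
\[
[(-\Delta)^{\frac{\epsilon}{2}},M_g] - T_g(1-\Delta)^{-\frac{1-\epsilon}{2}} \in (\mathcal L_{\frac{d}{1-\epsilon},\infty})_0, \qquad T_g := \epsilon\sum_{k=1}^d \frac{D_k}{(-\Delta)^{\frac{1}{2}}}M_{D_kg}.
\]
Applying this to $g=f\chi$ and to $g=\chi$, and multiplying on the left by the bounded operators $M_{\bar\chi}$ and $M_{\bar\chi f}$ respectively, yields
\[
M_{\bar\chi}[(-\Delta)^{\frac{\epsilon}{2}},M_f]M_\chi \equiv \tilde T\,(1-\Delta)^{-\frac{1-\epsilon}{2}} \pmod{(\mathcal L_{\frac{d}{1-\epsilon},\infty})_0},
\]
with $\tilde T := M_{\bar\chi}T_{f\chi} - M_{\bar\chi f}T_\chi$. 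Since $D_k(f\chi)$ and $D_k\chi$ have compact support, $\tilde T\in\Pi$ is compactly supported from the right. The key cancellation now comes from Leibniz's rule $D_k(f\chi)=\chi D_kf + f D_k\chi$: a short symbol computation gives
\[
{\rm sym}(\tilde T)(x,\omega) = \epsilon\bar\chi(x)\bigl(\chi(x)\,\omega\cdot\nabla f(x) + f(x)\,\omega\cdot\nabla\chi(x)\bigr) - \epsilon\bar\chi(x)f(x)\,\omega\cdot\nabla\chi(x) = \epsilon|\chi(x)|^2\,\omega\cdot\nabla f(x),
\]
the $\nabla\chi$ cross-terms canceling exactly. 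This cancellation is precisely why the present decomposition is preferable to the symmetric $[(-\Delta)^{\frac{\epsilon}{2}},M_{\bar\chi f}]M_\chi - [(-\Delta)^{\frac{\epsilon}{2}},M_{\bar\chi}]M_{f\chi}$, which would instead leave commutators of $M_\chi$ with $(1-\Delta)^{-\frac{1-\epsilon}{2}}$ that are delicate to place in $(\mathcal L_{\frac{d}{1-\epsilon},\infty})_0$ uniformly throughout $\epsilon\in(-\frac{d}{2},1)$.

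Finally, I apply Theorem \ref{FSZ generalisation thm} (for $d\geq 2$) or Theorem \ref{weyl asymptotic} (for $d=1$, absorbing the ${\rm sgn}(D)$ factor by its unitarity, as at the end of the proof of Lemma \ref{mtb key corollary}) with $p=\frac{d}{1-\epsilon}$ to the operator $\tilde T(1-\Delta)^{-\frac{1-\epsilon}{2}}$. The $L_{\frac{d}{1-\epsilon}}(\mathbb R^d\times\mathbb S^{d-1})$ norm of ${\rm sym}(\tilde T)$ factorizes, via rotation invariance of the sphere measure, exactly as in the conclusion of the proof of Lemma \ref{mtb key corollary}, giving $\kappa_{d,\epsilon}\bigl(\int_{\mathbb R^d}|\chi|^{\frac{2d}{1-\epsilon}}|\nabla f|^{\frac{d}{1-\epsilon}}\bigr)^{\frac{1-\epsilon}{d}}$. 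Transferring this asymptotic back to $M_{\bar\chi}[(-\Delta)^{\frac{\epsilon}{2}},M_f]M_\chi$ via \cite[Lemma 3.1]{FrSuZa} completes the argument. The main obstacle is thus finding the correct algebraic decomposition in the first step; once in hand, every subsequent step is a transcription of the arguments for Lemma \ref{mtb key corollary}.
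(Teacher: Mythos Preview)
Your argument is correct and self-contained, but it takes a more elaborate route than the paper. The paper's proof is one line: pick $\phi\in C^\infty_c(\mathbb R^d)$ with $\chi\phi=\chi$, observe that $M_{\bar\chi}[(-\Delta)^{\epsilon/2},M_f]M_\chi=M_{\bar\chi}[(-\Delta)^{\epsilon/2},M_{f\phi}]M_\chi$, and thereby reduce to $f\in C^\infty_c(\mathbb R^d)$; after that one simply sandwiches \eqref{eq:mtbkeycor} by $M_{\bar\chi}$ and $M_\chi$, commutes $M_\chi$ past $(1-\Delta)^{-(1-\epsilon)/2}$, and applies Theorem \ref{FSZ generalisation thm} (or Theorem \ref{weyl asymptotic}) to $M_{\bar\chi}TM_\chi\in\Pi$, whose symbol is $\epsilon|\chi|^2\,\omega\cdot\nabla f$ directly, without any Leibniz cancellation. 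Your decomposition $M_{\bar\chi}[(-\Delta)^{\epsilon/2},M_{f\chi}]-M_{\bar\chi f}[(-\Delta)^{\epsilon/2},M_\chi]$ achieves the same symbol after the $\nabla\chi$ cross-terms cancel, which is pleasant but not needed once $f$ has been truncated.

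Your remark that the alternative route would ``leave commutators of $M_\chi$ with $(1-\Delta)^{-\frac{1-\epsilon}{2}}$ that are delicate to place in $(\mathcal L_{\frac{d}{1-\epsilon},\infty})_0$'' is the one point where you misjudge the difficulty: this is exactly Lemma \ref{second appendix commutator lemma}, which holds for every $\chi\in C_c(\mathbb R^d)$ and every $p>0$ (and whose proof, as noted in the paper, remains valid for $d=1$). So there is nothing delicate there, and in fact the paper's route uses it implicitly. Finally, your passage from smooth $\chi$ to $\chi\in C_c(\mathbb R^d)$ is a little underjustified: to show that $M_{\bar\chi_n}[(-\Delta)^{\epsilon/2},M_f]M_{\chi_n}\to M_{\bar\chi}[(-\Delta)^{\epsilon/2},M_f]M_\chi$ in $\mathcal L_{\frac{d}{1-\epsilon},\infty}$ you still need $[(-\Delta)^{\epsilon/2},M_f]$, localized to a fixed compact set, to lie in $\mathcal L_{\frac{d}{1-\epsilon},\infty}$, and for this you end up inserting the same cutoff $\phi$ as the paper does at the outset. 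Once you notice this, the paper's reduction is strictly shorter.
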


\begin{proof} Let $\phi\in C^{\infty}_c(\mathbb{R}^d)$ be real-valued with $\chi\phi=\chi.$ Then
$$M_{\bar\chi} [(-\Delta)^{\frac{\epsilon}{2}},M_f] M_\chi=M_{\bar\chi} [(-\Delta)^{\frac{\epsilon}{2}},M_{f\cdot\phi}] M_\chi.$$
Hence, we may assume without loss of generality that $f\in C^{\infty}_c(\mathbb{R}^d).$ The rest of the argument follows that in Corollary \ref{mtb key corollary} {\it mutatis mutandi}. 
\end{proof}

The following bounds is an immediate consequence of Lemma \ref{mtb key corollary2}.

\begin{lem}\label{mtb key corollary3}
Let $d\geq 2$ and let $\epsilon\in(-\frac{d}{2},1)$ (alternatively, let $d=1$ and $\epsilon\in(0,1)$). There is a constant $c_{d,\epsilon}>0$ such that for all $f\in C^{\infty}(\mathbb{R}^d)$ and $\chi\in C_c(\mathbb{R}^d),$
$$\|M_{\bar\chi} [(-\Delta)^{\frac{\epsilon}{2}},M_f] M_\chi \|_{\frac{d}{1-\epsilon},\infty}
\geq c_{d,\epsilon} \left( \int_{\mathbb R^d} |\chi|^\frac{2d}{1-\epsilon} |\nabla f|^\frac{d}{1-\epsilon}\right)^\frac{1-\epsilon}{d}.$$
\end{lem}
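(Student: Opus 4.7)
The plan is simply to observe that Lemma \ref{mtb key corollary3} is a direct quantitative consequence of the spectral asymptotics in Lemma \ref{mtb key corollary2}, obtained by trading an equality of limits for the inequality built into the definition of the weak Schatten quasi-norm.

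Concretely, recall from Section \ref{prelim section} that for any compact operator $T$,
$$
\|T\|_{p,\infty} = \sup_{n\geq 0} (n+1)^{1/p} \mu(n, T) \geq \limsup_{t\to\infty} t^{1/p} \mu(t, T).
$$
Applying this with $p = \tfrac{d}{1-\epsilon}$ and $T = M_{\bar\chi}[(-\Delta)^{\epsilon/2}, M_f] M_\chi$, and then invoking Lemma \ref{mtb key corollary2} to evaluate the limit on the right-hand side, I would conclude
$$
\|M_{\bar\chi}[(-\Delta)^{\epsilon/2}, M_f] M_\chi\|_{\frac{d}{1-\epsilon},\infty}
\geq \kappa_{d,\epsilon} \left(\int_{\mathbb R^d} |\chi|^{\frac{2d}{1-\epsilon}} |\nabla f|^{\frac{d}{1-\epsilon}}\right)^{\frac{1-\epsilon}{d}},
$$
which is exactly the claimed bound with $c_{d,\epsilon} = \kappa_{d,\epsilon}$.

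There is no genuine obstacle here: Lemma \ref{mtb key corollary2} already delivers a finite, nonzero limit of $t^{(1-\epsilon)/d}\mu(t, \cdot)$, so the operator is automatically compact (in fact in $\mathcal{L}_{\frac{d}{1-\epsilon},\infty}$, by part (i) of Theorems \ref{main theorem positive} and \ref{main theorem negative} applied to a compactly supported $f$ agreeing with the given $f$ on the support of $\chi$, as in the proof of Lemma \ref{mtb key corollary2}), and the comparison of the supremum defining $\|\cdot\|_{p,\infty}$ with the limit is immediate. The only thing worth flagging is that, strictly speaking, one should first reduce to the case $f \in C_c^\infty(\mathbb R^d)$ exactly as in the proof of Lemma \ref{mtb key corollary2} to guarantee that the operator is well defined and compact; this reduction is harmless because both sides of the inequality depend only on $f$ through its restriction to a neighborhood of $\mathrm{supp}(\chi)$.
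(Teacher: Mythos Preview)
Your argument is correct and coincides with the paper's own: the paper simply states that Lemma \ref{mtb key corollary3} is an immediate consequence of Lemma \ref{mtb key corollary2}, which is exactly the deduction you spell out (the supremum defining $\|\cdot\|_{\frac{d}{1-\epsilon},\infty}$ dominates the limit computed there). Your remark that the reduction to $f\in C^\infty_c(\mathbb{R}^d)$ is already absorbed into Lemma \ref{mtb key corollary2} is accurate, and you even identify the explicit constant $c_{d,\epsilon}=\kappa_{d,\epsilon}$.
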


\section{Proof of Theorems \ref{main theorem positive} and \ref{main theorem negative} \eqref{mtnc}}\label{sec:proofpartiii}

In this section, $T_t,$ $t\in\mathbb{R}^d,$ denotes the translation operator,
$$
(T_t f)(s) = f(s+t) \,,\ s\in\mathbb R^d \,.
$$

\begin{lem}\label{convolution fubini lemma} Let $\epsilon\in(-\frac{d}{2},0)$ and let $f\in L_{1,{\rm loc}}(\mathbb{R}^d).$ If $h,\Phi\in C_c(\mathbb{R}^d),$ then
$$\int_{\mathbb{R}^d}\Phi(t)\langle T_{-t}f,h\rangle dt=\langle f\ast\Phi,h\rangle.$$
\end{lem}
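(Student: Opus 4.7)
The identity should follow from a direct application of Fubini's theorem. Unpacking the sesquilinear pairings in the sense of Lemma \ref{definition}, we have
$$\langle T_{-t}f, h\rangle = \int_{\mathbb{R}^d} f(s-t)\overline{h(s)}\,ds,$$
which is well-defined because $h$ is compactly supported and $f\in L_{1,\mathrm{loc}}(\mathbb{R}^d)$. Similarly, $\langle f\ast\Phi, h\rangle = \int_{\mathbb{R}^d}(f\ast\Phi)(s)\overline{h(s)}\,ds$, where $(f\ast\Phi)(s)=\int_{\mathbb{R}^d} f(s-t)\Phi(t)\,dt$. The claimed equality thus reduces to interchanging the order of integration in the iterated integral of $(s,t)\mapsto f(s-t)\Phi(t)\overline{h(s)}$.

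The hypothesis required to invoke Fubini is absolute integrability on $\mathbb{R}^d\times\mathbb{R}^d$. Let $K_1$ and $K_2$ be compact sets containing $\mathrm{supp}(\Phi)$ and $\mathrm{supp}(h)$ respectively; the integrand then vanishes off $K_2\times K_1$. I would estimate
$$\int_{K_1}|\Phi(t)|\int_{K_2}|f(s-t)||h(s)|\,ds\,dt \leq \|\Phi\|_\infty\|h\|_\infty|K_1|\int_{K_2-K_1}|f(u)|\,du,$$
after a change of variable $u=s-t$, where $K_2-K_1:=\{s-t:s\in K_2,\,t\in K_1\}$ is again compact. The right-hand side is finite by local integrability of $f$, so Fubini's theorem applies and immediately yields the stated identity.

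I expect no serious obstacle here: this is essentially a bookkeeping lemma. The assumption $\epsilon\in(-d/2,0)$ plays no role in the argument itself and is presumably included only to align the statement with the setting in which the lemma is subsequently applied.
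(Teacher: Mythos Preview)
Your proof is correct and follows essentially the same approach as the paper: both verify absolute integrability of $(s,t)\mapsto f(s-t)\Phi(t)\overline{h(s)}$ using the compact supports of $h,\Phi$ together with $f\in L_{1,\mathrm{loc}}$, then invoke Fubini. Your observation that the hypothesis on $\epsilon$ is irrelevant to the argument is also accurate.
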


\begin{proof} Let $h$ be supported in $B(0,r_1)$ and let $\Phi$ be supported in $B(r_2).$ We have
$$\int_{\mathbb{R}^d\times\mathbb{R}^d}|h(s)||\Phi(t)||f(s-t)|\,dsdt\leq\|h\|_{\infty}\|\Phi\|_{\infty}\int_{\substack{|s|\leq r_1\\ |t|\leq r_2}}|f(s-t)|\,dsdt=$$
$$=\|h\|_{\infty}\|\Phi\|_{\infty}\int_{\substack{|u|\leq r_1\\ |u+v|\leq r_2}}|f(v)|\,dudv\leq \|h\|_{\infty}\|\Phi\|_{\infty}\int_{\substack{|u|\leq r_1\\ |v|\leq r_1+r_2}}|f(v)|\,dudv=$$
$$=c_dr_1^2\|h\|_{\infty}\|\Phi\|_{\infty}\int_{|v|\leq r_1+r_2}|f(v)|\,dv<\infty.$$
By Fubini theorem, we have
$$\int_{\mathbb{R}^d}\Phi(t)\Big(\int_{\mathbb{R}^d}f(s-t)\overline{h(s)}\,ds\Big)dt=\int_{\mathbb{R}^d}\overline{h(s)} \Big(\int_{\mathbb{R}^d}f(s-t)\Phi(t)dt\Big)ds.$$
This completes the proof.
\end{proof}	

In the following lemma we use the notion of submajorization, which is discussed in detail for instance in \cite[Section 2.3]{LSZ-book-2}.

\begin{lem}\label{main convolution lemma} Let $\epsilon\in(-\frac{d}{2},1]$ and let $f\in L_{1,{\rm loc}}(\mathbb{R}^d)$ be such that $[(-\Delta)^{\frac{\epsilon}{2}},M_f]$ extends to a bounded operator on $L_2(\mathbb{R}^d).$ If $\Phi\in C_c(\mathbb{R}^d)$ is nonnegative with $\|\Phi\|_1=1,$ then $[(-\Delta)^{\frac{\epsilon}{2}},M_{f\ast\Phi}]$ also extends to a bounded operator on $L_2(\mathbb{R}^d)$ and
$$[(-\Delta)^{\frac{\epsilon}{2}},M_{f\ast\Phi}]\prec\prec [(-\Delta)^{\frac{\epsilon}{2}},M_f].$$
Moreover, if $[(-\Delta)^{\frac{\epsilon}{2}},M_f]$ is compact, then so is $[(-\Delta)^{\frac{\epsilon}{2}},M_{f\ast\Phi}]$.
\end{lem}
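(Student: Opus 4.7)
\smallskip\noindent\emph{Proof plan.} The plan is to realize $[(-\Delta)^{\frac{\epsilon}{2}},M_{f*\Phi}]$ as a weak operator average of unitary translates of $C:=[(-\Delta)^{\frac{\epsilon}{2}},M_f]$ and then to invoke submajorization of averages of unitary conjugates. Let $U_t$ denote the unitary translation $(U_tg)(s)=g(s-t)$. Because $(-\Delta)^{\frac{\epsilon}{2}}$ is translation invariant and $U_tM_fU_t^{*}=M_{T_{-t}f}$, the commutator transforms covariantly:
\[
[(-\Delta)^{\frac{\epsilon}{2}},M_{T_{-t}f}]\;=\;U_tCU_t^{*}\qquad (t\in\mathbb{R}^d),
\]
and each such operator is bounded on $L_2(\mathbb{R}^d)$ with operator norm equal to $\|C\|_\infty$.

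Since $\Phi\in L_1(\mathbb{R}^d)$ and the map $t\mapsto U_tCU_t^{*}$ is uniformly norm-bounded and continuous in the weak operator topology, the weak operator integral
\[
B\;:=\;\int_{\mathbb{R}^d}\Phi(t)\,U_tCU_t^{*}\,dt
\]
defines a bounded operator on $L_2(\mathbb{R}^d)$ with $\|B\|_\infty\le\|C\|_\infty$. I would then verify that $B$ coincides with the bounded extension of $[(-\Delta)^{\frac{\epsilon}{2}},M_{f*\Phi}]$. For $\phi,\psi\in C_c^{\infty}(\mathbb{R}^d)$, Lemma \ref{definition} expands
\[
\bigl\langle[(-\Delta)^{\frac{\epsilon}{2}},M_{T_{-t}f}]\phi,\psi\bigr\rangle=\langle(T_{-t}f)\phi,(-\Delta)^{\frac{\epsilon}{2}}\psi\rangle-\langle(-\Delta)^{\frac{\epsilon}{2}}\phi,\overline{T_{-t}f}\,\psi\rangle.
\]
Since $\bar\phi\cdot(-\Delta)^{\frac{\epsilon}{2}}\psi$ and $\overline{(-\Delta)^{\frac{\epsilon}{2}}\phi}\cdot\psi$ both lie in $C_c(\mathbb{R}^d)$ (the factor $\phi$ or $\psi$ carries the compact support, while $(-\Delta)^{\frac{\epsilon}{2}}$ applied to a $C_c^{\infty}$-function is smooth and bounded on that support), Lemma \ref{convolution fubini lemma} permits integration against $\Phi(t)\,dt$ in each pairing, reproducing $\langle[(-\Delta)^{\frac{\epsilon}{2}},M_{f*\Phi}]\phi,\psi\rangle$ via a second application of Lemma \ref{definition}. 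Density of $C_c^{\infty}(\mathbb{R}^d)$ in $L_2(\mathbb{R}^d)$ together with the operator norm bound then identifies $B$ as the extension. Verifying this identification, in particular the applicability of Lemma \ref{convolution fubini lemma} in both pairings, is the main technical step.

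Submajorization then follows from the Ky Fan argument: for each $s>0$, the functional $T\mapsto\int_0^s\mu(u,T)\,du$ is a norm on $\mathcal{B}(L_2(\mathbb{R}^d))$ that is invariant under unitary conjugation. Applying it to the weak integral representing $B$ and passing the norm under the integral sign gives
\[
\int_0^s\mu(u,B)\,du\;\le\;\int\Phi(t)\int_0^s\mu(u,U_tCU_t^{*})\,du\,dt\;=\;\int_0^s\mu(u,C)\,du,
\]
which is precisely $B\prec\prec C$. Finally, for the compactness statement, if $C$ is compact then strong continuity of $(U_t)$ combined with compactness of $C$ makes $t\mapsto U_tCU_t^{*}$ norm continuous; since $\Phi\in C_c(\mathbb{R}^d)$, the integrand $\Phi(t)U_tCU_t^{*}$ is then Bochner integrable in the closed ideal $\mathcal{K}(L_2(\mathbb{R}^d))$, and hence $B\in\mathcal{K}(L_2(\mathbb{R}^d))$.
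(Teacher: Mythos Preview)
Your proposal is correct and follows essentially the same approach as the paper: form the weak operator integral $B=\int\Phi(t)\,U_tCU_t^{*}\,dt$, identify it with $[(-\Delta)^{\epsilon/2},M_{f*\Phi}]$ on $C_c^\infty$ via Lemma~\ref{definition} and Lemma~\ref{convolution fubini lemma}, and read off $B\prec\prec C$ from unitary invariance of the Ky~Fan norms. The only noteworthy difference is in the compactness step: the paper deduces it from the general fact that a bounded operator submajorized by a compact one is compact (via $N^{-1}\sum_{n=1}^N\mu(n,\cdot)\to0$), whereas you argue that $t\mapsto U_tCU_t^{*}$ is norm continuous when $C$ is compact and then use Bochner integrability in the closed ideal $\mathcal{K}$; both arguments are valid and standard.
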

\begin{proof} Let $A:L_2(\mathbb{R}^d)\to L_2(\mathbb{R}^d)$ be the extension of $[(-\Delta)^{\frac{\epsilon}{2}},M_f].$ We consider the weak integral
$$B=\int_{\mathbb{R}^d}\Phi(t)T_{-t}AT_tdt.$$	
Clearly, $B$ is a bounded operator and we have $B\prec\prec A$; see, for instance, \cite[Lemma 18 and its proof]{LMSZ}.

We now claim that
\begin{equation}
	\label{eq:majorproof}
	\langle B\phi,\psi\rangle=\langle [(-\Delta)^{\frac{\epsilon}{2}},M_{f\ast\Phi}]\phi,\psi\rangle,\quad \phi,\psi\in C^{\infty}_c(\mathbb{R}^d).
\end{equation}
By definition,
$$LHS=\int_{\mathbb{R}^d}\Phi(t)\langle T_{-t}AT_t\phi,\psi\rangle dt=\int_{\mathbb{R}^d}\Phi(t)\langle [(-\Delta)^{\frac{\epsilon}{2}},M_{T_{-t}f}]\phi,\psi\rangle dt.$$
By Lemma \ref{definition}, we have
$$\langle [(-\Delta)^{\frac{\epsilon}{2}},M_{T_{-t}f}]\phi,\psi\rangle
=\langle T_{-t}f\cdot\phi,(-\Delta)^{\frac{\epsilon}{2}}\psi\rangle
-\langle (-\Delta)^{\frac{\epsilon}{2}}\phi,\overline{T_{-t}f}\cdot\psi\rangle=$$
$$=\langle T_{-t}f,\bar{\phi}\cdot(-\Delta)^{\frac{\epsilon}{2}}\psi\rangle-\langle \bar{\psi}\cdot(-\Delta)^{\frac{\epsilon}{2}}\phi,\overline{T_{-t}f}\rangle
=\langle T_{-t} f,h\rangle,$$
where
$$h=\bar{\phi}\cdot (-\Delta)^{\frac{\epsilon}{2}}\psi-\psi\cdot\overline{(-\Delta)^{\frac{\epsilon}{2}}\phi}\in C_c(\mathbb{R}^d).$$
Meanwhile, it follows from Lemma \ref{definition} that
$$RHS=\langle(f\ast\Phi)\cdot\phi,(-\Delta)^{\frac{\epsilon}{2}}\psi\rangle-\langle (-\Delta)^{\frac{\epsilon}{2}}\phi,\overline{(f\ast\Phi)}\cdot\psi\rangle=\langle f\ast\Phi,h\rangle.$$
The claim \eqref{eq:majorproof} follows now from Lemma \ref{convolution fubini lemma}.	

By \eqref{eq:majorproof}, we have
$$B\phi=[(-\Delta)^{\frac{\epsilon}{2}},M_{f\ast\Phi}]\phi,\quad \phi\in C^{\infty}_c(\mathbb{R}^d).$$
Therefore, $[(-\Delta)^{\frac{\epsilon}{2}},M_{f\ast\Phi}]$ extends to a bounded operator on $L_2(\mathbb{R}^d)$ and its extension is exactly $B.$

The last statement concerning compactness follows from the general fact that any bounded operator that is submajorized by a compact one is compact. The latter fact follows immediately from the definition of submajorization via the singular value function, together with the simple fact that a bounded operator $A$ is compact if and only if $N^{-1} \sum_{n=1}^N \mu(n,A)\to 0$ as $N\to\infty.$
\end{proof}

\begin{lem}\label{standard convolution lemma} Let $1<p\leq \infty$ and let $f\in L_{1,{\rm loc}}(\mathbb{R}^d)$ be such that
$$\|\nabla(f\ast\Phi)\|_{L_p(\mathbb{R}^d,\mathbb{C}^d)}\leq 1$$
for every nonnegative $\Phi\in C^{\infty}_c(\mathbb{R}^d)$ with $\|\Phi\|_1=1.$ Then $f\in\dot{W}^{1,p}(\mathbb{R}^d)$ and
$$\|f\|_{\dot{W}^{1}_{p}(\mathbb{R}^d)}\leq 1.$$
\end{lem}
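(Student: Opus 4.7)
The plan is to produce the distributional gradient of $f$ as a weak (or weak-$*$) limit of the gradients of mollifications $f\ast\Phi_\epsilon$, using the uniform bound in the hypothesis.

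First I would fix a nonnegative $\Phi\in C_c^\infty(\mathbb{R}^d)$ with $\|\Phi\|_1=1$ and set $\Phi_\epsilon(x):=\epsilon^{-d}\Phi(x/\epsilon)$, so that each $\Phi_\epsilon$ is an admissible test kernel. The hypothesis then gives that the vector-valued function $g_\epsilon:=\nabla(f\ast\Phi_\epsilon)\in L_p(\mathbb{R}^d,\mathbb{C}^d)$ satisfies $\|g_\epsilon\|_p\leq 1$ uniformly in $\epsilon>0$.

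Next I would extract a subsequence $\epsilon_k\to 0$ such that $g_{\epsilon_k}$ converges to some $g\in L_p(\mathbb{R}^d,\mathbb{C}^d)$ in the weak topology when $1<p<\infty$ (by reflexivity of $L_p$ and Banach--Alaoglu), and in the weak-$*$ topology when $p=\infty$ (using $L_\infty=(L_1)^*$ and the separability of $L_1$, so that bounded sequences have weak-$*$ convergent subsequences). In either case the limit satisfies $\|g\|_p\leq 1$ by lower semicontinuity of the norm.

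Finally I would identify $g$ with the distributional gradient of $f$. For each $\psi\in C_c^\infty(\mathbb{R}^d)$, integration by parts gives
\begin{equation*}
\langle g_{\epsilon_k},\psi\rangle=\langle\nabla(f\ast\Phi_{\epsilon_k}),\psi\rangle=-\langle f\ast\Phi_{\epsilon_k},\nabla\psi\rangle.
\end{equation*}
Since $f\in L_{1,{\rm loc}}(\mathbb{R}^d)$, the standard mollification property yields $f\ast\Phi_{\epsilon_k}\to f$ in $L_1(K)$ for every compact $K$, so the right-hand side converges to $-\langle f,\nabla\psi\rangle=\langle\nabla f,\psi\rangle$. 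The left-hand side converges to $\langle g,\psi\rangle$ by the (weak or weak-$*$) convergence, since $\psi\in L_q$ for all $q$ with compact support. Hence $\nabla f=g$ as distributions, which means $\nabla f\in L_p(\mathbb{R}^d,\mathbb{C}^d)$ with $\|\nabla f\|_p\leq 1$, i.e.\ $f\in\dot W^1_p(\mathbb{R}^d)$ with $\|f\|_{\dot W^1_p}\leq 1$. The only mildly subtle point is the choice of the correct weak topology depending on whether $p<\infty$ or $p=\infty$, but in both cases the argument is standard.
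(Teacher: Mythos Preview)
Your proof is correct and follows essentially the same approach as the paper's: mollify with a rescaled approximate identity, use the uniform $L_p$ bound to extract a weak (or weak-$*$) subsequential limit, and identify that limit with the distributional gradient of $f$ via convergence of mollifications. The only cosmetic difference is that the paper handles both $1<p<\infty$ and $p=\infty$ at once by noting that $L_p=(L_{p'})^*$ with $L_{p'}$ separable, rather than splitting into the reflexive and non-reflexive cases.
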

\begin{proof} Let $0\leq\Phi\in C^{\infty}_c(\mathbb{R}^d)$ with $\|\Phi\|_1=1$ and set
$$\Phi_n(t):=n^d\Phi(nt),\quad t\in\mathbb{R}^d,\quad n\in\mathbb{N}.$$
By assumption, we have
$$\|\nabla(f\ast\Phi_n)\|_{L_p(\mathbb{R}^d,\mathbb{C}^d)}\leq 1.$$

Since $1<p\leq\infty$, $L_p(\mathbb R^d,\mathbb C^d)$ is the Banach dual of a separable space, namely of $L_{p'}(\mathbb R^d,\mathbb C^d)$. Therefore (see, e.g., \cite[Proposition 4.49]{vN}) there is a subsequence $(\nabla(f\ast\Phi_{n_k}))$ and a $G\in L_p(\mathbb R^d,\mathbb C^d)$ such that $\nabla(f\ast\Phi_{n_k})\to G$ in weak$^{\ast}$-topology on $L_p$. In particular, $\nabla(f\ast\Phi_{n_k})\to G$ in the sense of distributions. However, $\nabla(f\ast\Phi_{n_k})\to \nabla f$ in the sense of distributions. By uniqueness of the limit, we have $G=\nabla f$ and, therefore, $\nabla f\in L_p(\mathbb{R}^d,\mathbb{C}^d)$ and
$$\|\nabla f\|_{L_p(\mathbb{R}^d,\mathbb{C}^d)}\leq 1,$$
as claimed.
\end{proof}

\begin{proof}[Proof of Theorems \ref{main theorem positive} and \ref{main theorem negative} \eqref{mtnc}]
	Let $f\in L_{1,{\rm loc}}(\mathbb R^d)$ and assume $[(-\Delta)^\frac\epsilon2,M_f]$ extends to a bounded operator on $L_2(\mathbb R^d)$ that belongs to $\mathcal L_{\frac{d}{1-\epsilon},\infty}$.
	
Let $\Phi\in C^{\infty}_c(\mathbb{R}^d)$ be nonnegative with $\|\Phi\|_1=1.$ By Lemma \ref{main convolution lemma}, we have
$$
[(-\Delta)^{\frac{\epsilon}{2}},M_{f\ast\Phi}]\prec\prec [(-\Delta)^{\frac{\epsilon}{2}},M_f].
$$
Therefore, if $\chi\in C^{\infty}_c(\mathbb{R}^d)$ satisfies $\|\chi\|_{\infty}=1$, we have
$$\Big\|M_{\overline\chi}[(-\Delta)^{\frac{\epsilon}{2}},M_{f\ast\Phi}]M_{\chi}\Big\|_{\frac{d}{1-\epsilon},\infty}\leq c_{d,\epsilon}^{(1)}\Big\| [(-\Delta)^{\frac{\epsilon}{2}},M_f]\Big\|_{\frac{d}{1-\epsilon},\infty}.$$
Meanwhile, since $\chi\in C^{\infty}_c(\mathbb{R}^d)$ and $f\ast\Phi\in C^{\infty}(\mathbb{R}^d)$, Lemma \ref{mtb key corollary3} implies that
$$\||\chi|^2\cdot|\nabla(f\ast\Phi)|\|_{L_{\frac{d}{1-\epsilon}}(\mathbb{R}^d)}\leq c^{(2)}_{d,\epsilon}\Big\|M_{\bar{\chi}}[(-\Delta)^{\frac{\epsilon}{2}},M_{f\ast\Phi}]M_{\chi}\Big\|_{\frac{d}{1-\epsilon},\infty}.$$
Combining these bounds, we find
$$\||\chi|^2\cdot|\nabla(f\ast\Phi)|\|_{L_{\frac{d}{1-\epsilon}}(\mathbb{R}^d)}\leq c^{(1)}_{d,\epsilon}c^{(2)}_{d,\epsilon}\Big\|[(-\Delta)^{\frac{\epsilon}{2}},M_f]\Big\|_{\frac{d}{1-\epsilon},\infty}.$$
Taking the supremum over $\chi\in C^{\infty}_c(\mathbb{R}^d)$ such that $\|\chi\|_{\infty}=1,$ we obtain
$$\|\nabla(f\ast\Phi)\|_{L_{\frac{d}{1-\epsilon}}(\mathbb{R}^d,\mathbb{C}^d)}\leq c^{(1)}_{d,\epsilon}c^{(2)}_{d,\epsilon}\Big\|[(-\Delta)^{\frac{\epsilon}{2}},M_f]\Big\|_{\frac{d}{1-\epsilon},\infty}.$$
The assertion follows from Lemma \ref{standard convolution lemma}.
\end{proof}

\section{Proof of Theorem \ref{epsilon1}}\label{sec:epsilon1}

\begin{proof}[Proof of Theorem \ref{epsilon1} \eqref{mtabounded}]
The first part follows from \cite[Theorem 2]{Ca}. Indeed, note that for sufficiently nice functions $\phi,$
$$(-\Delta)^\frac12\phi(t) = c_d \int_{\mathbb{R}^d} \frac{\phi(s)-\phi(t)}{|t-s|^{d+1}}ds,$$
where the integral is understood in the principal value sense. Therefore $[(-\Delta)^\frac12,M_f]$ has integral kernel
$$c_d\,\frac{f(t)-f(s)}{|t-s|^{d+1}},$$
and this is precisely the setting of \cite[Theorem 2]{Ca}.
\end{proof}

For the proofs of \eqref{mtcbounded} and \eqref{mtdbounded}, we use the following notation
$$e_{\xi}(t):=e^{i\xi\cdot t},\quad t,\xi,\in\mathbb{R}^d.$$
and the following auxiliary results.

\begin{lem}\label{inner product compute lemma} Let $\phi,\psi,f\in \mathcal{S}(\mathbb{R}^d).$ We have
$$\langle \psi, f\phi \rangle
=(2\pi)^{-\frac{d}{2}}\int_{\mathbb{R}^d\times\mathbb{R}^d} \widehat f(\eta_1-\eta_2) \overline{\widehat\psi(\eta_1)} \widehat\phi(\eta_2)d\eta_1\,d\eta_2,$$
$$\langle \psi, [(-\Delta)^\frac12,M_f] \phi \rangle
=(2\pi)^{-\frac{d}{2}}\int_{\mathbb{R}^d\times\mathbb{R}^d} (|\eta_1|-|\eta_2|) \widehat{f}(\eta_1-\eta_2) \overline{\widehat{\psi}(\eta_1)} \widehat{\phi}(\eta_2)d\eta_1\,d\eta_2.$$
Here $\widehat{f}(x) = (2\pi)^{-\frac{d}{2}} \int_{\mathbb{R}^d} e^{-i\xi\cdot x} f(x)dx$ denotes the Fourier transform.
\end{lem}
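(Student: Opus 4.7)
The plan is to derive the first identity directly from Plancherel's theorem together with the convolution theorem, and then obtain the second identity by applying the first in two places with $(-\Delta)^\frac12$ moved onto one of the two factors.

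\textbf{First identity.} Since $f,\phi\in\mathcal S(\mathbb R^d)$, the product $f\phi$ lies in $\mathcal S(\mathbb R^d)$, and its Fourier transform is given by the convolution theorem,
$$
\widehat{f\phi}(\eta_1)=(2\pi)^{-\frac d2}(\widehat f*\widehat\phi)(\eta_1)=(2\pi)^{-\frac d2}\int_{\mathbb R^d}\widehat f(\eta_1-\eta_2)\widehat\phi(\eta_2)\,d\eta_2.
$$
By Plancherel,
$$
\langle\psi,f\phi\rangle=\langle\widehat\psi,\widehat{f\phi}\rangle=(2\pi)^{-\frac d2}\int_{\mathbb R^d}\overline{\widehat\psi(\eta_1)}\int_{\mathbb R^d}\widehat f(\eta_1-\eta_2)\widehat\phi(\eta_2)\,d\eta_2\,d\eta_1,
$$
where the exchange of integrals is legitimate by Fubini, since the integrand lies in $L_1(\mathbb R^d\times\mathbb R^d)$ by rapid decay of $\widehat f$, $\widehat\psi$, $\widehat\phi$. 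This is the first claimed identity.

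\textbf{Second identity.} We expand
$$
\langle\psi,[(-\Delta)^\frac12,M_f]\phi\rangle=\langle\psi,(-\Delta)^\frac12(f\phi)\rangle-\langle\psi,f\cdot(-\Delta)^\frac12\phi\rangle.
$$
For the first term, self-adjointness of $(-\Delta)^\frac12$ (which applies since $\psi,f\phi$ and their images under $(-\Delta)^\frac12$ lie in $L_2$) gives $\langle\psi,(-\Delta)^\frac12(f\phi)\rangle=\langle(-\Delta)^\frac12\psi,f\phi\rangle$. Applying the first identity with $\psi$ replaced by $(-\Delta)^\frac12\psi$ and using $\widehat{(-\Delta)^\frac12\psi}(\eta_1)=|\eta_1|\widehat\psi(\eta_1)$, we obtain
$$
\langle\psi,(-\Delta)^\frac12(f\phi)\rangle=(2\pi)^{-\frac d2}\int_{\mathbb R^d\times\mathbb R^d}|\eta_1|\widehat f(\eta_1-\eta_2)\overline{\widehat\psi(\eta_1)}\widehat\phi(\eta_2)\,d\eta_1\,d\eta_2.
$$
For the second term, we apply the first identity with $\phi$ replaced by $(-\Delta)^\frac12\phi$, noting that $\widehat{(-\Delta)^\frac12\phi}(\eta_2)=|\eta_2|\widehat\phi(\eta_2)$, which yields
$$
\langle\psi,f\cdot(-\Delta)^\frac12\phi\rangle=(2\pi)^{-\frac d2}\int_{\mathbb R^d\times\mathbb R^d}|\eta_2|\widehat f(\eta_1-\eta_2)\overline{\widehat\psi(\eta_1)}\widehat\phi(\eta_2)\,d\eta_1\,d\eta_2.
$$
Subtracting these two identities yields the second claimed formula.

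\textbf{Possible subtlety.} The only point requiring a little care is that $(-\Delta)^\frac12\psi$ and $(-\Delta)^\frac12\phi$ are in general not in $\mathcal S(\mathbb R^d)$ (the symbol $|\eta|$ is not smooth at the origin). However, for $\psi\in\mathcal S$ the function $\eta\mapsto|\eta|\widehat\psi(\eta)$ belongs to $L_1(\mathbb R^d)\cap L_2(\mathbb R^d)$, so $(-\Delta)^\frac12\psi\in L_2(\mathbb R^d)$ and all integrals above remain absolutely convergent, so Fubini and Plancherel apply exactly as before. This is the main, though modest, technical point.
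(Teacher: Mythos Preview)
Your argument is correct and follows essentially the same route as the paper: the first identity via Plancherel and the convolution theorem, the second by splitting the commutator and applying the first identity to each piece. The only difference is in how the technical point that $(-\Delta)^{1/2}\psi$ and $(-\Delta)^{1/2}\phi$ need not lie in $\mathcal S(\mathbb R^d)$ is handled. The paper first restricts to $\phi,\psi$ whose Fourier transforms vanish near the origin (so that $(-\Delta)^{1/2}\phi,(-\Delta)^{1/2}\psi\in\mathcal S$ and the first identity applies verbatim), and then removes this restriction by a continuity argument, noting that both sides of the second identity are continuous in $\widehat\psi$ with respect to the $L_1$-norm. You instead argue directly that Plancherel and Fubini remain valid in the needed generality. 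Both routes are fine; yours is marginally more direct, but be aware that when you replace $\phi$ by $(-\Delta)^{1/2}\phi$ you are also implicitly invoking the convolution identity $\widehat{f\cdot(-\Delta)^{1/2}\phi}=(2\pi)^{-d/2}\widehat f*(|\cdot|\widehat\phi)$ for a non-Schwartz factor, which deserves one line of justification (e.g.\ it holds since $\widehat f\in L_1$ and $|\cdot|\widehat\phi\in L_2$, so both sides define bounded operators on $L_2$ that agree on $\mathcal S$).
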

\begin{proof} The first assertion is standard and the proof is omitted.
	
To prove the second assertion, suppose first that $\hat{\phi}$ and $\hat{\psi}$ vanish near $0.$ It follows that $(-\Delta)^{\frac12}\phi$ and $(-\Delta)^{\frac12}\psi$ are Schwartz functions. Note that
$$\widehat{(-\Delta)^{\frac12}\psi}(\eta_1)=|\eta_1|\hat{\psi}(\eta_1),\quad \widehat{(-\Delta)^{\frac12}\phi}(\eta_2)=|\eta_2|\hat{\phi}(\eta_2),\quad \eta_1,\eta_2\in\mathbb{R}^d.$$
We have
$$\langle \psi, [(-\Delta)^\frac12,M_f] \phi \rangle=\langle (-\Delta)^\frac12\psi, f\phi\rangle-\langle \psi, f((-\Delta)^\frac12\phi) \rangle.$$
Using the first assertion, we write
$$\langle (-\Delta)^\frac12\psi, f\phi\rangle=(2\pi)^{-\frac{d}{2}}\int_{\mathbb{R}^d\times\mathbb{R}^d} |\eta_1|\hat{f}(\eta_1-\eta_2) \overline{\hat{\psi}(\eta_1)} \hat{\phi}(\eta_2)d\eta_1\,d\eta_2,$$
$$\langle \psi, f((-\Delta)^\frac12\phi) \rangle=(2\pi)^{-\frac{d}{2}}\int_{\mathbb{R}^d\times\mathbb{R}^d}|\eta_2| \hat{f}(\eta_1-\eta_2) \overline{\hat{\psi}(\eta_1)} \hat{\phi}(\eta_2)d\eta_1\,d\eta_2.$$
Combining these equalities, we obtain the second assertion for the case when $\hat{\phi}$ and $\hat{\psi}$ vanish near $0.$ 

However,
$$\Big|\int_{\mathbb{R}^d\times\mathbb{R}^d} (|\eta_1|-|\eta_2|) \hat{f}(\eta_1-\eta_2) \overline{\hat{\psi}(\eta_1)} \hat{\phi}(\eta_2)d\eta_1\,d\eta_2\Big|\leq\|\hat{f'}\|_{\infty}\|\hat{\phi}\|_1\|\hat{\psi}\|_1.$$
So, the right hand side in the second assertion is a continuous functional of $\hat{\psi}$ in $L_1$-norm. One can also see that so is the left hand side. Thus, we can remove the restriction on $\phi$ and $\psi.$ 
\end{proof}

\begin{lem}\label{epsilon1lem}
Let $f\in C^{\infty}_c(\mathbb{R}^d)$ and let $\omega\in\mathbb{S}^{d-1}.$ We have
$$M_{e_{-n\omega}}[(-\Delta)^\frac12,M_f] M_{e_{n\omega}}\to M_{\omega\cdot\nabla f},\quad n\to\infty,$$ 
in the weak operator topology.
\end{lem}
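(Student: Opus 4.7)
The plan is to test the operators against Schwartz functions and compute the matrix elements via Fourier analysis, using Lemma \ref{inner product compute lemma}. Let $\phi,\psi\in\mathcal S(\mathbb R^d)$. Since $M_{e_{n\omega}}$ is unitary with adjoint $M_{e_{-n\omega}}$, and since $\widehat{M_{e_{n\omega}}\phi}(\eta)=\widehat{\phi}(\eta-n\omega)$, the second identity of Lemma \ref{inner product compute lemma} gives, after the change of variables $\eta_j\mapsto\eta_j+n\omega$,
\begin{equation*}
\bigl\langle \psi, M_{e_{-n\omega}}[(-\Delta)^{1/2},M_f]M_{e_{n\omega}}\phi\bigr\rangle
= (2\pi)^{-d/2}\!\!\int_{\mathbb R^d\times\mathbb R^d} a_n(\eta_1,\eta_2)\,\widehat f(\eta_1-\eta_2)\overline{\widehat\psi(\eta_1)}\,\widehat\phi(\eta_2)\,d\eta_1\,d\eta_2,
\end{equation*}
where $a_n(\eta_1,\eta_2):=|\eta_1+n\omega|-|\eta_2+n\omega|$.

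The elementary expansion $|\eta+n\omega|=n+\omega\cdot\eta+O(1/n)$ (valid for $\eta$ in any bounded set and $n$ large) yields $a_n(\eta_1,\eta_2)\to\omega\cdot(\eta_1-\eta_2)$ pointwise as $n\to\infty$. The key observation is that one has the uniform bound $|a_n(\eta_1,\eta_2)|\leq|\eta_1-\eta_2|$ by the triangle inequality. Hence the integrand is dominated in absolute value by $|\eta_1-\eta_2|\,|\widehat f(\eta_1-\eta_2)|\,|\widehat\psi(\eta_1)|\,|\widehat\phi(\eta_2)|$, which is integrable since $f\in C^\infty_c(\mathbb R^d)$ makes $\xi\mapsto |\xi||\widehat f(\xi)|$ Schwartz and $\widehat\phi,\widehat\psi\in L_1$. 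The dominated convergence theorem therefore applies and shows that the matrix element converges to
\begin{equation*}
(2\pi)^{-d/2}\int_{\mathbb R^d\times\mathbb R^d}\omega\cdot(\eta_1-\eta_2)\,\widehat f(\eta_1-\eta_2)\,\overline{\widehat\psi(\eta_1)}\,\widehat\phi(\eta_2)\,d\eta_1\,d\eta_2.
\end{equation*}

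Using the paper's convention $\nabla=\frac{1}{i}\partial$, one has $\widehat{\omega\cdot\nabla f}(\xi)=\omega\cdot\xi\,\widehat f(\xi)$, so the integral above is exactly the one in the first identity of Lemma \ref{inner product compute lemma} applied with $f$ replaced by $\omega\cdot\nabla f$. Consequently the limit equals $\langle\psi,M_{\omega\cdot\nabla f}\phi\rangle$, which gives the stated weak convergence on the dense subspace $\mathcal S(\mathbb R^d)\subset L_2(\mathbb R^d)$. To upgrade this to weak operator convergence on all of $L_2(\mathbb R^d)$, note that $M_{e_{\pm n\omega}}$ are unitary, so the conjugated operators share the common norm bound $\|[(-\Delta)^{1/2},M_f]\|_\infty$, which is finite by part \eqref{mtabounded} of Theorem \ref{epsilon1} since $f\in C^\infty_c(\mathbb R^d)\subset\dot W^1_\infty(\mathbb R^d)$; the target $M_{\omega\cdot\nabla f}$ is likewise bounded. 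A standard $3\varepsilon$ argument then extends the convergence of matrix elements from the dense subspace $\mathcal S(\mathbb R^d)$ to arbitrary pairs of $L_2$-vectors, which is precisely weak operator convergence. The only step that requires care is the justification of the dominated convergence, and the triangle inequality bound on $a_n$ handles it cleanly.
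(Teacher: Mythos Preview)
Your proof is correct and follows essentially the same route as the paper: express the matrix element via Lemma \ref{inner product compute lemma}, shift variables to get $a_n(\eta_1,\eta_2)=|\eta_1+n\omega|-|\eta_2+n\omega|$, bound it by $|\eta_1-\eta_2|$ via the triangle inequality, apply dominated convergence, and identify the limit using the first identity of the same lemma. The paper concludes with the same uniform boundedness observation you make (invoking Calder\'on's theorem, i.e., part \eqref{mtabounded}, which is proved independently), so there is no circularity.
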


\begin{proof} Let $\phi,\psi\in C^{\infty}_c(\mathbb{R}^d).$ Note that
$$\widehat{e_{n\omega}\phi}(\eta_1)=\hat{\phi}(\eta_1-n\omega),\quad \widehat{e_{n\omega}\psi}(\eta_2)=\hat{\psi}(\eta_2-n\omega).$$	
Applying the second assertion of Lemma \ref{inner product compute lemma} to the functions $e_{n\omega}\phi,e_{n\omega\psi},f\in C^{\infty}_c(\mathbb{R}^d),$ we arrive at
$$\langle \psi,\big(M_{e_{-n\omega}}[(-\Delta)^\frac12,M_f] M_{e_{n\omega}}\big)\phi \rangle=\langle e_{n\omega}\psi, [(-\Delta)^\frac12,M_f] (e_{n\omega}\phi) \rangle=$$
$$=(2\pi)^{-\frac{d}{2}} \int_{\mathbb{R}^d\times\mathbb{R}^d} (|\eta_1|-|\eta_2|) \widehat f(\eta_1-\eta_2) \overline{\widehat\psi(\eta_1-n\omega)} \widehat\phi(\eta_2-n\omega) d\eta_1\,d\eta_2.$$
Replacing $\eta_1$ with $\eta_1+n\omega$ and $\eta_2$ with $\eta_2+n\omega,$ we obtain
$$\langle \psi,\big(M_{e_{-n\omega}}[(-\Delta)^\frac12,M_f] M_{e_{n\omega}}\big)\phi \rangle=$$
$$=(2\pi)^{-\frac{d}{2}} \int_{\mathbb{R}^d\times\mathbb{R}^d} (|\eta_1+n\omega|-|\eta_2+n\omega|) \widehat f(\eta_1-\eta_2) \overline{\widehat\psi(\eta_1)} \widehat\phi(\eta_2)d\eta_1\,d\eta_2.$$

Set
\begin{align*}
	F_n(\eta_1,\eta_2) & :=(|\eta_1+n\omega|-|\eta_2+n\omega|) \widehat f(\eta_1-\eta_2) \overline{\widehat\psi(\eta_1)} \widehat\phi(\eta_2),\quad\eta_1,\eta_2\in\mathbb{R}^d, \\
	F(\eta_1,\eta_2) & :=(|\eta_1-\eta_2|) \widehat f(\eta_1-\eta_2) \overline{\widehat\psi(\eta_1)} \widehat\phi(\eta_2),\quad\eta_1,\eta_2\in\mathbb{R}^d, \\
	F_{\infty}(\eta_1,\eta_2) &:=(\omega\cdot(\eta_1-\eta_2)) \widehat f(\eta_1-\eta_2) \overline{\widehat\psi(\eta_1)} \widehat\phi(\eta_2),\quad\eta_1,\eta_2\in\mathbb{R}^d.
\end{align*}
It follows from the triangle inequality that $|F_n|\leq F$ on $\mathbb{R}^d\times\mathbb{R}^d.$ We have
$$\lim_{n\to\infty}|\eta_1+n\omega|-|\eta_2+n\omega| \to \omega\cdot(\eta_1- \eta_2),\quad \eta_1,\eta_2\in\mathbb{R}^d.$$
Thus, $F_n\to F_{\infty}$ pointwise. Moreover, since
$$|F(\eta_1,\eta_2)|\leq \|\widehat{(-\Delta)^{\frac12}f}\|_{\infty}|\hat{\psi}|(\eta_1)|\hat{\phi}|(\eta_2),\quad \eta_1,\eta_2\in\mathbb{R},$$
it follows that $F$ is integrable on $\mathbb{R}^d\times\mathbb{R}^d.$ By the Dominated Convergence Theorem, we have
$$\int_{\mathbb{R}^d\times\mathbb{R}^d}F_n(\eta_1,\eta_2)d\eta_1d\eta_2\to \int_{\mathbb{R}^d\times\mathbb{R}^d}F(\eta_1,\eta_2)d\eta_1d\eta_2,\quad n\to\infty.$$
Thus,
$$\lim_{n\to\infty}\langle \psi,\big(M_{e_{-n\omega}}[(-\Delta)^\frac12,M_f] M_{e_{n\omega}}\big)\phi \rangle=$$
$$=(2\pi)^{-\frac{d}{2}}\int_{\mathbb{R}^d\times\mathbb{R}^d}(\omega\cdot(\eta_1-\eta_2)) \widehat f(\eta_1-\eta_2) \overline{\widehat\psi(\eta_1)} \widehat\phi(\eta_2)d\eta_1d\eta_2=$$
$$=(2\pi)^{-\frac{d}{2}}\int_{\mathbb{R}^d\times\mathbb{R}^d}\widehat{\omega\cdot\nabla f}(\eta_1-\eta_2) \overline{\widehat\psi(\eta_1)} \widehat\phi(\eta_2)d\eta_1d\eta_2=\langle \psi,(\omega\cdot\nabla f)\phi\rangle.$$
Since our sequence of operators is bounded in the uniform norm, the assertion follows.
\end{proof}

\begin{lem}\label{rf lipschitz lemma} Let $f\in C^{\infty}(\mathbb{R}^d)$ be such that $[(-\Delta)^{\frac{1}{2}},M_f]$ is bounded. We have $f\in\dot{W}^{1}_{\infty}(\mathbb{R}^d)$ and
$$\|f\|_{\dot{W}^{1}_{\infty}(\mathbb{R}^d)}\leq\|[(-\Delta)^{\frac{1}{2}},M_f]\|_{\infty}.$$
\end{lem}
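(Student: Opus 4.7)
The plan is to exploit the invariance of the operator norm under unitary modulations. Set $A := [(-\Delta)^{1/2}, M_f]$, interpreted as the bounded extension on $L_2(\mathbb{R}^d)$. Since each $M_{e_{n\omega}}$ is unitary on $L_2(\mathbb{R}^d)$, we have $\|M_{e_{-n\omega}} A M_{e_{n\omega}}\|_\infty = \|A\|_\infty$ for every $n\in\mathbb N$ and $\omega\in\mathbb{S}^{d-1}$. The heart of the argument is to show that for every $\phi, \psi \in C^\infty_c(\mathbb{R}^d)$,
\[
\langle \psi, M_{e_{-n\omega}} A M_{e_{n\omega}} \phi \rangle \longrightarrow \langle \psi, (\omega \cdot \nabla f) \phi \rangle, \qquad n \to \infty.
\]
Passing to the limit in $|\langle \psi, M_{e_{-n\omega}} A M_{e_{n\omega}} \phi \rangle| \leq \|A\|_\infty \|\psi\|_2 \|\phi\|_2$ then yields $|\langle \psi, (\omega \cdot \nabla f) \phi \rangle| \leq \|A\|_\infty \|\psi\|_2 \|\phi\|_2$ for all $\phi,\psi\in C^\infty_c(\mathbb R^d)$, so that $M_{\omega\cdot\nabla f}$ extends boundedly with norm at most $\|A\|_\infty$. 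Because $\omega\cdot\nabla f\in C^\infty(\mathbb R^d)$, this forces the pointwise bound $|\omega\cdot\nabla f(x)|\leq \|A\|_\infty$ for every $x$. Taking the supremum over $\omega\in\mathbb{S}^{d-1}$ and using $\sup_{|\omega|=1}|\omega\cdot v|=|v|$ produces $\|\nabla f\|_\infty\leq\|A\|_\infty$, which is the claim.

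The main work is to establish the matrix-element convergence. Lemma \ref{epsilon1lem} gives exactly this conclusion, but only for $f\in C^\infty_c(\mathbb R^d)$. To reduce to that setting, I would localize with a cutoff. Given $\phi,\psi\in C^\infty_c(\mathbb R^d)$, pick $\chi\in C^\infty_c(\mathbb R^d)$ that equals $1$ on an open neighborhood of $\mathrm{supp}(\phi)\cup\mathrm{supp}(\psi)$. Decomposing $f=\chi f + (1-\chi)f$ with $\chi f\in C^\infty_c(\mathbb R^d)$, Lemma \ref{epsilon1lem} applied to $\chi f$ gives
\[
\langle \psi, M_{e_{-n\omega}}[(-\Delta)^{1/2}, M_{\chi f}] M_{e_{n\omega}} \phi \rangle \to \langle \psi, (\omega\cdot\nabla(\chi f)) \phi\rangle = \langle \psi, (\omega\cdot\nabla f)\phi \rangle,
\]
the last equality holding because $\chi\equiv 1$ on $\mathrm{supp}(\phi\bar\psi)$.

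The remaining task is to show that the tail $g:=(1-\chi)f$ contributes nothing at all: for every $n$,
\[
\langle e_{n\omega}\psi,\, [(-\Delta)^{1/2}, M_g](e_{n\omega}\phi)\rangle = 0.
\]
Using the distributional interpretation from Lemma \ref{definition}, this expression splits as $\langle e_{n\omega}\psi,\, (-\Delta)^{1/2} M_g(e_{n\omega}\phi)\rangle - \langle e_{n\omega}\psi,\, M_g (-\Delta)^{1/2}(e_{n\omega}\phi)\rangle$. The first term vanishes because $g\cdot e_{n\omega}\phi\equiv 0$ pointwise, so $M_g(e_{n\omega}\phi)=0$ and the distribution $(-\Delta)^{1/2}M_g(e_{n\omega}\phi)$ is zero. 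The second term is the regular distribution $\int \overline{e_{n\omega}\psi}\, g\, (-\Delta)^{1/2}(e_{n\omega}\phi)\,dx$, which also vanishes because $g$ is zero on $\mathrm{supp}(\psi)=\mathrm{supp}(e_{n\omega}\psi)$.

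The main obstacle I anticipate is precisely this localization step: one must lean on Lemma \ref{definition} to give the commutator of $(-\Delta)^{1/2}$ with the \emph{non}-compactly supported tail $(1-\chi)f$ a rigorous meaning and to verify that it is annihilated by the chosen test functions. Once $f$ has effectively been replaced by the compactly supported $\chi f$, the Fourier-analytic convergence supplied by Lemma \ref{epsilon1lem} takes over and completes the argument.
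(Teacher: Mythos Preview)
Your proof is correct and follows essentially the same route as the paper: both localize $f$ so that Lemma~\ref{epsilon1lem} (stated only for $C^\infty_c$) becomes applicable, then conjugate by the unitaries $M_{e_{n\omega}}$ and pass to the weak limit $M_{\omega\cdot\nabla f}$. The only cosmetic difference is in the bookkeeping of the localization: the paper introduces two cutoffs $\chi,\theta$ with $\theta\chi=\chi$ and uses the operator identity $M_{\bar\chi}[(-\Delta)^{1/2},M_{f\theta}]M_\chi=M_{\bar\chi}[(-\Delta)^{1/2},M_f]M_\chi$, whereas you split $f=\chi f+(1-\chi)f$ at the level of individual matrix elements and verify directly via Lemma~\ref{definition} that the tail $(1-\chi)f$ gives an exact zero contribution.
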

\begin{proof} 
	Let $\chi\in C^{\infty}_c(\mathbb{R}^d)$ with $\|\chi\|_{\infty}=1$ and choose $\theta\in C^{\infty}_c(\mathbb{R}^d)$ with $\theta\chi=\chi$. It is immediate that $f\theta\in C^{\infty}_c(\mathbb{R}^d)$ and
$$M_{\bar{\chi}} [(-\Delta)^{\frac{1}{2}},M_{f\theta}]M_{\chi}=M_{\bar{\chi}} [(-\Delta)^{\frac{1}{2}},M_{f}]M_{\chi}.$$
Therefore,
$$\|M_{\bar{\chi}} [(-\Delta)^{\frac{1}{2}},M_{f\theta}]M_{\chi}\|_{\infty}\leq\| [(-\Delta)^{\frac{1}{2}},M_f]\|_{\infty}$$
and, for every $n\in\mathbb{N}$ and for every $\omega\in\mathbb{S}^{d-1}$,
$$\|M_{\bar{\chi}}\cdot M_{e_{-n\omega}} [(-\Delta)^{\frac{1}{2}},M_{f\theta}]M_{e_{n\omega}}\cdot M_{\chi}\|_{\infty}\leq\| [(-\Delta)^{\frac{1}{2}},M_f]\|_{\infty}.$$
Letting $n\to\infty,$ we deduce from Lemma \ref{epsilon1lem} that for every $\omega\in\mathbb{S}^{d-1}$
$$\|M_{\bar{\chi}}\cdot M_{\omega\cdot\nabla(f\theta)}\cdot M_{\chi}\|_{\infty}\leq\| [(-\Delta)^{\frac{1}{2}},M_f]\|_{\infty}.$$
In other words, for every $\omega\in\mathbb{S}^{d-1}$ we have 
$$\||\chi|^2(\omega\cdot \nabla(f\theta))\|_{L_{\infty}(\mathbb{R}^d)}\leq\| [(-\Delta)^{\frac{1}{2}},M_f]\|_{\infty}.$$

We have $|\chi|^2\nabla\theta=0$ and therefore $|\chi|^2(\omega\cdot\nabla(f\theta))=|\chi|^2(\omega\nabla f)$.
Thus, we have shown that for every $\omega\in\mathbb{S}^{d-1}$ we have 
$$\||\chi|^2(\omega\cdot\nabla f)\|_{L_{\infty}(\mathbb R^d}\leq\| [(-\Delta)^{\frac{1}{2}},M_f]\|_{\infty}$$
Taking the supremum over all $\chi\in C^\infty_c(\mathbb R^d)$ with $\|\chi\|_\infty=1$, we obtain
$$\|\omega\cdot \nabla f\|_{L_{\infty}(\mathbb R^d)}\leq\| [(-\Delta)^{\frac{1}{2}},M_f]\|_{\infty}$$
for every $\omega\in\mathbb{S}^{d-1}.$ Since $\nabla f$ is continuous, we deduce that
$$|\omega\cdot (\nabla f)( x)|\leq\| [(-\Delta)^{\frac{1}{2}},M_f]\|_{\infty},\quad 
\qquad\text{for all}\ \omega\in\mathbb S^{d-1}\ \text{and all}\ x\in\mathbb R^d.$$
For given $x\in\mathbb R^d$ with $\nabla f(x)\neq 0$ we pick $\omega=\nabla f(x)/|\nabla f(x)|$ and obtain
$$
|(\nabla f)( x)| \leq\| [(-\Delta)^{\frac{1}{2}},M_f]\|_{\infty},\quad 
\qquad x\in\mathbb R^d,
$$
which is the claimed inequality.
\end{proof}

\begin{proof}[Proof of Theorem \ref{epsilon1} \eqref{mtcbounded}] Let $f\in L_{1,{\rm loc}}(\mathbb R^d)$ and assume the operator $[(-\Delta)^{\frac{1}{2}},M_f]$ is boun\-ded. Let $\Phi\in C^{\infty}_c(\mathbb{R}^d)$ be nonnegative with $\|\Phi\|_1=1.$ By Lemma \ref{main convolution lemma}, we have
$$\|[(-\Delta)^{\frac12},M_{f\ast\Phi}]\|_{\infty}\leq\|[(-\Delta)^{\frac12},M_f]\|_{\infty}.$$
Since $f\ast\Phi\in C^{\infty}(\mathbb{R}^d),$ it follows from Lemma \ref{rf lipschitz lemma} that
$$\|f\ast\Phi\|_{\dot{W}^{1}_{\infty}(\mathbb{R}^d)}\leq \|[(-\Delta)^{\frac12},M_f]\|_{\infty}.$$
The assertion follows now from Lemma \ref{standard convolution lemma}.	
\end{proof}

\begin{lem}\label{last rf lemma} Let $f,\chi\in C^{\infty}_c(\mathbb{R}^d)$. If  $M_{\bar{\chi}}[(-\Delta)^{\frac{1}{2}},M_f]M_{\chi}$ extends to a compact operator, then $\chi\nabla f=0.$
\end{lem}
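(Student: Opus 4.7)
The plan is to combine Lemma \ref{epsilon1lem} (the weak operator convergence $M_{e_{-n\omega}}[(-\Delta)^{1/2},M_f]M_{e_{n\omega}}\to M_{\omega\cdot\nabla f}$) with the basic fact that a compact operator is annihilated when sandwiched between unitaries tending weakly to zero. Set $K:=M_{\bar\chi}[(-\Delta)^{1/2},M_f]M_\chi$ (already bounded on $L_2(\mathbb R^d)$ by Theorem \ref{epsilon1}\eqref{mtabounded}, and compact by hypothesis), and for an arbitrary $\omega\in\mathbb S^{d-1}$ let $U_n:=M_{e_{n\omega}}$, a unitary operator on $L_2(\mathbb R^d)$. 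I will compute the weak operator limit of $U_n^*KU_n$ in two different ways and compare.

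First, I would check that $U_n\to 0$ in the weak operator topology: for $\phi,\psi\in L_2(\mathbb R^d)$ the product $\phi\bar\psi$ lies in $L_1(\mathbb R^d)$, so $\langle U_n\phi,\psi\rangle$ is (up to a constant) the Fourier transform of $\phi\bar\psi$ evaluated at $-n\omega$, which tends to $0$ by Riemann--Lebesgue. Since $K$ is compact, it maps weakly null sequences to norm null ones, so $KU_n\phi\to 0$ in norm for every $\phi$; combined with $\|U_n\psi\|=\|\psi\|$ this gives $\langle U_n^*KU_n\phi,\psi\rangle=\langle KU_n\phi,U_n\psi\rangle\to 0$, i.e. $U_n^*KU_n\to 0$ in the weak operator topology.

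Second, since multiplication operators commute,
$$
U_n^*KU_n \;=\; M_{\bar\chi}\bigl(M_{e_{-n\omega}}[(-\Delta)^{1/2},M_f]M_{e_{n\omega}}\bigr)M_\chi,
$$
and bordering by fixed multiplication operators preserves weak operator convergence (just absorb $M_\chi$ and $M_{\bar\chi}$ into the test vectors in the inner product). Hence Lemma \ref{epsilon1lem} yields $U_n^*KU_n\to M_{\bar\chi}M_{\omega\cdot\nabla f}M_\chi = M_{|\chi|^2\,\omega\cdot\nabla f}$ in WOT. Comparing the two limits forces $|\chi|^2\,\omega\cdot\nabla f=0$ almost everywhere, for every $\omega\in\mathbb S^{d-1}$. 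Specializing $\omega$ to the standard basis vectors gives $|\chi|^2\partial_k f=0$ for each $k$; since $\chi$ and $\nabla f$ are continuous this is a pointwise identity, so $\chi\nabla f=0$. There is no real obstacle here beyond the bookkeeping of the two WOT limits: the substantive analytic input is Lemma \ref{epsilon1lem}, and the only place where compactness is used is in the Riemann--Lebesgue/compactness step producing the zero limit.
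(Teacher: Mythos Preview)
Your proof is correct and follows essentially the same approach as the paper: both use the weak nullity of $e_{n\omega}\phi$ combined with compactness to force the inner products to vanish, and then invoke Lemma \ref{epsilon1lem} to identify the limit as $M_{|\chi|^2\,\omega\cdot\nabla f}$. Your final step (a zero multiplication operator has zero symbol a.e., hence everywhere by continuity) is a slightly cleaner packaging of the paper's argument, which instead tests against $\phi,\psi$ and approximates $\mathrm{sgn}(\partial_k f)$, but the substance is identical.
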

\begin{proof} For every $\psi,\phi\in C_c^{\infty}(\mathbb{R}^d)$ and for every $\omega\in\mathbb{S}^{d-1},$ we have $e_{n\omega}\psi,e_{n\omega}\phi\to0$ weakly to zero in $L_2(\mathbb{R}^d)$ as $n\to\infty.$ Therefore, the compactness of the operator implies
$$\langle e_{n\omega}\chi\psi, [(-\Delta)^{\frac{1}{2}},M_f] e_{n\omega}\chi \phi \rangle \to 0,\quad n\to\infty.$$
In other words,
$$\langle \chi\psi, (M_{e_{-n\omega}}[(-\Delta)^{\frac{1}{2}},M_f] M_{e_{n\omega}})(\chi \phi) \rangle \to 0,\quad n\to\infty.$$
By Lemma \ref{epsilon1lem}, we deduce that for every $\omega\in\mathbb S^{d-1}$
$$\langle \chi\psi, M_{\omega\cdot\nabla f}(\chi \phi) \rangle=0.$$
In particular,
$$\int_{\mathbb{R}^d}|\chi|^2\bar{\psi}\phi\partial_kf=0,\quad 1\leq k\leq d.$$
Taking $\bar{\psi}\phi$ to be (approximately) the sign of $\partial_kf,$ we conclude that
$$\int_{\mathbb{R}^d}|\chi|^2|\partial_kf|=0,\quad 1\leq k\leq d.$$
This completes the proof.
\end{proof}

\begin{proof}[Proof of Theorem \ref{epsilon1} \eqref{mtdbounded}]
	Let $f\in L_{1,{\rm loc}}(\mathbb R^d)$ and assume that $[(-\Delta)^\frac12,M_f]$ extends to a compact operator.
	 	
Let $\Phi\in C^{\infty}_c(\mathbb{R}^d)$ be nonnegative with $\|\Phi\|_1=1.$ Let $\chi\in C^{\infty}_c(\mathbb{R}^d)$ and choose $\theta\in C^{\infty}_c(\mathbb{R}^d)$ with $\theta\chi=\chi$.

By Lemma \ref{main convolution lemma}, $[(-\Delta)^{\frac{1}{2}},M_{\Phi\ast f}]$ is compact. Therefore,
$$M_{\bar{\chi}}[(-\Delta)^{\frac{1}{2}},M_{(\Phi\ast f)\theta}] M_\chi
= M_{\bar{\chi}}[(-\Delta)^{\frac{1}{2}},M_{\Phi\ast f}] M_\chi$$
is also compact. Clearly, $(\Phi\ast f)\theta\in C^{\infty}_c(\mathbb{R}^d).$ It follows from Lemma \ref{last rf lemma} that
$$\chi\cdot\nabla((f\ast\Phi)\theta)=0.$$
We have $\chi\nabla\theta=0$ and therefore
$$\chi\cdot\nabla(f\ast\Phi)=0.$$
Since $\chi\in C_c^\infty(\mathbb R^d)$ is arbitrary, it follows that
$$(\nabla f)\ast\Phi=\nabla(f\ast\Phi)=0.$$
Since $\Phi$ is arbitrary, it follows that $\nabla f=0.$ This completes the proof.
\end{proof}


\appendix

\section{Commutator estimate}

Throughout this appendix we assume $d\geq 2.$ Our goal is to prove the following commutator bound. We recall that the algebra $\Pi$ is defined at the beginning of Section \ref{sec:spectralasymp}.

\begin{thm}\label{t delta commutator lemma}  If $T\in\Pi$ is compactly supported, then
$$[T,(1-\Delta)^{-\frac{p}{2}}]\in (\mathcal{L}_{\frac{d}{p},\infty})_0,\quad p>0.$$
\end{thm}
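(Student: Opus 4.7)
The plan is to reduce the statement to a claim about multiplication operators and then lift it to all of $\Pi$ by a norm-approximation argument that exploits the compact support. Since $\Pi$ is generated by the two families $\{M_f:f\in\mathbb{C}+C_0(\mathbb{R}^d)\}$ and $\{g(\nabla(-\Delta)^{-\frac12}):g\in C(\mathbb{S}^{d-1})\}$, and since every operator of the second family is a Borel function of the commuting self-adjoint tuple $\nabla$—hence commutes with $(1-\Delta)^{-p/2}$—the whole problem concentrates on the multiplication operators. Operators of the form $g(\nabla(-\Delta)^{-\frac12})$ contribute a vanishing commutator and enter the argument only as ``inert'' factors one must push around.

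The heart of the proof is the inclusion
\begin{equation}\label{eq:planKey}
[M_\phi,(1-\Delta)^{-p/2}]\in(\mathcal{L}_{d/p,\infty})_0\qquad\text{for all }\phi\in C_c^{\infty}(\mathbb{R}^d)\text{ and }p>0.
\end{equation}
For $0<p<d/2$ this follows directly from Theorem \ref{approximation theorem} with $\epsilon=-p$: the leading term $\tfrac{p}{2}[\Delta,M_\phi](1-\Delta)^{-p/2-1}$ expands via $[\Delta,M_\phi]=M_{\Delta\phi}+2\sum_k M_{\partial_k\phi}\partial_k$ into sums of operators $M_\psi h(\nabla)$ with $\psi\in C_c^{\infty}$ and symbol $h$ equal to $(1+|\xi|^2)^{-p/2-1}$ or $\xi_k(1+|\xi|^2)^{-p/2-1}$; both symbols lie in $L_{d/(p+1),\infty}(\mathbb{R}^d)$, so Theorem \ref{cwikel estimate} places them in $\mathcal{L}_{d/(p+1),\infty}\subset(\mathcal{L}_{d/p,\infty})_0$. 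For $p\geq d/2$, where Theorem \ref{approximation theorem} does not directly apply, I would rerun the argument of Section \ref{sec:approximate} with more iterations: using the higher-order resolvent representation of $(1-\Delta)^{-p/2}$ from the functional calculus together with Fact \ref{commutator lemma} for suitable $n$ produces the same leading term modulo a tail integral, and Lemma \ref{second concrete integral lemma} with $\alpha+\beta+2$ chosen sufficiently large relative to $d/p$ places the tail into a weak Schatten ideal strictly smaller than $\mathcal{L}_{d/p,\infty}$.

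For the general case, introduce the set
\[
\mathcal{B}:=\{S\in\Pi:S\text{ is compactly supported and }[S,(1-\Delta)^{-p/2}]\in(\mathcal{L}_{d/p,\infty})_0\}.
\]
Using the derivation property and the fact that $(\mathcal{L}_{d/p,\infty})_0$ is a two-sided $\ast$-ideal, $\mathcal{B}$ is closed under sums, products, and adjoints. The crucial closure property is under operator-norm limits with a fixed compact bump: if $T=M_\phi T M_\phi$ and $T_n=M_\phi T_n M_\phi\to T$ in norm, then writing $[T_n-T,(1-\Delta)^{-p/2}]=M_\phi(T_n-T)M_\phi(1-\Delta)^{-p/2}-(1-\Delta)^{-p/2}M_\phi(T_n-T)M_\phi$ and using the Cwikel bound $M_\phi(1-\Delta)^{-p/2}\in\mathcal{L}_{d/p,\infty}$ (Theorem \ref{cwikel estimate}) yields
\[
\|[T_n-T,(1-\Delta)^{-p/2}]\|_{d/p,\infty}\leq 2\,\|T_n-T\|_{\infty}\,\|M_\phi\|_{\infty}\,\|M_\phi(1-\Delta)^{-p/2}\|_{d/p,\infty}\to 0,
\]
so by closedness of $(\mathcal{L}_{d/p,\infty})_0$ in $\mathcal{L}_{d/p,\infty}$ we conclude $T\in\mathcal{B}$. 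The compactly supported building blocks $M_\phi M_h M_\phi=M_{\phi^2 h}$ (with $h\in\mathbb{C}+C_0$ and $\phi^2 h\in C_c$ approximable uniformly by $C_c^{\infty}$ functions supported in a fixed bump, to which \eqref{eq:planKey} applies) and $M_\phi g(\nabla(-\Delta)^{-\frac12})M_\phi$ (whose commutator collapses by Leibniz to terms in $[M_\phi,(1-\Delta)^{-p/2}]$) both lie in $\mathcal{B}$. A general compactly supported $T\in\Pi$ is an operator-norm limit of $M_\phi P_n M_\phi$ with $P_n$ polynomial in the generators, hence $T\in\mathcal{B}$.

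The main obstacle is that $\mathcal{L}_{d/p,\infty}$ is not separable, so naive operator-norm approximations need not preserve membership in $(\mathcal{L}_{d/p,\infty})_0$. The compact support assumption $T=M_\phi T M_\phi$ is precisely what allows the Cwikel-type estimate on $M_\phi(1-\Delta)^{-p/2}$ to upgrade operator-norm convergence of $T_n$ to $\mathcal{L}_{d/p,\infty}$-norm convergence of the commutators. A secondary technical difficulty is extending the approximation framework of Section \ref{sec:approximate} to exponents $p\geq d/2$, which however only requires choosing the iteration depth in the resolvent expansion large enough so that the tail integral is controlled by a Cwikel bound in an ideal strictly smaller than $\mathcal{L}_{d/p,\infty}$.
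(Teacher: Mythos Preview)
Your central observation---that $g(\nabla(-\Delta)^{-1/2})$ commutes with $(1-\Delta)^{-p/2}$ since both are Borel functions of $\nabla$---is a genuine simplification over the paper's route. The paper works instead with $[T,M_\phi(1-\Delta)^{-p/2}]$ (note the inserted $M_\phi$), for which it must control $[g(\nabla(-\Delta)^{-1/2}),M_\phi(1-\Delta)^{-p/2}]$; this commutator is \emph{not} zero and is handled by Lemma~\ref{appendix final lemma}, which in turn rests on Lemma~\ref{rk appendix lemma} and the external result quoted as Lemma~\ref{MSX theorem}. Your direct attack on $[M_\phi P_n M_\phi,(1-\Delta)^{-p/2}]$ can bypass this whole chain.

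There is, however, a gap in how you pass from the two building-block types to a general $M_\phi P_n M_\phi$. Closure of $\mathcal{B}$ under products does not help here: for, say, $P_n=M_{f_1}g_1 M_{f_2}g_2 M_{f_3}$, the operator $M_\phi P_n M_\phi$ is \emph{not} a product of operators of the form $M_\phi M_h M_\phi$ or $M_\phi g M_\phi$. Expanding $[M_\phi P_n M_\phi,(1-\Delta)^{-p/2}]$ by Leibniz produces interior terms such as
\[
M_{\phi f_1}\,g_1\,[M_{f_2},(1-\Delta)^{-p/2}]\,g_2\,M_{f_3\phi},
\]
and with $f_2$ merely in $\mathbb{C}+C_0(\mathbb{R}^d)$ you have no control over $[M_{f_2},(1-\Delta)^{-p/2}]$: uniform approximation of $f_2$ by $C_c$ functions does not yield $\mathcal{L}_{d/p,\infty}$-convergence of these commutators, since no compact cut-off sits on either side of the bracket. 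The fix is easy and preserves your strategy: because $C_c(\mathbb{R}^d)$ is uniformly dense in $C_0(\mathbb{R}^d)$, the algebra $\Pi$ is already generated by $\{1\}\cup\{M_f:f\in C_c\}\cup\{g(\nabla(-\Delta)^{-1/2})\}$, so one may choose every $f_i$ in the approximating polynomial to lie in $C_c$. Then each nonvanishing Leibniz term carries exactly one factor $[M_h,(1-\Delta)^{-p/2}]$ with $h\in C_c$ (either $h=\phi$ or $h=f_i$), and Lemma~\ref{second appendix commutator lemma} places it in $(\mathcal{L}_{d/p,\infty})_0$.

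Finally, your derivation of the key inclusion $[M_\phi,(1-\Delta)^{-p/2}]\in(\mathcal{L}_{d/p,\infty})_0$ via Theorem~\ref{approximation theorem} is a detour, and the extension to $p\geq d/2$ is only sketched. The paper's Lemma~\ref{first appendix commutator lemma} proves the stronger inclusion in $\mathcal{L}_{d/(p+1),\infty}$ for all $p>0$ in one stroke: factor $\phi=\phi_1\phi_2$, apply Leibniz, and observe that each piece is a bounded order-zero pseudodifferential operator times an operator of the form $M_{\phi_i}(1-\Delta)^{-(p+1)/2}$, to which Theorem~\ref{cwikel estimate} applies directly.
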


The following lemma is a standard result, but for the convenience of the reader we provide a proof.

\begin{lem}\label{first appendix commutator lemma} If $f\in C^{\infty}_c(\mathbb{R}^d),$ then 
$$[M_f,(1-\Delta)^{-\frac{p}{2}}]\in \mathcal{L}_{\frac{d}{p+1},\infty},\quad p>0.$$
\end{lem}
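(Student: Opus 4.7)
The commutator $[M_f,(1-\Delta)^{-p/2}]$ is, heuristically, a compactly supported pseudo-differential operator of order $-(p+1)$: its Schwartz kernel is $(f(x)-f(y))K_p(x-y)$, where $K_p$ is the Bessel kernel of order $p$, and the Taylor factor $f(x)-f(y)=O(|x-y|)$ improves by one order the local behaviour of the order-$-p$ kernel $K_p$. Cwikel-type estimates (Theorem~\ref{cwikel estimate}) should therefore place it in $\mathcal{L}_{d/(p+1),\infty}$.

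To make this rigorous, the plan is to use the resolvent representation
\[
(1-\Delta)^{-p/2}=c_{p,m}\int_0^\infty\lambda^{m-p/2}R_\lambda^{m+1}\,d\lambda,\qquad R_\lambda:=(1+\lambda-\Delta)^{-1},
\]
valid for $0<p<2(m+1)$, picking $m$ sufficiently large depending on $p$. Taking the commutator with $M_f$ inside the integral and iterating Fact~\ref{commutator lemma} (together with $[AB,C]=A[B,C]+[A,C]B$ and $R_\lambda A_{k,f}=A_{k,f}R_\lambda+R_\lambda A_{k+1,f}R_\lambda$ to push all $R_\lambda$'s to the right), one writes $[M_f,R_\lambda^{m+1}]$ as a finite sum of ``principal'' terms of the form $A_{k,f}R_\lambda^{k+m+1}$, $k=1,\dots,n-1$, plus ``remainder'' terms of the form $R_\lambda^a A_{n,f}R_\lambda^b$ with $a\geq 1$ and $a+b=n+m+1$. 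After integration in $\lambda$ via the functional calculus, the principal terms become $cA_{k,f}(1-\Delta)^{-(p/2+k)}$. Writing each $A_{k,f}$ as a finite sum of $M_g\partial^\alpha$ with $g\in C_c^\infty(\mathbb{R}^d)$, $|\alpha|\leq k$, and factoring out the bounded multiplier $\partial^\alpha(1-\Delta)^{-|\alpha|/2}$, these become $M_g\cdot(\text{bounded})\cdot(1-\Delta)^{-(p+2k-|\alpha|)/2}$, which lie in $\mathcal{L}_{d/(p+2k-|\alpha|),\infty}\subseteq\mathcal{L}_{d/(p+1),\infty}$ by Theorem~\ref{cwikel estimate}; the worst case $k=1$, $|\alpha|=1$ gives exactly $\mathcal{L}_{d/(p+1),\infty}$.

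For the remainder terms I would factor
\[
R_\lambda^a A_{n,f}R_\lambda^b=\bigl[R_\lambda^a(1-\Delta)^{\sigma}\bigr]\cdot\bigl[(1-\Delta)^{-\sigma}A_{n,f}(1-\Delta)^{-\tau}\bigr]\cdot\bigl[(1-\Delta)^{\tau}R_\lambda^b\bigr]
\]
with $\sigma+\tau=(p+n+1)/2$ and $\sigma\leq a$, $\tau\leq b$ (feasible once $n$ is large enough). The middle factor has effective order $-(p+1)$ with $C_c^\infty$ coefficients and lies in $\mathcal{L}_{d/(p+1),\infty}$ by Cwikel, while the outer factors are controlled by the elementary Fourier-multiplier bound $\|(1-\Delta)^s R_\lambda^k\|_\infty\leq C(1+\lambda)^{s-k}$ (valid for $0\leq s\leq k$). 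This yields $\|R_\lambda^a A_{n,f}R_\lambda^b\|_{d/(p+1),\infty}\leq C(1+\lambda)^{(p-n-2m-1)/2}$, making $\lambda^{m-p/2}\|R_\lambda^a A_{n,f}R_\lambda^b\|_{d/(p+1),\infty}$ absolutely integrable over $(0,\infty)$ for $n$ sufficiently large, and Fact~\ref{general integral estimate} concludes. The main obstacle is purely the bookkeeping---choosing $m$, $n$, $\sigma$, $\tau$ consistently and tracking which case of Theorem~\ref{cwikel estimate} applies (depending on whether $d/(p+1)$ is $>2$, $=2$, or $<2$); none of these steps presents genuine analytic difficulty, and the scheme closely parallels Lemmas~\ref{first concrete integral lemma} and~\ref{second concrete integral lemma} earlier in the paper.
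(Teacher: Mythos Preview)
Your resolvent-expansion approach is sound in outline and would eventually yield the result, but it is a very different route from the paper's. The paper's proof is a two-line argument: factor $f=f_1f_2$ with $f_1,f_2\in C_c^\infty(\mathbb{R}^d)$, use Leibniz to write
\[
[M_f,(1-\Delta)^{-p/2}]=X_1Y_1+Y_2X_2,
\]
where $X_1=[M_{f_1},(1-\Delta)^{-p/2}](1-\Delta)^{(p+1)/2}$, $X_2=(1-\Delta)^{(p+1)/2}[M_{f_2},(1-\Delta)^{-p/2}]$, and $Y_i=M_{f_i}(1-\Delta)^{-(p+1)/2}$ (up to ordering). Then $X_1,X_2$ are pseudodifferential operators of order $0$ (the commutator of an order-$0$ and an order-$(-p)$ symbol has order $-(p+1)$, which is exactly cancelled), hence bounded, while $Y_1,Y_2\in\mathcal{L}_{d/(p+1),\infty}$ by Theorem~\ref{cwikel estimate}. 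That is the entire proof. Your approach, by contrast, avoids invoking the pseudodifferential calculus as a black box and instead reproves its content via explicit resolvent iteration; this is more elementary but vastly longer. One genuine technical caveat in your plan: Fact~\ref{general integral estimate} is stated only for Schatten indices strictly greater than $1$, whereas $d/(p+1)\leq 1$ occurs already for $d=2$, $p\geq 1$. You would need to patch this (e.g.\ via the $r$-convexity of $\mathcal{L}_{q,\infty}$ for small $q$, or by bounding the remainder in a stronger norm), which is routine but not addressed. The factorisation trick sidesteps all of this.
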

\begin{proof} We write $f=f_1f_2,$ $f_1,f_2\in C^{\infty}_c(\mathbb{R}^d).$ By the Leibniz rule, we have
$$[M_f,(1-\Delta)^{-\frac{p}{2}}]=[M_{f_1},(1-\Delta)^{-\frac{p}{2}}]\cdot M_{f_2}+M_{f_1}\cdot [M_{f_2},(1-\Delta)^{-\frac{p}{2}}]=X_1Y_1+Y_2X_2,$$
where we denote
$$X_1=[M_{f_1},(1-\Delta)^{-\frac{p}{2}}](1-\Delta)^{\frac{p+1}{2}},\quad X_2=(1-\Delta)^{\frac{p+1}{2}}[M_{f_2},(1-\Delta)^{-\frac{p}{2}}],$$
$$Y_1=(1-\Delta)^{-\frac{p+1}{2}}M_{f_2},\quad Y_2=M_{f_1}(1-\Delta)^{-\frac{p+1}{2}}.$$
Clearly, $X_1$ and $X_2$ are pseudodifferential operators of order $0.$ Hence, they are bounded. By Theorem \ref{cwikel estimate} we have $Y_1,Y_2\in\mathcal{L}_{\frac{d}{p+1},\infty}.$ This completes the proof.
\end{proof}

Using standard arguments we can now weaken the regularity on $f$ required in Lemma \ref{first appendix commutator lemma}.

\begin{lem}\label{second appendix commutator lemma} If $f\in C_c(\mathbb{R}^d),$ then 
$$[M_f,(1-\Delta)^{-\frac{p}{2}}]\in(\mathcal{L}_{\frac{d}{p},\infty})_0,\quad p>0.$$
\end{lem}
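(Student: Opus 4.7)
The plan is a density argument. Approximate $f\in C_c(\mathbb{R}^d)$ by a sequence $\{f_n\}\subset C_c^\infty(\mathbb{R}^d)$ (e.g.\ by mollification) so that $f_n\to f$ uniformly and all $\mathrm{supp}(f_n)$ are contained in a fixed compact set $K$. By Lemma~\ref{first appendix commutator lemma}, $[M_{f_n},(1-\Delta)^{-p/2}]\in\mathcal{L}_{d/(p+1),\infty}$. Since $d/(p+1)<d/p$, the singular values of any such operator satisfy $\mu(k,T)=O(k^{-(p+1)/d})=o(k^{-p/d})$, so $\mathcal{L}_{d/(p+1),\infty}\subset(\mathcal{L}_{d/p,\infty})_0$. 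Because $(\mathcal{L}_{d/p,\infty})_0$ is closed in the $\mathcal{L}_{d/p,\infty}$ quasi-norm, it suffices to prove that $[M_{f_n},(1-\Delta)^{-p/2}]\to[M_f,(1-\Delta)^{-p/2}]$ in that quasi-norm.

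The heart of the argument is therefore the uniform estimate
$$
\bigl\|[M_g,(1-\Delta)^{-p/2}]\bigr\|_{d/p,\infty}\leq C_K\,\|g\|_\infty,\qquad g\in C_c(\mathbb{R}^d),\ \mathrm{supp}(g)\subset K,
$$
applied to $g=f-f_n$. Via the quasi-triangle inequality \eqref{singular-value-sum} together with \eqref{singular-value-square} (to pass to the adjoint), this reduces to a bound on $\|M_g(1-\Delta)^{-p/2}\|_{d/p,\infty}$. Here I will invoke Theorem~\ref{cwikel estimate} in one of three regimes: when $d/p>2$, use part (i) with symbol $\xi\mapsto(1+|\xi|^2)^{-p/2}\in L_{d/p,\infty}(\mathbb{R}^d)$ and $g\in L_{d/p}(\mathbb{R}^d)$; when $d/p=2$, use part (ii) with the same symbol in $\ell_{2,\infty}(L_4)$ and $g\in\ell_{2,\log}(L_\infty)$; when $d/p<2$, use part (iii) with the symbol in $\ell_{d/p,\infty}(L_2)$ and $g\in\ell_{d/p}(L_2)$.

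The main bookkeeping issue is to see that in each regime the norm of $g$ in the relevant Birman--Solomyak-type space is dominated by $C_K\|g\|_\infty$ whenever $\mathrm{supp}(g)\subset K$. This is precisely what the paper's remark preceding Theorem~\ref{pre-cwikel estimate} asserts (these spaces all contain $C_c(\mathbb{R}^d)$), and it is immediate from the cube-decomposition definitions involved, since $K$ meets only finitely many unit cubes and the local $L_q$ (or $L_\infty$) norms on those cubes are bounded by $\|g\|_\infty$. Once this uniform estimate is in hand, $\|f-f_n\|_\infty\to 0$ yields the desired quasi-norm convergence, and closedness of $(\mathcal{L}_{d/p,\infty})_0$ inside $\mathcal{L}_{d/p,\infty}$ concludes the proof.
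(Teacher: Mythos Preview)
Your proof is correct and follows essentially the same route as the paper: a density argument using $C_c^\infty$ approximants, the inclusion $\mathcal{L}_{d/(p+1),\infty}\subset(\mathcal{L}_{d/p,\infty})_0$ from Lemma~\ref{first appendix commutator lemma}, and convergence in $\mathcal{L}_{d/p,\infty}$ via Cwikel-type bounds from Theorem~\ref{cwikel estimate}. The only cosmetic difference is that the paper factors $M_g = M_g\,M_{\chi_K}$ first and applies the Cwikel estimate once to $M_{\chi_K}(1-\Delta)^{-p/2}$, whereas you apply it directly to $M_g(1-\Delta)^{-p/2}$ and then control the Birman--Solomyak norm of $g$ by $C_K\|g\|_\infty$; both yield the same uniform bound.
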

\begin{proof} Assume for definiteness that $f$ is supported on $[-1,1]^d.$ Choose a se\-quence $\{f_n\}_{n\geq0}$ $\subset C^{\infty}_c(\mathbb{R}^d)$ supported in $[-1,1]^d$ such that $f_n\to f$ in the uniform norm. Using quasi-triangle and H\"older inequalities, we write
$$\|[M_{f_n-f},(1-\Delta)^{-\frac{p}{2}}]\|_{\frac{d}{p},\infty}\leq 2^{1+\frac{p}{d}}\|f_n-f\|_{\infty}\|M_{\chi_{[-1,1]^d}}(1-\Delta)^{-\frac{p}{2}}]\|_{\frac{d}{p},\infty}.$$
The second factor on the right hand side is finite by Theorem \ref{cwikel estimate}. Therefore,
$$[M_{f_n},(1-\Delta)^{-\frac{p}{2}}]\to [M_f,(1-\Delta)^{-\frac{p}{2}}]$$
in $\mathcal{L}_{\frac{d}{p},\infty}.$ Since the sequence on the left hand side belongs to $(\mathcal{L}_{\frac{d}{p},\infty})_0$ by Lemma \ref{first appendix commutator lemma}, the assertion follows.
\end{proof}

The following result is a special case of \cite[Theorem 1.6]{MSX} (with $\alpha=1$ and $\beta=p$).

\begin{lem}\label{MSX theorem} If $f\in C^{\infty}_c(\mathbb{R}^d),$ then
$$
[(1-\Delta)^{\frac12},M_f] (1-\Delta)^{-\frac{p}{2}}\in\mathcal{L}_{\frac{d}{p},\infty},\quad p>0.$$
\end{lem}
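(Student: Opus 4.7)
I would adapt the integral-representation argument of Theorem~\ref{approximation theorem} to the endpoint $\epsilon=1$, carried out in the presence of the factor $(1-\Delta)^{-p/2}$. By the functional calculus,
\[
(1-\Delta)^{1/2}=\frac{1}{\pi}\int_0^\infty\lambda^{-1/2}\,\frac{1-\Delta}{\lambda+1-\Delta}\,d\lambda,
\]
and the algebraic identity $[\frac{1-\Delta}{\lambda+1-\Delta},M_f]=-\lambda[\frac{1}{\lambda+1-\Delta},M_f]$ yields
\[
[(1-\Delta)^{1/2},M_f]=-\frac{1}{\pi}\int_0^\infty\lambda^{1/2}\Big[\tfrac{1}{\lambda+1-\Delta},M_f\Big]\,d\lambda.
\]
Fix an integer $N\geq 2$ (large in terms of $p$) and apply Fact~\ref{commutator lemma} to split the inner commutator as
\[
\Big[\tfrac{1}{\lambda+1-\Delta},M_f\Big]=\sum_{k=1}^{N-1}A_{k,f}\,\tfrac{1}{(\lambda+1-\Delta)^{k+1}}+\tfrac{1}{\lambda+1-\Delta}\,A_{N,f}\,\tfrac{1}{(\lambda+1-\Delta)^N}.
\]

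For each $k\in\{1,\ldots,N-1\}$, the beta-function identity $\int_0^\infty\lambda^{1/2}(\lambda+a)^{-(k+1)}\,d\lambda=B(\tfrac{3}{2},k-\tfrac12)\,a^{1/2-k}$ combined with the functional calculus reduces the corresponding contribution, after multiplication by $(1-\Delta)^{-p/2}$, to a constant multiple of $A_{k,f}(1-\Delta)^{(1-2k-p)/2}$. Writing $A_{k,f}=\sum_{|\alpha|\leq k}M_{g_\alpha}\partial^\alpha$ with $g_\alpha\in C_c^\infty(\mathbb{R}^d)$, each summand factors as $M_{g_\alpha}(1-\Delta)^{-p/2}\cdot\partial^\alpha(1-\Delta)^{(1-2k)/2}$; the second factor is bounded (since $|\alpha|\leq k\leq 2k-1$ for $k\geq 1$) and the first lies in $\mathcal{L}_{d/p,\infty}$ by Theorem~\ref{cwikel estimate}, so each explicit term belongs to $\mathcal{L}_{d/p,\infty}$.

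For the remainder integral, the norm bounds $\|(\lambda+1-\Delta)^{-1}\|_\infty\leq(1+\lambda)^{-1}$ and $\|(1-\Delta)^{N/2}(\lambda+1-\Delta)^{-N}\|_\infty\leq C(1+\lambda)^{-N/2}$, combined with the Cwikel-type bound $\|A_{N,f}(1-\Delta)^{-(N+p)/2}\|_{d/p,\infty}\leq C_f$ (by the same factorization), yield
\[
\Big\|\tfrac{1}{\lambda+1-\Delta}\,A_{N,f}\,\tfrac{1}{(\lambda+1-\Delta)^N}(1-\Delta)^{-p/2}\Big\|_{d/p,\infty}\leq\frac{C_f}{(1+\lambda)^{1+N/2}},
\]
which is $\lambda^{1/2}$-integrable on $(0,\infty)$ whenever $N\geq 2$; Fact~\ref{general integral estimate} then places the remainder integral in $\mathcal{L}_{d/p,\infty}$ in the Banach regime $d/p>1$.

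The main technical obstacle is the quasi-Banach regime $p\geq d$, in which Fact~\ref{general integral estimate} does not literally apply. I would handle it via the standard quasi-Banach analogue: the $(d/p)$-convexity of $\mathcal{L}_{d/p,\infty}$ supplies an inequality of the form $\|\int F\|_{d/p,\infty}\leq C_{d,p}\int\|F\|_{d/p,\infty}$ for which the pointwise bound just established is the only input needed, thereby placing the remainder, and hence $[(1-\Delta)^{1/2},M_f](1-\Delta)^{-p/2}$, in $\mathcal{L}_{d/p,\infty}$ for every $p>0$.
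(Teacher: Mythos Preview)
The paper does not give a proof of this lemma; it simply records it as a special case of \cite[Theorem~1.6]{MSX}. Your integral-representation argument is therefore an independent, self-contained route, and for $0<p<d$ (the Banach range $d/p>1$) it is correct: the commutator expansion of Fact~\ref{commutator lemma}, the beta-integral evaluation of the explicit terms, the factorisation $A_{k,f}(1-\Delta)^{(1-2k-p)/2}=\sum_\alpha M_{g_\alpha}(1-\Delta)^{-p/2}\cdot\partial^\alpha(1-\Delta)^{(1-2k)/2}$ together with Theorem~\ref{cwikel estimate}, and the use of Fact~\ref{general integral estimate} on the remainder all go through exactly as you describe.

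The gap is in the quasi-Banach range $p\geq d$. The inequality you invoke,
\[
\Big\|\int F\Big\|_{d/p,\infty}\leq C_{d,p}\int\|F\|_{d/p,\infty},
\]
is \emph{not} available for $d/p\leq 1$ and is not supplied by any convexity property of $\mathcal{L}_{d/p,\infty}$. For $q\leq 1$ the ideal $\mathcal{L}_{q,\infty}$ is not $q$-convex; what one has is $r$-normability for each $r<q$, which gives $\|\sum x_i\|^r\leq\sum\|x_i\|^r$ for finite sums, but this does \emph{not} imply a linear integral triangle inequality. In fact a weak integral of a family with uniformly bounded $\mathcal{L}_{q,\infty}$-quasinorm over a finite-measure set need not even lie in $\mathcal{L}_{q,\infty}$ when $q<1$. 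So the final paragraph, as written, does not close the argument.

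The strategy can be repaired, but it takes more than a one-line appeal to convexity. One option: fix $0<r<d/p$, decompose the remainder integral dyadically as $\sum_{n\geq 0}R^{(n)}$, bound each $R^{(n)}$ in $\mathcal{L}_{d/p,\infty}$ by pulling a single Cwikel factor outside and estimating the remaining $\lambda$-dependent Fourier multipliers in \emph{operator norm} (which does satisfy an integral inequality), and then choose $N$ large enough that $\sum_n\|R^{(n)}\|_{d/p,\infty}^r<\infty$, so the $r$-triangle inequality for the countable sum finishes. Alternatively, for $p\geq d$ one may simply defer to \cite{MSX}, as the paper does.
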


The following lemma is a technical precursor to Lemma \ref{appendix final lemma}.

\begin{lem}\label{rk appendix lemma} If $f\in C^{\infty}_c(\mathbb{R}^d),$  then 
$$[D_k(-\Delta)^{-\frac12},M_f(1-\Delta)^{-\frac{p}{2}}]\in\mathcal{L}_{\frac{d}{p+1},\infty},\quad p>0,\quad k\in\{1,\cdots,d\}.$$
\end{lem}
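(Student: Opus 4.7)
The plan is as follows. Since $R_k:=D_k(-\Delta)^{-1/2}$ and $(1-\Delta)^{-p/2}$ are both Fourier multipliers, they commute, so
$[D_k(-\Delta)^{-1/2}, M_f(1-\Delta)^{-p/2}] = [R_k, M_f](1-\Delta)^{-p/2}$,
and it suffices to prove this operator lies in $\mathcal{L}_{d/(p+1),\infty}$. I decompose $R_k = \tilde R_k + S$, where $\tilde R_k := D_k(1-\Delta)^{-1/2}$ and $S$ is the Fourier multiplier with symbol $\sigma_S(\xi) = \xi_k\bigl(|\xi|^{-1} - (1+|\xi|^2)^{-1/2}\bigr)$. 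A short asymptotic calculation (bounded by $|\xi_k|/|\xi|\le 1$ near the origin, and $O(|\xi|^{-2})$ at infinity via the expansion $(1+|\xi|^2)^{-1/2}=|\xi|^{-1}(1-\tfrac12|\xi|^{-2}+\cdots)$) yields $|\sigma_S(\xi)| \lesssim (1+|\xi|^2)^{-1}$, so that $S = T(1-\Delta)^{-1}$ for a bounded Fourier multiplier $T$.

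For the $\tilde R_k$-piece, I would combine the Leibniz rule $[D_k B, M_f]=M_{D_k f}B+D_k[B,M_f]$ with the identity $[(1-\Delta)^{-1/2}, M_f] = -(1-\Delta)^{-1/2}[(1-\Delta)^{1/2}, M_f](1-\Delta)^{-1/2}$ to get
$[\tilde R_k, M_f](1-\Delta)^{-p/2} = M_{D_k f}(1-\Delta)^{-(p+1)/2} - \tilde R_k\,[(1-\Delta)^{1/2}, M_f]\,(1-\Delta)^{-(p+1)/2}$.
The first summand lies in $\mathcal{L}_{d/(p+1),\infty}$ by Theorem \ref{cwikel estimate}, since $(1+|\xi|^2)^{-(p+1)/2} \in L_{d/(p+1),\infty}$ and $D_kf\in C_c^\infty$; the second lies in $\mathcal{L}_{d/(p+1),\infty}$ by Lemma \ref{MSX theorem} applied with parameter $p+1$, combined with boundedness of $\tilde R_k$.

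For the $S$-piece, the Leibniz rule $[T(1-\Delta)^{-1}, M_f] = T[(1-\Delta)^{-1}, M_f] + [T, M_f](1-\Delta)^{-1}$ splits $[S, M_f](1-\Delta)^{-p/2}$ into two parts. For the first, invoking $[(1-\Delta)^{-1}, M_f] = (1-\Delta)^{-1}[\Delta, M_f](1-\Delta)^{-1}$ and writing $[\Delta, M_f] = M_{\Delta f} - 2\sum_j M_{D_j f}D_j$ explicitly as a first-order differential operator with $C_c^\infty$ coefficients, one reduces to a finite sum of terms of the form $T(1-\Delta)^{-1}M_g(1-\Delta)^{-(p+2)/2}$ and $T(1-\Delta)^{-1}M_g D_j(1-\Delta)^{-(p+2)/2}$; since $\xi_j(1+|\xi|^2)^{-(p+2)/2}$ is dominated by $(1+|\xi|^2)^{-(p+1)/2}\in L_{d/(p+1),\infty}$, Cwikel places each term in $\mathcal{L}_{d/(p+1),\infty}$. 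For the second piece $[T, M_f](1-\Delta)^{-(p+2)/2}$, expanding $TM_f(1-\Delta)^{-(p+2)/2}-M_fT(1-\Delta)^{-(p+2)/2}$ and applying Cwikel twice (using boundedness of $T$ and the symbol of $T(1-\Delta)^{-(p+2)/2}$ being bounded by $(1+|\xi|^2)^{-(p+2)/2}$) places it in $\mathcal{L}_{d/(p+2),\infty} \subset \mathcal{L}_{d/(p+1),\infty}$.

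The main obstacle is locating the extra half-derivative of smoothing needed to upgrade the elementary $\mathcal{L}_{d/p,\infty}$ bound (obtainable from $R_k$ bounded and $M_f(1-\Delta)^{-p/2}\in\mathcal{L}_{d/p,\infty}$) to the sharp $\mathcal{L}_{d/(p+1),\infty}$ bound. The key technical observation enabling this is the symbol bound $|\sigma_S(\xi)| \lesssim (1+|\xi|^2)^{-1}$, which yields the factorization $S = T(1-\Delta)^{-1}$ and so converts the commutator with $S$ into one with $(1-\Delta)^{-1}$, where the extra factor of $(1-\Delta)^{-1}$ provides exactly the missing smoothing.
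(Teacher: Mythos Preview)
Your proof is correct and follows essentially the same route as the paper: the same decomposition $R_k=\tilde R_k+S$ (the paper writes $S=h_k(\nabla)$), the same symbol bound $|\sigma_S(\xi)|\lesssim(1+|\xi|^2)^{-1}$, the identical treatment of the $\tilde R_k$-piece via $M_{D_kf}(1-\Delta)^{-(p+1)/2}$ plus Lemma~\ref{MSX theorem} with parameter $p+1$, and Cwikel-type bounds for the $S$-piece. The only difference is organizational: for the $S$-piece the paper does not use the Leibniz rule at all, but simply writes
\[
[h_k(\nabla),M_f(1-\Delta)^{-p/2}]=\bigl(h_k(\nabla)(1-\Delta)\bigr)\cdot(1-\Delta)^{-1}M_f(1-\Delta)^{-p/2}-M_f(1-\Delta)^{-(p+2)/2}\cdot\bigl((1-\Delta)h_k(\nabla)\bigr),
\]
and then invokes boundedness of $h_k(\nabla)(1-\Delta)$ together with the bound \eqref{eq:traceidealbound} for each remaining factor, landing directly in $\mathcal{L}_{d/(p+2),\infty}$. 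This bypasses your expansion of $[\Delta,M_f]$ and the separate handling of $[T,M_f]$, so it is a couple of lines shorter, but your argument reaches the same conclusion with the same ingredients.
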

\begin{proof} Set 
$$h_k(s)=\frac{s_k}{|s|}-\frac{s_k}{(1+|s|^2)^{\frac12}},\quad s\in\mathbb{R}^d.$$
It is immediate that
$$[D_k(-\Delta)^{-\frac12},M_f(1-\Delta)^{-\frac{p}{2}}]-[D_k(1-\Delta)^{-\frac12},M_f(1-\Delta)^{-\frac{p}{2}}]=$$
$$=h_k(\nabla)M_f(1-\Delta)^{-\frac{p}{2}}-M_f(1-\Delta)^{-\frac{p}{2}}h_k(\nabla)=$$
$$=h_k(\nabla)(1-\Delta) \cdot (1-\Delta)^{-1}M_f(1-\Delta)^{-\frac{p}{2}}-M_f(1-\Delta)^{-\frac{p+2}{2}}\cdot (1-\Delta) h_k(\nabla).$$
Using Theorem \ref{cwikel estimate} as in the proof of Lemma \ref{first appendix commutator lemma}, we obtain
$$(1-\Delta)^{-1}M_f(1-\Delta)^{-\frac{p}{2}},\quad M_f(1-\Delta)^{-\frac{p+2}{2}}\in\mathcal{L}_{\frac{d}{p+2},\infty}.$$
Since $h_k(\nabla)\cdot (1-\Delta)$ is bounded, it follows from \eqref{eq:traceidealbound} that
\begin{equation}\label{ral eq0}
[D_k(-\Delta)^{-\frac12},M_f(1-\Delta)^{-\frac{p}{2}}]-[D_k(1-\Delta)^{-\frac12},M_f(1-\Delta)^{-\frac{p}{2}}]\in\mathcal{L}_{\frac{d}{p+2},\infty}.
\end{equation}

Next,
$$[D_k(1-\Delta)^{-\frac12},M_f(1-\Delta)^{-\frac{p}{2}}]=[D_k(1-\Delta)^{-\frac12},M_f]\cdot (1-\Delta)^{-\frac{p}{2}}=$$
$$=M_{D_kf}(1-\Delta)^{-\frac{p+1}{2}}-D_k(1-\Delta)^{-\frac12}\cdot [(1-\Delta)^{\frac12},M_f](1-\Delta)^{-\frac{p+1}{2}}.$$
Applying Theorem \ref{cwikel estimate} to the first term and Theorem \ref{MSX theorem} (applied with $p+1$ instead of $p$) to the second term, we deduce that
\begin{equation}\label{ral eq1}
[D_k(-\Delta)^{-\frac12},M_f(1-\Delta)^{-\frac{p}{2}}]\in\mathcal{L}_{\frac{d}{p+1},\infty}.
\end{equation}
Combining \eqref{ral eq0} and \eqref{ral eq1}, we complete the proof.
\end{proof}

The next lemma is the crucial step in proving Theorem \ref{t delta commutator lemma}.

\begin{lem}\label{appendix final lemma} If $f\in C_c(\mathbb{R}^d)$ and if $g\in C(\mathbb{S}^{d-1}),$ then 
$$[g(\nabla(-\Delta)^{-\frac12}),M_f(1-\Delta)^{-\frac{p}{2}}]\in(\mathcal{L}_{\frac{d}{p},\infty})_0,\quad p>0.$$
\end{lem}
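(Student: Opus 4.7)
The strategy is to reduce the claim first to polynomial $g$, then to smooth $f$, so that the key technical input from Lemma \ref{rk appendix lemma} can be applied directly.

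First, I observe that the map $g\mapsto g(\nabla(-\Delta)^{-\frac12})$ is an isometric $\ast$-representation of $C(\mathbb{S}^{d-1})$: the commuting bounded self-adjoint operators $R_k := D_k(-\Delta)^{-\frac12}$ satisfy $\sum_{k=1}^d R_k^2 = I$, so their joint spectrum equals $\mathbb{S}^{d-1}$, and hence $\|g(R)\|_\infty = \|g\|_{C(\mathbb{S}^{d-1})}$. Combined with the quasi-triangle inequality for $\mathcal{L}_{\frac{d}{p},\infty}$ and the inclusion $B := M_f(1-\Delta)^{-\frac{p}{2}} \in \mathcal{L}_{\frac{d}{p},\infty}$ (Theorem \ref{cwikel estimate}), this yields a Lipschitz-type bound
$$
\|[g(R),B]\|_{\frac{d}{p},\infty} \leq C_p \,\|g\|_{C(\mathbb{S}^{d-1})}\, \|B\|_{\frac{d}{p},\infty},\qquad g \in C(\mathbb{S}^{d-1}).
$$
Since $(\mathcal{L}_{\frac{d}{p},\infty})_0$ is closed in $\mathcal{L}_{\frac{d}{p},\infty}$, and polynomials in the coordinate functions $\omega_1,\ldots,\omega_d$ are dense in $C(\mathbb{S}^{d-1})$ by the Stone--Weierstrass theorem, it suffices to prove the claim for polynomial $g$.

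Next, I handle polynomial $g$ in the case $f \in C_c^\infty(\mathbb{R}^d)$. Such a $g$ gives rise to a finite linear combination of products $R_{k_1}\cdots R_{k_n}$, and by the Leibniz rule,
$$
[R_{k_1}\cdots R_{k_n}, B] = \sum_{j=1}^{n} R_{k_1}\cdots R_{k_{j-1}}\,[R_{k_j},B]\,R_{k_{j+1}}\cdots R_{k_n}.
$$
Each Riesz transform is bounded on $L_2(\mathbb{R}^d)$, and Lemma \ref{rk appendix lemma} furnishes $[R_{k_j},B] \in \mathcal{L}_{\frac{d}{p+1},\infty}$. Since $\mathcal{L}_{\frac{d}{p+1},\infty} \subset (\mathcal{L}_{\frac{d}{p},\infty})_0$, the entire commutator belongs to $(\mathcal{L}_{\frac{d}{p},\infty})_0$, as required.

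Finally, I pass from $f \in C_c^\infty$ to general $f \in C_c$ by uniform approximation. Given $f \in C_c(\mathbb{R}^d)$ supported in a compact set $K$, choose $f_n \in C_c^\infty(\mathbb{R}^d)$ all supported in a slightly larger compact $K'$ with $f_n \to f$ uniformly. Writing $M_{f-f_n} = M_{f-f_n}M_{\chi_{K'}}$ and using the operator ideal bound together with Theorem \ref{cwikel estimate} gives
$$
\|M_{f-f_n}(1-\Delta)^{-\frac{p}{2}}\|_{\frac{d}{p},\infty} \leq \|f-f_n\|_\infty \,\|M_{\chi_{K'}}(1-\Delta)^{-\frac{p}{2}}\|_{\frac{d}{p},\infty},
$$
where the second factor is finite. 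Applying the Lipschitz estimate from the first step shows that $[g(R),M_{f_n}(1-\Delta)^{-\frac{p}{2}}] \to [g(R),M_f(1-\Delta)^{-\frac{p}{2}}]$ in $\mathcal{L}_{\frac{d}{p},\infty}$, and closedness of $(\mathcal{L}_{\frac{d}{p},\infty})_0$ completes the proof. The substantive ingredient is Lemma \ref{rk appendix lemma}; the remaining steps are routine density and continuity considerations, though care with the quasi-norm constants for $\mathcal{L}_{\frac{d}{p},\infty}$ is needed when $p>d$.
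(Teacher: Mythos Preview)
Your proof is correct and follows essentially the same approach as the paper: reduce to polynomial $g$ by Stone--Weierstrass and a quasi-norm continuity estimate, then invoke Lemma~\ref{rk appendix lemma} together with the Leibniz rule. Your additional density step passing from $f\in C_c^\infty$ to $f\in C_c$ is in fact a point the paper's own proof glosses over (Lemma~\ref{rk appendix lemma} is stated only for smooth $f$), so your argument is, if anything, slightly more complete.
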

\begin{proof} If $g\in C(\mathbb{S}^{d-1})$ is a monomial, then Lemma \ref{rk appendix lemma} and the Leibniz rule yield
$$[g(\nabla(-\Delta)^{-\frac12}),M_f(1-\Delta)^{-\frac{p}{2}}]\in\mathcal{L}_{\frac{d}{p+1},\infty}.$$
If $g\in C(\mathbb{S}^{d-1})$ is a polynomial, then we obtain by linearity
$$[g(\nabla(-\Delta)^{-\frac12}),M_f(1-\Delta)^{-\frac{p}{2}}]\in\mathcal{L}_{\frac{d}{p+1},\infty}\subset(\mathcal{L}_{\frac{d}{p},\infty})_0.$$

Consider now the general case. Fix $g\in C(\mathbb{S}^{d-1})$ and choose a sequence of polynomials $\{g_n\}_{n\geq0}\in C(\mathbb{S}^{d-1})$ such that $g_n\to g$ in the uniform norm. Using quasi-triangle and H\"older inequalities, we write
$$\|[(g_n-g)(\nabla(-\Delta)^{-\frac12}),M_f(1-\Delta)^{-\frac{p}{2}}]\|_{\frac{d}{p},\infty}\leq 2^{1+\frac{p}{d}}\|g_n-g\|_{\infty}\|M_f(1-\Delta)^{-\frac{p}{2}}\|_{\frac{d}{p},\infty}.$$
The second factor on the right hand side is finite by Theorem \ref{cwikel estimate}. Therefore,
$$[g_n(\nabla(-\Delta)^{-\frac12}),M_f(1-\Delta)^{-\frac{p}{2}}]\to [g(\nabla(-\Delta)^{-\frac12}),M_f(1-\Delta)^{-\frac{p}{2}}]$$
in $\mathcal{L}_{\frac{d}{p},\infty}.$ Since the sequence on the left is in $(\mathcal{L}_{\frac{d}{p},\infty})_0,$ the assertion follows.
\end{proof}

Next, we derive an approximation result for operators $T\in\Pi$.

\begin{lem}\label{t standard approximation lemma} Let $T\in\Pi.$ There are $\{f_{n,l}\}_{n,l\geq1}\subset (C_0+\mathbb{C})(\mathbb{R}^d),$ $\{g_{n,l}\}_{n,l\geq1}\subset C(\mathbb{S}^{d-1})$ and $\{S_n\}_{n\geq0}\subset\mathcal{K}(L_2(\mathbb{R}^d))$ such that, with the limit in the uniform norm,
$$T=\lim_{n\to\infty}T_n,\quad T_n :=\sum_{l=1}^{l_n}M_{f_{n,l}}g_{n,l}(\nabla(-\Delta)^{-\frac12})+S_n.$$
\end{lem}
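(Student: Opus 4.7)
The plan is to show that the set
\[
\mathcal A := \Bigl\{ \sum_{l=1}^{L} M_{f_l}\, g_l(\nabla(-\Delta)^{-\frac12}) + S : L \in \mathbb N,\ f_l \in (\mathbb C+C_0)(\mathbb R^d),\ g_l \in C(\mathbb S^{d-1}),\ S \in \mathcal K(L_2(\mathbb R^d)) \Bigr\}
\]
is a $\ast$-subalgebra of $\mathcal B(L_2(\mathbb R^d))$ that contains the two generating families of $\Pi$. Once this is established, the norm closure $\overline{\mathcal A}$ is a $C^\ast$-subalgebra that contains $\Pi$, so for every $T\in\Pi$ one can find $T_n\in\mathcal A$ with $\|T-T_n\|_\infty\to 0$, yielding the required data $f_{n,l}, g_{n,l}, S_n$.

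The only nontrivial ingredient is the commutator compactness
\[
[M_f, g(\nabla(-\Delta)^{-\frac12})] \in \mathcal K(L_2(\mathbb R^d)),\qquad f\in (\mathbb C+C_0)(\mathbb R^d),\ g\in C(\mathbb S^{d-1}).
\]
Granted this, the algebraic verifications become routine. For products one expands
\[
M_{f_1}g_1(\nabla(-\Delta)^{-\frac12})\, M_{f_2} g_2(\nabla(-\Delta)^{-\frac12}) \equiv M_{f_1 f_2} (g_1 g_2)(\nabla(-\Delta)^{-\frac12}) \pmod{\mathcal K},
\]
using that $(\mathbb C+C_0)(\mathbb R^d)$ and $C(\mathbb S^{d-1})$ are closed under pointwise multiplication and that the functional calculus is multiplicative; adjoints similarly reduce to the product form modulo $\mathcal K$. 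Cross terms with the compact summand $S$ are compact because $\mathcal K$ is a two-sided ideal, and each generator of $\Pi$ clearly lies in $\mathcal A$ (take one of the two factors equal to the constant function~$1$).

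To prove the commutator compactness I would first dispose of constant $f$ trivially, and then reduce $f\in C_0$ to $f\in C_c^\infty$ using sup-norm density, the elementary estimate $\|[M_h,T]\|_\infty \leq 2\|h\|_\infty\|T\|_\infty$, and norm-closedness of $\mathcal K$. By the same token, Stone--Weierstrass reduces $g$ to a polynomial in the coordinate functions $\omega_1,\ldots,\omega_d$ on $\mathbb S^{d-1}$; then $g(\nabla(-\Delta)^{-\frac12})$ is a polynomial in the Riesz transforms $R_j = D_j(-\Delta)^{-\frac12}$, and the Leibniz rule reduces everything to the single claim
\[
[M_f, R_k] \in \mathcal K(L_2(\mathbb R^d)),\qquad f\in C_c^\infty(\mathbb R^d),\ 1\le k\le d.
\]

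This last statement is the \emph{main obstacle}. It is a classical fact: by Uchiyama's refinement \cite{Uc} of Coifman--Rochberg--Weiss \cite{CoRoWe}, $[M_f, R_k]$ is compact whenever $f$ lies in $\mathrm{VMO}(\mathbb R^d)$, and $C_c^\infty(\mathbb R^d)\subset \mathrm{VMO}(\mathbb R^d)$. A self-contained alternative is to use the construction of the symbol map from \cite{SZ-DAO1,MSZ-DAO2}: since $\sigma$ is a $\ast$-homomorphism into the commutative $C^\ast$-algebra $C(\mathbb S^{d-1},\mathbb C+C_0)$, every commutator $[M_f, R_k]$ belongs to $\ker\sigma$, and inspection of the references shows that $\ker\sigma\subset\mathcal K$, giving the required compactness within the paper's own framework.
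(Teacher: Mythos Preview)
Your proposal is correct and follows essentially the same route as the paper. The paper approximates $T$ by noncommutative polynomials in the generators and then invokes \cite[Lemma 5.8]{DAO3}, which asserts precisely that a finite product $\prod_k M_{f_k}g_k(\nabla(-\Delta)^{-1/2})$ differs from $M_{\prod_k f_k}\,(\prod_k g_k)(\nabla(-\Delta)^{-1/2})$ by a compact operator; you reformulate this as showing that your $\mathcal A$ is a $\ast$-algebra, and you unpack the black-boxed lemma into the commutator compactness $[M_f,R_k]\in\mathcal K$ plus Leibniz and density reductions, appealing to Uchiyama \cite{Uc} (or to $\ker({\rm sym})\subset\mathcal K$ from \cite{SZ-DAO1,MSZ-DAO2}) for the final step. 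Both arguments rest on the same underlying fact; yours is just more self-contained.
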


\begin{proof} If $T\in\Pi,$ then we can find a sequence $\{T_n\}_{n\geq0}\subset\Pi$ such that $T_n\to T$ in the uniform norm and such that 
$$T_n=\sum_{l=1}^{l_n}\prod_{k=1}^{k_n}M_{f_{n,k,l}}g_{n,k,l}(\nabla(-\Delta)^{-\frac12}).$$
Here, $f_{n,k,l}\in (C_0+\mathbb{C})(\mathbb{R}^d)$ and $g_{n,k,l}\in C(\mathbb{S}^{d-1})$ for every $n,$ every $k$ and every $l.$

By \cite[Lemma 5.8]{DAO3}, the operator
$$\prod_{k=1}^{k_n}M_{f_{n,k,l}}g_{n,k,l}(\nabla(-\Delta)^{-\frac12})-M_{\prod_{k=1}^{k_n}f_{n,k,l}}(\prod_{k=1}^{k_n}g_{n,k,l})(\nabla(-\Delta)^{-\frac12})$$
is compact. Setting
$$f_{n,l}=\prod_{k=1}^{k_n}f_{n,k,l},\quad g_{n,l}=\prod_{k=1}^{k_n}g_{n,k,l},\quad 1\leq l\leq l_n$$
the assertion follows.
\end{proof}

We are now in position to prove the main result of this appendix.

\begin{proof}[Proof of Theorem \ref{t delta commutator lemma}] Let $T\in\Pi$ and let $T_n\in\Pi$ be as in Lemma \ref{t standard approximation lemma}. Let $\phi\in C^{\infty}_c(\mathbb{R}^d).$ By the Leibniz rule, we have
\begin{multline}\label{a1 main eq}
[T_n,M_{\phi}(1-\Delta)^{-\frac{p}{2}}]=\sum_{l=1}^{l_n} [M_{f_{n,l}},M_{\phi}(1-\Delta)^{-\frac{p}{2}}]\cdot g_{n,l}(\nabla(-\Delta)^{-\frac12})+\\
+\sum_{l=1}^{l_n}M_{f_{n,l}}\cdot [g_{n,l}(\nabla(-\Delta)^{-\frac12}),M_{\phi}(1-\Delta)^{-\frac{p}{2}}]+\\
+[S_n,M_{\phi}(1-\Delta)^{-\frac{p}{2}}].
\end{multline}
The commutators on the right hand side of \eqref{a1 main eq} belong to $(\mathcal{L}_{\frac{d}{p},\infty})_0.$ Indeed,
$$[M_{f_{n,l}},M_{\phi}(1-\Delta)^{-\frac{p}{2}}]=[M_{f_{n,l}\phi},(1-\Delta)^{-\frac{p}{2}}]-[M_{\phi},(1-\Delta)^{-\frac{p}{2}}]\cdot M_{f_{n,l}}.$$
Hence, the first commutator on the right hand side of \eqref{a1 main eq} belongs to $(\mathcal{L}_{\frac{d}{p},\infty})_0$ by Lemma \ref{second appendix commutator lemma}. The second commutator on the right hand side  of \eqref{a1 main eq} belongs to $(\mathcal{L}_{\frac{d}{p},\infty})_0$ by Lemma \ref{appendix final lemma}. The third commutator on the right hand side of \eqref{a1 main eq} belongs to $(\mathcal{L}_{\frac{d}{p},\infty})_0$ since $M_{\phi}(1-\Delta)^{-\frac{p}{2}}\in\mathcal{L}_{\frac{d}{p},\infty}$ and since $S_n$ is compact. Therefore,
$$[T_n,M_{\phi}(1-\Delta)^{-\frac{p}{2}}]\in (\mathcal{L}_{\frac{d}{p},\infty})_0.$$
Since $T_n\to T$ in the uniform norm and since $M_\phi(1-\Delta)^{-\frac{p}{2}}\in\mathcal{L}_{\frac{d}{p},\infty}$ by Theorem~\ref{cwikel estimate}, it follows that
$$[T_n,M_{\phi}(1-\Delta)^{-\frac{p}{2}}]\to [T,M_{\phi}(1-\Delta)^{-\frac{p}{2}}]\,,$$
where the limit is taken in $\mathcal{L}_{\frac{d}{p},\infty}.$ Since the sequence on the left hand side belongs to $(\mathcal{L}_{\frac{d}{p},\infty})_0,$ it follows that
$$[T,M_{\phi}(1-\Delta)^{-\frac{p}{2}}]\in (\mathcal{L}_{\frac{d}{p},\infty})_0.$$

Suppose now that $T\in\Pi$ is compactly supported, that is, $T=M_{\phi}T=TM_{\phi}$ for some $\phi\in C^{\infty}_c(\mathbb{R}^d).$ We have
$$[T,(1-\Delta)^{-\frac{p}{2}}]=T(1-\Delta)^{-\frac{p}{2}}-(1-\Delta)^{-\frac{p}{2}}T=$$
$$=TM_{\phi}(1-\Delta)^{-\frac{p}{2}}-(1-\Delta)^{-\frac{p}{2}}M_{\phi}T=[T,M_{\phi}(1-\Delta)^{-\frac{p}{2}}]+[M_{\phi},(1-\Delta)^{-\frac{p}{2}}]\cdot T.$$
The first term belongs to $(\mathcal{L}_{\frac{d}{p},\infty})_0$ as we have just shown above, and the second one does by Lemma \ref{first appendix commutator lemma}. This completes the proof.
\end{proof}


\end{document}